\newtheorem{theorem}{Theorem}
\newtheorem{lemma}[theorem]{Lemma}
\newtheorem{fact}[theorem]{Fact}
\newtheorem{algorithm}[theorem]{Algorithm}
\theoremstyle{definition}
\newtheorem{definition}[theorem]{Definition}
\theoremstyle{remark}
\newtheorem{remark}[theorem]{Remark}
\newtheorem{corollary}[theorem]{Corollary}
\numberwithin{theorem}{section}
\lstdefinestyle{derivation} {
    breakatwhitespace=false,         
    breaklines=true,                 
    captionpos=b,                    
    keepspaces=true,                 
    numbers=left,                    
    numbersep=5pt,
    escapechar=\&
}
\DeclareFontFamily{OMX}{MnSymbolE}{}
\DeclareSymbolFont{MnLargeSymbols}{OMX}{MnSymbolE}{m}{n}
\DeclareFontShape{OMX}{MnSymbolE}{m}{n}{
    <-6>  MnSymbolE5
   <6-7>  MnSymbolE6
   <7-8>  MnSymbolE7
   <8-9>  MnSymbolE8
   <9-10> MnSymbolE9
  <10-12> MnSymbolE10
  <12->   MnSymbolE12
}{}
\DeclareFontShape{OMX}{MnSymbolE}{b}{n}{
    <-6>  MnSymbolE-Bold5
   <6-7>  MnSymbolE-Bold6
   <7-8>  MnSymbolE-Bold7
   <8-9>  MnSymbolE-Bold8
   <9-10> MnSymbolE-Bold9
  <10-12> MnSymbolE-Bold10
  <12->   MnSymbolE-Bold12
}{}
\let\llangle\@undefined
\let\rrangle\@undefined
\DeclareMathDelimiter{\llangle}{\mathopen}%
                     {MnLargeSymbols}{'164}{MnLargeSymbols}{'164}
\DeclareMathDelimiter{\rrangle}{\mathclose}%
                     {MnLargeSymbols}{'171}{MnLargeSymbols}{'171}
\newcommand{\Wedge}[1]{\ensuremath{\sideset{}{^{#1}}\bigwedge}}
\newcommand{\quasi}[1]{\ensuremath{\llangle{#1}\rrangle}}
\newcommand{\real}[1]{\ensuremath{\llbracket{#1}\rrbracket}}
\begin{document}

\title{On Intermediate Justification Logics}

\author{Nicholas Pischke}
\address{Hoch-Weiseler Str. 46, Butzbach, 35510, Hesse, Germany}
\email{pischkenicholas@gmail.com}

\keywords{Justification Logic, Intermediate Logic, Heyting Algebras, Kripke Frames, Realization}

\begin{abstract}
We study abstract intermediate justification logics, that is arbitrary intermediate propositional logics extended with a subset of specific axioms of (classical) justification logics. For these, we introduce various semantics by combining either Heyting algebras or Kripke frames with the usual semantic machinery used by Mkrtychev's, Fitting's or Lehmann's and Studer's models for classical justification logics. We prove unified completeness theorems for all intermediate justification logics and their corresponding semantics using a respective propositional completeness theorem of the underlying intermediate logic. Further, by a modification of a method of Fitting, we prove unified realization theorems for a large class of intermediate justification logics and accompanying intermediate modal logics.
\end{abstract}

\maketitle

\section{Introduction}
Justification logics originated in the 90's from the studies of Artemov (see \cite{Art1995,Art2001}) regarding the provability interpretation of the modal logic $\mathbf{S4}$ (as initiated by G\"odel in \cite{Goe1933}) and the connected problem of formalizing the Brouwer-Heyting-Kolmogorov interpretation of intuitionistic propositional logic. From there, the prototype justification logic (the logic of proofs $\mathbf{LP}$) was substantially generalized and the resulting family of justification logics gained importance in the context of general (explicit) epistemic reasoning (see the survey \cite{Art2008}) with two recent textbooks on the subject \cite{AF2019,KS2019}. 

The original semantics for the logic of proofs was its intended arithmetical interpretation in Peano arithmetic but since then, various other semantics have been proposed which apply not only to the logic of proofs but to the whole family of justification logics. Notable instances important in this paper are the syntactic models of Mkrtychev \cite{Mkr1997} as well as the possible-world models of Fitting \cite{Fit2005} and the recent subset semantics of Lehmann and Studer \cite{LS2019}. These other semantical access points have been instrumental not only in demonstrating the strength of justification logics in modeling general epistemic scenarios and in understanding the ontology of justification terms and formulae in (classical) justification logics but also in inner-logical investigations for properties like decidability (see e.g. \cite{Kuz2000,Kuz2008,Mkr1997}).

The main theorem on the logic of proofs, besides arithmetical completeness, is the so called realization theorem which establishes a correspondence between $\mathbf{S4}$ and the logic of proofs where, constructively, every $\Box$ in a modal formula can be replaced with a suitable justification term such that the resulting formula is a theorem of $\mathbf{LP}$. This property was not only essential to the original motivation of the logic of proofs but is central also in the study of the whole framework of justification logics as it has analogues for all other known classical representatives, giving a central correspondence between justification and modal logics.\\

Besides the classical justification logics, there is a growing literature on non-classical justification logics, in particular encompassing various lines of research originating from the formalization of explicit, but vague, knowledge. In particular, there are the works on many-valued justification logics (see \cite{Gha2014,Gha2016,Pis2020,Pis2019}) and on intuitionistic justification logics (see \cite{KMS2017,MS2016,MS2018}). In fact, the G\"odel justification logics from \cite{Gha2014,Pis2020,Pis2019} also relate to the latter, with G\"odel logic as the base logic being one of the prime examples of an intermediate logic, originating from Dummet's work \cite{Dum1959} (in turn influenced by G\"odel's remarks on intuitionistic logic \cite{Goe1932}).\\

We give a unified theory regarding semantics and realization for the above examples of intuitionistic, G\"odel as well as classical justification logics \emph{and beyond} by introducing abstract intermediate justification logics, that is intermediate propositional logics over the justification language extended with a collection of designated justification axioms.\\

Semantically, starting at the two typical semantical access points for the underlying intermediate logics of (1) algebraic semantics based on Heyting algebras and of (2) the semantics of Kripke based on partial orders, we extended these algebraic and order theoretic approaches by the usual (appropriately adapted) semantic machinery for treating justification modalities from the classical models of Mkrtychev, Fitting as well as Lehmann and Studer. Here, the algebraic approach extends the classes of classical (or G\"odel) Mkrtychev, Fitting and subset models by allowing the models to take values not only in $\{0,1\}$ (or $[0,1]$ as in the G\"odel case) but in arbitrary Heyting algebras. The approach via intuitionistic Kripke frames extends the previous considerations for semantics of intuitionistic justification logics by new model classes as well as a wider range of applicable logics. All these considerations culminate in general unified completeness theorems based on a semantical characterization of the underlying intermediate logic.

Concerning realization, we modify the approach of Fitting towards non-constructive classical realization from \cite{Fit2016} and prove a very general, though non-constructive, unified realization theorem between these intermediate justification logics and a class of intermediate modal logics defined later. This is a direct example of the applicability of the previous semantic considerations, as this proof of the realization theorem relies on model theoretic constructions using Fitting's models over intuitionistic Kripke frames.
\section{Intermediate justification logics}
\subsection{Syntax and proof calculi}
We consider the propositional language
\[
\mathcal{L}_0:\phi::=\bot\mid p\mid (\phi\land\phi)\mid (\phi\lor\phi)\mid (\phi\rightarrow\phi)
\]
where $p\in Var:=\{p_i\mid i\in\mathbb{N}\}$. We introduce negation as the abbreviation $\neg\phi:=\phi\rightarrow\bot$. We also define
\[
\bigwedge_{i=1}^n\phi_n:=\phi_1\land\dots\land\phi_n
\]
for some $\phi_1,\dots,\phi_n\in\mathcal{L}_0$. The same applies to $\lor$. In order to define intermediate logics and later intermediate justification logics, we need to briefly review some notions regarding propositional substitutions.

A \emph{substitution in $\mathcal{L}_0$} is a function $\sigma:Var\to\mathcal{L}_0$. This function $\sigma$ naturally extends to $\mathcal{L}_0$ by commuting with the connectives $\land,\lor,\rightarrow$ and $\bot$ and we write $\sigma(\phi)$ also for the image of this extended function.

Using this definition of substitutions, we can now give the following definition of an intermediate justification logic.
\begin{definition}
An \emph{intermediate logic} (over $\mathcal{L}_0$) is a set $\mathbf{L}\subsetneq\mathcal{L}_0$ which satisfies:
\begin{enumerate}
\item the schemes $(A1)$ - $(A9)$ are contained in $\mathbf{L}$;
\item $\mathbf{L}$ is closed under \emph{modus ponens}, that is $\phi\rightarrow\psi,\phi\in\mathbf{L}$ implies $\psi\in\mathbf{L}$;
\item $\mathbf{L}$ is closed under substitution in $\mathcal{L}_0$.
\end{enumerate}
Here, the schemes $(A1)$ - $(A9)$ are given by:
\begin{enumerate}
\item [$(A1)$] $\phi\rightarrow (\psi\rightarrow\phi)$;
\item [$(A2)$] $(\phi\rightarrow(\chi\rightarrow\psi))\rightarrow ((\phi\rightarrow\chi)\rightarrow (\phi\rightarrow\psi))$;
\item [$(A3)$] $(\phi\land\psi)\rightarrow\phi$;
\item [$(A4)$] $(\phi\land\psi)\rightarrow\psi$;
\item [$(A5)$] $\phi\rightarrow(\psi\rightarrow(\phi\land\psi))$;
\item [$(A6)$] $\phi\rightarrow(\phi\lor\psi)$;
\item [$(A7)$] $\psi\rightarrow(\phi\lor\psi)$;
\item [$(A8)$] $(\phi\rightarrow\psi)\rightarrow ((\chi\rightarrow\psi)\rightarrow ((\phi\lor\chi)\rightarrow\psi))$;
\item [$(A9)$] $\bot\rightarrow\phi$.
\end{enumerate}
\end{definition}
We denote the smallest intermediate propositional logic, that is the logic given by the axiom schemes $(A1)$ - $(A9)$ in $\mathcal{L}_0$ closed under modus ponens, by $\mathbf{IPC}$. Given a set of formulae $\Gamma\subseteq\mathcal{L}_0$, we write
\[
\Gamma\vdash_\mathbf{L}\phi\text{ iff }\exists\gamma_1,\dots,\gamma_n\in\Gamma\;\left(\bigwedge_{i=1}^n\gamma_i\rightarrow\phi\in\mathbf{L}\right).
\]
On the side of justification logics, we consider the following set of \emph{justification terms}
\[
Jt:t::=x\mid c\mid [t+t]\mid [t\cdot t]\mid\; !t
\]
where $x\in V:=\{x_i\mid i\in\mathbb{N}\}$ and $c\in C:=\{c_i\mid i\in\mathbb{N}\}$ and the resulting multi-modal language
\[
\mathcal{L}_J:\phi::=\bot\mid p\mid (\phi\land\phi)\mid (\phi\lor\phi)\mid (\phi\rightarrow\phi)\mid t:\phi
\]
where $p\in Var$ and $t\in Jt$. Naturally, the same abbreviations as for $\mathcal{L}_0$ also apply here. Given a set $\Gamma,\Delta\subseteq\mathcal{L}_J$, we write $\Gamma +\Delta$ for the smallest set containing $\Gamma\cup\Delta$ which is closed under modus ponens. 

In order to formulate intermediate justification logics, we consider especially \emph{substitutions in $\mathcal{L}_J$}. These are again functions $\sigma:Var\to\mathcal{L}_J$ which extend uniquely to $\mathcal{L}_J$ by commuting with $\land,\lor,\rightarrow,\bot$ and the justification modalities `$t:$`. We again write $\sigma(\phi)$ for the image of a formula $\phi\in\mathcal{L}_J$ under this extension. By $\overline{\Gamma}$, we denote the closure of $\Gamma$ under substitutions in $\mathcal{L}_J$.
\begin{definition}
Let $\mathbf{L}$ be an intermediate propositional logic. Given the axiom schemes
\begin{enumerate}
\item [$(J)$] $t:(\phi\rightarrow\psi)\rightarrow (s:\phi\rightarrow [t\cdot s]:\psi)$,
\item [$(+)$] $t:\phi\rightarrow [t+s]:\phi$, $t:\phi\rightarrow [s+t]:\phi$,
\item [$(F)$] $t:\phi\rightarrow\phi$,
\item [$(I)$] $t:\phi\rightarrow !t:t:\phi$,
\end{enumerate}
we consider the following \emph{justification logics based on $\mathbf{L}$}:
\begin{enumerate}
\item $\mathbf{LJ}_0:=\overline{\mathbf{L}}+ (J)+ (+)$;
\item $\mathbf{LJT}_0:=\mathbf{LJ}_0+ (F)$;
\item $\mathbf{LJ4}_0:=\mathbf{LJ}_0+ (I)$;
\item $\mathbf{LJT4}_0:=\mathbf{LJ}_0+ (F)+ (I)$.
\end{enumerate}
\end{definition}
For any choice $\mathbf{LJL}_0\in\{\mathbf{LJ}_0,\mathbf{LJT}_0,\mathbf{LJ4}_0,\mathbf{LJT4}_0\}$, given $\Gamma\cup\{\phi\}\subseteq\mathcal{L}_J$, we write
\[
\Gamma\vdash_{\mathbf{LJL}_0}\phi\text{ iff }\exists\gamma_1,\dots,\gamma_n\in\Gamma\;\left(\bigwedge_{i=1}^n\gamma_i\rightarrow\phi\in\mathbf{LJL}_0\right).
\]
Specific instances of intermediate propositional logics and of the resulting intermediate justification logics which we consider explicitly in this paper are
\[
\mathbf{G}:=\mathbf{IPC}+(LIN),\quad\mathbf{C}:=\mathbf{IPC}+(LEM),
\]
with the schemes
\begin{enumerate}
\item [($LIN$)] $(\phi\rightarrow\psi)\lor (\psi\rightarrow\phi)$,
\item [($LEM$)] $\phi\lor\neg\phi$,
\end{enumerate}
over $\mathcal{L}_0$.
\begin{definition}
Let $\mathbf{L}$ be an intermediate propositional logic and $\mathbf{LJL}_0\in\{\mathbf{LJ}_0,\mathbf{LJT}_0,\mathbf{LJ4}_0,\mathbf{LJT4}_0\}$. A \emph{constant specification for $\mathbf{LJL}_0$} is a set $CS$ of formulae from $\mathcal{L}_J$ of the form
\[
c_{i_n}:\dots:c_{i_1}:\phi
\]
where $n\geq 1$, $c_{i_k}\in CS$ for all $k$ and $\phi$ is an axiom instance of $\mathbf{LJL}_0$, that is $\phi\in\overline{\mathbf{L}}$ or $\phi$ is an instance of the justification axiom schemes $(J),(+),(F),(I)$ (depending on $\mathbf{LJL}_0$).
\end{definition}
Constant specifications can be used to augment proof systems and increase the amount of justified formulae which they can prove.
\begin{definition}
Let $\mathbf{L}$ be an intermediate propositional logic and $\mathbf{LJL}_0\in\{\mathbf{LJ}_0,\mathbf{LJT}_0,\mathbf{LJ4}_0,\mathbf{LJT4}_0\}$ and let $CS$ be a constant specification for $\mathbf{LJL}_0$. We write $\Gamma\vdash_{\mathbf{LJL}_{CS}}\phi$ for $\Gamma\cup CS\vdash_{\mathbf{LJL}_0}\phi$ with $\Gamma\cup\{\phi\}\subseteq\mathcal{L}_J$. We also write $\mathbf{LJL}_{CS}\vdash\phi$ for $\emptyset\vdash_{\mathbf{LJL}_{CS}}\phi$.
\end{definition}
Note that any $\mathbf{LJL}_0$ and thus any $\mathbf{LJL}_{CS}$ fulfils the classical deduction theorem.\\

An important instance of a constant specification for $\mathbf{LJL}_0$ is the \emph{total constant specification} (that is the maximal constant specification w.r.t. $\subseteq$ in the sense of the above definition) and we denote it by $TCS_{\mathbf{LJL}_0}$. We write $\mathbf{LJL}$ for $\mathbf{LJL}_{TCS_{\mathbf{LJL}_0}}$. The total constant specification will be important later on in the proof of the realization theorems. For this, we already note the following lemma, a straightforward generalization of the classical lifting lemma of justification logics:
\begin{lemma}[Lifting Lemma]
Let $\mathbf{L}$ be an intermediate logic and let $\mathbf{LJL}\in\{\mathbf{LJ},\mathbf{LJT},\mathbf{LJ4},\mathbf{LJT4}\}$. Let $\{\gamma_1,\dots,\gamma_n,\phi\}\subseteq\mathcal{L}_J$. If
\[
\{\gamma_1,\dots,\gamma_n\}\vdash_{\mathbf{LJL}}\phi,
\]
then for any $s_1,\dots,s_n\in Jt$, there is a $t\in Jt$ such that
\[
\{s_1:\gamma_1,\dots,s_n:\gamma_n\}\vdash_{\mathbf{LJL}}t:\phi.
\]
\end{lemma}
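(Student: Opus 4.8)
The plan is to proceed by induction on the length of the derivation witnessing $\{\gamma_1,\dots,\gamma_n\}\vdash_{\mathbf{LJL}}\phi$. Since this means $\bigwedge_{i=1}^n\gamma_i\rightarrow\phi\in\mathbf{LJL}$, I would unwind this into an explicit Hilbert-style proof sequence $\psi_1,\dots,\psi_m=\phi$ using the hypotheses $\gamma_1,\dots,\gamma_n$ as additional assumptions (this is legitimate because each $\mathbf{LJL}_{CS}$ satisfies the deduction theorem). The goal is to construct, for each line $\psi_k$ in this proof, a term $t_k\in Jt$ together with a derivation of $s:\gamma_1,\dots,s:\gamma_n\vdash_{\mathbf{LJL}} t_k:\psi_k$ (writing $s_i$ for the given terms), so that the final $t:=t_m$ discharges the lemma.

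For the inductive step I would distinguish the standard cases by how $\psi_k$ enters the proof. \emph{Axioms and constant specification members:} if $\psi_k$ is an axiom instance of $\mathbf{LJL}$ or lies in $\overline{\mathbf{L}}$, then since we work with the total constant specification, there is a constant $c$ with $c:\psi_k\in TCS_{\mathbf{LJL}_0}$, so $t_k:=c$ works immediately; this is exactly where totality of $CS$ is used. \emph{Hypotheses:} if $\psi_k=\gamma_i$, then I take $t_k:=s_i$, since $s_i:\gamma_i$ is among the assumptions by construction. \emph{Modus ponens:} if $\psi_k$ follows from earlier lines $\psi_j=(\psi_\ell\rightarrow\psi_k)$ and $\psi_\ell$, the induction hypothesis supplies terms $t_j,t_\ell$ with derivations of $t_j:(\psi_\ell\rightarrow\psi_k)$ and $t_\ell:\psi_\ell$; applying the application axiom $(J)$ and two rounds of modus ponens yields $[t_j\cdot t_\ell]:\psi_k$, so I set $t_k:=[t_j\cdot t_\ell]$.

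The only genuine subtlety, and the main thing to check carefully, is the axiom/constant-specification case: the classical lifting lemma is typically stated relative to an \emph{axiomatically appropriate} constant specification, and here the cleanest route is to observe that the total constant specification $TCS_{\mathbf{LJL}_0}$ is axiomatically appropriate by definition, so every axiom instance—including the nested-modality members $c_{i_n}:\dots:c_{i_1}:\phi$ that $CS$ itself licenses—is internalized by some constant. Beyond this, the argument is purely combinatorial and does not interact with the choice of base logic $\mathbf{L}$: since $\overline{\mathbf{L}}$ is closed under substitution and each $\mathbf{LJL}_0$ contains the same justification axioms $(J)$ and $(+)$ regardless of whether $(F)$ or $(I)$ are present, the same case analysis goes through uniformly for all four systems, which is precisely why the lemma can be stated for arbitrary intermediate $\mathbf{L}$. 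I would note that neither $(F)$ nor $(I)$ is needed in the construction—only $(J)$ and the total constant specification are essential—so the proof is identical across the four choices of $\mathbf{LJL}$.
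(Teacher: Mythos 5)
Your proof is correct and is exactly the argument the paper has in mind: the paper gives no proof of its own but cites the classical lifting lemma (induction on the derivation, internalizing axioms and $TCS$-members via constants, hypotheses via the given $s_i$, and modus ponens via the application axiom $(J)$), noting that it transfers to the intermediate setting unchanged. Your closing observation that only $(J)$, modus ponens and totality of the constant specification are used — so nothing depends on the choice of $\mathbf{L}$ or on $(F)$/$(I)$ — is precisely why that transfer is immediate.
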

A proof for the classical case, which transfers to the intermediate cases immediately, can be found e.g. in \cite{AF2019}. 

In particular, $\mathbf{LJL}$ has \emph{internalization}, that is $\mathbf{LJL}\vdash\phi$ implies that there is a term $t\in Jt$  such that $\mathbf{LJL}\vdash t:\phi$. It should also be noted that the justification variables of $t$ are among the combined justification variables of the $s_i$. In particular, if the terms $s_i$ are free from justification variables, then $t$ is so as well. 
\subsection{Extended propositional languages}
In later sections, it will be convenient to consider intermediate logics over different sets of variables. For this, we consider the language
\[
\mathcal{L}_0(X):\phi::=\bot\mid x\mid (\phi\land\phi)\mid (\phi\lor\phi)\mid (\phi\rightarrow\phi)
\]
where $X$ is a countably infinite set of variables. The same notational abbreviations as before also apply here. Note that naturally $\mathcal{L}_0(Var)=\mathcal{L}_0$. A particular choice different from $Var$ for $X$ in the following will be the set
\[
Var^\star:=Var\cup\{\phi_t\mid\phi\in\mathcal{L}_J,t\in Jt\}.
\]
Here, we write $\mathcal{L}_0^\star:=\mathcal{L}_0(Var^\star)$.

For the following definition, note that any bijection $t:Var\to X$ can be naturally extended to a bijection $t:\mathcal{L}_0\to\mathcal{L}_0(X)$ through recursion on $\mathcal{L}_0$ by commuting with $\land,\lor,\rightarrow$ and $\bot$. Also, such a bijection $t:Var\to X$ always exists as both $X$ and $Var$ are countably infinite.
\begin{definition}
Let $\mathbf{L}$ be an intermediate logic and let $t:Var\to X$ be a bijection extended to $t:\mathcal{L}_0\to\mathcal{L}_0(X)$ by commuting with $\land,\lor,\rightarrow$ and $\bot$. We write $\mathbf{L}(X):=t[\mathbf{L}]$.
\end{definition}
Note that here also $\mathbf{L}(Var)=\mathbf{L}$.
\begin{remark}
In the above definition, it is indeed not important which bijection $f:Var\to X$ is fixed as $\mathbf{L}$ is closed under substitutions. Further, naturally $\mathbf{L}(X)$ is closed under modus ponens and under substitutions of variables in $X$ by formulae in $\mathcal{L}_0(X)$.
\end{remark}
Given $\mathbf{L}(X)$ and $\Gamma\cup\{\phi\}\subseteq\mathcal{L}_0(X)$, we write $\Gamma\vdash_{\mathbf{L}(X)}\phi$ if as before
\[
\exists\gamma_1,\dots,\gamma_n\in\Gamma\;\left(\bigwedge_{i=1}^n\gamma_i\rightarrow\phi\in\mathbf{L}(X)\right).
\]
In the following, we will also write $\mathbf{L}^\star$ for the particular case of $\mathbf{L}(Var^\star)$.
\section{Algebraic semantics for intermediate justification logics}
We move on to the first main line of semantics for intermediate justification logics studied here, extending the model-theoretic approaches of Mkrtychev, Fitting as well as Lehmann and Studer to take values in arbitrary Heyting algebras. The models which we introduce and the techniques used later to prove corresponding completeness theorems are similar to those from \cite{Pis2020} where completeness theorems of the particular case of G\"odel justification logics with respect to models over the particular Heyting algebra $\mathbf{[0,1]_G}$ (see the last section) were considered. 
\subsection{Heyting algebras and propositional semantics}
We give some preliminaries on Heyting algebras and their relevant notions as a primer for the later definitions.
\begin{definition}
A \emph{Heyting algebra} is a structure $\mathbf{A}=\langle A,\land^\mathbf{A},\lor^\mathbf{A},\rightarrow^\mathbf{A},0^\mathbf{A},1^\mathbf{A}\rangle$ such that $\langle A,\land^\mathbf{A},\lor^\mathbf{A},0^\mathbf{A},1^\mathbf{A}\rangle$ is a bounded lattice with largest element $1^\mathbf{A}$ and smallest element $0^\mathbf{A}$ and $\rightarrow^\mathbf{A}$ is a binary operation with
\begin{enumerate}
\item $x\rightarrow^\mathbf{A}x=1^\mathbf{A}$,
\item $x\land^\mathbf{A}(x\rightarrow^\mathbf{A} y)=x\land^\mathbf{A} y$,
\item $y\land^\mathbf{A}(x\rightarrow^\mathbf{A} y)=y$,
\item $x\rightarrow^\mathbf{A} (y\land^\mathbf{A} z)=(x\rightarrow^\mathbf{A} y)\land^\mathbf{A}(x\rightarrow^\mathbf{A} z)$,
\end{enumerate}
where we write $a\leq^\mathbf{A}b$ for $a\land^\mathbf{A}b=a$.
\end{definition}
Note that this order $\leq^\mathbf{A}$ on $A$ is always a partial order. Given a Heyting algebra $\mathbf{A}$, we write $\neg^\mathbf{A}x:=x\rightarrow^\mathbf{A}0^\mathbf{A}$. $\mathbf{A}$ is called a \emph{Boolean algebra} if $x\rightarrow^\mathbf{A}y=\neg^\mathbf{A}x\lor^\mathbf{A}y$ for all $x,y\in A$.\\

We collect some facts about Heyting algebras which are of use later.
\begin{lemma}
Let $\mathbf{A}=\langle A,\land^\mathbf{A},\lor^\mathbf{A},\rightarrow^\mathbf{A},0^\mathbf{A},1^\mathbf{A}\rangle$ be a Heyting algebra. Then, for all $x,y,z,w\in A$:
\begin{enumerate}
\item $x\land^\mathbf{A}y\leq^\mathbf{A}z$ iff $x\leq^\mathbf{A}y\rightarrow^\mathbf{A}z$;
\item $x\leq^\mathbf{A}y$ iff $x\rightarrow^\mathbf{A}y=1^\mathbf{A}$;
\item $1\rightarrow^\mathbf{A}x=x$;
\item if $x\leq^\mathbf{A}y$, then $y\rightarrow^\mathbf{A}z\leq^\mathbf{A}x\rightarrow^\mathbf{A}z$;
\item $(x\rightarrow^\mathbf{A}y)\land^\mathbf{A}(z\rightarrow^\mathbf{A}w)\leq^\mathbf{A}(x\land^\mathbf{A}z)\rightarrow^\mathbf{A}(y\land^\mathbf{A}w)$.
\end{enumerate}
\end{lemma}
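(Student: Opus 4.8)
The plan is to prove each of the five parts using the defining axioms of a Heyting algebra, working purely equationally where possible and leaning on the adjunction in part (1) for the rest. The real workhorse is part (1), the residuation/Galois-adjunction property $x\land^\mathbf{A}y\leq^\mathbf{A}z$ iff $x\leq^\mathbf{A}y\rightarrow^\mathbf{A}z$, since almost everything else follows from it together with order-theoretic manipulation; so I would establish it first even though it appears as item (1).

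For part (1), recall $a\leq^\mathbf{A}b$ is defined as $a\land^\mathbf{A}b=a$. For the forward direction, assuming $x\land^\mathbf{A}y\leq^\mathbf{A}z$, I would show $x\leq^\mathbf{A}(y\rightarrow^\mathbf{A}z)$, i.e.\ $x\land^\mathbf{A}(y\rightarrow^\mathbf{A}z)=x$; the key is to massage $x\land^\mathbf{A}(y\rightarrow^\mathbf{A}z)$ using defining axiom (2), $x\land^\mathbf{A}(x\rightarrow^\mathbf{A}y)=x\land^\mathbf{A}y$, suitably instantiated, together with the hypothesis. For the converse, assuming $x\leq^\mathbf{A}(y\rightarrow^\mathbf{A}z)$, I would meet both sides with $y$ and apply axiom (2) in the form $y\land^\mathbf{A}(y\rightarrow^\mathbf{A}z)=y\land^\mathbf{A}z$ to conclude $x\land^\mathbf{A}y\leq^\mathbf{A}z$. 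Part (2) is then almost immediate: specialize part (1) taking the first argument to be $1^\mathbf{A}$ (so that $1^\mathbf{A}\land^\mathbf{A}y=y$), or argue directly that $x\leq^\mathbf{A}y$ means $x\land^\mathbf{A}y=x$ and combine with defining axioms (1) and (2) to force $x\rightarrow^\mathbf{A}y=1^\mathbf{A}$, and conversely use axiom (2) with the hypothesis $x\rightarrow^\mathbf{A}y=1^\mathbf{A}$.

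Parts (3), (4), and (5) then follow by repeated use of the adjunction from part (1). For part (3), $1^\mathbf{A}\rightarrow^\mathbf{A}x=x$, I would show each side is $\leq^\mathbf{A}$ the other: using part (1), $1^\mathbf{A}\rightarrow^\mathbf{A}x\leq^\mathbf{A}1^\mathbf{A}\rightarrow^\mathbf{A}x$ gives $(1^\mathbf{A}\rightarrow^\mathbf{A}x)\land^\mathbf{A}1^\mathbf{A}\leq^\mathbf{A}x$, hence $1^\mathbf{A}\rightarrow^\mathbf{A}x\leq^\mathbf{A}x$; the reverse direction uses that $x\land^\mathbf{A}1^\mathbf{A}=x\leq^\mathbf{A}x$ and part (1) again. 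For part (4), the antitonicity of $\rightarrow^\mathbf{A}$ in its first argument, I would start from $(y\rightarrow^\mathbf{A}z)\land^\mathbf{A}y\leq^\mathbf{A}z$ (which is part (1) applied to the trivially true $y\rightarrow^\mathbf{A}z\leq^\mathbf{A}y\rightarrow^\mathbf{A}z$), use $x\leq^\mathbf{A}y$ to get $(y\rightarrow^\mathbf{A}z)\land^\mathbf{A}x\leq^\mathbf{A}(y\rightarrow^\mathbf{A}z)\land^\mathbf{A}y\leq^\mathbf{A}z$, and then re-apply part (1) to conclude $y\rightarrow^\mathbf{A}z\leq^\mathbf{A}x\rightarrow^\mathbf{A}z$.

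For part (5) I would again aim to apply part (1), so it suffices to show
\[
\bigl((x\rightarrow^\mathbf{A}y)\land^\mathbf{A}(z\rightarrow^\mathbf{A}w)\bigr)\land^\mathbf{A}(x\land^\mathbf{A}z)\leq^\mathbf{A}y\land^\mathbf{A}w.
\]
The idea is to regroup the meet and use the two "evaluation" inequalities $(x\rightarrow^\mathbf{A}y)\land^\mathbf{A}x\leq^\mathbf{A}y$ and $(z\rightarrow^\mathbf{A}w)\land^\mathbf{A}z\leq^\mathbf{A}w$, combined with monotonicity of $\land^\mathbf{A}$, to bound the left-hand side by $y\land^\mathbf{A}w$. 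I expect the only mildly delicate point throughout to be keeping the bookkeeping of the commutative-associative-idempotent lattice reasoning correct, together with the verification that $\leq^\mathbf{A}$ is monotone for $\land^\mathbf{A}$ (which follows directly from the lattice axioms); none of these steps is conceptually hard once part (1) is in hand, so the main obstacle is simply proving the adjunction in part (1) cleanly from the nonstandard list of defining axioms (1)–(4) rather than from the usual residuation definition.
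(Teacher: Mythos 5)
The paper offers no proof of this lemma at all --- it declares the properties ``quite immediate from the definition'' and defers to \cite{Ono2019} --- so the only question is whether your argument is sound. Your architecture, namely proving the residuation property (1) first and deriving (2)--(5) from it by order-theoretic bookkeeping, is the standard route, and your treatments of the backward direction of (1) and of parts (2)--(5) all go through exactly as you describe.

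The one step that fails as stated is the forward direction of (1). Writing $\land,\rightarrow,\leq,1$ for the operations of $\mathbf{A}$, you propose to obtain $x\land(y\rightarrow z)=x$ from $x\land y\leq z$ by manipulating with defining axiom (2), i.e.\ $x\land(x\rightarrow y)=x\land y$. That axiom cannot carry this direction: on the three-element chain $0<a<1$ the operation $u\rightarrow v:=u\land v$ satisfies axiom (2), yet $1\land a\leq a$ while $1\not\leq a\rightarrow a=a$, so axiom (2) together with the lattice axioms does not entail the implication you want. Axiom (2) is precisely what powers the \emph{backward} direction (and you use it correctly there); the forward direction instead needs the other three axioms. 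A correct derivation: by axiom (4) and axiom (1), $y\rightarrow(x\land y)=(y\rightarrow x)\land(y\rightarrow y)=y\rightarrow x$ and $y\rightarrow(x\land y\land z)=(y\rightarrow x)\land(y\rightarrow z)$; the hypothesis says $x\land y\land z=x\land y$, so these two expressions coincide and hence $y\rightarrow x\leq y\rightarrow z$; finally axiom (3), instantiated as $x\land(y\rightarrow x)=x$, gives $x\leq y\rightarrow x\leq y\rightarrow z$. With this repair the remainder of your proof stands as written.
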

These properties are quite immediate from the definition of Heyting algebras. For a modern reference on basic properties of Heyting algebras, see e.g. \cite{Ono2019}.

Another particular property of Heyting algebras important in this note is that of \emph{completeness}.
\begin{definition}
A Heyting algebra $\mathbf{A}$ is \emph{complete} if every set $X\subseteq A$ has a \emph{join} and a \emph{meet} with respect to $\leq^\mathbf{A}$, that is for every $X\subseteq A$ there are $s_X,i_X\in A$ such that:
\begin{itemize}
\item $\forall x\in X\left(x\leq^\mathbf{A} s_X\right)$ and if $x\leq^\mathbf{A} s$ for all $x\in X$, then $s_X\leq^\mathbf{A} s$;
\item $\forall x\in X\left(i_X\leq^\mathbf{A} x\right)$ and if $i\leq^\mathbf{A} x$ for all $x\in X$, then $i\leq^\mathbf{A} i_X$.
\end{itemize}
\end{definition}
We denote these (unique) joins and meets, $s_X$ and $i_X$, by $\bigvee X$ and $\bigwedge X$, respectively. Given a class of Heyting algebras $\mathsf{C}$, we write $\mathsf{C}_{fin}$ for the subclass of all finite algebras and $\mathsf{C}_{com}$ for the subclass of all complete Heyting algebras in $\mathsf{C}$. Naturally, every finite Heyting algebra is complete.\\

Given an (extended) propositional language $\mathcal{L}_0(X)$, we can give an algebraic interpretation using various classes of particular Heyting algebras.
\begin{definition}
Let $\mathbf{A}$ be a Heyting algebra. A \emph{propositional evaluation of }$\mathcal{L}_0(X)$ is a function $f:\mathcal{L}_0(X)\to A$ which satisfies the following equations:
\begin{enumerate}
\item $f(\bot)=0^\mathbf{A}$;
\item $f(\phi\land\psi)=f(\phi)\land^\mathbf{A}f(\psi)$;
\item $f(\phi\lor\psi)=f(\phi)\lor^\mathbf{A}f(\psi)$;
\item $f(\phi\rightarrow\psi)=f(\phi)\rightarrow^\mathbf{A}f(\psi)$.
\end{enumerate}
\end{definition}
We denote the set of all $\mathbf{A}$-valued propositional evaluations of $\mathcal{L}_0(X)$ by $\mathsf{Ev}(\mathbf{A};\mathcal{L}_0(X))$.
\begin{definition}
Let $\mathsf{C}$ be a class of Heyting algebras and $\Gamma\cup\{\phi\}\subseteq\mathcal{L}_0(X)$. We write $\Gamma\models_\mathsf{C}\phi$ if
\[
\forall\mathbf{A}\in\mathsf{C}\forall f\in\mathsf{Ev}(\mathbf{A};\mathcal{L}_0(X))\big( f[\Gamma]\subseteq\{1^\mathbf{A}\}\text{ implies }f(\phi)=1^\mathbf{A}\big).
\]
\end{definition}
If in particular $C=\{\mathbf{A}\}$, we write $\models_\mathbf{A}$ for the corresponding relation.
\begin{definition}
Let $\mathbf{L}$ be an intermediate logic and let $X$ be a set of variables. We say that $\mathbf{L}(X)$ is \emph{(strongly) complete} with respect to a class $\mathsf{C}$ of Heyting algebras if for any $\Gamma\cup\{\phi\}\subseteq\mathcal{L}_0(X)$: $\Gamma\vdash_{\mathbf{L}(X)}\phi$ iff $\Gamma\models_\mathsf{C}\phi$.
\end{definition}
Although not particularly important for the rest of the paper, every intermediate logic actually has at least one class of Heyting algebras with respect to which it is strongly complete (namely its variety). We collect this in the following fact.
\begin{fact}
For every intermediate logic $\mathbf{L}$ and any set of variables $X$, there is a class of Heyting algebras $\mathsf{C}$ such that $\mathbf{L}(X)$ is strongly complete with respect to $\mathsf{C}$.
\end{fact}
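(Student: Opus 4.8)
The plan is to prove this via a Lindenbaum--Tarski construction, taking $\mathsf{C}$ to be the class of \emph{all} Heyting algebras that validate $\mathbf{L}(X)$, that is all $\mathbf{A}$ such that $f(\theta)=1^\mathbf{A}$ for every $\theta\in\mathbf{L}(X)$ and every $f\in\mathsf{Ev}(\mathbf{A};\mathcal{L}_0(X))$. With this choice, soundness is almost immediate. Suppose $\Gamma\vdash_{\mathbf{L}(X)}\phi$, so there are $\gamma_1,\dots,\gamma_n\in\Gamma$ with $\bigwedge_{i=1}^n\gamma_i\rightarrow\phi\in\mathbf{L}(X)$. For any $\mathbf{A}\in\mathsf{C}$ and $f\in\mathsf{Ev}(\mathbf{A};\mathcal{L}_0(X))$ with $f[\Gamma]\subseteq\{1^\mathbf{A}\}$, validity gives $f\big(\bigwedge_{i=1}^n\gamma_i\rightarrow\phi\big)=1^\mathbf{A}$, hence $f(\bigwedge_{i=1}^n\gamma_i)\leq^\mathbf{A}f(\phi)$ by item (2) of the lemma on Heyting algebras above. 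Since each $f(\gamma_i)=1^\mathbf{A}$ we get $f(\bigwedge_{i=1}^n\gamma_i)=1^\mathbf{A}$ and therefore $f(\phi)=1^\mathbf{A}$. Thus $\Gamma\models_\mathsf{C}\phi$.

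For the converse I would argue by contraposition, producing a separating algebra and evaluation whenever $\Gamma\not\vdash_{\mathbf{L}(X)}\phi$. Fix such $\Gamma$ and $\phi$ and define the relation $\equiv_\Gamma$ on $\mathcal{L}_0(X)$ by $\psi\equiv_\Gamma\chi$ iff $\Gamma\vdash_{\mathbf{L}(X)}\psi\rightarrow\chi$ and $\Gamma\vdash_{\mathbf{L}(X)}\chi\rightarrow\psi$. Using the deduction theorem together with the axioms $(A1)$--$(A9)$, one checks that $\equiv_\Gamma$ is an equivalence relation compatible with $\land,\lor,\rightarrow$, so the quotient $\mathbf{A}_\Gamma:=\mathcal{L}_0(X)/{\equiv_\Gamma}$ carries well-defined operations induced by the connectives, with $[\bot]$ as least element and the class of all $\Gamma$-provable formulae as greatest element $1^{\mathbf{A}_\Gamma}$. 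The four Heyting-algebra identities from the definition then reduce to standard $\mathbf{IPC}$-derivations, so $\mathbf{A}_\Gamma$ is a Heyting algebra whose order satisfies $[\psi]\leq^{\mathbf{A}_\Gamma}[\chi]$ iff $\Gamma\vdash_{\mathbf{L}(X)}\psi\rightarrow\chi$.

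It then remains to verify the two defining features. First, $\mathbf{A}_\Gamma\in\mathsf{C}$: an arbitrary $f\in\mathsf{Ev}(\mathbf{A}_\Gamma;\mathcal{L}_0(X))$ is determined by a substitution $\sigma$ with $f(x)=[\sigma(x)]$, whence $f(\theta)=[\sigma(\theta)]$ for all $\theta$; for $\theta\in\mathbf{L}(X)$, closure of $\mathbf{L}(X)$ under substitution (cf.\ the remark following the definition of $\mathbf{L}(X)$) gives $\sigma(\theta)\in\mathbf{L}(X)$, so $[\sigma(\theta)]=1^{\mathbf{A}_\Gamma}$. Second, the canonical evaluation $f_0(\psi):=[\psi]$ satisfies $f_0[\Gamma]\subseteq\{1^{\mathbf{A}_\Gamma}\}$, since $\Gamma\vdash_{\mathbf{L}(X)}\gamma$ for every $\gamma\in\Gamma$, whereas $f_0(\phi)=[\phi]\neq 1^{\mathbf{A}_\Gamma}$ because $\Gamma\not\vdash_{\mathbf{L}(X)}\phi$. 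This exhibits $\Gamma\not\models_\mathsf{C}\phi$, completing the contrapositive.

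The main obstacle is less in the logic than in the bookkeeping: the genuinely load-bearing observation is that the $\Gamma$-relative quotient still validates all of $\mathbf{L}(X)$, which hinges entirely on closure of $\mathbf{L}(X)$ under substitution rather than on any deductive closure of $\Gamma$. The remaining verifications---that $\equiv_\Gamma$ is a congruence and that $\mathbf{A}_\Gamma$ satisfies the four Heyting identities---are routine but somewhat lengthy intermediate-logic derivations; I would only spell out the compatibility of $\rightarrow$ with $\equiv_\Gamma$ in detail, as it is the one step requiring a small argument beyond direct appeals to $(A1)$--$(A9)$ and modus ponens.
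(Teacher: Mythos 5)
Your proof is correct and is exactly the standard Lindenbaum--Tarski argument the paper has in mind: the paper gives no proof of this Fact, noting only that the witnessing class is the variety of $\mathbf{L}$ (i.e.\ the class of Heyting algebras validating $\mathbf{L}(X)$, which is your $\mathsf{C}$) and deferring to the cited reference. Your identification of the load-bearing step --- that the $\Gamma$-relative quotient validates all of $\mathbf{L}(X)$ because evaluations into it are induced by substitutions and $\mathbf{L}(X)$ is substitution-closed --- is precisely the right point to emphasize.
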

For a modern reference of the proof, see again e.g. \cite{Ono2019}. Correspondingly, we introduce the following notation. We write $\mathsf{C}\in\mathsf{Alg}(\mathbf{L}(X))$, $\mathsf{C}\in\mathsf{Alg}_{com}(\mathbf{L}(X))$ or $\mathsf{C}\in\mathsf{Alg}_{fin}(\mathbf{L}(X))$ if $\mathsf{C}$ is a class of Heyting algebras, of complete Heyting algebras or of finite Heyting algebras with respect to which $\mathbf{L}(X)$ is strongly complete. Note that here
\[
\mathsf{C}\in\mathsf{Alg}(\mathbf{L}(X))\text{ iff }\mathsf{C}\in\mathsf{Alg}(\mathbf{L}(Y))
\]
for arbitrary sets of variables $X,Y$ and similarly for $\mathsf{Alg}_{com}(\mathbf{L}(X))$ and $\mathsf{Alg}_{fin}(\mathbf{L}(X))$.
\subsection{Algebraic Mkrtychev models}
The first class of semantics which we consider are algebraic Mkrtychev models. The classical Mkrtychev models were introduced in \cite{Mkr1997}, originally for the logic of proofs, and mark the first non-provability semantics. The generalization of the Mkrtychev models to the other classical justification logics $\mathbf{CJ}_0,\mathbf{CJT}_0,\mathbf{CJ4}_0$ and $\mathbf{CJT4}_0$ is due to Kuznets \cite{Kuz2000}. In some contexts, in particular \cite{AF2019,KS2019}, these models are also called basic models. The following algebraic models also generalize the work on $[0,1]$-valued Mkrtychev models in \cite{Gha2014,Pis2020} for the G\"odel justification logics.
\begin{definition}[Algebraic Mkrtychev model]\label{def:algmkrtmod}
Let $\mathbf{A}$ be a Heyting algebra. An (\emph{$\mathbf{A}$-valued}) \emph{algebraic Mkrtychev model} is a structure $\mathfrak{M}=\langle\mathbf{A},\mathcal{V}\rangle$ such that $\mathcal{V}:\mathcal{L}_J\to A$ fulfils
\begin{enumerate}
\item $\mathcal{V}(\bot)=0^\mathbf{A}$,
\item $\mathcal{V}(\phi\land\psi)=\mathcal{V}(\phi)\land^\mathbf{A}\mathcal{V}(\psi)$,
\item $\mathcal{V}(\phi\lor\psi)=\mathcal{V}(\phi)\lor^\mathbf{A}\mathcal{V}(\psi)$,
\item $\mathcal{V}(\phi\rightarrow\psi)=\mathcal{V}(\phi)\rightarrow^\mathbf{A}\mathcal{V}(\psi)$,
\end{enumerate}
for all $\phi,\psi\in\mathcal{L}_J$ and such that it satisfies
\begin{enumerate}[(i)]
\item $\mathcal{V}(t:(\phi\rightarrow\psi))\land^\mathbf{A}\mathcal{V}(s:\phi)\leq^\mathbf{A}\mathcal{V}([t\cdot s]:\psi)$,
\item $\mathcal{V}(t:\phi)\lor^\mathbf{A}\mathcal{V}(s:\phi)\leq^\mathbf{A}\mathcal{V}([t+s]:\phi)$,
\end{enumerate}
for all $t,s\in Jt$ and $\phi,\psi\in\mathcal{L}_J$.
\end{definition}
We write $\mathfrak{M}\models\phi$ if $\mathcal{V}(\phi)=1^\mathbf{A}$ and $\mathfrak{M}\models\Gamma$ if $\mathfrak{M}\models\gamma$ for all $\gamma\in\Gamma$ where $\Gamma\subseteq\mathcal{L}_J$.
\begin{definition}
Let $\mathfrak{M}=\langle\mathbf{A},\mathcal{V}\rangle$ be an $\mathbf{A}$-valued algebraic Mkrtychev model. We call $\mathfrak{M}$
\begin{enumerate}
\item \emph{factive} if $\mathcal{V}(t:\phi)\leq^\mathbf{A}\mathcal{V}(\phi)$ for all $\phi\in\mathcal{L}_J$ and all $t\in Jt$, and
\item \emph{introspective} if $\mathcal{V}(t:\phi)\leq^\mathbf{A}\mathcal{V}(!t:t:\phi)$ for all $\phi\in\mathcal{L}_J$ and all $t\in Jt$.
\end{enumerate}
\end{definition}
\begin{definition}
Let $\mathsf{C}$ be a class of Heyting algebras. Then:
\begin{enumerate}
\item $\mathsf{CAMJ}$ denotes the class of all $\mathbf{A}$-valued Mkrtychev models for all $\mathbf{A}\in\mathsf{C}$;
\item $\mathsf{CAMJT}$ denotes the class of all factive $\mathsf{CAMJ}$-models;
\item $\mathsf{CAMJ4}$ denotes the class of all introspective $\mathsf{CAMJ}$-models;
\item $\mathsf{CAMJT4}$ denotes the class of all factive and introspective $\mathsf{CAMJ}$-models.
\end{enumerate}
\end{definition}
\begin{definition}
Let $\mathbf{A}$ be a Heyting algebra and let $\mathfrak{M}=\langle\mathbf{A},\mathcal{V}\rangle$ be an algebraic Mkrtychev model. Further, let $CS$ be a constant specification (for some proof calculus). We say that $\mathfrak{M}$ \emph{respects} $CS$ if $\mathcal{V}(c:\phi)=1^\mathbf{A}$ for all $c:\phi\in CS$.
\end{definition}
If $\mathsf{C}$ is a class of algebraic Mkrtychev models, then we denote the subclass of all models from $\mathsf{C}$ respecting a constant specification $CS$ by $\mathsf{C}_{CS}$.
\begin{definition}
Let $\mathsf{C}$ be a class of algebraic Mkrtychev models and let $\Gamma\cup\{\phi\}\subseteq\mathcal{L}_J$. We write:
\begin{enumerate}
\item $\Gamma\models_\mathsf{C}\phi$ if $\forall\mathfrak{M}=\langle\mathbf{A},\mathcal{V}\rangle\in\mathsf{C}\left(\Wedge{\mathbf{A}}\{\mathcal{V}(\gamma)\mid\gamma\in\Gamma\}\leq^\mathbf{A}\mathcal{V}(\phi)\right)$;
\item $\Gamma\models_\mathsf{C}^1\phi$ if $\forall\mathfrak{M}=\langle\mathbf{A},\mathcal{V}\rangle\in\mathsf{C}\Big(\mathfrak{M}\models\Gamma\Rightarrow\mathfrak{M}\models\phi\Big)$.
\end{enumerate}
\end{definition}
\begin{lemma}\label{lem:algmkrtmodsoundness}
Let $\mathbf{L}$ be an intermediate logic, $\mathbf{LJL}_0\in\{\mathbf{LJ}_0,\mathbf{LJT}_0,\mathbf{LJ4}_0,\mathbf{LJT4}_0\}$, let $CS$ be a constant specification logic, and let $\mathsf{C}\in\mathsf{Alg}(\mathbf{L})$. Let $\mathsf{CAMJL}$ be the class of algebraic Mkrtychev models corresponding to $\mathbf{LJL}_0$ and $\mathsf{C}$. For any $\Gamma\cup\{\phi\}\subseteq\mathcal{L}_J$:
\[
\Gamma\vdash_{\mathbf{LJL}_{CS}}\phi\text{ implies }\Gamma\models_{\mathsf{CAMJL}_{CS}}\phi.
\]
\end{lemma}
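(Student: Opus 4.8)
The plan is to reduce everything to a single \emph{validity observation}: that $\mathcal{V}(\chi)=1^{\mathbf{A}}$ holds for every $\chi\in\mathbf{LJL}_0$ and every model $\mathfrak{M}=\langle\mathbf{A},\mathcal{V}\rangle\in\mathsf{CAMJL}$. Granting this, I would unwind the definitions as follows. Assume $\Gamma\vdash_{\mathbf{LJL}_{CS}}\phi$, so that $\Gamma\cup CS\vdash_{\mathbf{LJL}_0}\phi$ and hence there are $\gamma_1,\dots,\gamma_n\in\Gamma\cup CS$ with $\bigwedge_{i=1}^n\gamma_i\rightarrow\phi\in\mathbf{LJL}_0$. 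Fix $\mathfrak{M}=\langle\mathbf{A},\mathcal{V}\rangle\in\mathsf{CAMJL}_{CS}$. The validity observation gives $\mathcal{V}(\bigwedge_{i=1}^n\gamma_i\rightarrow\phi)=1^{\mathbf{A}}$, so by the criterion $u\leq^{\mathbf{A}}v$ iff $u\rightarrow^{\mathbf{A}}v=1^{\mathbf{A}}$ together with the clauses for $\land$ we obtain $\mathcal{V}(\gamma_1)\land^{\mathbf{A}}\dots\land^{\mathbf{A}}\mathcal{V}(\gamma_n)\leq^{\mathbf{A}}\mathcal{V}(\phi)$. Since $\mathfrak{M}$ respects $CS$, every $\gamma_i$ lying in $CS$ satisfies $\mathcal{V}(\gamma_i)=1^{\mathbf{A}}$ and thus drops out of the meet, leaving only conjuncts coming from $\Gamma$; as $\Wedge{\mathbf{A}}\{\mathcal{V}(\gamma)\mid\gamma\in\Gamma\}$ lies below each of these finitely many conjuncts, I conclude $\Wedge{\mathbf{A}}\{\mathcal{V}(\gamma)\mid\gamma\in\Gamma\}\leq^{\mathbf{A}}\mathcal{V}(\phi)$, which is exactly $\Gamma\models_{\mathsf{CAMJL}_{CS}}\phi$.

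The validity observation itself I would prove by induction on the generation of $\mathbf{LJL}_0$ as the least modus-ponens-closed set containing $\overline{\mathbf{L}}$ and the pertinent justification axioms. For the modus ponens step, if $\mathcal{V}(\chi\rightarrow\eta)=1^{\mathbf{A}}$ and $\mathcal{V}(\chi)=1^{\mathbf{A}}$, then $\mathcal{V}(\eta)=1^{\mathbf{A}}\rightarrow^{\mathbf{A}}\mathcal{V}(\eta)=\mathcal{V}(\chi)\rightarrow^{\mathbf{A}}\mathcal{V}(\eta)=\mathcal{V}(\chi\rightarrow\eta)=1^{\mathbf{A}}$, using $1^{\mathbf{A}}\rightarrow^{\mathbf{A}}x=x$. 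For the justification axioms, each instance validates directly against a defining inequality of the model: an instance of $(J)$ reduces, via the adjunction $x\land^{\mathbf{A}}y\leq^{\mathbf{A}}z$ iff $x\leq^{\mathbf{A}}y\rightarrow^{\mathbf{A}}z$ and the criterion $u\leq^{\mathbf{A}}v$ iff $u\rightarrow^{\mathbf{A}}v=1^{\mathbf{A}}$, precisely to condition (i) of Definition~\ref{def:algmkrtmod}; an instance of $(+)$ reduces similarly to condition (ii); and, where present, instances of $(F)$ and $(I)$ reduce to factivity and introspectivity, respectively. The remaining case, that of a propositional axiom $\chi\in\overline{\mathbf{L}}$, is the substantive one.

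This last case is where the hypothesis $\mathsf{C}\in\mathsf{Alg}(\mathbf{L})$ enters, and it is the step I expect to require the most care. I would write $\chi=\sigma(\psi)$ with $\psi\in\mathbf{L}$ and $\sigma$ a substitution in $\mathcal{L}_J$, and define a propositional evaluation $g\in\mathsf{Ev}(\mathbf{A};\mathcal{L}_0)$ by $g(p):=\mathcal{V}(\sigma(p))$ for $p\in Var$, extended through the connectives. A straightforward induction on a formula $\theta\in\mathcal{L}_0$ then shows $g(\theta)=\mathcal{V}(\sigma(\theta))$: the base cases $p$ and $\bot$ are immediate, and the connective cases go through because $\sigma$ commutes with $\land,\lor,\rightarrow,\bot$ while $\mathcal{V}$ satisfies the matching propositional equations on all of $\mathcal{L}_J$. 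Taking $\theta=\psi$ gives $\mathcal{V}(\chi)=g(\psi)$. Finally, since $\psi\in\mathbf{L}$, the soundness half of strong completeness of $\mathbf{L}$ with respect to $\mathsf{C}$ forces $g(\psi)=1^{\mathbf{A}}$ for every evaluation into every $\mathbf{A}\in\mathsf{C}$; concretely, $(A1)$ and modus ponens give $(\bot\rightarrow\bot)\rightarrow\psi\in\mathbf{L}$, so $\{\bot\rightarrow\bot\}\vdash_{\mathbf{L}}\psi$, and since $g(\bot\rightarrow\bot)=0^{\mathbf{A}}\rightarrow^{\mathbf{A}}0^{\mathbf{A}}=1^{\mathbf{A}}$ always, the completeness equivalence yields $g(\psi)=1^{\mathbf{A}}$. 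Hence $\mathcal{V}(\chi)=1^{\mathbf{A}}$, which closes the induction and, with the assembly above, the proof.
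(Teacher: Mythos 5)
Your proposal is correct and follows essentially the same route as the paper: reduce the claim to the validity of all theorems of $\mathbf{LJL}_0$, prove that by induction on derivations (modus ponens via $1^{\mathbf{A}}\rightarrow^{\mathbf{A}}x=x$, justification axioms via the defining inequalities of the model class, and instances of $\overline{\mathbf{L}}$ by pulling back $\mathcal{V}\circ\sigma$ to a propositional evaluation and invoking $\mathsf{C}\in\mathsf{Alg}(\mathbf{L})$), and then assemble the strong statement using the finite-premise form of $\vdash_{\mathbf{LJL}_{CS}}$ and the fact that models respecting $CS$ evaluate $CS$-formulae to $1^{\mathbf{A}}$. Your explicit $\{\bot\rightarrow\bot\}\vdash_{\mathbf{L}}\psi$ device for extracting $g(\psi)=1^{\mathbf{A}}$ from strong completeness is a small refinement the paper leaves implicit, but the argument is the same.
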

\begin{proof}
We only show that $\vdash_{\mathbf{LJL}_0}\phi$ implies $\models_{\mathsf{CAMJL}}\phi$. This already suffices for the strong completeness statement above by the following argument using the deduction theorem for the respective logics and compactness of the provability relations:
\begin{align*}
\Gamma\vdash_{\mathbf{LJL}_{CS}}\phi&\text{ impl. }\exists\Gamma_0\subseteq\Gamma\cup CS\text{ finite }\left(\Gamma_0\vdash_{\mathbf{LJL}_0}\phi\right)\\
                                                             &\text{ impl. }\exists\Gamma_0\subseteq\Gamma\cup CS\text{ finite }\left(\vdash_{\mathbf{LJL}_0}\bigwedge\Gamma_0\rightarrow\phi\right)\\
                                                             &\text{ impl. }\exists\Gamma_0\subseteq\Gamma\cup CS\text{ finite }\left(\models_{\mathsf{CAMJL}}\bigwedge\Gamma_0\rightarrow\phi\right)\\
                                                             &\text{ impl. }\exists\Gamma_0\subseteq\Gamma\cup CS\text{ finite }\forall\mathfrak{M}=\langle\mathbf{A},\mathcal{V}\rangle\in\mathsf{CAMJL}\left(\sideset{}{^\mathbf{A}}\bigwedge\{\mathcal{V}(\gamma)\mid\gamma\in\Gamma_0\}\leq^\mathbf{A}\mathcal{V}(\phi)\right)\\
                                                             &\text{ impl. }\forall\mathfrak{M}=\langle\mathbf{A},\mathcal{V}\rangle\in\mathsf{CAMJL}\left(\sideset{}{^\mathbf{A}}\bigwedge\{\mathcal{V}(\gamma)\mid\gamma\in\Gamma\cup CS\}\leq^\mathbf{A}\mathcal{V}(\phi)\right)\\
                                                             &\text{ impl. }\forall\mathfrak{M}=\langle\mathbf{A},\mathcal{V}\rangle\in\mathsf{CAMJL}_{CS}\left(\sideset{}{^\mathbf{A}}\bigwedge\{\mathcal{V}(\gamma)\mid\gamma\in\Gamma\}\leq^\mathbf{A}\mathcal{V}(\phi)\right)\\
                                                             &\text{ impl. }\Gamma\models_{\mathsf{CAMJL}_{CS}}\phi.
\end{align*}
We show that $\vdash_{\mathbf{LJL}_0}\phi$ implies $\models_{\mathsf{CAMJL}}\phi$ as follows: by the definition of $\mathbf{LJL}_0$, it suffices to show that $\models_{\mathsf{CAMJL}}\phi$ for $\phi\in\overline{\mathbf{L}}$ as well as for $\phi\in (J)\cup(+)$ or even (depending on the choice of $\mathbf{LJL}_0$) $\phi\in(F)\cup(I)$ and that it is preserved under modus ponens. The latter is immediate. For the former, note that in the case of $\phi$ being a justification axiom, the choice of $\mathsf{CAMJL}$ is such that all models satisfy conditions (i) and (ii) of Definition \ref{def:algmkrtmod} (validating $(J)$ and $(+)$) and (depending on $\mathbf{LJL}_0$) are factive or introspective given $(F)$ or $(I)$ and thus validate those immediately. 

If now $\phi\in\overline{\mathbf{L}}$, then by definition there is a substitution $\sigma:Var\to\mathcal{L}_J$ and a formula $\psi\in\mathbf{L}$ such that $\phi=\sigma(\psi)$. Let $\mathbf{A}\in\mathsf{C}$ and $\mathfrak{M}=\langle\mathbf{A},\mathcal{V}\rangle$ be a $\mathsf{CAMJ}$-model. Then, we may define
\[
f:\chi\mapsto\mathcal{V}(\sigma(\chi))
\]
for $\chi\in\mathcal{L}_0$. By definition of $\mathfrak{M}$ and properties of $\sigma$, we have that $f$ is a well-defined evaluation on $\mathbf{A}$. By the choice of $\mathsf{C}$, we have that $\psi\in\mathbf{L}$ implies $f(\psi)=1^\mathbf{A}$ and thus $\mathcal{V}(\sigma(\psi))=\mathcal{V}(\phi)=1$. As $\mathfrak{M}$ was arbitrary, we have $\models_{\mathsf{CAMJ}}\phi$.
\end{proof}
\subsection{Algebraic Fitting models}
The second algebraic semantics which we consider is based on algebraic Fitting models, derived from the fundamental possible-world semantics of Fitting \cite{Fit2005} which combined the earlier work of Mkrtychev on syntactic evaluations with the usual semantics of non-explicit modal logics based on modal Kripke models. As a generalization, we allow the accessibility, evidence and evaluation functions to take values in Heyting algebras. We have to restrict these to complete Heyting algebras however, as we want certain algebraic equations to be satisfied which involve infima and suprema. The algebraic Fitting models presented here again generalize the previously introduced many-valued Fitting models from \cite{Gha2014,Pis2020} from the context of the G\"odel justification logics.
\begin{definition}\label{def:algfittingmod}
Let $\mathbf{A}$ be a complete Heyting algebra. An (\emph{$\mathbf{A}$-valued}) \emph{algebraic Fitting model} is a structure $\mathfrak{M}=\langle\mathbf{A},\mathcal{W},\mathcal{R},\mathcal{E},\mathcal{V}\rangle$ with
\begin{itemize}
\item $\mathcal{W}\neq\emptyset$,
\item $\mathcal{R}:\mathcal{W}\times \mathcal{W}\to A$,
\item $\mathcal{E}:\mathcal{W}\times Jt\times \mathcal{L}_J\to A$,
\item $\mathcal{V}:\mathcal{W}\times\mathcal{L}_J\to A$,
\end{itemize}
such that it fulfils the conditions
\begin{enumerate}
\item $\mathcal{V}(w,\bot)=0^\mathbf{A}$,
\item $\mathcal{V}(w,\phi\land\phi)=\mathcal{V}(w,\phi)\land^\mathbf{A}\mathcal{V}(w,\psi)$,
\item $\mathcal{V}(w,\phi\lor\phi)=\mathcal{V}(w,\phi)\lor^\mathbf{A}\mathcal{V}(w,\psi)$,
\item $\mathcal{V}(w,\phi\rightarrow\phi)=\mathcal{V}(w,\phi)\rightarrow^\mathbf{A}\mathcal{V}(w,\psi)$,
\item $\mathcal{V}(w,t:\phi)=\mathcal{E}_w(t,\phi)\land^\mathbf{A}\Wedge{\mathbf{A}}\{\mathcal{R}(w,v)\rightarrow^\mathbf{A} \mathcal{V}(v,\phi)\mid v\in \mathcal{W}\}$,
\end{enumerate}
for all $w\in \mathcal{W}$ and such that it satisfies
\begin{enumerate}[(i)]
\item $\mathcal{E}_w(t,\phi\rightarrow\psi)\land^\mathbf{A}\mathcal{E}_w(s,\phi)\leq^\mathbf{A}\mathcal{E}_w(t\cdot s,\psi)$,
\item $\mathcal{E}_w(t,\phi)\lor^\mathbf{A}\mathcal{E}_w(s,\phi)\leq^\mathbf{A}\mathcal{E}_w(t+s,\phi)$,
\end{enumerate}
for all $w\in\mathcal{W}$, all $t,s\in Jt$ and all $\phi,\psi\in\mathcal{L}_J$.
\end{definition}
We write $(\mathfrak{M},w)\models\phi$ for $\mathcal{V}(w,\phi)=1^\mathbf{A}$ and $(\mathfrak{M},w)\models\Gamma$ if $(\mathfrak{M},w)\models\gamma$ for all $\gamma\in\Gamma$.
\begin{definition}
Let $\mathfrak{M}=\langle\mathbf{A},\mathcal{W},\mathcal{R},\mathcal{E},\mathcal{V}\rangle$ be an $\mathbf{A}$-valued Fitting model. We call $\mathfrak{M}$
\begin{enumerate}[(i)]
\item \emph{reflexive} if $\forall w\in \mathcal{W}\left(\mathcal{R}(w,w)=1^\mathbf{A}\right)$,
\item \emph{transitive} if $\forall w,v,u\in \mathcal{W}\left(\mathcal{R}(w,v)\land^\mathbf{A}\mathcal{R}(v,u)\leq^\mathbf{A}\mathcal{R}(w,u)\right)$,
\item \emph{monotone} if $\forall v,w\in\mathcal{W}\forall t\in Jt\forall\phi\in\mathcal{L}_J\left(\mathcal{E}_w(t,\phi)\land^\mathbf{A}\mathcal{R}(w,v)\leq^\mathbf{A}\mathcal{E}_v(t,\phi)\right)$,
\item \emph{introspective} if it is transitive, monotone and satisfies
\[
\mathcal{E}_w(t,\phi)\leq^\mathbf{A} \mathcal{E}_w(!t,t:\phi)
\]
for all $w\in\mathcal{W}$ and all $t\in Jt$, $\phi\in\mathcal{L}_J$,
\item \emph{accessibility-crisp} if $\forall w,v\in \mathcal{W}\left( \mathcal{R}(w,v)\in\{0^\mathbf{A},1^\mathbf{A}\}\right)$.
\end{enumerate}
\end{definition}
\begin{definition}
Let $\mathsf{C}$ be a class of complete Heyting algebras. Then:
\begin{enumerate}
\item $\mathsf{CAFJ}$ denotes the class of all $\mathbf{A}$-valued Fitting models for all $\mathbf{A}\in\mathsf{C}$;
\item $\mathsf{CAFJT}$ denotes the class of all reflexive $\mathsf{CAFJ}$-models;
\item $\mathsf{CAFJ4}$ denotes the class of all introspective $\mathsf{CAFJ}$-models;
\item $\mathsf{CFJT4}$ denotes the class of all $\mathsf{CAFJ4}$-models which are reflexive.
\end{enumerate}
\end{definition}
By $\mathsf{C^c}$, we denote the class of all accessibility-crisp models in $\mathsf{C}$ for some class $\mathsf{C}$ of algebraic Fitting models.
\begin{definition}
Let $\mathbf{A}$ be a complete Heyting algebra and let $\mathfrak{M}=\langle\mathbf{A},\mathcal{W},\mathcal{R},\mathcal{E},\mathcal{V}\rangle$ be an $\mathbf{A}$-valued algebraic Fitting model. We say that $\mathfrak{M}$ \emph{respects a constant specification} $CS$ (for some proof system) if
\[
\mathcal{V}(w,c:\phi)=1^\mathbf{A}
\]
for all $w\in\mathcal{W}$ and all $c:\phi\in CS$.
\end{definition}
Given a class $\mathsf{C}$ of algebraic Fitting models, we denote the subclass of all algebraic Fitting models in $\mathsf{C}$ respecting a constant specification $CS$ (for some proof system) by $\mathsf{C}_{CS}$.
\begin{definition}
Let $\mathsf{C}$ be a class of algebraic Fitting models and $\Gamma\cup\{\phi\}\subseteq\mathcal{L}_J$. We write:
\begin{enumerate}
\item $\Gamma\models_\mathsf{C}\phi$ if $\forall\mathfrak{M}=\langle\mathbf{A},\mathcal{W},\mathcal{R},\mathcal{E},\mathcal{V}\rangle\in\mathsf{C}\forall w\in \mathcal{W}\left(\Wedge{\mathbf{A}}\{\mathcal{V}(w,\gamma)\mid\gamma\in\Gamma\}\leq^\mathbf{A} \mathcal{V}(w,\phi)\right)$;
\item $\Gamma\models^1_\mathsf{C}\phi$ if $\forall\mathfrak{M}=\langle\mathbf{A},\mathcal{W},\mathcal{R},\mathcal{E},\mathcal{V}\rangle\in\mathsf{C}\forall w\in \mathcal{W}\Big((\mathfrak{M},w)\models\Gamma\Rightarrow (\mathfrak{M},w)\models\phi\Big)$.
\end{enumerate}
\end{definition}
\begin{lemma}\label{lem:algfittingmodsoundness}
Let $\mathbf{L}$ be an intermediate logic and $\mathbf{LJL}_0\in\{\mathbf{LJ}_0,\mathbf{LJT}_0,\mathbf{LJ4}_0,\mathbf{LJT4}_0\}$. Let $CS$ be a constant specification for $\mathbf{LJL}_0$ and let $\mathsf{C}\in\mathsf{Alg}_{com}(\mathbf{L})$. Let $\mathsf{CAFJL}$ be the class of algebraic Fitting models corresponding to $\mathbf{LJL}_0$ and $\mathsf{C}$. For any $\Gamma\cup\{\phi\}\subseteq\mathcal{L}_J$:
\[
\Gamma\vdash_{\mathbf{LJL}_{CS}}\phi\text{ implies }\Gamma\models_{\mathsf{CAFJL}_{CS}}\phi.
\]
\end{lemma}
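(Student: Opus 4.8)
The plan is to mirror the proof of Lemma~\ref{lem:algmkrtmodsoundness}: I would first reduce the strong soundness statement to the claim that $\vdash_{\mathbf{LJL}_0}\phi$ implies $\models_{\mathsf{CAFJL}}\phi$, where validity now means $\mathcal{V}(w,\phi)=1^\mathbf{A}$ at every world $w$ of every model (recall $\models_{\mathsf{C}}\phi$ with empty premise set unwinds to $1^\mathbf{A}\leq^\mathbf{A}\mathcal{V}(w,\phi)$, i.e. $\mathcal{V}(w,\phi)=1^\mathbf{A}$, at each $w$). The reduction is identical to the Mkrtychev case, using the classical deduction theorem together with compactness of $\vdash_{\mathbf{LJL}_0}$ and unwinding the definition of $\models_{\mathsf{CAFJL}_{CS}}$ pointwise at each $w$; that a model lies in $\mathsf{CAFJL}_{CS}$ and hence respects $CS$ is exactly what lets the finitely many members of $CS\cap\Gamma_0$ be absorbed, since they receive value $1^\mathbf{A}$ at every world. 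It therefore remains to check that the propositional axioms in $\overline{\mathbf{L}}$ and the relevant justification axioms are valid and that validity is preserved under modus ponens; the latter is immediate from clause (4) of Definition~\ref{def:algfittingmod} together with $1^\mathbf{A}\rightarrow^\mathbf{A}x=x$.

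For $\phi\in\overline{\mathbf{L}}$, say $\phi=\sigma(\psi)$ with $\psi\in\mathbf{L}$, I would fix a model $\mathfrak{M}=\langle\mathbf{A},\mathcal{W},\mathcal{R},\mathcal{E},\mathcal{V}\rangle$ and a world $w$ and observe that $\chi\mapsto\mathcal{V}(w,\sigma(\chi))$ is, by clauses (1)--(4), a propositional evaluation on $\mathbf{A}$. Since $\mathsf{C}\in\mathsf{Alg}_{com}(\mathbf{L})$, this evaluation sends every element of $\mathbf{L}$ to $1^\mathbf{A}$, whence $\mathcal{V}(w,\phi)=1^\mathbf{A}$. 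This is precisely the argument from the previous lemma, now run world by world.

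The core of the proof is the validity of the justification axioms, which after residuation (i.e. using $x\land^\mathbf{A}y\leq^\mathbf{A}z$ iff $x\leq^\mathbf{A}y\rightarrow^\mathbf{A}z$, and $x\leq^\mathbf{A}y$ iff $x\rightarrow^\mathbf{A}y=1^\mathbf{A}$) reduces each axiom to an inequality between the values assigned by clause (5). For $(J)$ the two evidence factors are controlled by condition (i), while for the common meet terms one needs, for each fixed $v$ with $r=\mathcal{R}(w,v)$, $a=\mathcal{V}(v,\phi)$, $b=\mathcal{V}(v,\psi)$ (so $a\rightarrow^\mathbf{A}b=\mathcal{V}(v,\phi\rightarrow\psi)$), the Heyting identity $(r\rightarrow^\mathbf{A}(a\rightarrow^\mathbf{A}b))\land^\mathbf{A}(r\rightarrow^\mathbf{A}a)\leq^\mathbf{A}r\rightarrow^\mathbf{A}b$, which I would verify from the defining identities $x\land^\mathbf{A}(x\rightarrow^\mathbf{A}y)=x\land^\mathbf{A}y$; this is the algebraic form of normality of the implicit $\Box$, and taking the meet over $v$ afterwards uses completeness of $\mathbf{A}$. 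For $(+)$ the meet term of clause (5) is literally unchanged between $t{:}\phi$ and $[t{+}s]{:}\phi$, so only $\mathcal{E}_w(t,\phi)\leq^\mathbf{A}\mathcal{E}_w(t{+}s,\phi)$ is required, which follows from condition (ii). For $(F)$, reflexivity lets me instantiate the meet at $v=w$ and use $\mathcal{R}(w,w)\rightarrow^\mathbf{A}\mathcal{V}(w,\phi)=1^\mathbf{A}\rightarrow^\mathbf{A}\mathcal{V}(w,\phi)=\mathcal{V}(w,\phi)$.

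The hardest case is $(I)$, where the three clauses of introspectivity combine. After residuation it suffices to prove $\mathcal{V}(w,t{:}\phi)\leq^\mathbf{A}\mathcal{E}_w(!t,t{:}\phi)$ and $\mathcal{V}(w,t{:}\phi)\land^\mathbf{A}\mathcal{R}(w,v)\leq^\mathbf{A}\mathcal{V}(v,t{:}\phi)$ for all $v$. The first follows from the evidence-introspection condition of Definition after noting $\mathcal{V}(w,t{:}\phi)\leq^\mathbf{A}\mathcal{E}_w(t,\phi)$. For the second I would expand $\mathcal{V}(v,t{:}\phi)$ by clause (5) and treat its two factors separately: the evidence factor $\mathcal{E}_v(t,\phi)$ is reached by monotonicity (condition iii), and each meet factor $\mathcal{R}(v,u)\rightarrow^\mathbf{A}\mathcal{V}(u,\phi)$ is reached by transitivity, using $\mathcal{R}(w,v)\land^\mathbf{A}\mathcal{R}(v,u)\leq^\mathbf{A}\mathcal{R}(w,u)$ together with $\mathcal{V}(w,t{:}\phi)\leq^\mathbf{A}\mathcal{R}(w,u)\rightarrow^\mathbf{A}\mathcal{V}(u,\phi)$ and residuation. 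I expect the main obstacle to be exactly these manipulations of the infinitary meet in clause (5): completeness of $\mathbf{A}$ is what guarantees the meet exists, and the estimates above repeatedly exploit that the meet lies below each of its members together with the residuation law, so some care is needed to keep the Heyting-algebraic bookkeeping correct across the nested implications.
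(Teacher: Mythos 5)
Your proposal is correct and follows essentially the same route as the paper's proof: the same reduction of strong to weak soundness via the deduction theorem and compactness, the same world-by-world propositional evaluation $\chi\mapsto\mathcal{V}(w,\sigma(\chi))$ for $\phi\in\overline{\mathbf{L}}$, and the same verification of $(J)$, $(+)$, $(F)$ and $(I)$ from conditions (i)--(ii) of Definition~\ref{def:algfittingmod} together with reflexivity, transitivity, monotonicity and introspectivity, using residuation and the pointwise-then-meet treatment of the infinitary infimum in clause (5). The decomposition of the $(I)$ case into $\mathcal{V}(w,t{:}\phi)\leq^\mathbf{A}\mathcal{E}_w(!t,t{:}\phi)$ and $\mathcal{V}(w,t{:}\phi)\leq^\mathbf{A}\mathcal{R}(w,v)\rightarrow^\mathbf{A}\mathcal{V}(v,t{:}\phi)$ is exactly the paper's argument.
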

\begin{proof}
As before, we only show $\vdash_{\mathbf{LJL}_0}\phi$ implies $\models_\mathsf{CAFJL}\phi$. The argument from the proof of Lemma \ref{lem:algmkrtmodsoundness} about how to obtain strong soundness can be straightforwardly adapted to the case of algebraic Fitting models.

To see that $\vdash_{\mathbf{LJL}_0}\phi$ implies $\models_\mathsf{CAFJL}\phi$, note that it is again enough to show the claim for $\phi\in\overline{\mathbf{L}}$ or $\phi$ being a justification axiom (depending on $\mathbf{LJL}_0$).

If $\phi\in\overline{\mathbf{L}}$, then by the choice of $\mathsf{C}$ one may repeat the argument from the proof of Lemma \ref{lem:algmkrtmodsoundness} \emph{locally} for every $\mathcal{V}(w,\cdot)$ with $w\in\mathcal{W}$ to obtain $\models_{\mathsf{CAFJ}}\phi$.

As the algebraic Fitting models are slightly more complex in their evaluation of the justification modalities, we actually show the validity of the justification axiom schemes in their respective model classes. For this, let $\mathfrak{M}=\langle\mathbf{A},\mathcal{W},\mathcal{R},\mathcal{E},\mathcal{V}\rangle\in\mathsf{CAFJ}$ and let $w\in\mathcal{W}$.
\begin{enumerate}
\item Consider the axiom scheme $(J)$. Then, we have
\begin{align*}
&\Wedge{\mathbf{A}}\{\mathcal{R}(w,v)\rightarrow^\mathbf{A} \mathcal{V}(v,\phi\rightarrow\psi)\mid v\in \mathcal{W}\}\land^\mathbf{A}\Wedge{\mathbf{A}}\{\mathcal{R}(w,v)\rightarrow^\mathbf{A} \mathcal{V}(v,\phi)\mid v\in \mathcal{W}\}\\
&\qquad\qquad\leq^\mathbf{A}\Wedge{\mathbf{A}}\{\mathcal{R}(w,v)\rightarrow^\mathbf{A} \mathcal{V}(v,\psi)\mid v\in \mathcal{W}\}
\end{align*}
Further we have $\mathcal{E}_w(t,\phi\rightarrow\psi)\land^\mathbf{A}\mathcal{E}_w(s,\phi)\leq^\mathbf{A}\mathcal{E}_w(t\cdot s,\psi)$ by condition (i) of Definition \ref{def:algfittingmod}. Thus:
\begin{align*}
&\mathcal{V}(w,t:(\phi\rightarrow\psi))\land^\mathbf{A}\mathcal{V}(w,s:\phi)\\
&=\mathcal{E}_w(t,\phi\rightarrow\psi)\land^\mathbf{A}\Wedge{\mathbf{A}}\{\mathcal{R}(w,v)\rightarrow^\mathbf{A} \mathcal{V}(v,\phi\rightarrow\psi)\mid v\in \mathcal{W}\}\land^\mathbf{A}\\
&\qquad\qquad\mathcal{E}_w(s,\phi)\land^\mathbf{A}\Wedge{\mathbf{A}}\{\mathcal{R}(w,v)\rightarrow^\mathbf{A} \mathcal{V}(v,\phi)\mid v\in \mathcal{W}\}\\
&\leq^\mathbf{A}\mathcal{E}_w(t\cdot s,\psi)\land^\mathbf{A}\Wedge{\mathbf{A}}\{\mathcal{R}(w,v)\rightarrow^\mathbf{A} \mathcal{V}(v,\psi)\mid v\in \mathcal{W}\}\\
&=\mathcal{V}(w,[t\cdot s]:\psi).
\end{align*}
The claim follows from the above as by residuation, we have $\mathcal{V}(w,t:(\phi\rightarrow\psi))\leq^\mathbf{A}\mathcal{V}(w,s:\phi)\rightarrow^\mathbf{A}\mathcal{V}(w,[t\cdot s]:\psi)$.
\item Consider the axiom scheme $(+)$. We only show $\mathcal{V}(w,t:\phi)\leq^\mathbf{A}\mathcal{V}(w,[t+s]:\phi)$. The other part follows similarly. By condition (ii) of Definition \ref{def:algfittingmod}, we get
\[
\mathcal{E}_w(t,\phi)\leq^\mathbf{A}\mathcal{E}_w(t,\phi)\lor^\mathbf{A}\mathcal{E}_w(s,\phi)\leq^\mathbf{A}\mathcal{E}_w(t+s,\phi)
\]
and thus
\begin{align*}
\mathcal{V}(w,t:\phi)&=\mathcal{E}_w(t,\phi)\land^\mathbf{A}\Wedge{\mathbf{A}}\{\mathcal{R}(w,v)\rightarrow^\mathbf{A} \mathcal{V}(v,\phi)\mid v\in \mathcal{W}\}\\
                     &\leq^\mathbf{A}\mathcal{E}_w(t+s,\phi)\land^\mathbf{A}\Wedge{\mathbf{A}}\{\mathcal{R}(w,v)\rightarrow^\mathbf{A} \mathcal{V}(v,\phi)\mid v\in \mathcal{W}\}\\
                     &=\mathcal{V}(w,[t+s]:\phi).
\end{align*}
\item Consider the axiom scheme $(F)$ and assume that $\mathfrak{M}$ is reflexive. We have $\mathcal{R}(w,w)=1^\mathbf{A}$ and thus:
\begin{align*}
\mathcal{V}(w,t:\phi)&=\mathcal{E}_w(t,\phi)\land^\mathbf{A}\Wedge{\mathbf{A}}\{\mathcal{R}(w,v)\rightarrow^\mathbf{A} \mathcal{V}(v,\phi)\mid v\in \mathcal{W}\}\\
                     &\leq^\mathbf{A}\Wedge{\mathbf{A}}\{\mathcal{R}(w,v)\rightarrow^\mathbf{A} \mathcal{V}(v,\phi)\mid v\in \mathcal{W}\}\\
                     &\leq^\mathbf{A}\mathcal{R}(w,w)\rightarrow^\mathbf{A}\mathcal{V}(w,\phi)\\
                     &=\mathcal{V}(w,\phi).
\end{align*}
\item Consider the axiom scheme $(I)$. Assume that $\mathfrak{M}$ is introspective. By the transitivity of $\mathcal{R}$, we obtain at first
\[
\Wedge{\mathbf{A}}\{\mathcal{R}(w,u)\rightarrow^\mathbf{A}\mathcal{V}(u,\phi)\mid u\in\mathcal{W}\}\leq^\mathbf{A}\mathcal{R}(w,v)\rightarrow^\mathbf{A}\Wedge{\mathbf{A}}\{\mathcal{R}(v,u)\rightarrow^\mathbf{A}\mathcal{V}(u,\phi)\mid u\in\mathcal{W}\}.
\]
To see this, note that we have
\begin{align*}
\mathcal{R}(w,v)\land^\mathbf{A}\Wedge{\mathbf{A}}\{\mathcal{R}(w,u)\rightarrow^\mathbf{A}\mathcal{V}(u,\phi)\mid u\in\mathcal{W}\}&\leq^\mathbf{A}\mathcal{R}(w,v)\land^\mathbf{A}\left(\mathcal{R}(w,u)\rightarrow^\mathbf{A}\mathcal{V}(u,\phi)\right)\\
&\leq^\mathbf{A}\mathcal{R}(w,v)\land^\mathbf{A}\left(\left(\mathcal{R}(w,v)\land^\mathbf{A}\mathcal{R}(v,u)\right)\rightarrow^\mathbf{A}\mathcal{V}(u,\phi)\right)\\
&=\mathcal{R}(w,v)\land^\mathbf{A}\left(\mathcal{R}(w,v)\rightarrow^\mathbf{A}\left(\mathcal{R}(v,u)\right)\rightarrow^\mathbf{A}\mathcal{V}(u,\phi)\right)\\
&\leq^\mathbf{A}\mathcal{R}(w,u)\rightarrow^\mathbf{A}\mathcal{V}(u,\phi)
\end{align*}
for all $u\in\mathcal{W}$. By taking the infimum over $u$,the yields
\[
\mathcal{R}(w,v)\land^\mathbf{A}\Wedge{\mathbf{A}}\{\mathcal{R}(w,u)\rightarrow^\mathbf{A}\mathcal{V}(u,\phi)\mid u\in\mathcal{W}\}\leq^\mathbf{A}\Wedge{\mathbf{A}}\{\mathcal{R}(v,u)\rightarrow^\mathbf{A}\mathcal{V}(u,\phi)\mid u\in\mathcal{W}\}.
\]
Further, we obtain by monotonicity that
\[
\mathcal{E}_w(t,\phi)\leq^\mathbf{A}\mathcal{R}(w,v)\rightarrow^\mathbf{A}\mathcal{E}_v(t,\phi),
\]
and hence
\begin{align*}
\mathcal{V}(w,t:\phi)&=\mathcal{E}_w(t,\phi)\land^\mathbf{A}\Wedge{\mathbf{A}}\{\mathcal{R}(w,u)\rightarrow^\mathbf{A}\mathcal{V}(u,\phi)\mid u\in\mathcal{W}\}\\
&\leq^\mathbf{A}\left(\mathcal{R}(w,v)\rightarrow^\mathbf{A}\mathcal{E}_v(t,\phi)\right)\land^\mathbf{A}\left(\mathcal{R}(w,v)\rightarrow^\mathbf{A}\Wedge{\mathbf{A}}\{\mathcal{R}(v,u)\rightarrow^\mathbf{A}\mathcal{V}(u,\phi)\mid u\in\mathcal{W}\}\right)\\
&\leq^\mathbf{A}\mathcal{R}(w,v)\rightarrow^\mathbf{A}\left(\mathcal{E}_v(t,\phi)\land^\mathbf{A}\Wedge{\mathbf{A}}\{\mathcal{R}(v,u)\rightarrow^\mathbf{A}\mathcal{V}(u,\phi)\mid u\in\mathcal{W}\}\right)\\
&=\mathcal{R}(w,v)\rightarrow^\mathbf{A}\mathcal{V}(v,t:\phi).
\end{align*}
By taking the infimum, we get
\[
\mathcal{E}_w(t,\phi)\land^\mathbf{A}\Wedge{\mathbf{A}}\{\mathcal{R}(w,u)\rightarrow^\mathbf{A}\mathcal{V}(u,\phi)\mid u\in\mathcal{W}\}\leq^\mathbf{A}\Wedge{\mathbf{A}}\{\mathcal{R}(w,v)\rightarrow^\mathbf{A}\mathcal{V}(v,t:\phi)\mid v\in\mathcal{W}\}.
\]
and thus, we obtain for any $v\in\mathcal{W}$ by introspectivity:
\begin{align*}
\mathcal{V}(w,t:\phi)&\leq^\mathbf{A}\mathcal{E}_w(!t,t:\phi)\land^\mathbf{A}\Wedge{\mathbf{A}}\{\mathcal{R}(w,v)\rightarrow^\mathbf{A}\mathcal{V}(v,t:\phi)\mid v\in\mathcal{W}\}\\
&=\mathcal{V}(w,!t:t:\phi).
\end{align*}
\end{enumerate}
\end{proof}
\subsection{Algebraic subset models}
The last algebraic semantics which we consider is based on algebraic generalizations of the subset models for classical justification logic by Lehmann and Studer \cite{LS2019}. Similar as with the previous algebraic Fitting models, we allow all involved functions to take arbitrary values in Heyting algebras, again restricting ourselves to complete Heyting algebras to be able to formulate certain regularity conditions. 
\begin{definition}[Algebraic subset model]\label{def:algsubsetmod}
Let $\mathbf{A}$ be a complete Heyting algebra with domain $A$. An (\emph{$\mathbf{A}$-valued}) \emph{algebraic subset model} is a structure $\mathfrak{M}=\langle \mathbf{A},\mathcal{W},\mathcal{W}_0,\mathcal{E},\mathcal{V}\rangle$ with
\begin{itemize}
\item $\mathcal{W}\neq\emptyset$,
\item $\mathcal{W}_0\subseteq \mathcal{W}$, $\mathcal{W}_0\neq\emptyset$,
\item $\mathcal{E}:Jt\times \mathcal{W}\times \mathcal{W}\to A$,
\item $\mathcal{V}:\mathcal{W}\times\mathcal{L}_J\to A$,
\end{itemize}
such that for all $w\in \mathcal{W}_0$, $\mathcal{V}$ fulfils the conditions
\begin{enumerate}
\item $\mathcal{V}(w,\bot)=0^\mathbf{A}$,
\item $\mathcal{V}(w,\phi\land\psi)=\mathcal{V}(w,\phi)\land^\mathbf{A}\mathcal{V}(w,\psi)$,
\item $\mathcal{V}(w,\phi\lor\psi)=\mathcal{V}(w,\phi)\lor^\mathbf{A}\mathcal{V}(w,\psi)$,
\item $\mathcal{V}(w,\phi\rightarrow\psi)=\mathcal{V}(w,\phi)\rightarrow^\mathbf{A}\mathcal{V}(w,\psi)$,
\item $\mathcal{V}(w,t:\phi)=\sideset{}{^\mathbf{A}}\bigwedge\{\mathcal{E}_t(w,v)\rightarrow^\mathbf{A} \mathcal{V}(v,\phi)\mid v\in \mathcal{W}\}$,
\end{enumerate}
and such that it is \emph{regular}, that is for all $w\in \mathcal{W}_0$:
\begin{enumerate}[(i)]
\item $\mathcal{E}_{s+t}(w,v)\leq^\mathbf{A}\mathcal{E}_s(w,v)\land^\mathbf{A}\mathcal{E}_t(w,v)$ for all $v\in \mathcal{W}$;
\item for all $v\in\mathcal{W}$:
\[
\mathcal{E}_{s\cdot t}(w,v)\leq^\mathbf{A}\sideset{}{^\mathbf{A}}\bigwedge\{\mathfrak{M}_{s,t}^{w}(\psi)\rightarrow^\mathbf{A}\mathcal{V}(v,\psi)\mid\psi\in\mathcal{L}_J\}
\]
with
\[
\mathfrak{M}_{s,t}^{w}(\psi):=\sideset{}{^\mathbf{A}}\bigvee\{\mathcal{V}(w,s:(\phi\rightarrow\psi))\land^\mathbf{A} \mathcal{V}(w,t:\phi)\mid\phi\in\mathcal{L}_J\}.
\]
\end{enumerate}
\end{definition}
We write $(\mathfrak{M},w)\models\phi$ for $\mathcal{V}(w,\phi)=1^\mathbf{A}$ and $(\mathfrak{M},w)\models\Gamma$ for $\mathcal{V}(w,\gamma)=1^\mathbf{A}$ for all $\gamma\in\Gamma$.\\

The function $\mathcal{E}$ is actually a straightforward $\mathbf{A}$-valued generalization of the $E$-function from \cite{LS2019} as it is in fact nothing more than a different representation of the function
\[
\mathcal{E}:Jt\times \mathcal{W}\to A^\mathcal{W}
\]
which maps terms and worlds to \emph{$A$-valued subsets} of $\mathcal{W}$.
\begin{definition}
Let $\mathfrak{M}=\langle \mathbf{A},\mathcal{W},\mathcal{W}_0,\mathcal{E},\mathcal{V}\rangle$ be an $\mathbf{A}$-valued subset model. We call $\mathfrak{M}$
\begin{enumerate}[(i)]
\item \emph{reflexive} if $\forall w\in \mathcal{W}_0\forall t\in Jt\left(\mathcal{E}_t(w,w)=1^\mathbf{A}\right)$,
\item \emph{introspective} if $\forall w\in \mathcal{W}_0\forall v\in \mathcal{W}\forall t\in Jt\left(\mathcal{E}_{!t}(w,v)\leq^\mathbf{A}\sideset{}{^\mathbf{A}}\bigwedge\{\mathcal{V}(w,t:\phi)\rightarrow^\mathbf{A}\mathcal{V}(v,t:\phi)\mid\phi\in\mathcal{L}_J\}\right)$,
\item \emph{accessibility-crisp} if $\forall t\in Jt\forall w,v\in \mathcal{W}_0\left( \mathcal{E}_t(w,v)\in\{0^\mathbf{A},1^\mathbf{A}\}\right)$.
\end{enumerate}
\end{definition}
\begin{definition}
Let $\mathsf{C}$ be a class of complete Heyting algebras. Then:
\begin{enumerate}
\item $\mathsf{CASJ}$ denotes the class of all $\mathbf{A}$-valued subset models for all $\mathbf{A}\in\mathsf{C}$;
\item $\mathsf{CASJT}$ denotes the class of all reflexive $\mathsf{CASJ}$-models;
\item $\mathsf{CASJ4}$ denotes the class of all introspective $\mathsf{CASJ}$-models;
\item $\mathsf{CASJT4}$ denotes the class of all reflexive and introspective $\mathsf{CASJ}$-models.
\end{enumerate}
\end{definition}
Given a class $\mathsf{C}$ of algebraic subset models, we denote the class of all accessibility-crisp models in $\mathsf{C}$ by $\mathsf{C^c}$.
\begin{definition}
Let $\mathbf{A}$ be a complete Heyting algebra and let $\mathfrak{M}=\langle\mathbf{A},\mathcal{W},\mathcal{W}_0,\mathcal{E},\mathcal{V}\rangle$ be an $\mathbf{A}$-valued algebraic subset model. Further, let $CS$ be a constant specification (for some proof calculus). We say that $\mathfrak{M}$ \emph{respects} $CS$ if $\mathcal{V}(w,c:\phi)=1^\mathbf{A}$ for all $c:\phi\in CS$ and all $w\in \mathcal{W}_0$.
\end{definition}
Given a  class $\mathsf{C}$ of algebraic subset models, we write $\mathsf{C}_{CS}$ for the class of all models from $\mathsf{C}$ which respect $CS$. As before, there are two natural consequence relations to consider here.
\begin{definition}
Let $\Gamma\cup\{\phi\}\subseteq\mathcal{L}_J$ and $\mathsf{C}$ be a class of algebraic subset models. We write
\begin{enumerate}
\item $\Gamma\models_\mathsf{C}\phi$ if $\forall\mathfrak{M}=\langle\mathbf{A},\mathcal{W},\mathcal{W}_0,\mathcal{E},\mathcal{V}\rangle\in\mathsf{C}\forall w\in \mathcal{W}_0\left(\sideset{}{^\mathbf{A}}\bigwedge\{\mathcal{V}(w,\gamma)\mid\gamma\in\Gamma\}\leq^\mathbf{A}\mathcal{V}(w,\phi)\right)$;
\item $\Gamma\models^1_\mathsf{C}\phi$ if $\forall\mathfrak{M}=\langle\mathbf{A},\mathcal{W},\mathcal{W}_0,\mathcal{E},\mathcal{V}\rangle\in\mathsf{C}\forall w\in \mathcal{W}_0\Big((\mathfrak{M},w)\models\Gamma\Rightarrow (\mathfrak{M},w)\models\phi\Big)$.
\end{enumerate}
\end{definition}
We write $\Gamma\models_{\mathbf{A}\mathsf{JL}}\phi$ or $\Gamma\models^1_{\mathbf{A}\mathsf{JL}}\phi$ for $\Gamma\models_{\{\mathbf{A}\}\mathsf{JL}}\phi$ or $\Gamma\models^1_{\{\mathbf{A}\}\mathsf{JL}}\phi$, respectively.
\begin{lemma}\label{lem:algsubsetmodsoundness}
Let $\mathbf{L}$ be an intermediate logic and $\mathbf{LJL}_0\in\{\mathbf{LJ}_0,\mathbf{LJT}_0,\mathbf{LJ4}_0,\mathbf{LJT4}_0\}$. Further, let $CS$ be a constant specification for $\mathbf{LJL}_0$ and let $\mathsf{C}\in\mathsf{Alg}_{com}(\mathbf{L})$. Let $\mathsf{CASJL}$ be the class of algebraic subset models corresponding to $\mathbf{LJL}_0$ and $\mathsf{C}$. For any $\Gamma\cup\{\phi\}\subseteq\mathcal{L}_J$, we have:
\[
\Gamma\vdash_{\mathbf{LJL}_{CS}}\phi\text{ implies }\Gamma\models_{\mathsf{CASJL}_{CS}}\phi.
\]
\end{lemma}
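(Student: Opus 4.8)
The plan is to follow the same two-stage strategy used in the proofs of Lemmas \ref{lem:algmkrtmodsoundness} and \ref{lem:algfittingmodsoundness}. First I would reduce the strong soundness claim to the pointwise statement that $\vdash_{\mathbf{LJL}_0}\phi$ implies $\models_{\mathsf{CASJL}}\phi$: the reduction is verbatim the chain of implications from the proof of Lemma \ref{lem:algmkrtmodsoundness}, using the deduction theorem for $\mathbf{LJL}_0$ together with compactness of $\vdash_{\mathbf{LJL}_0}$ and the fact that models in $\mathsf{CASJL}_{CS}$ evaluate every formula in $CS$ to $1^\mathbf{A}$ at each world of $\mathcal{W}_0$. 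Since the consequence relation here quantifies only over worlds $w\in\mathcal{W}_0$, the whole argument is carried out at such worlds, where the conditions (1)--(5) of Definition \ref{def:algsubsetmod} and regularity are available. By the definition of $\mathbf{LJL}_0$ it then suffices to establish $\models_{\mathsf{CASJL}}\phi$ for $\phi\in\overline{\mathbf{L}}$ and for each applicable justification axiom from $(J),(+),(F),(I)$, and to note that validity is preserved under modus ponens (immediate, since $x\rightarrow^\mathbf{A}y=1^\mathbf{A}$ and $x=1^\mathbf{A}$ force $y=1^\mathbf{A}$). For $\phi\in\overline{\mathbf{L}}$ the argument from Lemma \ref{lem:algmkrtmodsoundness} applies locally: for each fixed $w\in\mathcal{W}_0$ the map $\chi\mapsto\mathcal{V}(w,\sigma(\chi))$ is an $\mathbf{A}$-valued propositional evaluation, so the choice of $\mathsf{C}\in\mathsf{Alg}_{com}(\mathbf{L})$ forces $\mathcal{V}(w,\phi)=1^\mathbf{A}$.

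The substantive part is verifying the justification schemes against the subset-model clause $\mathcal{V}(w,t:\phi)=\Wedge{\mathbf{A}}\{\mathcal{E}_t(w,v)\rightarrow^\mathbf{A}\mathcal{V}(v,\phi)\mid v\in\mathcal{W}\}$. For $(+)$ I would use regularity (i) to get $\mathcal{E}_{t+s}(w,v)\leq^\mathbf{A}\mathcal{E}_t(w,v)$, then antitonicity of $\rightarrow^\mathbf{A}$ in the first argument (part 4 of the Heyting-algebra facts collected above) to obtain $\mathcal{E}_t(w,v)\rightarrow^\mathbf{A}\mathcal{V}(v,\phi)\leq^\mathbf{A}\mathcal{E}_{t+s}(w,v)\rightarrow^\mathbf{A}\mathcal{V}(v,\phi)$ for each $v$; taking the meet over $v$ yields $\mathcal{V}(w,t:\phi)\leq^\mathbf{A}\mathcal{V}(w,[t+s]:\phi)$, and the disjunct $t:\phi\rightarrow[s+t]:\phi$ is symmetric. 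For $(F)$, reflexivity gives $\mathcal{E}_t(w,w)=1^\mathbf{A}$, so the conjunct at $v=w$ of the defining meet equals $1^\mathbf{A}\rightarrow^\mathbf{A}\mathcal{V}(w,\phi)=\mathcal{V}(w,\phi)$ (part 3), whence $\mathcal{V}(w,t:\phi)\leq^\mathbf{A}\mathcal{V}(w,\phi)$. For $(I)$, introspectivity gives $\mathcal{E}_{!t}(w,v)\leq^\mathbf{A}\mathcal{V}(w,t:\phi)\rightarrow^\mathbf{A}\mathcal{V}(v,t:\phi)$, i.e.\ $\mathcal{E}_{!t}(w,v)\land^\mathbf{A}\mathcal{V}(w,t:\phi)\leq^\mathbf{A}\mathcal{V}(v,t:\phi)$; residuation (part 1) then gives $\mathcal{V}(w,t:\phi)\leq^\mathbf{A}\mathcal{E}_{!t}(w,v)\rightarrow^\mathbf{A}\mathcal{V}(v,t:\phi)$ for each $v$, and the meet over $v$ yields $\mathcal{V}(w,t:\phi)\leq^\mathbf{A}\mathcal{V}(w,!t:t:\phi)$.

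I expect the main obstacle to be $(J)$, the one scheme that genuinely exploits the more delicate regularity condition (ii) with the auxiliary quantity $\mathfrak{M}_{s,t}^{w}$. Here I would first observe that for every fixed $\phi$ the element $\mathcal{V}(w,t:(\phi\rightarrow\psi))\land^\mathbf{A}\mathcal{V}(w,s:\phi)$ is one of the joinands defining $\mathfrak{M}_{t,s}^{w}(\psi)$, hence is $\leq^\mathbf{A}\mathfrak{M}_{t,s}^{w}(\psi)$. Condition (ii) applied to the term $[t\cdot s]$ gives $\mathcal{E}_{t\cdot s}(w,v)\leq^\mathbf{A}\mathfrak{M}_{t,s}^{w}(\psi)\rightarrow^\mathbf{A}\mathcal{V}(v,\psi)$, that is $\mathcal{E}_{t\cdot s}(w,v)\land^\mathbf{A}\mathfrak{M}_{t,s}^{w}(\psi)\leq^\mathbf{A}\mathcal{V}(v,\psi)$. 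Combining the two facts yields $\mathcal{V}(w,t:(\phi\rightarrow\psi))\land^\mathbf{A}\mathcal{V}(w,s:\phi)\land^\mathbf{A}\mathcal{E}_{t\cdot s}(w,v)\leq^\mathbf{A}\mathcal{V}(v,\psi)$ for every $v$; residuation turns this into $\mathcal{V}(w,t:(\phi\rightarrow\psi))\land^\mathbf{A}\mathcal{V}(w,s:\phi)\leq^\mathbf{A}\mathcal{E}_{t\cdot s}(w,v)\rightarrow^\mathbf{A}\mathcal{V}(v,\psi)$, and taking the meet over $v\in\mathcal{W}$ gives $\mathcal{V}(w,t:(\phi\rightarrow\psi))\land^\mathbf{A}\mathcal{V}(w,s:\phi)\leq^\mathbf{A}\mathcal{V}(w,[t\cdot s]:\psi)$. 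A final residuation step recovers the implicational form of $(J)$. The only care needed is to keep the indices in (ii) aligned with the variable naming in $(J)$, matching the pair $s:(\phi\rightarrow\psi),\,t:\phi$ of the regularity clause to the pair $t:(\phi\rightarrow\psi),\,s:\phi$ of the axiom (so that the relevant term is $[t\cdot s]$ and the relevant supremum is $\mathfrak{M}_{t,s}^{w}$).
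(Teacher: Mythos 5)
Your proposal is correct and follows essentially the same route as the paper's proof: the same reduction to weak soundness via the deduction theorem and compactness, the same local propositional-evaluation argument for $\phi\in\overline{\mathbf{L}}$, and the same verification of $(J),(+),(F),(I)$ against the meet-based clause for $\mathcal{V}(w,t:\phi)$, with $(J)$ handled exactly as in the paper by bounding the conjunct by the joinand of $\mathfrak{M}^w_{t,s}(\psi)$, invoking regularity (ii), and residuating before taking the meet over $v$. Your explicit remark about aligning the indices of the regularity clause with the variable naming in $(J)$ is a correct reading of Definition \ref{def:algsubsetmod} and matches what the paper does implicitly.
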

\begin{proof}
By the same reasoning as in Lemmas \ref{lem:algmkrtmodsoundness} and \ref{lem:algfittingmodsoundness}, we only show $\vdash_{\mathbf{LJL}_0}\phi$ implies $\models_{\mathsf{CASJL}}\phi$. Similarly, it suffices to show the claim for $\phi\in\overline{\mathbf{L}}$ as well as the justifications axioms (based on $\mathbf{LJL}_0$).

We may repeat the argument from the previous soundness proofs that $\phi\in\overline{\mathbf{L}}$ implies $\models_{\mathsf{CASJ}}\phi$ by constructing a similar propositional evaluation $f$ locally for every $\mathcal{V}(w,\cdot)$ over every $w\in\mathcal{W}_0$.

We thus only show the validity of (1) ($J$), (2) ($+$), (3) ($F$) and (4) ($I$) in their respective model classes. For this, let $\mathfrak{M}=\langle A,\mathcal{W},\mathcal{W}_0,\mathcal{E},\mathcal{V}\rangle$ be a $\mathsf{CASJL}_{CS}$-model and let $w\in \mathcal{W}_0$.
\begin{enumerate}
\item We show
\[
\mathcal{V}(w,t:(\phi\rightarrow\psi))\land^\mathbf{A} \mathcal{V}(w,s:\phi)\leq^\mathbf{A} \mathcal{V}(w,[t\cdot s]:\psi)\tag{$\dagger$}.
\]
For this, let $v\in \mathcal{W}$. We then have 
\begin{align*}
\mathcal{V}(w,t:(\phi\rightarrow\psi))\land^\mathbf{A} \mathcal{V}(w,s:\phi)&\leq^\mathbf{A}\sideset{}{^\mathbf{A}}\bigvee\{\mathcal{V}(w,t:(\phi\rightarrow\psi))\land^\mathbf{A} \mathcal{V}(w,s:\phi)\mid\phi\in\mathcal{L}_J\}\\
                                                        &\leq^\mathbf{A}\mathcal{E}_{t\cdot s}(w,v)\rightarrow^\mathbf{A} \mathcal{V}(v,\psi)
\end{align*}
through condition (ii) of Definition \ref{def:algsubsetmod}. Therefore, we get
\[
\mathcal{V}(w,t:(\phi\rightarrow\psi))\land^\mathbf{A} \mathcal{V}(w,s:\phi)\leq^\mathbf{A}\sideset{}{^\mathbf{A}}\bigwedge\{\mathcal{E}_{t\cdot s}(w,v)\rightarrow^\mathbf{A} \mathcal{V}(v,\psi)\mid v\in \mathcal{W}\}=\mathcal{V}(w,[t\cdot s]:\psi)
\]
as $v$ was arbitrary,  which is ($\dagger$).
\item Let $v\in \mathcal{W}$. We have
\begin{align*}
\mathcal{V}(w,t:\phi)&=\sideset{}{^\mathbf{A}}\bigwedge\{\mathcal{E}_t(w,u)\rightarrow^\mathbf{A} \mathcal{V}(u,\phi)\mid u\in \mathcal{W}\}\\
           &\leq^\mathbf{A}\mathcal{E}_t(w,v)\rightarrow^\mathbf{A} \mathcal{V}(v,\phi)\\
           &\leq^\mathbf{A}\mathcal{E}_{t+s}(w,v)\rightarrow^\mathbf{A} \mathcal{V}(v,\phi)
\end{align*}
through condition (i) in Definition \ref{def:algsubsetmod}. As $v$ was arbitrary, we obtain
\[
\mathcal{V}(w,t:\phi)\leq^\mathbf{A}\sideset{}{^\mathbf{A}}\bigwedge\{\mathcal{E}_{t+s}(w,v)\rightarrow^\mathbf{A} \mathcal{V}(v,\phi)\mid v\in \mathcal{W}\}=\mathcal{V}(w,[t+s]:\phi).
\]
One shows similarly that $\mathcal{V}(w,s:\phi)\leq^\mathbf{A} \mathcal{V}(w,[t+s]:\phi)$.
\item $\mathfrak{M}$ is reflexive by assumption. Therefore, we have $\mathcal{E}_t(w,w)=1^\mathbf{A}$ as $w\in \mathcal{W}_0$ and thus
\begin{align*}
\mathcal{V}(w,t:\phi)&=\sideset{}{^\mathbf{A}}\bigwedge\{\mathcal{E}_t(w,v)\rightarrow^\mathbf{A}\mathcal{V}(v,\phi)\mid v\in \mathcal{W}\}\\
           &\leq^\mathbf{A}\mathcal{E}_t(w,w)\rightarrow^\mathbf{A}\mathcal{V}(w,\phi)\\
           &\leq^\mathbf{A}\mathcal{V}(w,\phi).
\end{align*}
Therefore, $\mathcal{V}(w,t:\phi)\rightarrow^\mathbf{A}\mathcal{V}(w,\phi)=1$.
\item $\mathfrak{M}$ is introspective by assumption. Thus, we have
\[
\mathcal{V}(w,t:\phi)\leq^\mathbf{A} \mathcal{E}_{!t}(w,v)\rightarrow^\mathbf{A}\mathcal{V}(v,t:\phi)
\]
for any $v\in \mathcal{W}$ by the introspectivity and we therefore get:
\[
\mathcal{V}(w,t:\phi)\leq^\mathbf{A}\sideset{}{^\mathbf{A}}\bigwedge\{\mathcal{E}_{!t}(w,v)\rightarrow^\mathbf{A}\mathcal{V}(v,t:\phi)\mid v\in \mathcal{W}\}=\mathcal{V}(w,!t:t:\phi).
\]
\end{enumerate}
\end{proof}
\section{Completeness for algebraic semantics}
To approach completeness, we translate the language $\mathcal{L}_J$ to $\mathcal{L}_0^\star$ by introducing the translation
\[
\star:\mathcal{L}_J\to\mathcal{L}_0^\star
\]
using recursion on $\mathcal{L}_J$ with the following clauses:
\begin{itemize}
\item $\bot^\star:=\bot$;
\item $p^\star:= p$;
\item $(\phi\circ\psi)^\star:=\phi^\star\circ\psi^\star$ with $\circ\in\{\land,\lor,\rightarrow\}$;
\item $(t:\phi)^\star:=\phi_t$.
\end{itemize}
Using the above translation, we can convert formulae containing justification modalities into formulae of $\mathcal{L}_0^\star$ and use semantic results for the intermediate logic in question \emph{over $\mathcal{L}_0^\star$} to derive results for the corresponding intermediate justification logic. This approach, especially in the context of algebra-valued modal logics, goes back to Caicedo and Rodriguez work \cite{CR2010} (see also \cite{Vid2015}) and was previously also applied in the context of many-valued justification logics (see \cite{Pis2020}).

For this, the following lemma provides a way to interpret modal systems in extended propositional systems. For this, given a proof calculus $\mathbf{S}$ over a language $\mathcal{L}$, we write $Th_{\mathbf{S}}:=\{\phi\in\mathcal{L}\mid\;\vdash_\mathbf{S}\phi\}$.
\begin{lemma}\label{lem:startrans}
Let $\mathbf{L}$ be an intermediate logic and $\mathbf{LJL}_0\in\{\mathbf{LJ}_0,\mathbf{LJT}_0,\mathbf{LJ4}_0,\mathbf{LJT4}_0\}$ and $CS$ be a constant specification for $\mathbf{LJL}_0$. For any $\Gamma\cup\{\phi\}\subseteq\mathcal{L}_J$:
\[
\Gamma\vdash_{\mathbf{LJL}_{CS}}\phi\text{ iff }\Gamma^\star\cup(Th_{\mathbf{LJL}_{CS}})^\star\vdash_{\mathbf{L}^\star}\phi^\star.
\]
\end{lemma}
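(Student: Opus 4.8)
The plan is to exploit that the translation $\star$ is a \emph{bijection} $\mathcal{L}_J\to\mathcal{L}_0^\star$: an easy induction on $\mathcal{L}_J$ shows $\star$ is injective and surjective, with inverse $\star^{-1}$ obtained by replacing each atom $\phi_t\in Var^\star$ by $t:\phi$ and each $p\in Var$ by $p$, commuting with $\land,\lor,\rightarrow,\bot$. In particular both $\star$ and $\star^{-1}$ commute with the propositional connectives, which lets me move freely between provability over $\mathcal{L}_J$ and over $\mathcal{L}_0^\star$. I would then prove the two directions separately.

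For the direction from left to right, suppose $\Gamma\vdash_{\mathbf{LJL}_{CS}}\phi$, so there are $\gamma_1,\dots,\gamma_n\in\Gamma$ with $\bigwedge_{i=1}^n\gamma_i\rightarrow\phi\in\mathbf{LJL}_{CS}=Th_{\mathbf{LJL}_{CS}}$. Applying $\star$ and using that it commutes with $\land$ and $\rightarrow$ gives $\bigwedge_{i=1}^n\gamma_i^\star\rightarrow\phi^\star\in(Th_{\mathbf{LJL}_{CS}})^\star$, while each $\gamma_i^\star\in\Gamma^\star$. Since $\mathbf{L}^\star$ contains $\mathbf{IPC}$ (over $Var^\star$), conjoining the $\gamma_i^\star$ and applying modus ponens with this implication yields $\Gamma^\star\cup(Th_{\mathbf{LJL}_{CS}})^\star\vdash_{\mathbf{L}^\star}\phi^\star$. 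This direction is essentially bookkeeping.

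The substantial direction is right to left, and its heart is the pullback claim: for every $\chi\in\mathcal{L}_J$, if $\chi^\star\in\mathbf{L}^\star$ then $\chi\in\overline{\mathbf{L}}$ (hence $\chi\in\mathbf{LJL}_{CS}$). To prove the claim I use $\mathbf{L}^\star=\mathbf{L}(Var^\star)=t[\mathbf{L}]$ for the fixed bijection $t:Var\to Var^\star$ extended to formulae: from $\chi^\star\in t[\mathbf{L}]$ I get $\rho:=t^{-1}(\chi^\star)\in\mathbf{L}$, and setting $\sigma:=\star^{-1}\circ t$ on $Var$ (so $\sigma(q)=t(q)$ when $t(q)\in Var$ and $\sigma(q)=s:\psi$ when $t(q)=\psi_s$) defines a substitution $\sigma:Var\to\mathcal{L}_J$ with $\sigma(\rho)=\star^{-1}(t(\rho))=\star^{-1}(\chi^\star)=\chi$; since $\rho\in\mathbf{L}$ and $\overline{\mathbf{L}}$ is the $\mathcal{L}_J$-substitution closure of $\mathbf{L}$, this yields $\chi\in\overline{\mathbf{L}}\subseteq\mathbf{LJL}_{CS}$. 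Granting the claim, I assume $\Gamma^\star\cup(Th_{\mathbf{LJL}_{CS}})^\star\vdash_{\mathbf{L}^\star}\phi^\star$, choose $\alpha_1,\dots,\alpha_k$ from $\Gamma^\star\cup(Th_{\mathbf{LJL}_{CS}})^\star$ with $\bigwedge_{j=1}^k\alpha_j\rightarrow\phi^\star\in\mathbf{L}^\star$, and put $\beta_j:=\star^{-1}(\alpha_j)$, so each $\beta_j$ lies in $\Gamma$ or in $Th_{\mathbf{LJL}_{CS}}$ and $\bigl(\bigwedge_{j=1}^k\beta_j\rightarrow\phi\bigr)^\star=\bigwedge_{j=1}^k\alpha_j\rightarrow\phi^\star\in\mathbf{L}^\star$. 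The claim gives $\bigwedge_{j=1}^k\beta_j\rightarrow\phi\in\mathbf{LJL}_{CS}$, and I then discharge the $\beta_j\in Th_{\mathbf{LJL}_{CS}}$ one at a time via the intuitionistically valid principle that $\vdash B$ and $\vdash(A\land B)\rightarrow C$ imply $\vdash A\rightarrow C$, leaving a conjunction of members of $\Gamma$ that implies $\phi$, i.e. $\Gamma\vdash_{\mathbf{LJL}_{CS}}\phi$ (the degenerate case where no conjunct comes from $\Gamma$ is routine).

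I expect the main obstacle to be the pullback claim, specifically making precise the interplay between the \emph{renaming} bijection $t$ defining $\mathbf{L}^\star=t[\mathbf{L}]$ and the \emph{substitution} $\sigma$ into the richer language $\mathcal{L}_J$: one must verify that $\star$ is genuinely a bijection commuting with the connectives and that $\sigma=\star^{-1}\circ t$ is a legitimate $\mathcal{L}_J$-substitution with $\sigma(\rho)=\chi$, so that closure of $\mathbf{L}$ under $\mathcal{L}_J$-substitutions can be invoked. Everything else reduces to elementary intuitionistic propositional reasoning together with the deduction theorem already noted for $\mathbf{LJL}_{CS}$.
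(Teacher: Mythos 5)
Your proof is correct and follows essentially the same route as the paper: the key step in both is the pullback of members of $\mathbf{L}^\star$ into $\overline{\mathbf{L}}$ via the substitution $\sigma=\star^{-1}\circ t$, exactly as in the paper's case (c). The only difference is presentational — the paper argues by induction on derivations (axioms/premises plus closure under modus ponens), whereas you work directly with the finite-conjunction characterization of $\vdash$ and the deduction theorem, which amounts to the same thing.
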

\begin{proof}
We prove both directions separately. In any way, recall that $\star$ is a bijection between $\mathcal{L}_J$ and $\mathcal{L}_0^\star$.\\

For the direction from left to right, notice that it suffices to show $\Gamma^\star\cup(Th_{\mathbf{LJL}_{CS}})^\star\vdash_{\mathbf{L}^\star}\phi^\star$ for
\begin{enumerate}[(i)]
\item $\phi\in\Gamma$, or
\item $\phi\in CS$, or
\item $\phi\in\mathbf{LJL}_0$,
\end{enumerate}
and that it is preserved under modus ponens. The latter is obvious by definition of $\star$. For (i) of the former, we have $\phi^\star\in\Gamma^\star$ and thus $\Gamma^\star\cup(Th_{\mathbf{LJL}_{CS}})^\star\vdash_{\mathbf{L}^\star}\phi^\star$. For (ii) and (iii), we have $\vdash_{\mathbf{LJL}_{CS}}\phi$ and thus $\phi^\star\in (Th_{\mathbf{LJL}_{CS}})^\star$. Hence, we also obtain $\Gamma^\star\cup(Th_{\mathbf{LJL}_{CS}})^\star\vdash_{\mathbf{L}^\star}\phi^\star$.\\

For the direction from right to left, note that also here it suffices to show $\Gamma\vdash_{\mathbf{LJL}_{CS}}\phi$ for
\begin{enumerate}[(a)]
\item $\phi^\star\in\Gamma^\star$, or
\item $\phi^\star\in(Th_{\mathbf{LJL}_{CS}})^\star$, or
\item $\phi^\star\in\mathbf{L}^\star$,
\end{enumerate}
and that also here, it is preserved under modus ponens. The latter is again immediate. For (a) of the former, we have $\phi\in\Gamma$ which gives $\Gamma\vdash_{\mathbf{LJL}_{CS}}\phi$ directly. For (b), we have $\vdash_{\mathbf{LJL}_{CS}}\phi$ by definition. For (c), we have $\phi\in\overline{\mathbf{L}}$. To see this, note that by the definition of $\mathbf{L}^\star=\mathbf{L}(\mathcal{L}_0^\star)$, we have $\phi^\star=\sigma(\psi)$ for some $\psi\in\mathbf{L}$ and some bijection $t:Var\to Var^\star$. Now, the function
\[
\sigma_t:p\mapsto\begin{cases}q&\text{if }t(p)=q\\s:\phi&\text{if }t(p)=\phi_s\end{cases}
\]
is a substitution from $Var$ to $\mathcal{L}_J$ and we get
\[
\phi=\sigma_t(\psi).
\]
Thus, we have $\phi\in\overline{\mathbf{L}}$ and hence $\phi\in\mathbf{LJL}_0$, i.e. $\Gamma\vdash_{\mathbf{LJL}_{CS}}\phi$.
\end{proof}
The rest of this section is devoted countermodel constructions, converting algebraic evaluations of $\mathcal{L}_0^\star$ into corresponding algebraic Mkrtychev, Fitting or subset models and deriving corresponding completeness results for the intermediate justification logics from this.
\subsection{Completeness w.r.t. algebraic Mkrtychev models}
\begin{definition}
Let $\mathbf{L}$ be an intermediate logic and let $\mathbf{LJL}_0\in\{\mathbf{LJ}_0,\mathbf{LJT}_0,\mathbf{LJ4}_0,\mathbf{LJT4}_0\}$ where $CS$ is a constant specification for $\mathbf{LJL}_0$. Let $\mathbf{A}$ be a Heyting algebra and $v\in\mathsf{Ev}(\mathbf{A};\mathcal{L}_0^\star)$. The \emph{canonical algebraic Mkrtychev model w.r.t. $\mathbf{A}$ and $v$} is the structure $\mathfrak{M}_{\mathbf{A},v}^{c,M}(\mathbf{LJL}_{CS}):=\langle\mathbf{A},\mathcal{V}^c\rangle$ defined by:
\[
\mathcal{V}^c(\phi):=v(\phi^\star).
\]
\end{definition}
\begin{lemma}\label{lem:algmkrtmodwelldef}
For any Heyting algebra $\mathbf{A}$, any $v\in\mathsf{Ev}(\mathbf{A};\mathcal{L}_0^\star)$ with $v[(Th_{\mathbf{LJL}_{CS}})^\star]\subseteq\{1^\mathbf{A}\}$ and any choice of $\mathbf{LJL}_{CS}$, $\mathfrak{M}^{c,M}_{\mathbf{A},v}(\mathbf{LJL}_{CS})$ is a well-defined $\mathbf{A}$-valued algebraic Fitting model. Further:
\begin{enumerate}[(a)]
\item if $(F)$ is an axiom scheme of $\mathbf{LJL}_{CS}$, then $\mathfrak{M}^{c,M}_{\mathbf{A},v}(\mathbf{LJL}_{CS})$ is factive;
\item if $(I)$ is an axiom scheme of $\mathbf{LJL}_{CS}$, then $\mathfrak{M}^{c,M}_{\mathbf{A},v}(\mathbf{LJL}_{CS})$ is introspective. 
\end{enumerate}
\end{lemma}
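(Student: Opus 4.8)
The plan is to unwind both claims into (in)equalities in $\mathbf{A}$ that are witnessed by the hypothesis $v[(Th_{\mathbf{LJL}_{CS}})^\star]\subseteq\{1^\mathbf{A}\}$, exploiting that $\star$ commutes with the propositional connectives and sends each justified formula to a single variable in $Var^\star$. Concretely, I first record the translation identities $(t:\phi)^\star=\phi_t$, $([t\cdot s]:\psi)^\star=\psi_{t\cdot s}$, $([t+s]:\phi)^\star=\phi_{t+s}$ and $(!t:t:\phi)^\star=(t:\phi)_{!t}$, so that each clause of Definition \ref{def:algmkrtmod} becomes a statement purely about the values $v$ assigns to such variables. (Note that the statement says ``Fitting model'' but, being in the Mkrtychev section with $\mathfrak{M}=\langle\mathbf{A},\mathcal{V}^c\rangle$, the intended claim is that it is an algebraic \emph{Mkrtychev} model.)

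I would then verify the propositional clauses (1)--(4) for $\mathcal{V}^c$. Since $\mathcal{V}^c(\phi)=v(\phi^\star)$, $v$ is a propositional evaluation, and $\star$ commutes with $\bot,\land,\lor,\rightarrow$, each clause is immediate; for instance $\mathcal{V}^c(\phi\rightarrow\psi)=v((\phi\rightarrow\psi)^\star)=v(\phi^\star)\rightarrow^\mathbf{A}v(\psi^\star)=\mathcal{V}^c(\phi)\rightarrow^\mathbf{A}\mathcal{V}^c(\psi)$, and the remaining cases are analogous. This step is purely mechanical.

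The genuine content lies in conditions (i) and (ii). For (i), the instance $t:(\phi\rightarrow\psi)\rightarrow(s:\phi\rightarrow[t\cdot s]:\psi)$ of $(J)$ lies in $Th_{\mathbf{LJL}_{CS}}$, so its translation $(\phi\rightarrow\psi)_t\rightarrow(\phi_s\rightarrow\psi_{t\cdot s})$ lies in $(Th_{\mathbf{LJL}_{CS}})^\star$ and is sent to $1^\mathbf{A}$ by $v$. Reading off the value through the evaluation clauses and using that $x\rightarrow^\mathbf{A}y=1^\mathbf{A}$ iff $x\leq^\mathbf{A}y$, together with residuation ($x\land^\mathbf{A}y\leq^\mathbf{A}z$ iff $x\leq^\mathbf{A}y\rightarrow^\mathbf{A}z$), I obtain $\mathcal{V}^c(t:(\phi\rightarrow\psi))\land^\mathbf{A}\mathcal{V}^c(s:\phi)\leq^\mathbf{A}\mathcal{V}^c([t\cdot s]:\psi)$, i.e.\ (i). For (ii), the two halves of $(+)$, namely $t:\phi\rightarrow[t+s]:\phi$ and the instance $s:\phi\rightarrow[t+s]:\phi$, give $v(\phi_t)\leq^\mathbf{A}v(\phi_{t+s})$ and $v(\phi_s)\leq^\mathbf{A}v(\phi_{t+s})$; since the join is the least upper bound this yields $v(\phi_t)\lor^\mathbf{A}v(\phi_s)\leq^\mathbf{A}v(\phi_{t+s})$, i.e.\ (ii).

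Parts (a) and (b) follow the same template. If $(F)$ is present then $t:\phi\rightarrow\phi\in Th_{\mathbf{LJL}_{CS}}$, so $v(\phi_t\rightarrow\phi^\star)=1^\mathbf{A}$, whence $\mathcal{V}^c(t:\phi)=v(\phi_t)\leq^\mathbf{A}v(\phi^\star)=\mathcal{V}^c(\phi)$, giving factivity; if $(I)$ is present then $t:\phi\rightarrow!t:t:\phi\in Th_{\mathbf{LJL}_{CS}}$ translates to $\phi_t\rightarrow(t:\phi)_{!t}$, so $\mathcal{V}^c(t:\phi)=v(\phi_t)\leq^\mathbf{A}v((t:\phi)_{!t})=\mathcal{V}^c(!t:t:\phi)$, giving introspectivity. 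The only point demanding care throughout is the bookkeeping of the fresh variables produced by $\star$ and the confirmation that each relevant axiom instance (for arbitrary $t,s,\phi,\psi$) genuinely lies in $(Th_{\mathbf{LJL}_{CS}})^\star$; once that is in place, every condition collapses to a one-line Heyting-algebra manipulation, so I anticipate no substantial obstacle.
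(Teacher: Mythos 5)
Your proof is correct and follows essentially the same route as the paper's: conditions (1)--(4) come from $v$ being a propositional evaluation together with $\star$ commuting with the connectives, while conditions (i), (ii) and parts (a), (b) are read off from the $\star$-translations of the justification axioms being sent to $1^\mathbf{A}$, combined with the basic Heyting-algebra facts (residuation and $x\rightarrow^\mathbf{A}y=1^\mathbf{A}$ iff $x\leq^\mathbf{A}y$). Your side remark is also right: ``algebraic Fitting model'' in the statement is a typo for ``algebraic Mkrtychev model''.
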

\begin{proof}
As $v\in\mathsf{Ev}(\mathbf{A};\mathcal{L}_0^\star)$, we have items (1) - (4) from Definition \ref{def:algmkrtmod}. Then, as additionally $v[(Th_{\mathbf{LJL}_{CS}})^\star]\subseteq\{1^\mathbf{A}\}$, we get
\begin{align*}
\mathcal{V}^c(t:(\phi\rightarrow\psi))\land^\mathbf{A} \mathcal{V}^c(s:\phi)&=v((\phi\rightarrow\psi)_t)\land^\mathbf{A}v(\phi_s)\\
&\leq^\mathbf{A}v(\psi_{[t\cdot s]})\\
&=\mathcal{V}^c([t\cdot s]:\psi)
\end{align*}
and
\begin{align*}
\mathcal{V}^c(t:\phi)\lor^\mathbf{A} \mathcal{V}^c(s:\phi)&=v(\phi_t)\lor^\mathbf{A}v(\phi_s)\\
&\leq^\mathbf{A}v(\phi_{[t+s]})\\
&=\mathcal{V}^c([t+s]:\phi)
\end{align*}
regarding items (i) and (ii) of Definition \ref{def:algmkrtmod}. Now, regarding (a), if $(F)$ is an axiom scheme of $\mathbf{LJL}_{CS}$, we naturally have 
\[
\mathcal{V}^c(t:\phi) =v(\phi_t)\leq v(\phi^\star)=\mathcal{V}^c(\phi).
\]
As for item (b), if $(I)$ is an axiom scheme of $\mathbf{LJL}_{CS}$, we have
\[
\mathcal{V}^c(t:\phi)=v(\phi_t)\leq^\mathbf{A}v((t:\phi)_{!t})=\mathcal{V}^c(!t:t:\phi).
\]
\end{proof}
\begin{theorem}\label{thm:algmkrtmodcomp}
Let $\mathbf{L}$ be an intermediate logic and let $\mathbf{LJL}_0\in\{\mathbf{LJ}_0,\mathbf{LJT}_0,\mathbf{LJ4}_0,\mathbf{LJT4}_0\}$ where $CS$ is a constant specification for $\mathbf{LJL}_0$. Further, let $\mathsf{C}\in\mathsf{Alg}(\mathbf{L})$ and let $\mathsf{CAMJL}$ be the class of algebraic Mkrtychev models corresponding to $\mathbf{LJL}_0$ and $\mathsf{C}$.

For any $\Gamma\cup\{\phi\}\subseteq\mathcal{L}_J$, the following are equivalent:
\begin{enumerate}
\item $\Gamma\vdash_{\mathbf{LJL}_{CS}}\phi$;
\item $\Gamma\models_{\mathsf{CAMJL}_{CS}}\phi$;
\item $\Gamma\models^1_{\mathsf{CAMJL}_{CS}}\phi$.
\end{enumerate}
\end{theorem}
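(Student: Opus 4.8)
The plan is to establish the cycle of implications $(1)\Rightarrow(2)\Rightarrow(3)\Rightarrow(1)$. The implication $(1)\Rightarrow(2)$ is exactly the soundness statement already proven in Lemma \ref{lem:algmkrtmodsoundness}, so nothing new is needed there. The implication $(2)\Rightarrow(3)$ is immediate from the definitions: if $\Wedge{\mathbf{A}}\{\mathcal{V}(\gamma)\mid\gamma\in\Gamma\}\leq^\mathbf{A}\mathcal{V}(\phi)$ holds for every model in $\mathsf{CAMJL}_{CS}$, then in particular any model with $\mathfrak{M}\models\Gamma$ (i.e.\ $\mathcal{V}(\gamma)=1^\mathbf{A}$ for all $\gamma\in\Gamma$, whence the meet equals $1^\mathbf{A}$) must satisfy $\mathcal{V}(\phi)=1^\mathbf{A}$, that is $\mathfrak{M}\models\phi$. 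Thus the real content lies in the completeness direction $(3)\Rightarrow(1)$, which I would prove contrapositively.

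\emph{The main step} $(3)\Rightarrow(1)$ proceeds by contraposition using the $\star$-translation and the canonical model construction. Suppose $\Gamma\not\vdash_{\mathbf{LJL}_{CS}}\phi$. By Lemma \ref{lem:startrans}, this is equivalent to
\[
\Gamma^\star\cup(Th_{\mathbf{LJL}_{CS}})^\star\not\vdash_{\mathbf{L}^\star}\phi^\star.
\]
Since $\mathsf{C}\in\mathsf{Alg}(\mathbf{L})$ and $\mathbf{L}^\star=\mathbf{L}(Var^\star)$, the class $\mathsf{C}$ is also complete for $\mathbf{L}^\star$ (by the remark that $\mathsf{Alg}(\mathbf{L}(X))$ is independent of the variable set $X$). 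Hence the failure of the syntactic consequence yields, by algebraic completeness of $\mathbf{L}^\star$, some Heyting algebra $\mathbf{A}\in\mathsf{C}$ and some evaluation $v\in\mathsf{Ev}(\mathbf{A};\mathcal{L}_0^\star)$ with $v[\Gamma^\star\cup(Th_{\mathbf{LJL}_{CS}})^\star]\subseteq\{1^\mathbf{A}\}$ but $v(\phi^\star)\neq 1^\mathbf{A}$. I then form the canonical model $\mathfrak{M}:=\mathfrak{M}^{c,M}_{\mathbf{A},v}(\mathbf{LJL}_{CS})$. By Lemma \ref{lem:algmkrtmodwelldef}, since $v[(Th_{\mathbf{LJL}_{CS}})^\star]\subseteq\{1^\mathbf{A}\}$, this is a well-defined $\mathbf{A}$-valued algebraic Mkrtychev model which is factive whenever $(F)$ is present and introspective whenever $(I)$ is present, so it lies in $\mathsf{CAMJL}$. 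Moreover, for each $c\!:\!\chi\in CS$ we have $(c\!:\!\chi)\in Th_{\mathbf{LJL}_{CS}}$, so $\mathcal{V}^c(c\!:\!\chi)=v((c\!:\!\chi)^\star)=1^\mathbf{A}$; thus $\mathfrak{M}$ respects $CS$ and in fact $\mathfrak{M}\in\mathsf{CAMJL}_{CS}$.

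It remains to read off the failure of $\Gamma\models^1_{\mathsf{CAMJL}_{CS}}\phi$ from this countermodel. By the defining clause $\mathcal{V}^c(\chi)=v(\chi^\star)$, for every $\gamma\in\Gamma$ we have $\mathcal{V}^c(\gamma)=v(\gamma^\star)=1^\mathbf{A}$ (as $\gamma^\star\in\Gamma^\star$ and $v[\Gamma^\star]\subseteq\{1^\mathbf{A}\}$), so $\mathfrak{M}\models\Gamma$; yet $\mathcal{V}^c(\phi)=v(\phi^\star)\neq 1^\mathbf{A}$, so $\mathfrak{M}\not\models\phi$. This witnesses $\Gamma\not\models^1_{\mathsf{CAMJL}_{CS}}\phi$, completing the contrapositive and hence the cycle. \emph{The only delicate point} is ensuring that the algebra produced by the completeness of $\mathbf{L}^\star$ can be taken within the fixed class $\mathsf{C}$; this is exactly guaranteed by the variable-set invariance of $\mathsf{Alg}(\mathbf{L})$ noted after Fact~2.x, so no additional work is required beyond invoking it.
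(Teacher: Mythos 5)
Your proposal is correct and follows essentially the same route as the paper: soundness via Lemma \ref{lem:algmkrtmodsoundness}, the trivial step from $\models$ to $\models^1$, and then the contrapositive of $(3)\Rightarrow(1)$ via Lemma \ref{lem:startrans}, algebraic completeness of $\mathbf{L}^\star$ over $\mathsf{C}$, and the canonical model $\mathfrak{M}^{c,M}_{\mathbf{A},v}(\mathbf{LJL}_{CS})$ together with Lemma \ref{lem:algmkrtmodwelldef} and the $CS$-respecting check. Your explicit appeal to the variable-set invariance of $\mathsf{Alg}(\mathbf{L})$ makes precise a step the paper leaves implicit.
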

\begin{proof}
(1) implies (2) comes from Lemma \ref{lem:algmkrtmodsoundness} and (2) implies (3) is natural. For (3) implies (1), suppose $\Gamma\not\vdash_{\mathbf{LJL}_{CS}}\phi$. Then, by Lemma \ref{lem:startrans}, we know
\[
\Gamma^\star\cup (Th_{\mathbf{LJL}_{CS}})^\star\not\vdash_{\mathbf{L}^\star}\phi^\star
\]
which implies that there exists an $\mathbf{A}\in\mathsf{C}$ and a $v\in\mathsf{Ev}(\mathbf{A};\mathcal{L}_0^\star)$ such that
\[
v[\Gamma^\star]\subseteq\{1^\mathbf{A}\}, v[(Th_{\mathbf{LJL}_{CS}})^\star]\subseteq\{1^\mathbf{A}\}\text{ and }v(\phi)<1^\mathbf{A}
\]
by assumption on $\mathsf{C}$. By Lemma \ref{lem:algmkrtmodwelldef}, we have that $\mathfrak{M}^{c,M}_{\mathbf{\mathbf{A},v}}(\mathbf{LJL}_{CS})$ is a well-defined $\mathsf{CAMJL}$-model and by definition, it follows that:
\[
\mathfrak{M}\models\Gamma\text{ and }\mathfrak{M}\not\models\phi.
\]
Also, $\mathfrak{M}^{c,M}_{\mathbf{A},v}(\mathbf{LJL}_{CS})$ respects $CS$. As we have $\vdash_{\mathbf{LJL}_{CS}}c:\phi$ for $c:\phi\in CS$, we have $\phi_c\in (Th_{\mathbf{LJL}_{CS}})^\star$ and thus $v(\phi_c)=1^\mathbf{A}$. By definition, this yields
\[
\mathcal{V}^c(c:\phi)=v(\phi_c)=1^\mathbf{A}.
\]
Hence, we obtain $\Gamma\not\models^1_{\mathsf{CAMJL}_{CS}}\phi$.
\end{proof}
\subsection{Completeness w.r.t. algebraic Fitting models}
\begin{definition}
Let $\mathbf{L}$ be an intermediate propositional logic and let $\mathbf{LJL}_0\in\{\mathbf{LJ}_0,\mathbf{LJT}_0,\mathbf{LJ4}_0,\mathbf{LJT4}_0\}$ where $CS$ is a constant specification for $\mathbf{LJL}_0$. Let $\mathbf{A}$ be a complete Heyting algebra. The \emph{canonical algebraic Fitting model w.r.t. $\mathbf{A}$} is the structure $\mathfrak{M}_\mathbf{A}^{c,F}(\mathbf{LJL}_{CS}):=\langle \mathbf{A},\mathcal{W}^c,\mathcal{W}^c_0,\mathcal{E}^c,\mathcal{V}^c\rangle$ defined as follows:
\begin{itemize}
\item $\mathcal{W}^c:=\{v\in\mathsf{Ev}(\mathbf{A};\mathcal{L}_0^\star)\mid v[(Th_{\mathbf{LJL}_{CS}})^\star]\subseteq\{1^\mathbf{A}\}\}$;
\item $\mathcal{R}^c(v,w):=\begin{cases}1^\mathbf{A}&\text{if }\forall t\in Jt\forall\phi\in\mathcal{L}_J\left(v(\phi_t)\leq^\mathbf{A} w(\phi^\star)\right);\\0^\mathbf{A}&\text{otherwise};\end{cases}$
\item $\mathcal{E}^c_v(t,\phi):=v(\phi_t)$;
\item $\mathcal{V}^c(v,\phi):=v(\phi^\star)$.
\end{itemize}
\end{definition}
\begin{lemma}\label{lem:algfittingmodwelldef}
For any complete Heyting algebra $\mathbf{A}$ and any choice of $\mathbf{LJL}_{CS}$, $\mathfrak{M}^{c,F}_\mathbf{A}(\mathbf{LJL}_{CS})$ is a well-defined $\mathbf{A}$-valued algebraic Fitting model. Further:
\begin{enumerate}[(a)]
\item if $(F)$ is an axiom scheme of $\mathbf{LJL}_{CS}$, then $\mathfrak{M}^{c,F}_\mathbf{A}(\mathbf{LJL}_{CS})$ is reflexive;
\item if $(I)$ is an axiom scheme of $\mathbf{LJL}_{CS}$, then $\mathfrak{M}^{c,F}_\mathbf{A}(\mathbf{LJL}_{CS})$ is introspective. 
\end{enumerate}
\end{lemma}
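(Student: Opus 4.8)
The plan is to check the defining conditions of Definition~\ref{def:algfittingmod} for the structure $\mathfrak{M}^{c,F}_\mathbf{A}(\mathbf{LJL}_{CS})$ one by one, and then to verify the two additional properties (a) and (b). Throughout, the decisive fact is that every world $v\in\mathcal{W}^c$ satisfies $v[(Th_{\mathbf{LJL}_{CS}})^\star]\subseteq\{1^\mathbf{A}\}$, so the $\star$-translation of any theorem of $\mathbf{LJL}_{CS}$ is sent to $1^\mathbf{A}$; combined with the residuation law $x\land^\mathbf{A}y\leq^\mathbf{A}z$ iff $x\leq^\mathbf{A}y\rightarrow^\mathbf{A}z$, this converts provable implications into algebraic inequalities. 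The propositional clauses (1)--(4) are immediate from $v\in\mathsf{Ev}(\mathbf{A};\mathcal{L}_0^\star)$ together with the fact that $\star$ commutes with $\land,\lor,\rightarrow,\bot$, and the evidence conditions (i) and (ii) follow by applying the observation above to the theorems $(J)$ and $(+)$: for instance $(\phi\rightarrow\psi)_t\rightarrow(\phi_s\rightarrow\psi_{[t\cdot s]})\in(Th_{\mathbf{LJL}_{CS}})^\star$ yields $v((\phi\rightarrow\psi)_t)\land^\mathbf{A}v(\phi_s)\leq^\mathbf{A}v(\psi_{[t\cdot s]})$, which is exactly $\mathcal{E}^c_v(t,\phi\rightarrow\psi)\land^\mathbf{A}\mathcal{E}^c_v(s,\phi)\leq^\mathbf{A}\mathcal{E}^c_v(t\cdot s,\psi)$.

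The heart of the well-definedness claim is clause (5), where I must show
\[
\mathcal{V}^c(v,t:\phi)=\mathcal{E}^c_v(t,\phi)\land^\mathbf{A}\Wedge{\mathbf{A}}\{\mathcal{R}^c(v,w)\rightarrow^\mathbf{A}\mathcal{V}^c(w,\phi)\mid w\in\mathcal{W}^c\}.
\]
Since $\mathcal{V}^c(v,t:\phi)=v((t:\phi)^\star)=v(\phi_t)=\mathcal{E}^c_v(t,\phi)$, this reduces to proving $v(\phi_t)\land^\mathbf{A}\mathcal{R}^c(v,w)\leq^\mathbf{A}w(\phi^\star)$ for every $w$. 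Here the crispness of $\mathcal{R}^c$ does the work: if $\mathcal{R}^c(v,w)=1^\mathbf{A}$ then by definition $v(\phi_t)\leq^\mathbf{A}w(\phi^\star)$, and if $\mathcal{R}^c(v,w)=0^\mathbf{A}$ the inequality is trivial. Taking the meet over all $w$ (which exists since $\mathbf{A}$ is complete) gives $v(\phi_t)\leq^\mathbf{A}\Wedge{\mathbf{A}}\{\mathcal{R}^c(v,w)\rightarrow^\mathbf{A}w(\phi^\star)\mid w\in\mathcal{W}^c\}$, so the meet with $\mathcal{E}^c_v(t,\phi)=v(\phi_t)$ collapses to $v(\phi_t)$, as required. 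For part (a), reflexivity $\mathcal{R}^c(v,v)=1^\mathbf{A}$ amounts to $v(\phi_t)\leq^\mathbf{A}v(\phi^\star)$ for all $t,\phi$, which is precisely the inequality furnished by the theorem $(F)$, whose translation $\phi_t\rightarrow\phi^\star$ lies in $(Th_{\mathbf{LJL}_{CS}})^\star$.

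Part (b) is where the real work lies, since introspectivity demands transitivity, monotonicity, and the evidence inequality simultaneously. The inequality $\mathcal{E}^c_v(t,\phi)\leq^\mathbf{A}\mathcal{E}^c_v(!t,t:\phi)$, i.e. $v(\phi_t)\leq^\mathbf{A}v((t:\phi)_{!t})$, comes directly from the theorem $(I)$: ${t:\phi\rightarrow{!t}:t:\phi}$. The subtle points are transitivity and monotonicity, and the key move I would make is to probe the accessibility relation not at the pair $(t,\phi)$ but at the \emph{lifted} pair $(!t,\,t:\phi)$. When $\mathcal{R}^c(v,w)=1^\mathbf{A}$, its definition gives $v((t:\phi)_{!t})\leq^\mathbf{A}w((t:\phi)^\star)=w(\phi_t)$, and chaining with $(I)$ yields $v(\phi_t)\leq^\mathbf{A}v((t:\phi)_{!t})\leq^\mathbf{A}w(\phi_t)$. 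This single inequality $v(\phi_t)\leq^\mathbf{A}w(\phi_t)$ immediately gives monotonicity (the case $\mathcal{R}^c(v,w)=0^\mathbf{A}$ being trivial), and for transitivity, if moreover $\mathcal{R}^c(w,u)=1^\mathbf{A}$ then $w(\phi_t)\leq^\mathbf{A}u(\phi^\star)$, so $v(\phi_t)\leq^\mathbf{A}u(\phi^\star)$ for all $t,\phi$, that is $\mathcal{R}^c(v,u)=1^\mathbf{A}$; crispness then upgrades both to the full lattice-valued statements. The main obstacle is precisely spotting this lifting trick: once the accessibility relation is evaluated at $(!t,t:\phi)$, everything else is forced by $(I)$ together with the crispness of $\mathcal{R}^c$.
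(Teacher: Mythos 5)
Your proof is correct and follows essentially the same route as the paper's: clauses (1)--(4) from $v\in\mathsf{Ev}(\mathbf{A};\mathcal{L}_0^\star)$, clause (5) by exploiting the crispness of $\mathcal{R}^c$ to collapse the infimum onto $v(\phi_t)$, clauses (i)--(ii) and parts (a)--(b) from the $\star$-translations of the corresponding axiom schemes. In particular, the ``lifting trick'' you highlight for part (b) --- instantiating the definition of $\mathcal{R}^c$ at the pair $(!t,\,t:\phi)$ to get $v(\phi_t)\leq^\mathbf{A}v((t:\phi)_{!t})\leq^\mathbf{A}w(\phi_t)$ and deriving monotonicity and transitivity from it --- is exactly the argument the paper uses.
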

\begin{proof}
Conditions (1) - (4) from Definition \ref{def:algfittingmod} follow immediately for any $v\in \mathcal{W}^c$ as $v\in\mathsf{Ev}(\mathbf{A};\mathcal{L}_0^\star)$ and by definition of $\star$. For item (5), we have
\[
v(\phi_t)\leq^\mathbf{A} w(\phi^\star)
\]
for any $w\in \mathcal{W}^c$ with $\mathcal{R}^c(v,w)=1^\mathbf{A}$. Thus, we obtain
\[
v(\phi_t)\leq^\mathbf{A}\Wedge{\mathbf{A}}\{w(\phi^\star)\mid w\in \mathcal{W}^c, \mathcal{R}^c(v,w)=1^\mathbf{A}\}=\Wedge{\mathbf{A}}\{\mathcal{R}^c(v,w)\rightarrow^\mathbf{A}w(\phi^\star)\mid w\in \mathcal{W}^c\}.
\]
Therefore
\[
\mathcal{E}^c_v(t,\phi)\land^\mathbf{A}\Wedge{\mathbf{A}}\{\mathcal{R}^c(v,w)\rightarrow^\mathbf{A}w(\phi^\star)\mid w\in \mathcal{W}^c\}=v(\phi_t).
\]
For item (i), note that
\begin{align*}
\mathcal{E}^c_v(t,\phi\rightarrow\psi)\land^\mathbf{A}\mathcal{E}^c_v(s,\phi)&=v((\phi\rightarrow\psi)_t)\land^\mathbf{A} v(\phi_s)\\
&\leq^\mathbf{A} v(\psi_{[t\cdot s]})\\
&=\mathcal{E}^c_v(t\cdot s,\psi)
\end{align*}
where the inequality follows using the axiom scheme ($J$) as $v[(Th_{\mathbf{LJL}_{CS}})^\star]\subseteq\{1^\mathbf{A}\}$ and $v\in\mathsf{Ev}(\mathbf{A};\mathcal{L}_0^\star)$.

For item (ii), note that
\[
\mathcal{E}^c_v(t,\phi)=v(\phi_t)\leq^\mathbf{A}v(\phi_{[t+s]})=\mathcal{E}^c_v(t+s,\phi)
\]
and similarly for $s$ through the axiom scheme ($+$) as again $v[(Th_{\mathbf{LJL}_{CS}})^\star]\subseteq\{1^\mathbf{A}\}$ and $v\in\mathsf{Ev}(\mathbf{A};\mathcal{L}_0^\star)$. Thus, we have
\[
\mathcal{E}^c_v(t,\phi)\lor^\mathbf{A}\mathcal{E}^c_v(s,\phi)\leq^\mathbf{A}\mathcal{E}^c_v(t+s,\phi).
\]
On to item (a), if $(F)$ is an axiom scheme of $\mathbf{LJL}_{CS}$, then we naturally have
\[
v(\phi_t)\leq^\mathbf{A}v(\phi^\star)
\]
for any $\phi\in\mathcal{L}_J$ and any $t\in Jt$. Thus, in particular we have $\mathcal{R}^c(v,v)=1^\mathbf{A}$ by definition and hence $\mathcal{R}^c$ is reflexive.

For item (b), note at first that by the axiom scheme $(I)$, we have
\[
\mathcal{E}^c_v(t,\phi)=v(\phi_t)\leq^\mathbf{A}v((t:\phi)_{!t})=\mathcal{E}^c_v(!t,t:\phi)
\]
for any $\phi\in\mathcal{L}_J$ and any $t\in Jt$ by definition of the canonical model. Further, we have that $\mathcal{R}^c$ is transitive. For this, let $\mathcal{R}^c(v,w)=\mathcal{R}^c(w,u)=1^\mathbf{A}$. Then, we have for any $\phi\in\mathcal{L}_J$ and any $t\in Jt$:
\[
v(\phi_t)\leq^\mathbf{A}v((t:\phi)_{!t})\leq^\mathbf{A} w(\phi_t)\leq^\mathbf{A} u(\phi^\star)
\]
and thus $\mathcal{R}^c(v,u)=1^\mathbf{A}$. For the property of monotonicity, suppose $\mathcal{R}^c(v,w)=1^\mathbf{A}$. Then, we obtain
\[
\mathcal{E}^c_v(t,\phi)=v(\phi_t)\leq^\mathbf{A}v((t:\phi)_{!t})\leq^\mathbf{A}w(\phi_t)=\mathcal{E}^c_w(t,\phi)
\]
which is monotonicity.
\end{proof}
Similarly as before, we obtain the following completeness theorem.
\begin{theorem}\label{thm:algfittingmodcomp}
Let $\mathbf{L}$ be an intermediate logic and let $\mathbf{LJL}_0\in\{\mathbf{LJ}_0,\mathbf{LJT}_0,\mathbf{LJ4}_0,\mathbf{LJT4}_0\}$ where $CS$ is a constant specification for $\mathbf{LJL}_0$. Further, let $\mathsf{C}\in\mathsf{Alg}_{com}(\mathbf{L})$ and let $\mathsf{CAFJL}$ be the class of algebraic Fitting models corresponding to $\mathbf{LJL}_0$ and $\mathsf{C}$.

For any $\Gamma\cup\{\phi\}\subseteq\mathcal{L}_J$, the following are equivalent:
\begin{enumerate}
\item $\Gamma\vdash_{\mathbf{LJL}_{CS}}\phi$;
\item $\Gamma\models_{\mathsf{CAFJL}_{CS}}\phi$;
\item $\Gamma\models^1_{\mathsf{CAFJL}_{CS}}\phi$;
\item $\Gamma\models^1_{\mathsf{CAFJL^c}_{CS}}\phi$.
\end{enumerate}
\end{theorem}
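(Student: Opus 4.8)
The plan is to establish the cycle of implications $(1)\Rightarrow(2)\Rightarrow(3)\Rightarrow(4)\Rightarrow(1)$, following the template of Theorem \ref{thm:algmkrtmodcomp} but additionally exploiting the crispness built into the canonical accessibility relation so as to capture the accessibility-crisp class in item $(4)$. The implication $(1)\Rightarrow(2)$ is exactly the soundness statement of Lemma \ref{lem:algfittingmodsoundness}. The implication $(2)\Rightarrow(3)$ is immediate: if $\Wedge{\mathbf{A}}\{\mathcal{V}(w,\gamma)\mid\gamma\in\Gamma\}\leq^\mathbf{A}\mathcal{V}(w,\phi)$ and $(\mathfrak{M},w)\models\Gamma$, then the meet on the left equals $1^\mathbf{A}$, forcing $\mathcal{V}(w,\phi)=1^\mathbf{A}$. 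Finally, $(3)\Rightarrow(4)$ holds simply because $\mathsf{CAFJL^c}_{CS}\subseteq\mathsf{CAFJL}_{CS}$, so validity over the larger class entails validity over the accessibility-crisp subclass.

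The substance lies in $(4)\Rightarrow(1)$, which I would argue contrapositively. Assuming $\Gamma\not\vdash_{\mathbf{LJL}_{CS}}\phi$, Lemma \ref{lem:startrans} yields $\Gamma^\star\cup(Th_{\mathbf{LJL}_{CS}})^\star\not\vdash_{\mathbf{L}^\star}\phi^\star$. Since $\mathsf{C}\in\mathsf{Alg}_{com}(\mathbf{L})$ witnesses the strong completeness of $\mathbf{L}^\star$ over complete Heyting algebras (completeness being precisely what makes the infima in the Fitting valuation of $t:\phi$ meaningful), there is a complete Heyting algebra $\mathbf{A}\in\mathsf{C}$ and an evaluation $v\in\mathsf{Ev}(\mathbf{A};\mathcal{L}_0^\star)$ with $v[\Gamma^\star]\subseteq\{1^\mathbf{A}\}$, $v[(Th_{\mathbf{LJL}_{CS}})^\star]\subseteq\{1^\mathbf{A}\}$ and $v(\phi^\star)<1^\mathbf{A}$. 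I would then pass to the canonical algebraic Fitting model $\mathfrak{M}:=\mathfrak{M}_\mathbf{A}^{c,F}(\mathbf{LJL}_{CS})$, whose worlds are exactly those evaluations in $\mathsf{Ev}(\mathbf{A};\mathcal{L}_0^\star)$ sending $(Th_{\mathbf{LJL}_{CS}})^\star$ into $\{1^\mathbf{A}\}$; in particular $v$ is a world of $\mathfrak{M}$. By Lemma \ref{lem:algfittingmodwelldef}, $\mathfrak{M}$ is a well-defined $\mathbf{A}$-valued algebraic Fitting model lying in the class $\mathsf{CAFJL}$ appropriate to $\mathbf{LJL}_0$ (being reflexive, respectively introspective, whenever $(F)$, respectively $(I)$, is present).

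Two further observations complete the argument. First, the canonical accessibility function $\mathcal{R}^c$ takes values in $\{0^\mathbf{A},1^\mathbf{A}\}$ by its very definition, so $\mathfrak{M}$ is accessibility-crisp and hence belongs to $\mathsf{CAFJL^c}$. Second, $\mathfrak{M}$ respects $CS$: for any $c:\psi\in CS$ we have $\vdash_{\mathbf{LJL}_{CS}}c:\psi$, so $(c:\psi)^\star=\psi_c\in(Th_{\mathbf{LJL}_{CS}})^\star$, whence every world $w$ satisfies $\mathcal{V}^c(w,c:\psi)=w(\psi_c)=1^\mathbf{A}$. Thus $\mathfrak{M}\in\mathsf{CAFJL^c}_{CS}$. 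Finally, reading off the defining clause $\mathcal{V}^c(w,\chi)=w(\chi^\star)$ at the world $v$, we obtain $(\mathfrak{M},v)\models\Gamma$ from $v[\Gamma^\star]\subseteq\{1^\mathbf{A}\}$ and $(\mathfrak{M},v)\not\models\phi$ from $v(\phi^\star)<1^\mathbf{A}$. This refutes $\Gamma\models^1_{\mathsf{CAFJL^c}_{CS}}\phi$, as desired.

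The genuinely load-bearing step is the well-definedness of $\mathcal{V}^c$ as a Fitting valuation, namely that the syntactically prescribed value $v(\phi_t)$ of $t:\phi$ coincides with $\mathcal{E}^c_v(t,\phi)\land^\mathbf{A}\Wedge{\mathbf{A}}\{\mathcal{R}^c(v,w)\rightarrow^\mathbf{A}w(\phi^\star)\mid w\in\mathcal{W}^c\}$, but this is exactly the content already discharged in Lemma \ref{lem:algfittingmodwelldef}. Within the theorem itself the only genuinely new idea is therefore the crispness of $\mathcal{R}^c$, which is precisely what upgrades the completeness statement from item $(3)$ to item $(4)$.
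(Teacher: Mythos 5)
Your proposal is correct and follows exactly the route the paper intends: the paper gives Theorem \ref{thm:algfittingmodcomp} with the remark that it is obtained ``similarly as before,'' i.e.\ by the argument of Theorem \ref{thm:algmkrtmodcomp} with the canonical algebraic Fitting model of Lemma \ref{lem:algfittingmodwelldef} in place of the canonical Mkrtychev model, and your contrapositive for $(4)\Rightarrow(1)$ together with the observation that $\mathcal{R}^c$ is crisp by construction is precisely how the extra item $(4)$ is meant to be discharged. Nothing to add.
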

\subsection{Completeness w.r.t. algebraic subset models}
\begin{definition}
Let $\mathbf{L}$ be an intermediate propositional logic and let $\mathbf{LJL}_0\in\{\mathbf{LJ}_0,\mathbf{LJT}_0,\mathbf{LJ4}_0,\mathbf{LJT4}_0\}$ where $CS$ is a constant specification for $\mathbf{LJL}_0$. Let $\mathbf{A}$ be a complete Heyting algebra. The \emph{canonical algebraic subset model w.r.t. $\mathbf{A}$} is the structure $\mathfrak{M}_\mathbf{A}^{c,S}(\mathbf{LJL}_{CS}):=\langle \mathbf{A},\mathcal{W}^c,\mathcal{W}^c_0,\mathcal{E}^c,\mathcal{V}^c\rangle$ defined as follows:
\begin{itemize}
\item $\mathcal{W}^c:=A^{\mathcal{L}_J}$;
\item $\mathcal{W}^c_0:=\{v\in\mathsf{Ev}(\mathbf{A};\mathcal{L}_0^\star)\mid v[(Th_{\mathbf{LJL}_{CS}})^\star]\subseteq\{1^\mathbf{A}\}\}$;
\item $\mathcal{E}^c_t(v,w):=\begin{cases}1^\mathbf{A}&\text{if }\forall\phi\in\mathcal{L}_J\left(v(\phi_t)\leq^\mathbf{A} w(\phi^\star)\right);\\0^\mathbf{A}&\text{otherwise};\end{cases}$
\item $\mathcal{V}^c(v,\phi):=v(\phi^\star)$.
\end{itemize}
\end{definition}
\begin{lemma}\label{lem:algsubsetmodwelldef}
For any complete Heyting algebra $\mathbf{A}$ and any choice of $\mathbf{LJL}_{CS}$, $\mathfrak{M}^{c,S}_\mathbf{A}(\mathbf{LJL}_{CS})$ is a well-defined $\mathbf{A}$-valued algebraic subset model. Further:
\begin{enumerate}[(a)]
\item if $(F)$ is an axiom scheme of $\mathbf{LJL}_{CS}$, then $\mathfrak{M}^{c,S}_\mathbf{A}(\mathbf{LJL}_{CS})$ is reflexive;
\item if $(I)$ is an axiom scheme of $\mathbf{LJL}_{CS}$, then $\mathfrak{M}^{c,S}_\mathbf{A}(\mathbf{LJL}_{CS})$ is introspective.
\end{enumerate}
\end{lemma}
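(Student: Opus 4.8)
The plan is to verify, for every $w\in\mathcal{W}_0^c$, the structural conditions (1)--(5) and the regularity conditions (i)--(ii) of Definition \ref{def:algsubsetmod}, and then to read off reflexivity and introspectivity under the respective axiom hypotheses. The single device I would use everywhere is that $\mathcal{E}^c$ is two-valued: any inequality of the form $\mathcal{E}^c_r(w,v)\leq^\mathbf{A}(\dots)$ is vacuous when $\mathcal{E}^c_r(w,v)=0^\mathbf{A}$ and reduces to an ordinary inequality in $\mathbf{A}$ when $\mathcal{E}^c_r(w,v)=1^\mathbf{A}$; and the latter case unfolds, by definition of $\mathcal{E}^c$, to the universal statement $\forall\phi\,(w(\phi_r)\leq^\mathbf{A}v(\phi^\star))$. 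Conditions (1)--(4) are immediate, since each $w\in\mathcal{W}_0^c$ lies in $\mathsf{Ev}(\mathbf{A};\mathcal{L}_0^\star)$ and $\star$ commutes with the connectives, exactly as in the proof of Lemma \ref{lem:algmkrtmodwelldef}.

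The first substantial point is condition (5). Here $\mathcal{V}^c(w,t:\phi)=w(\phi_t)$, whereas the right-hand side is $\Wedge{\mathbf{A}}\{\mathcal{E}^c_t(w,v)\rightarrow^\mathbf{A}v(\phi^\star)\mid v\in\mathcal{W}^c\}$. The summands with $\mathcal{E}^c_t(w,v)=0^\mathbf{A}$ contribute $1^\mathbf{A}$ and may be discarded, so the infimum equals $\Wedge{\mathbf{A}}\{v(\phi^\star)\mid\mathcal{E}^c_t(w,v)=1^\mathbf{A}\}$. Every such $v$ satisfies $w(\phi_t)\leq^\mathbf{A}v(\phi^\star)$, so $w(\phi_t)$ is a lower bound; the crucial observation is that, because $\mathcal{W}^c=A^{\mathcal{L}_J}$ consists of \emph{all} functions rather than only evaluations, we may use the witness $\tilde v\in\mathcal{W}^c$ determined by $\tilde v(\psi^\star):=w(\psi_t)$ for all $\psi$. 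This $\tilde v$ satisfies $\mathcal{E}^c_t(w,\tilde v)=1^\mathbf{A}$ by construction and attains $\tilde v(\phi^\star)=w(\phi_t)$, so the lower bound is realized and the infimum equals $w(\phi_t)$, giving (5).

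For regularity I would extract the needed estimates from $w$ validating the $\star$-images of the relevant theorems. For (i), if $\mathcal{E}^c_{s+t}(w,v)=1^\mathbf{A}$ then the two $(+)$ axioms give $w(\phi_s),w(\phi_t)\leq^\mathbf{A}w(\phi_{[s+t]})\leq^\mathbf{A}v(\phi^\star)$ for all $\phi$, whence $\mathcal{E}^c_s(w,v)=\mathcal{E}^c_t(w,v)=1^\mathbf{A}$. For (ii), assuming $\mathcal{E}^c_{s\cdot t}(w,v)=1^\mathbf{A}$, it suffices to show $\mathfrak{M}_{s,t}^{w}(\psi)\leq^\mathbf{A}v(\psi^\star)$ for each $\psi$; each disjunct $w((\phi\rightarrow\psi)_s)\land^\mathbf{A}w(\phi_t)$ is bounded by $w(\psi_{[s\cdot t]})$ via residuation applied to the $\star$-image of $(J)$, and $w(\psi_{[s\cdot t]})\leq^\mathbf{A}v(\psi^\star)$ by hypothesis, so taking the join over $\phi$ closes the estimate. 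Parts (a) and (b) are then short: reflexivity holds because the $\star$-image of $(F)$ gives $w(\phi_t)\leq^\mathbf{A}w(\phi^\star)$, i.e.\ $\mathcal{E}^c_t(w,w)=1^\mathbf{A}$; and introspectivity holds because, when $\mathcal{E}^c_{!t}(w,v)=1^\mathbf{A}$, the $\star$-image of $(I)$ yields $w(\phi_t)\leq^\mathbf{A}w((t:\phi)_{!t})$, which chained with the instance $w((t:\phi)_{!t})\leq^\mathbf{A}v((t:\phi)^\star)=v(\phi_t)$ of the unfolded $\mathcal{E}^c_{!t}$-condition gives $\mathcal{V}^c(w,t:\phi)\leq^\mathbf{A}\mathcal{V}^c(v,t:\phi)$ for all $\phi$. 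I expect the main obstacle to be precisely the infimum computation in (5): it is the one place where the choice $\mathcal{W}^c=A^{\mathcal{L}_J}$ is genuinely needed, and one must note that the witness $\tilde v$ need not be an evaluation at all, which is exactly why it is permitted to live in $\mathcal{W}^c\setminus\mathcal{W}_0^c$.
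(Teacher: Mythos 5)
Your proposal is correct and follows essentially the same route as the paper's proof: the same reduction of the infimum in condition (5) to the worlds with $\mathcal{E}^c_t(w,v)=1^\mathbf{A}$, the same witness function (the paper's $v_t$ is exactly your $\tilde v$, and both rely on $\mathcal{W}^c=A^{\mathcal{L}_J}$ admitting non-evaluations), and the same use of the $\star$-images of $(+)$, $(J)$, $(F)$ and $(I)$ for regularity, reflexivity and introspectivity. No gaps.
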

\begin{proof}
To show that $\mathfrak{M}^c_\mathbf{A}(\mathbf{LJL}_{CS})$ is well-defined, we have to verify the conditions (1) - (5) and (i), (ii) from Definition \ref{def:algsubsetmod}. For this, let $v\in \mathcal{W}^c_0$. We only show (5) from the former, as (1) - (4) follows naturally from $\mathsf{v}\in\mathsf{Ev}(\mathbf{A};\mathcal{L}_0^\star)$.

For (5), we show the equality in two steps. At first, note that
\begin{align*}
\sideset{}{^\mathbf{A}}\bigwedge\{\mathcal{E}^c_t(v,w)\rightarrow^\mathbf{A}\mathcal{V}^c(w,\phi)\mid w\in \mathcal{W}^c\}&=\sideset{}{^\mathbf{A}}\bigwedge\{\mathcal{E}^c_t(v,w)\rightarrow^\mathbf{A}w(\phi^\star)\mid w\in \mathcal{W}^c\}\\
&=\sideset{}{^\mathbf{A}}\bigwedge\{w(\phi^\star)\mid w\in \mathcal{W}^c, \mathcal{E}^c_t(v,w)=1^\mathbf{A}\}.
\end{align*}
Now, by definition we have
\[
\mathcal{V}^c(v,t:\phi)=v(\phi_t)\leq^\mathbf{A} w(\phi^\star)
\]
for any $w\in \mathcal{W}^c$. Thus, we naturally have
\[
\mathcal{V}^c(v,t:\phi)\leq^\mathbf{A}\sideset{}{^\mathbf{A}}\bigwedge\{w(\phi^\star)\mid w\in \mathcal{W}^c, \mathcal{E}^c_t(v,w)=1^\mathbf{A}\}.
\]
For the other direction, consider
\[
v_t:\mathcal{L}_J\to A, \psi^\star\mapsto\psi_t.
\]
Then, we have that $v_t\in \mathcal{W}^c$ and further
\[
v(\psi_t)\leq^\mathbf{A}v_t(\phi^\star)
\]
by definition. Thus $\mathcal{E}^c_t(v,v_t)=1^\mathbf{A}$ and therefore
\[
\sideset{}{^\mathbf{A}}\bigwedge\{w(\phi^\star)\mid w\in \mathcal{W}^c, \mathcal{E}^c_t(v,w)=1^\mathbf{A}\}\leq^\mathbf{A}v_t(\phi^\star)=v(\phi_t).
\]
Let further $w\in \mathcal{W}^c$.
\begin{enumerate}[(i)]
\item Suppose $\mathcal{E}^c_{t+s}(v,w)=1^\mathbf{A}$. Then, we have (as $v\in\mathsf{Ev}(\mathbf{A};\mathcal{L}_0^\star)$ and $v[(Th_{\mathbf{LJL}_{CS}})^\star]\subseteq\{1^\mathbf{A}\}$)
\[
v(\phi_t)\leq^\mathbf{A}v(\phi_{[t+s]})\leq^\mathbf{A}w(\phi^\star)
\]
through axiom scheme ($+$) for any $\phi\in\mathcal{L}_J$ and similarly for $v(\phi_s)$. Thus, we have $\mathcal{E}^c_s(v,w)=\mathcal{E}^c_t(v,w)=1^\mathbf{A}$.
\item Suppose $\mathcal{E}^c_{t\cdot s}(v,w)=1^\mathbf{A}$. We write $(\mathfrak{M}^c)^v_{t,s}$ as a shorthand for $(\mathfrak{M}^{c,S}_\mathbf{A}(\mathbf{LJL}_{CS}))^v_{t,s}$. Then, to show
\[
(\mathfrak{M}^c)^{v}_{t,s}(\psi)\leq^\mathbf{A} w(\psi^\star)
\]
for every $\psi\in\mathcal{L}_J$, it suffices to show (for an arbitrary $\psi\in\mathcal{L}_J$):
\[
(\mathfrak{M}^c)^{v}_{t,s}(\psi)\leq^\mathbf{A} v(\psi_{t\cdot s})\tag{$\dagger$}.
\]
($\dagger$) however follows from
\begin{align*}
(\mathfrak{M}^c)^{v}_{t,s}(\psi)&=\sideset{}{^\mathbf{A}}\bigvee\{\mathcal{V}^c(v,t:(\phi\rightarrow\psi))\land^\mathbf{A}\mathcal{V}^c(v,s:\phi)\mid\phi\in\mathcal{L}_J\}\\
                                &=\sideset{}{^\mathbf{A}}\bigvee\{v((\phi\rightarrow\psi)_t)\land^\mathbf{A}v(\phi_s)\mid\phi\in\mathcal{L}_J\}\\
                                &\leq^\mathbf{A}v(\psi_{t\cdot s}).\\
\end{align*}
\end{enumerate}

It remains to show items (a) and (b).
\begin{enumerate}[(a)]
\item Assume that ($F$) is an axiom scheme of $\mathbf{LJL}_{CS}$. Let $v\in \mathcal{W}_0^c$ and let $t\in Jt$. We then naturally have that
\[
v(\phi^\star\rightarrow\psi^\star)\land^\mathbf{A}v(\phi^\star)\leq^\mathbf{A} v(\psi^\star)
\]
for any $\phi,\psi\in\mathcal{L}_J$ as $v\in\mathsf{Ev}(\mathbf{A};\mathcal{L}_0^\star)$. Now, using that ($F$) is an axiom scheme of $\mathbf{LJL}_{CS}$, we obtain for any $\phi\in\mathcal{L}_J$ that
\[
v(\phi_t\rightarrow\phi^\star)=1^\mathbf{A}
\]
through $v[(Th_{\mathbf{LJL}_{CS}})^\star]\subseteq\{1^\mathbf{A}\}$ and thus
\[
v(\phi_t)\leq^\mathbf{A} v(\phi^\star)
\]
as $v\in\mathsf{Ev}(\mathbf{A};\mathcal{L}_0^\star)$ again. This gives $\mathcal{E}^c_t(v,v)=1^\mathbf{A}$.
\item Assume that ($I$) is an axiom scheme of $\mathbf{LJL}_{CS}$ and let again $v\in \mathcal{W}_0^c$, $w\in \mathcal{W}^c$ and $t\in Jt$. Assume $\mathcal{E}_{!t}(v,w)=1^\mathbf{A}$ and let $\phi\in\mathcal{L}_J$ be arbitrary. We have, as 
\[
v(\phi_t)\leq^\mathbf{A} v((t:\phi)_{!t})
\]
through $v\in \mathcal{W}_0^c$, that
\begin{align*}
V(v,t:\phi)&=v(\phi_t)\\
           &\leq^\mathbf{A} v((t:\phi)_{!t})\\
           &\leq^\mathbf{A} w(\phi_t)\\
           &= V(w,t:\phi)
\end{align*}
where the last inequality follows from $\mathcal{E}_{!t}(v,w)=1^\mathbf{A}$.
\end{enumerate}
\end{proof}
With a similar proof as in Theorem \ref{thm:algmkrtmodcomp}, we also obtain the following theorem.
\begin{theorem}\label{thm:algsubsetmodcomp}
Let $\mathbf{L}$ be an intermediate logic and let $\mathbf{LJL}_0\in\{\mathbf{LJ}_0,\mathbf{LJT}_0,\mathbf{LJ4}_0,\mathbf{LJT4}_0\}$ where $CS$ is a constant specification for $\mathbf{LJL}_0$. Let further $\mathsf{C}\in\mathsf{Alg}_{com}(\mathbf{L})$ and let $\mathsf{CASJL}$ be the class of algebraic subset models corresponding to $\mathbf{LJL}_0$ and $\mathsf{C}$.

For any $\Gamma\cup\{\phi\}\subseteq\mathcal{L}_J$, the following are equivalent:
\begin{enumerate}
\item $\Gamma\vdash_{\mathbf{LJL}_{CS}}\phi$;
\item $\Gamma\models_{\mathsf{CASJL}_{CS}}\phi$;
\item $\Gamma\models^1_{\mathsf{CASJL}_{CS}}\phi$;
\item $\Gamma\models^1_{\mathsf{CASJL^c}_{CS}}\phi$.
\end{enumerate}
\end{theorem}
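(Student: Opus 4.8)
The plan is to prove the four-fold equivalence by establishing the cycle (1) $\Rightarrow$ (2) $\Rightarrow$ (3) $\Rightarrow$ (4) $\Rightarrow$ (1), following the pattern of Theorem \ref{thm:algmkrtmodcomp} but with one extra link accounting for the crisp model class. First, (1) $\Rightarrow$ (2) is precisely soundness, i.e. Lemma \ref{lem:algsubsetmodsoundness}. The implication (2) $\Rightarrow$ (3) is routine: if $\sideset{}{^\mathbf{A}}\bigwedge\{\mathcal{V}(w,\gamma)\mid\gamma\in\Gamma\}\leq^\mathbf{A}\mathcal{V}(w,\phi)$ holds at every $w\in\mathcal{W}_0$ of every model, then whenever $(\mathfrak{M},w)\models\Gamma$ the left-hand meet equals $1^\mathbf{A}$, forcing $\mathcal{V}(w,\phi)=1^\mathbf{A}$, which is $(\mathfrak{M},w)\models\phi$. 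The implication (3) $\Rightarrow$ (4) is immediate from the definitions, since $\mathsf{CASJL^c}_{CS}$ is by construction a subclass of $\mathsf{CASJL}_{CS}$, so a $\models^1$-validity over the larger class descends to the smaller one.

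The substantial direction is (4) $\Rightarrow$ (1), which I would carry out by contraposition. Assume $\Gamma\not\vdash_{\mathbf{LJL}_{CS}}\phi$. By Lemma \ref{lem:startrans} this yields $\Gamma^\star\cup(Th_{\mathbf{LJL}_{CS}})^\star\not\vdash_{\mathbf{L}^\star}\phi^\star$, and since $\mathsf{C}\in\mathsf{Alg}_{com}(\mathbf{L})$ witnesses strong completeness of $\mathbf{L}^\star=\mathbf{L}(Var^\star)$, there exist $\mathbf{A}\in\mathsf{C}$ and $v\in\mathsf{Ev}(\mathbf{A};\mathcal{L}_0^\star)$ with $v[\Gamma^\star]\subseteq\{1^\mathbf{A}\}$, $v[(Th_{\mathbf{LJL}_{CS}})^\star]\subseteq\{1^\mathbf{A}\}$ and $v(\phi^\star)<1^\mathbf{A}$. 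I would then pass to the canonical subset model $\mathfrak{M}^{c,S}_\mathbf{A}(\mathbf{LJL}_{CS})$, which by Lemma \ref{lem:algsubsetmodwelldef} is a well-defined $\mathbf{A}$-valued algebraic subset model of the correct subclass (reflexive if $(F)$ is present, introspective if $(I)$ is present, by items (a) and (b) of that lemma).

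It remains to check membership and the countermodel property. The second condition on $v$ places $v$ in $\mathcal{W}^c_0$, and the same condition on each $w\in\mathcal{W}^c_0$ shows, using $\vdash_{\mathbf{LJL}_{CS}}c:\phi$ and hence $\phi_c\in(Th_{\mathbf{LJL}_{CS}})^\star$ for every $c:\phi\in CS$, that $\mathcal{V}^c(w,c:\phi)=w(\phi_c)=1^\mathbf{A}$; thus the model respects $CS$ and lies in $\mathsf{CASJL}_{CS}$. Evaluating at the world $v$ gives $\mathcal{V}^c(v,\gamma)=v(\gamma^\star)=1^\mathbf{A}$ for all $\gamma\in\Gamma$ while $\mathcal{V}^c(v,\phi)=v(\phi^\star)<1^\mathbf{A}$, so $(\mathfrak{M},v)\models\Gamma$ but $(\mathfrak{M},v)\not\models\phi$.

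The single point that makes the conclusion land on (4) rather than merely on (3) is that $\mathcal{E}^c_t(v,w)$ takes values only in $\{0^\mathbf{A},1^\mathbf{A}\}$ by its very definition, so the canonical model is automatically accessibility-crisp, hence a member of $\mathsf{CASJL^c}_{CS}$; this delivers $\Gamma\not\models^1_{\mathsf{CASJL^c}_{CS}}\phi$ and closes the contrapositive. I do not expect a genuine obstacle here: the construction parallels the Mkrtychev and Fitting cases, and the only care required is bookkeeping of which regularity, reflexivity and introspection conditions of Lemma \ref{lem:algsubsetmodwelldef} are invoked for each choice of $\mathbf{LJL}_0$, all of which is already packaged in that lemma.
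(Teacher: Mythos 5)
Your proposal is correct and is essentially the proof the paper intends: the paper only remarks that the theorem follows ``with a similar proof as in Theorem \ref{thm:algmkrtmodcomp}'', and your argument is exactly that adaptation, using Lemma \ref{lem:algsubsetmodsoundness} for soundness, Lemma \ref{lem:startrans} plus the completeness of $\mathbf{L}^\star$ for the countermodel, and Lemma \ref{lem:algsubsetmodwelldef} for well-definedness of the canonical subset model. Your observation that $\mathcal{E}^c_t$ is $\{0^\mathbf{A},1^\mathbf{A}\}$-valued by construction, so the canonical model is automatically accessibility-crisp and the refutation lands in item (4), is precisely the one extra point needed beyond the Mkrtychev case.
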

\section{Frame semantics for intermediate justification logics}
As a second semantic approach, we extend not Heyting algebras but intuitionistic Kripke frames for intermediate logics with the semantic machinery of the models of Mkrtychev, Fitting or of Lehmann and Studer.

This extends the work on intuitionistic Mkrtychev and Fitting models (under different terminology) from Marti and Studer in \cite{MS2016} to wider classes of logics.
\subsection{Kripke frames and propositional semantics}
We review some concepts from Kripke frames for propositional intermediate logics (see e.g. \cite{Gab1981,Ono1971}). For this, we need some terminology from the context of the theory of partial orders first.
\begin{definition}
A \emph{Kripke frame} is a structure $\langle F,\leq\rangle$ such that $\leq$ is a reflexive, transitive and antisymmetric binary relation on the non-empty set $F$ (that is, a Kripke frame is just a partial order).
\end{definition}
A set $X\subseteq F$ is called a \emph{cone} (or \emph{upset}) if
\[
\forall x\in X\forall y\in F\left(x\leq y\Rightarrow y\in X\right).
\]
We denote the smallest cone containing a set $X$ of a partial order $\langle F,\leq\rangle$ by $\uparrow X$. A cone $X$ is called \emph{principal} if $X=\;\uparrow\{x\}$ for some element $x$. It is straightforward that
\[
\uparrow\{x\}=\{y\in F\mid y\geq x\}
\]
and that
\[
\uparrow X=\bigcup_{x\in X}\uparrow\{x\}.
\]
A Kripke frame $\mathfrak{G}=\langle G,\leq'\rangle$ is an \emph{(induced) subframe} of a Kripke frame $\mathfrak{F}=\langle F,\leq\rangle$ if $G\subseteq F$ and $\leq'=\leq\cap (G\times G)$. In this case, we also write $\mathfrak{G}=\mathfrak{F}\upharpoonright G$. A Kripke frame is called \emph{principal} if its domain is principal.
\begin{definition}
Let $\mathfrak{F}=\langle F,\leq\rangle$ be a Kripke frame. A (\emph{$\mathcal{L}_0(X)$-})\emph{Kripke model} based on $\mathfrak{F}$ is a structure $\mathfrak{M}=\langle\mathfrak{F},\Vdash\rangle$ with $\Vdash\subseteq F\times X$ which satisfies
\[
x\leq y\text{ and }x\Vdash p\text{ implies }y\Vdash p
\]
for all $p\in X$.
\end{definition}
A Kripke model $\mathfrak{N}=\langle\mathfrak{G},\Vdash'\rangle$ is called an (\emph{induced}) \emph{submodel} of a Kripke model $\mathfrak{M}=\langle\mathfrak{F},\Vdash\rangle$ if $\mathfrak{G}$ is an induced subframe of $\mathfrak{F}$ and for all $p\in X$:
\[
\{x\in G\mid x\Vdash' p\}=\{x\in F\mid x\Vdash p\}\cap G.
\]
We write $\mathfrak{N}=\mathfrak{M}\upharpoonright G$ in this case. 

Given a Kripke model $\mathfrak{M}=\langle\mathfrak{F},\Vdash\rangle$, we introduce the satisfaction relation $\models$ for formulae from $\mathcal{L}_0(X)$ as follows. Given a $x\in F$, we define recursively:
\begin{itemize}
\item $(\mathfrak{M},x)\not\models\bot$;
\item $(\mathfrak{M},x)\models p$ if $x\Vdash p$;
\item $(\mathfrak{M},x)\models\phi\land\psi$ if $(\mathfrak{M},x)\models\phi$ and $(\mathfrak{M},x)\models\psi$;
\item $(\mathfrak{M},x)\models\phi\lor\psi$ if $(\mathfrak{M},x)\models\phi$ or $(\mathfrak{M},x)\models\psi$;
\item $(\mathfrak{M},x)\models\phi\rightarrow\psi$ if $\forall y\in F\left( x\leq y\Rightarrow (\mathfrak{M},x)\not\models\phi\text{ or }(\mathfrak{M},x)\models\psi\right)$.
\end{itemize}
We write $\mathfrak{M}\models\phi$ if $(\mathfrak{M},x)\models\phi$ for any $x\in F$, $(\mathfrak{M},x)\models\Gamma$ if $(\mathfrak{M},x)\models\gamma$ for all $\gamma\in\Gamma$ and $\mathfrak{M}\models\Gamma$ if $(\mathfrak{M},x)\models\Gamma$ for all $x\in F$.

A fundamental property of Kripke models is that the monotonicity of propositional variables extends to all formulae. More precisely, we have the following:
\begin{lemma}\label{lem:propmono}
Let $\mathfrak{M}=\langle\mathfrak{F},\Vdash\rangle$ be a $\mathcal{L}_0(X)$-Kripke model. Then, for all $\phi\in\mathcal{L}_0(X)$ and all $x,y\in F$:
\[
x\leq y\text{ and }(\mathfrak{M},x)\models\phi\text{ implies }(\mathfrak{M},y)\models\phi.
\]
\end{lemma}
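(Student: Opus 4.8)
The plan is to argue by structural induction on $\phi\in\mathcal{L}_0(X)$, reading off each case from the recursive definition of the satisfaction relation $\models$ and invoking the monotonicity of $\Vdash$ on variables that is built into the definition of a Kripke model. Throughout I would fix $x,y\in F$ with $x\leq y$ and assume $(\mathfrak{M},x)\models\phi$, aiming to conclude $(\mathfrak{M},y)\models\phi$.

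First I would dispatch the base cases. If $\phi=\bot$, the hypothesis $(\mathfrak{M},x)\models\bot$ never holds, so the implication is vacuously true. If $\phi=p$ for some $p\in X$, then $(\mathfrak{M},x)\models p$ unfolds to $x\Vdash p$, and the defining monotonicity condition on $\Vdash$ together with $x\leq y$ yields $y\Vdash p$, that is $(\mathfrak{M},y)\models p$. The conjunction and disjunction cases then follow mechanically from the induction hypothesis: from $(\mathfrak{M},x)\models\psi_1\land\psi_2$ one obtains $(\mathfrak{M},x)\models\psi_1$ and $(\mathfrak{M},x)\models\psi_2$, hence $(\mathfrak{M},y)\models\psi_1$ and $(\mathfrak{M},y)\models\psi_2$ by the hypothesis applied to each conjunct, giving $(\mathfrak{M},y)\models\psi_1\land\psi_2$; the disjunction is handled symmetrically by applying the hypothesis to whichever disjunct holds at $x$.

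The one case that requires genuine care is implication, $\phi=\psi_1\rightarrow\psi_2$, and here the main point to notice is that the argument hinges on the \emph{transitivity} of $\leq$ rather than on the induction hypothesis itself. Assuming $(\mathfrak{M},x)\models\psi_1\rightarrow\psi_2$, to establish $(\mathfrak{M},y)\models\psi_1\rightarrow\psi_2$ I would fix an arbitrary $z\in F$ with $y\leq z$. By transitivity of $\leq$ we then have $x\leq z$, so the clause witnessing $(\mathfrak{M},x)\models\psi_1\rightarrow\psi_2$, instantiated at the successor $z$ of $x$, delivers $(\mathfrak{M},z)\not\models\psi_1$ or $(\mathfrak{M},z)\models\psi_2$. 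Since $z$ was an arbitrary point above $y$, this is exactly the condition defining $(\mathfrak{M},y)\models\psi_1\rightarrow\psi_2$, which completes the inductive step and hence the proof.
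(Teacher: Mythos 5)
Your proof is correct and is exactly the "easy induction on the structure of $\mathcal{L}_0(X)$" that the paper invokes without writing out: monotonicity of $\Vdash$ for the variable case, the induction hypothesis for $\land$ and $\lor$, and transitivity of $\leq$ for the implication case. Nothing to add.
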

The proof is an easy induction on the structure of $\mathcal{L}_0(X)$. Given a class of Kripke frames $\mathsf{C}$, we write $\mathsf{Mod}(\mathsf{C};\mathcal{L}_0(X))$ for the class of all Kripke models over $\mathcal{L}_0(X)$ with underlying Kripke frames from $\mathsf{C}$. Given a single frame $\mathfrak{F}$, we also write $\mathsf{Mod}(\mathfrak{F};\mathcal{L}_0(X))$ for $\mathsf{Mod}(\{\mathfrak{F}\};\mathcal{L}_0(X))$.

Using these definitions, there are now two definitions of consequence to consider.
\begin{definition}
Let $\Gamma\cup\{\phi\}\subseteq\mathcal{L}_0(X)$ and $\mathsf{C}$ be a class of Kripke models. Then, we write:
\begin{enumerate}
\item $\Gamma\models_\mathsf{C}\phi$ if $\forall\mathfrak{M}\in\mathsf{C}\forall x\in\mathcal{D}(\mathfrak{M})\Big((\mathfrak{M},x)\models\Gamma\Rightarrow (\mathfrak{M},x)\models\phi\Big)$;
\item $\Gamma\models^g_\mathsf{C}\phi$ if $\forall\mathfrak{M}\in\mathsf{C}\Big(\mathfrak{M}\models\Gamma\Rightarrow \mathfrak{M}\models\phi\Big)$.
\end{enumerate}
Further, if $\mathsf{C}$ is now a class of Kripke frames, we write:
\begin{enumerate}
\setcounter{enumi}{2}
\item  $\Gamma\models_\mathsf{C}\phi$ if $\Gamma\models_{\mathsf{Mod}(\mathsf{C};\mathcal{L}_0(X))}\phi$;
\item $\Gamma\models^g_\mathsf{C}\phi$ if $\Gamma\models^g_{\mathsf{Mod}(\mathsf{C};\mathcal{L}_0(X))}\phi$.
\end{enumerate}
\end{definition}
\begin{definition}
Let $\mathbf{L}$ be an intermediate logic, $X$ a countably infinite set of variables and $\mathsf{C}$ be a class of Kripke frames. 
\begin{enumerate}
\item We say that $\mathbf{L}(X)$ is \emph{strongly complete} w.r.t. $\mathsf{C}$ if $\Gamma\vdash_{\mathbf{L}(X)}\phi$ is equivalent to $\Gamma\models_\mathsf{C}\phi$.
\item We say that $\mathbf{L}(X)$ is \emph{strongly globally complete} w.r.t. $\mathsf{C}$ if $\Gamma\vdash_{\mathbf{L}(X)}\phi$ is equivalent to $\Gamma\models_\mathsf{C}^g\phi$.
\end{enumerate}
Given a class of Kripke frames $\mathsf{C}$, we write $\mathsf{C}\in\mathsf{KFr}(\mathbf{L})$ or $\mathsf{C}\in\mathsf{KFr}^g(\mathbf{L})$ if $\mathbf{L}$ is strongly (locally) complete or strongly globally complete w.r.t. $\mathsf{C}$, respectively. We also write $\mathsf{C}\in\mathsf{KFr}(\mathbf{L})\cap\mathsf{KFr}^g(\mathbf{L})$ for $\mathsf{C}\in\mathsf{KFr}(\mathbf{L})$ and $\mathsf{C}\in\mathsf{KFr}^g(\mathbf{L})$.
\end{definition}
The global version will later prove to be important in the completeness considerations. Two things shall be noted in this context. First, it is well known that there are Kripke incomplete intermediate logics, that is intermediate logics where there is no class of Kripke frames for which the logic is (even weakly) complete. The first such logic was constructed in \cite{She1977}. All following considerations involving propositional completeness w.r.t. classes of Kripke frames thus implicitly assume that such a class exists.

Further, if an intermediate logic is characterized by a class of Kripke frames \emph{locally}, there is a simple extended class of frames which characterizes the logic \emph{globally}. More precisely, we have the following:
\begin{lemma}\label{lem:localglobalequiv}
Let $\mathsf{C}$ be a class of Kripke frames and let $\overline{\mathsf{C}}$ be the closure of $\mathsf{C}$ under principal subframes. Let $\Gamma\cup\{\phi\}\subseteq\mathcal{L}_0(X)$. Then, we have:
\begin{enumerate}
\item $\Gamma\models_\mathsf{C}\phi$ iff $\Gamma\models_{\overline{\mathsf{C}}}\phi$;
\item $\Gamma\models_{\overline{\mathsf{C}}}\phi$ iff $\Gamma\models^g_{\overline{\mathsf{C}}}\phi$.
\end{enumerate}
\end{lemma}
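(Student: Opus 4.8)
The plan is to reduce both equivalences to a single truth-invariance property for induced submodels on cones, which I would establish first as a preliminary claim: if $\mathfrak{N}=\mathfrak{M}\upharpoonright G$ where $G$ is a cone of the underlying frame, then for every $x\in G$ and every $\psi\in\mathcal{L}_0(X)$ one has $(\mathfrak{M},x)\models\psi$ iff $(\mathfrak{N},x)\models\psi$. This follows by induction on the structure of $\psi$: the cases $\bot$, atoms, $\land$ and $\lor$ are immediate from the satisfaction clauses, and the only delicate case is $\psi=\phi_1\rightarrow\phi_2$, whose clause quantifies over the successors of $x$. Since $G$ is a cone and $x\in G$, the set of successors $\{y\mid x\leq y\}$ is identical whether computed in $\mathfrak{F}$ or in the subframe on $G$, so the induction hypothesis closes the case.

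For part (1), the implication $\Gamma\models_{\overline{\mathsf{C}}}\phi\Rightarrow\Gamma\models_\mathsf{C}\phi$ is immediate from $\mathsf{C}\subseteq\overline{\mathsf{C}}$. For the converse I would assume $\Gamma\models_\mathsf{C}\phi$ and take any model $\mathfrak{M}$ over a frame $\mathfrak{G}\in\overline{\mathsf{C}}$ with a point $x$ satisfying $(\mathfrak{M},x)\models\Gamma$; the case $\mathfrak{G}\in\mathsf{C}$ is handled by hypothesis, so suppose $\mathfrak{G}=\mathfrak{F}\upharpoonright\uparrow\{y\}$ for some $\mathfrak{F}\in\mathsf{C}$ and $x\in\uparrow\{y\}$. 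I would extend $\mathfrak{M}$ to a model $\mathfrak{M}'$ over all of $\mathfrak{F}$ by retaining the original forcing on $\uparrow\{y\}$ and declaring every variable false at each point outside this cone. This forcing is monotone, since any point forcing a variable lies in $\uparrow\{y\}$ and all of its successors again lie in that cone, where the monotonicity of $\mathfrak{M}$ applies. As $\mathfrak{M}=\mathfrak{M}'\upharpoonright\uparrow\{y\}$, the truth-invariance property gives $(\mathfrak{M}',x)\models\Gamma$; applying $\Gamma\models_\mathsf{C}\phi$ to $\mathfrak{M}'$ over $\mathfrak{F}\in\mathsf{C}$ yields $(\mathfrak{M}',x)\models\phi$, and transferring back gives $(\mathfrak{M},x)\models\phi$.

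For part (2), the direction $\Gamma\models_{\overline{\mathsf{C}}}\phi\Rightarrow\Gamma\models^g_{\overline{\mathsf{C}}}\phi$ holds for any class, since $\mathfrak{M}\models\Gamma$ forces $(\mathfrak{M},x)\models\Gamma$ at every $x$, whence $(\mathfrak{M},x)\models\phi$ everywhere by the local consequence. For the converse I would assume the global consequence and take a model $\mathfrak{M}$ over $\mathfrak{F}\in\overline{\mathsf{C}}$ with $(\mathfrak{M},x)\models\Gamma$, then pass to the submodel $\mathfrak{N}:=\mathfrak{M}\upharpoonright\uparrow\{x\}$, whose frame $\mathfrak{F}\upharpoonright\uparrow\{x\}$ again lies in $\overline{\mathsf{C}}$ by closure under principal subframes. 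By Lemma \ref{lem:propmono} we have $(\mathfrak{M},y)\models\Gamma$ for all $y\geq x$, and by truth-invariance $(\mathfrak{N},y)\models\Gamma$ for all $y\in\uparrow\{x\}$, so $\mathfrak{N}\models\Gamma$; the global consequence then gives $\mathfrak{N}\models\phi$, hence $(\mathfrak{N},x)\models\phi$, and a final transfer yields $(\mathfrak{M},x)\models\phi$.

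The main obstacle will be the truth-invariance property together with the bookkeeping in part (1), where I must extend the model off the generating cone while preserving the monotonicity demanded of a Kripke model; once these are settled, both equivalences reduce to routine transfers of satisfaction between a model and its generated submodel, with the closure of $\overline{\mathsf{C}}$ under principal subframes ensuring that all intermediate frames remain in the relevant class.
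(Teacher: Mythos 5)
Your proof is correct and follows essentially the same route as the paper: both arguments rest on the invariance of satisfaction under restriction to (principal) cones, combined with Lemma \ref{lem:propmono} for the global-to-local direction. The only difference is that in part (1) you explicitly carry out the extension of a model on a principal subframe to a model on the ambient frame from $\mathsf{C}$ (setting variables false off the cone and checking monotonicity), a step the paper leaves implicit; this is a welcome filling-in of detail rather than a different method.
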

\begin{proof}
For (1), we naturally have the direction from right to left. For the converse, note that for all $\phi\in\mathcal{L}_0$, all $\mathfrak{M}$ over frames from $\mathsf{C}$ and all $x\in\mathcal{D}(\mathfrak{M})$, we have
\[
(\mathfrak{M},x)\models\phi\text{ iff }(\mathfrak{M}\upharpoonright(\uparrow\{x\}),x)\models\phi.
\]
Thus, the claim follows from the fact that for every $\mathfrak{F}\in\overline{\mathsf{C}}$, we have $\mathfrak{F}\in\mathsf{C}$ or $\mathfrak{F}=\mathfrak{G}\upharpoonright (\uparrow\{x\})$ for some $\mathfrak{G}\in\mathsf{C}$ and some $x\in\mathcal{D}(\mathfrak{G})$.

For (2), we immediately get the direction from left to right. For the converse, consider $\Gamma\models^g_{\overline{\mathsf{C}}}\phi$, that is
\[
\forall\mathfrak{N}\in\mathsf{Mod}(\overline{\mathsf{C}};\mathcal{L}_0) \Big(\mathfrak{N}\models\Gamma\Rightarrow\mathfrak{N}\models\phi\Big).
\]
Let $\mathfrak{F}\in\overline{\mathsf{C}}$ and $\mathfrak{M}\in\mathsf{Mod}(\mathfrak{F};\mathcal{L}_0)$ as well as $x\in F$ and suppose $(\mathfrak{M},x)\models\Gamma$. Consider 
\[
\mathfrak{M}':=\mathfrak{M}\upharpoonright(\uparrow\{x\}).
\]
Then, we obtain $\mathfrak{M}'\models\Gamma$ by Lemma \ref{lem:propmono} and we have $\mathfrak{M}'\in\mathsf{Mod}(\overline{\mathsf{C}},\mathcal{L}_0)$ as $\overline{\mathsf{C}}$ is closed under principal subframes. We thus have $\mathfrak{M}'\models\phi$ by $\Gamma\models_{\overline{\mathsf{C}}}^g\phi$. This gives especially $(\mathfrak{M},x)\models\phi$ as we have
\[
(\mathfrak{M},x)\models\phi\text{ iff }(\mathfrak{M}\upharpoonright(\uparrow\{x\}),x)\models\phi.
\]
as above. Thus, we have $\Gamma\models_{\overline{\mathsf{C}}}\phi$.
\end{proof}
\subsection{Intuitionistic Mkrtychev models}
We continue our semantical investigations into intermediate justification logics by extending the approach of Mkrtychevs syntactic models by intuitionistic Kripke frames. These intuitionistic Mkrtychev models are akin to the previously considered models from \cite{MS2016} for $\mathbf{IPCJT4}$ (under the name of intuitionistic basic models).
\begin{definition}\label{def:genmkrtmod}
Let $\mathfrak{F}=\langle F,\leq\rangle$ be a Kripke frame. An \emph{intuitionistic Mkrtychev model based on $\mathfrak{F}$} is a structure $\mathfrak{M}=\langle\mathfrak{F},\mathcal{E},\Vdash\rangle$ such that $\Vdash\subseteq F\times Var$ and $\mathcal{E}:Jt\times F\to 2^{\mathcal{L}_J}$ satisfy
\begin{enumerate}
\item $x\leq y$ and $x\Vdash p$ implies $y\Vdash p$ for all $p\in Var$,
\item $x\leq y$ and $\phi\in\mathcal{E}_t(x)$ implies $\phi\in\mathcal{E}_t(x)$ for all $\phi\in\mathcal{L}_J$ and all $t\in Jt$,
\end{enumerate}
for all $x,y\in F$ as well as 
\begin{enumerate}[(i)]
\item $\mathcal{E}_t(x)\sqsupset\mathcal{E}_s(x)\subseteq\mathcal{E}_{[t\cdot s]}(x)$ for all $x\in F$ and all $t,s\in Jt$,
\item $\mathcal{E}_t(x)\cup\mathcal{E}_s(x)\subseteq\mathcal{E}_{[t+s]}(x)$ for all $x\in F$ and all $t,s\in Jt$,
\end{enumerate}
where
\[
\Gamma\sqsupset\Delta:=\{\phi\in\mathcal{L}_J\mid\psi\rightarrow\phi\in\Gamma,\psi\in\Delta\text{ for some }\psi\in\mathcal{L}_J\}
\]
for $\Gamma,\Delta\subseteq\mathcal{L}_J$.
\end{definition}
Given an intuitionistic Mkrtychev model $\mathfrak{M}$ over a Kripke frame $\mathfrak{F}=\langle F,\leq\rangle$, we also write $\mathcal{D}(\mathfrak{M}):= F$ and call $F$ the domain of $\mathfrak{M}$. Note that we use $F$ to denote the domain of a model but also $(F)$ to denote the axiom scheme of factivity for the intermediate justification logics. 

Over an intuitionistic Mkrtychev model $\mathfrak{M}=\langle\mathfrak{F},\mathcal{E},\Vdash\rangle$, we introduce the following local satisfaction relation by recursion:
\begin{itemize}
\item $(\mathfrak{M},x)\not\models\bot$;
\item $(\mathfrak{M},x)\models p$ if $x\Vdash p$;
\item $(\mathfrak{M},x)\models\phi\land\psi$ if $(\mathfrak{M},x)\models\phi$ and $(\mathfrak{M},x)\models\psi$;
\item $(\mathfrak{M},x)\models\phi\lor\psi$ if $(\mathfrak{M},x)\models\phi$ or $(\mathfrak{M},x)\models\psi$;
\item $(\mathfrak{M},x)\models\phi\rightarrow\psi$ if $\forall y\in F\left( x\leq y\Rightarrow (\mathfrak{M},x)\not\models\phi\text{ or }(\mathfrak{M},x)\models\psi\right)$;
\item $(\mathfrak{M},x)\models t:\phi$ if $\phi\in\mathcal{E}_t(x)$.
\end{itemize}
We write $(\mathfrak{M},x)\models\Gamma$ if $(\mathfrak{M},x)\models\gamma$ for all $\gamma\in\Gamma$. Further, we have the following immediate lemma.
\begin{lemma}
Let $\mathfrak{F}=\langle F,\leq\rangle$ be a Kripke frame and let $\mathfrak{M}$ be an intuitionistic Mkrtychev model over $\mathfrak{F}$. For any $\phi\in\mathcal{L}_J$ and all $x,y\in F$: 
\[
x\leq y\text{ and }(\mathfrak{M},x)\models\phi\text{ implies }(\mathfrak{M},y)\models\phi.
\]
\end{lemma}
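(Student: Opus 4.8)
The plan is to proceed by induction on the structure of the formula $\phi\in\mathcal{L}_J$, following the template of the purely propositional persistence result (Lemma \ref{lem:propmono}) and handling the single new case of the justification modality separately. Throughout, fix $x,y\in F$ with $x\leq y$, assume $(\mathfrak{M},x)\models\phi$, and aim to conclude $(\mathfrak{M},y)\models\phi$.

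The base cases are immediate: $\bot$ is never satisfied, so the implication holds vacuously, and for $\phi=p\in Var$ the claim is precisely condition (1) of Definition \ref{def:genmkrtmod}. The conjunction and disjunction cases are routine, following by applying the induction hypothesis to each component, since satisfaction of $\land$ and $\lor$ is determined pointwise at the world $x$. The implication case $\phi=\psi\rightarrow\chi$ is the usual intuitionistic argument and is where transitivity of $\leq$ is used: assuming $(\mathfrak{M},x)\models\psi\rightarrow\chi$, to verify $(\mathfrak{M},y)\models\psi\rightarrow\chi$ I take an arbitrary $z\geq y$, note that $x\leq z$ by transitivity, and read off from the satisfaction clause at $x$ that either $(\mathfrak{M},z)\not\models\psi$ or $(\mathfrak{M},z)\models\chi$; as $z$ was arbitrary, this is exactly satisfaction of $\psi\rightarrow\chi$ at $y$.

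The only genuinely new case is the justification modality $\phi=t:\chi$, and the key observation is that here \emph{no} appeal to the induction hypothesis is required. By the satisfaction clause, $(\mathfrak{M},x)\models t:\chi$ means $\chi\in\mathcal{E}_t(x)$, and condition (2) of Definition \ref{def:genmkrtmod} -- the built-in persistence of the evidence function -- yields directly $\chi\in\mathcal{E}_t(y)$ from $x\leq y$, which is precisely $(\mathfrak{M},y)\models t:\chi$. Thus the modal clause evaluates $t:\chi$ by membership in the evidence set, and the model axioms already force this set to be upward closed along $\leq$, so the step is handled by fiat rather than through the recursion.

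In summary, I expect no real obstacle: the connective cases are identical to those in Lemma \ref{lem:propmono}, and the modal case reduces to the monotonicity requirement on $\mathcal{E}$. The only mild point of care is the implication step's use of transitivity of $\leq$, exactly as in the propositional setting.
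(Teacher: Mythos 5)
Your proof is correct and is exactly the structural induction the paper has in mind: it states this lemma without proof as ``immediate,'' in analogy with Lemma \ref{lem:propmono}, and your treatment of the connective cases plus the observation that the $t{:}\chi$ case follows directly from condition (2) of Definition \ref{def:genmkrtmod} (persistence of $\mathcal{E}$) without any appeal to the induction hypothesis is precisely the intended argument. No gaps.
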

\begin{definition}\label{def:genmkrtmodspecialclasses}
Let $\mathfrak{F}=\langle F,\leq\rangle$ be a Kripke frame and $\mathfrak{M}=\langle\mathfrak{F},\mathcal{E},\Vdash\rangle$ be an intuitionistic Mkrtychev model. We call $\mathfrak{M}$
\begin{enumerate}
\item \emph{factive} if $\phi\in\mathcal{E}_t(x)$ implies $(\mathfrak{M},x)\models\phi$ for all $t\in Jt$, all $\phi\in\mathcal{L}_J$ and all $x\in F$, and
\item \emph{introspective} if $t:\mathcal{E}_t(x)\subseteq\mathcal{E}_{!t}(x)$ where $t:\Gamma=\{t:\gamma\mid\gamma\in\Gamma\}$ for all $t\in Jt$ and all $x\in F$.
\end{enumerate}
\end{definition}
\begin{definition}
Let $\mathsf{C}$ be a class of Kripke frames. Then, we write:
\begin{enumerate}
\item $\mathsf{CKMJ}$ for the class of all intuitionistic Mkrtychev models over frames from $\mathsf{C}$;
\item $\mathsf{CKMJT}$ for the class of all factive intuitionistic Mkrtychev models over frames from $\mathsf{C}$;
\item $\mathsf{CKMJ4}$ for the class of all introspective intuitionistic Mkrtychev models over frames from $\mathsf{C}$;
\item $\mathsf{CKMJT4}$ for the class of all factive and introspective intuitionistic Mkrtychev models over $\mathsf{C}$.
\end{enumerate}
\end{definition}
\begin{definition}
Let $\mathfrak{M}=\langle\mathfrak{F},\mathcal{E},\Vdash\rangle$ be an intuitionistic Mkrtychev model and let $CS$ be a constant specification (for some proof calculus). We say that $\mathfrak{M}$ \emph{respects} $CS$ if for all $x\in F$ and all $c:\phi\in CS$: $\phi\in\mathcal{E}_c(x)$.
\end{definition}
Given a class $\mathsf{C}$ of intuitionistic Mkrtychev models, we denote the class of all intuitionistic Mkrtychev models respecting a constant specification $CS$ by $\mathsf{C}_{CS}$.
\begin{definition}
Let $\mathsf{C}$ be a class of intuitionistic Mkrtychev models. We write $\Gamma\models_\mathsf{C}\phi$ if for all $\mathfrak{M}\in\mathsf{C}$ and all $x\in\mathcal{D}(\mathfrak{M})$: $(\mathfrak{M},x)\models\Gamma$ implies $(\mathfrak{M},x)\models\phi$.
\end{definition}
\begin{lemma}\label{lem:genmkrtmodsoundness}
Let $\mathbf{L}$ be an intermediate logic and $\mathbf{LJL}_0\in\{\mathbf{LJ}_0,\mathbf{LJT}_0,\mathbf{LJ4}_0,\mathbf{LJT4}_0\}$. Let $CS$ be a constant specification for $\mathbf{LJL}_0$ and let $\mathsf{C}\in\mathsf{KFr}(\mathbf{L})$. Let $\mathsf{CKMJL}$ be the class of intuitionistic Mkrtychev models corresponding to $\mathbf{LJL}_0$ and $\mathsf{C}$. For any $\Gamma\cup\{\phi\}\subseteq\mathcal{L}_J$:
\[
\Gamma\vdash_{\mathbf{LJL}_{CS}}\phi\text{ implies }\Gamma\models_{\mathsf{CKMJL}_{CS}}\phi.
\]
\end{lemma}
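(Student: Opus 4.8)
The plan is to follow the same pattern as the proof of Lemma~\ref{lem:algmkrtmodsoundness}, reducing strong soundness to the global validity of the theorems of $\mathbf{LJL}_0$ in the model class $\mathsf{CKMJL}$. Concretely, suppose $\Gamma\vdash_{\mathbf{LJL}_{CS}}\phi$, so there are $\gamma_1,\dots,\gamma_n\in\Gamma\cup CS$ with $\bigwedge_{i=1}^n\gamma_i\rightarrow\phi\in\mathbf{LJL}_0$. Fix $\mathfrak{M}\in\mathsf{CKMJL}_{CS}$ and $x\in\mathcal{D}(\mathfrak{M})$ with $(\mathfrak{M},x)\models\Gamma$. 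Since $\mathfrak{M}$ respects $CS$ we get $(\mathfrak{M},x)\models c:\psi$ for every $c:\psi\in CS$, hence $(\mathfrak{M},x)\models\gamma_i$ for each $i$ and therefore $(\mathfrak{M},x)\models\bigwedge_{i=1}^n\gamma_i$. Once we know that every theorem of $\mathbf{LJL}_0$ is forced at every world of every model in $\mathsf{CKMJL}$, we obtain $(\mathfrak{M},x)\models\bigwedge_{i=1}^n\gamma_i\rightarrow\phi$, and local modus ponens (instantiating the implication clause at the reflexive point $x\leq x$) yields $(\mathfrak{M},x)\models\phi$. Throughout I will use the reformulation that an implication $\alpha\rightarrow\beta$ is forced at every world iff $(\mathfrak{M},y)\models\alpha$ implies $(\mathfrak{M},y)\models\beta$ for every $y$; iterating this, and using the monotonicity lemma for intuitionistic Mkrtychev models, a nested implication $\alpha_1\rightarrow(\cdots\rightarrow(\alpha_k\rightarrow\beta))$ is globally forced iff simultaneous forcing of $\alpha_1,\dots,\alpha_k$ at a world forces $\beta$ there.

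It then remains to show global forcing of each axiom together with its preservation under modus ponens; the latter is immediate from the reformulation above. For the intermediate part, let $\psi=\sigma(\chi)\in\overline{\mathbf{L}}$ with $\chi\in\mathbf{L}$ and $\sigma$ a substitution in $\mathcal{L}_J$, and fix $\mathfrak{M}=\langle\mathfrak{F},\mathcal{E},\Vdash\rangle\in\mathsf{CKMJL}$ with $\mathfrak{F}=\langle F,\leq\rangle\in\mathsf{C}$. Mirroring the construction of the evaluation $f$ in the proof of Lemma~\ref{lem:algmkrtmodsoundness}, I would define a propositional $\mathcal{L}_0$-Kripke model $\mathfrak{M}^\sigma=\langle\mathfrak{F},\Vdash^\sigma\rangle$ over the same frame by $x\Vdash^\sigma p$ iff $(\mathfrak{M},x)\models\sigma(p)$; this is a legitimate Kripke model since the required persistence of $\Vdash^\sigma$ follows from the monotonicity lemma applied to $\sigma(p)$. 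A routine induction on $\theta\in\mathcal{L}_0$ establishing the substitution lemma $(\mathfrak{M}^\sigma,x)\models\theta$ iff $(\mathfrak{M},x)\models\sigma(\theta)$ then goes through, because $\sigma$ commutes with $\bot,\land,\lor,\rightarrow$ and the two satisfaction relations share the same recursive clauses for these connectives (in particular the same $\forall y\geq x$ clause for $\rightarrow$). As $\mathfrak{F}\in\mathsf{C}$ and $\mathsf{C}\in\mathsf{KFr}(\mathbf{L})$, the soundness direction of strong completeness gives $(\mathfrak{M}^\sigma,x)\models\chi$ for all $x$, whence $(\mathfrak{M},x)\models\sigma(\chi)=\psi$ for all $x$.

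For the justification axioms I would verify each scheme directly via the truth-preservation reformulation and read it off from the structural conditions. For $(J)$ this reduces to showing that $(\phi\rightarrow\psi)\in\mathcal{E}_t(w)$ and $\phi\in\mathcal{E}_s(w)$ imply $\psi\in\mathcal{E}_{[t\cdot s]}(w)$, which is exactly condition~(i) of Definition~\ref{def:genmkrtmod} (with $\rho=\phi$ witnessing $\psi\in\mathcal{E}_t(w)\sqsupset\mathcal{E}_s(w)$). For $(+)$ it reduces to $\phi\in\mathcal{E}_t(w)\Rightarrow\phi\in\mathcal{E}_{[t+s]}(w)$ (and symmetrically), which is condition~(ii). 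For $(F)$, available only when the model class is factive, the reduction is precisely factivity, and for $(I)$, available only when the class is introspective, it reduces to $\phi\in\mathcal{E}_t(w)\Rightarrow t:\phi\in\mathcal{E}_{!t}(w)$, i.e. $t:\mathcal{E}_t(w)\subseteq\mathcal{E}_{!t}(w)$, as in Definition~\ref{def:genmkrtmodspecialclasses}. Here one must check that $\mathsf{CKMJL}$ imposes factivity (resp. introspectivity) exactly when $(F)$ (resp. $(I)$) belongs to $\mathbf{LJL}_0$, which holds by the definition of the classes $\mathsf{CKMJT},\mathsf{CKMJ4},\mathsf{CKMJT4}$.

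I expect the main obstacle to be the intermediate part, i.e. transferring the Kripke soundness of the underlying propositional logic $\mathbf{L}$ into the justification setting: one has to set up the induced propositional model $\mathfrak{M}^\sigma$ correctly and verify the substitution lemma, taking care that the persistence condition and the $\forall y\geq x$ clause for intuitionistic implication interact properly with $\sigma$. The justification-axiom verifications, by contrast, collapse to the defining set-theoretic closure conditions once the truth-preservation reformulation is in place, and are therefore routine.
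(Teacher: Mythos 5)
Your proof is correct and follows essentially the same route as the paper: reduce strong to weak soundness via the deduction theorem and respect of $CS$, handle $\phi\in\overline{\mathbf{L}}$ by inducing a propositional Kripke model $x\Vdash^\sigma p$ iff $(\mathfrak{M},x)\models\sigma(p)$ over the same frame and invoking completeness of $\mathbf{L}$ w.r.t.\ $\mathsf{C}$, and read off the justification axioms from conditions (i), (ii) of Definition~\ref{def:genmkrtmod} and the factivity/introspectivity clauses. The only difference is that you spell out explicitly (via the nested-implication reformulation and the monotonicity lemma) what the paper leaves as ``immediate.''
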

\begin{proof}
By an argument similar to the one of Lemma \ref{lem:algmkrtmodsoundness}, we may reduce strong to weak soundness. Thus, we only show that $\vdash_{\mathbf{LJL}_0}\phi$ implies $\models_{\mathsf{CKMJL}}\phi$. As before, by definition of $\mathbf{LJL}_0$, it suffices to show $\models_{\mathsf{CKMJL}}\phi$ for $\phi\in\overline{\mathbf{L}}$ or $\phi$ being an instance of the justification axioms (depending on $\mathbf{LJL}_0$). For both, let $\mathfrak{M}=\langle\mathfrak{F},\mathcal{E},\Vdash\rangle\in\mathsf{CKMJL}$ as well as $x\in\mathcal{D}(\mathfrak{M})$.

If $\phi\in\overline{\mathbf{L}}$, then there is a substitution $\sigma:Var\to\mathcal{L}_J$ such that $\phi=\sigma(\psi)$ for some $\psi\in\mathbf{L}$. By the choice of $\mathsf{C}$, we have that $(\mathfrak{N},y)\models\psi$ for any $\mathfrak{N}=\langle\mathfrak{F},\Vdash'\rangle$ and any $y\in\mathcal{D}(\mathfrak{F})$. Define a particular $\Vdash'$ by 
\[
y\Vdash' p\text{ iff }(\mathfrak{M},y)\models\sigma(p)
\]
for any $p\in Var$ and any $y\in F$ and define $\mathfrak{N}:=\langle\mathfrak{F},\Vdash'\rangle$. Then, it is straightforward to see that $(\mathfrak{M},y)\models\sigma(\chi)$ iff $(\mathfrak{N},y)\models\chi$ for any $\chi\in\mathcal{L}_0$ and thus especially, we have $(\mathfrak{M},x)\models\phi$. This gives $\models_{\mathsf{CKMJL}}\phi$.

If $\phi$ is an instance of $(J)$ or $(+)$, then the conditions (i) and (ii) of Definition \ref{def:genmkrtmod}, respectively, give the validity of $\phi$ immediately.

Similarly if $\phi$ is an instance of $(F)$ and $\mathfrak{M}$ is factive or $\phi$ is an instance of $(I)$ and $\mathfrak{M}$ is introspective, the respective validity of $\phi$ follows immediately by the definition of factive or introspective intuitionistic Mkrtychev models, that is (1) or (2) of Definition \ref{def:genmkrtmodspecialclasses}.
\end{proof}
\subsection{Intuitionistic Fitting models}
We continue with intuitionistic Fitting models, combining various streams of semantics in non-classical modal logics by extending the approach using intuitionistic modal Kripke models of \cite{Ono1977} for intuitionistic modal logics by the machinery of evidence functions for explicit modalities in the sense of Fitting (or conversely extending Fitting's models with the machinery of intuitionistic Kripke frames). In any way, the models which we introduce are akin to a model class from \cite{MS2016} for $\mathbf{IPCJT4}$ (which are called intuitionistic modular models there).
\begin{definition}\label{def:genfittingmod}
Let $\mathfrak{F}=\langle F,\leq\rangle$ be a Kripke frame. An \emph{intuitionistic Fitting model based on $\mathfrak{F}$} is a structure $\mathfrak{M}=\langle\mathfrak{F},\mathcal{R},\mathcal{E},\Vdash\rangle$ such that $\Vdash\subseteq F\times Var$, $\mathcal{R}\subseteq F\times F$ and $\mathcal{E}:Jt\times F\to 2^{\mathcal{L}_J}$ satisfy
\begin{enumerate}
\item $x\leq y$ and $x\Vdash p$ imply $y\Vdash p$ for all $p\in Var$,
\item $x\leq y$ and $\phi\in\mathcal{E}_t(x)$ imply $\phi\in\mathcal{E}_t(y)$ for all $\phi\in\mathcal{L}_J$ and all $t\in Jt$,
\item $x\leq y$ implies $\mathcal{R}[y]\subseteq\mathcal{R}[x]$ where $\mathcal{R}[x]:=\{z\in F\mid (x,z)\in\mathcal{R}\}$,
\end{enumerate}
for all $x,y\in F$ as well as 
\begin{enumerate}[(i)]
\item $\mathcal{E}_t(x)\sqsupset\mathcal{E}_s(x)\subseteq\mathcal{E}_{[t\cdot s]}(x)$ for all $x\in F$ and all $t,s\in Jt$,
\item $\mathcal{E}_t(x)\cup\mathcal{E}_s(x)\subseteq\mathcal{E}_{[t+s]}(x)$ for all $x\in F$ and all $t,s\in Jt$.
\end{enumerate}
\end{definition}
Over an intuitionistic Fitting model $\mathfrak{M}=\langle\mathfrak{F},\mathcal{R},\mathcal{E},\Vdash\rangle$, we introduce the following local satisfaction relation by recursion:
\begin{itemize}
\item $(\mathfrak{M},x)\not\models\bot$;
\item $(\mathfrak{M},x)\models p$ if $x\Vdash p$;
\item $(\mathfrak{M},x)\models\phi\land\psi$ if $(\mathfrak{M},x)\models\phi$ and $(\mathfrak{M},x)\models\psi$;
\item $(\mathfrak{M},x)\models\phi\lor\psi$ if $(\mathfrak{M},x)\models\phi$ or $(\mathfrak{M},x)\models\psi$;
\item $(\mathfrak{M},x)\models\phi\rightarrow\psi$ if $\forall y\in F\left( x\leq y\Rightarrow (\mathfrak{M},x)\not\models\phi\text{ or }(\mathfrak{M},x)\models\psi\right)$;
\item $(\mathfrak{M},x)\models t:\phi$ if $\phi\in\mathcal{E}_t(x)$ and $\forall y\in\mathcal{R}[x]\;(\mathfrak{M},y)\models\phi$.
\end{itemize}
We write $(\mathfrak{M},x)\models\Gamma$ if $(\mathfrak{M},x)\models\gamma$ for all $\gamma\in\Gamma$. Also, given an intuitionistic Fitting model $\mathfrak{M}$ over a Kripke frame $\mathfrak{F}=\langle F,\leq,V\rangle$, we write again $\mathcal{D}(\mathfrak{M})=F$.
\begin{lemma}
Let $\mathfrak{M}$ be an intuitionistic Fitting model over a Kripke frame $\mathfrak{F}=\langle F,\leq\rangle$. For any $\phi\in\mathcal{L}_J$ and all $x,y\in F$: 
\[
x\leq y\text{ and }(\mathfrak{M},x)\models\phi\text{ imply }(\mathfrak{M},y)\models\phi.
\]
\end{lemma}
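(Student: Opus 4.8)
The plan is to argue by induction on the structure of $\phi\in\mathcal{L}_J$. Throughout, fix $x,y\in F$ with $x\leq y$ and assume $(\mathfrak{M},x)\models\phi$, aiming to derive $(\mathfrak{M},y)\models\phi$; I split into cases according to the outermost connective of $\phi$. The case $\phi=\bot$ is vacuous, since the antecedent $(\mathfrak{M},x)\models\bot$ never holds. The case $\phi=p$ for $p\in Var$ is precisely persistence condition (1) of Definition \ref{def:genfittingmod}. The conjunction and disjunction cases are routine: from $(\mathfrak{M},x)\models\psi\land\chi$ one extracts both conjuncts at $x$, applies the induction hypothesis to each, and reassembles at $y$; dually for $\lor$, applying the induction hypothesis to whichever disjunct holds at $x$.

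For the implication case $\phi=\psi\rightarrow\chi$, I would unfold the semantic clause: $(\mathfrak{M},x)\models\psi\rightarrow\chi$ says that every successor $z\geq x$ fails $\psi$ or satisfies $\chi$. Since $\leq$ is transitive, any $z\geq y$ also satisfies $z\geq x$, so the condition inherited from $x$ applies verbatim at every such $z$, yielding $(\mathfrak{M},y)\models\psi\rightarrow\chi$. Notably this case does not use the induction hypothesis at all; persistence of the implication is a purely structural consequence of the transitivity of the frame order.

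The essential case is the justification modality $\phi=t:\psi$. Here $(\mathfrak{M},x)\models t:\psi$ unpacks into the conjunction of $\psi\in\mathcal{E}_t(x)$ and $(\mathfrak{M},z)\models\psi$ for all $z\in\mathcal{R}[x]$. For the first conjunct, persistence condition (2) of Definition \ref{def:genfittingmod} transports $\psi\in\mathcal{E}_t(x)$ along $x\leq y$ to $\psi\in\mathcal{E}_t(y)$. For the second, the antitonicity condition (3) gives $\mathcal{R}[y]\subseteq\mathcal{R}[x]$, so every $z\in\mathcal{R}[y]$ already lies in $\mathcal{R}[x]$ and hence satisfies $\psi$ by assumption. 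Both conjuncts defining $(\mathfrak{M},y)\models t:\psi$ therefore hold, and again the induction hypothesis is not invoked.

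The only conceptual point, rather than a genuine obstacle, is that monotonicity for the two \emph{universal} clauses (the implication and the $\mathcal{R}$-quantifier inside $t:$) is secured not by the induction hypothesis but directly by the frame conditions: transitivity of $\leq$ for implication, and the \emph{shrinking} of the accessibility image $\mathcal{R}[\cdot]$ as one moves up the order for the modality. Recognizing that condition (3) is deliberately antitone, so that the box-like part of $t:\psi$ persists upward, is exactly what makes the modal case go through, while the evidence-function part is then handled by the straightforward persistence condition (2).
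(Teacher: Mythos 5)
Your proof is correct and is exactly the argument the paper intends: the lemma is stated without proof as an immediate structural induction (in analogy with Lemma \ref{lem:propmono}), and your case analysis — conditions (1) and (2) of the definition for the atomic and evidence parts, transitivity of $\leq$ for implication, and the antitonicity $\mathcal{R}[y]\subseteq\mathcal{R}[x]$ from condition (3) for the modal clause — is precisely what makes it go through. Your observation that the implication and modal cases need no induction hypothesis is accurate and worth noting, but does not constitute a departure from the paper's (omitted) proof.
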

\begin{definition}
Let $\mathfrak{F}=\langle F,\leq\rangle$ be a Kripke frame and $\mathfrak{M}=\langle\mathfrak{F},\mathcal{R},\mathcal{E},\Vdash\rangle$ be an intuitionistic Fitting model. We call $\mathfrak{M}$
\begin{enumerate}
\item \emph{reflexive} if $\mathcal{R}$ is reflexive,
\item \emph{transitive} if $\mathcal{R}$ is transitive,
\item \emph{monotone} if $\mathcal{E}_t(x)\subseteq\mathcal{E}_t(y)$ for $y\in\mathcal{R}[x]$ and for all $t\in Jt$ and all $x\in F$,
\item \emph{introspective} if it is transitive, monotone and $t:\mathcal{E}_t(x)\subseteq\mathcal{E}_{!t}(x)$ for all $t\in Jt$ and all $x\in F$.
\end{enumerate}
\end{definition}
\begin{definition}
Let $\mathsf{C}$ be a class of intuitionistic Fitting models. We write $\Gamma\models_\mathsf{C}\phi$ if for all $\mathfrak{M}\in\mathsf{C}$ and all $x\in\mathcal{D}(\mathfrak{M})$: $(\mathfrak{M},x)\models\Gamma$ implies $(\mathfrak{M},x)\models\phi$.
\end{definition}
\begin{definition}
Let $\mathsf{C}$ be a class of Kripke frames. Then, we write:
\begin{enumerate}
\item $\mathsf{CKFJ}$ for the class of all intuitionistic Fitting models over frames from $\mathsf{C}$;
\item $\mathsf{CKFJT}$ for the class of all reflexive intuitionistic Fitting models over frames from $\mathsf{C}$;
\item $\mathsf{CKFJ4}$ for the class of all introspective intuitionistic Fitting models over frames from $\mathsf{C}$;
\item $\mathsf{CKFJT4}$ for the class of all reflexive and introspective intuitionistic Fitting models over frames from $\mathsf{C}$.
\end{enumerate}
\end{definition}
\begin{lemma}\label{lem:genfitmodsoundness}
Let $\mathbf{L}$ be an intermediate logic and $\mathbf{LJL}_0\in\{\mathbf{LJ}_0,\mathbf{LJT}_0,\mathbf{LJ4}_0,\mathbf{LJT4}_0\}$. Let $CS$ be a constant specification for  $\mathbf{LJL}_0$ and let $\mathsf{C}\in\mathsf{KFr}(\mathbf{L})$. Let $\mathsf{CKFJL}$ be the class of intuitionistic Fitting models corresponding to $\mathbf{LJL}_0$ and $\mathsf{C}$. Then, for any $\Gamma\cup\{\phi\}\subseteq\mathcal{L}_J$:
\[
\Gamma\vdash_{\mathbf{LJL}_{CS}}\phi\text{ implies }\Gamma\models_{\mathsf{CKFJL}_{CS}}\phi.
\]
\end{lemma}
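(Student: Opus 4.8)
The plan is to follow the same template used for the soundness proofs of the algebraic semantics and of the intuitionistic Mkrtychev models. First, by the deduction theorem for $\mathbf{LJL}_{CS}$ together with compactness of the provability relation, one reduces the strong soundness statement to the weak one exactly as in the proof of Lemma \ref{lem:algmkrtmodsoundness}; it therefore suffices to show that $\vdash_{\mathbf{LJL}_0}\phi$ implies $\models_{\mathsf{CKFJL}}\phi$. Since $\mathbf{LJL}_0$ is generated from $\overline{\mathbf{L}}$ together with the relevant justification axiom schemes (depending on $\mathbf{LJL}_0$) under modus ponens, and modus ponens visibly preserves validity, it is enough to verify that each axiom instance holds at every point of every model $\mathfrak{M}=\langle\mathfrak{F},\mathcal{R},\mathcal{E},\Vdash\rangle\in\mathsf{CKFJL}$.

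For the case $\phi\in\overline{\mathbf{L}}$, I would reuse the substitution argument from the proof of Lemma \ref{lem:genmkrtmodsoundness}: given $\sigma\colon Var\to\mathcal{L}_J$ and $\psi\in\mathbf{L}$ with $\phi=\sigma(\psi)$, define a propositional valuation $\Vdash'$ on the underlying frame by $y\Vdash' p$ iff $(\mathfrak{M},y)\models\sigma(p)$. Monotonicity of $\Vdash'$ is ensured by the monotonicity lemma for intuitionistic Fitting models, so $\mathfrak{N}:=\langle\mathfrak{F},\Vdash'\rangle$ is a genuine $\mathcal{L}_0$-Kripke model over a frame from $\mathsf{C}$. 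A routine induction on $\chi\in\mathcal{L}_0$ gives $(\mathfrak{M},y)\models\sigma(\chi)$ iff $(\mathfrak{N},y)\models\chi$, and since $\mathsf{C}\in\mathsf{KFr}(\mathbf{L})$ forces $(\mathfrak{N},y)\models\psi$ for all $y$, the validity of $\phi$ follows.

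The substance lies in the justification axioms, where the key difference from the Mkrtychev case is that $(\mathfrak{M},x)\models t:\phi$ now carries \emph{two} conjuncts: the evidence requirement $\phi\in\mathcal{E}_t(x)$ and the accessibility requirement $(\mathfrak{M},y)\models\phi$ for every $y\in\mathcal{R}[x]$. Since the intuitionistic reading of $\rightarrow$ quantifies over all $\leq$-successors, the monotonicity lemma lets one push all antecedents forward to a common world and then check the consequent there, after which each conjunct is handled by the corresponding frame condition. Concretely, for $(J)$ the evidence conjunct comes from condition (i) via $\psi\in\mathcal{E}_t(x)\sqsupset\mathcal{E}_s(x)\subseteq\mathcal{E}_{[t\cdot s]}(x)$, while the accessibility conjunct follows by evaluating $\phi\rightarrow\psi$ together with $\phi$ at each $y\in\mathcal{R}[x]$, using reflexivity of $\leq$ to instantiate the implication at $y$ itself; for $(+)$ both conjuncts are immediate from condition (ii); and for $(F)$ one invokes reflexivity of $\mathcal{R}$ (so $x\in\mathcal{R}[x]$) to extract $(\mathfrak{M},x)\models\phi$ from the accessibility conjunct.

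The hardest case, and the main obstacle, is introspection $(I)$. To derive $(\mathfrak{M},x)\models\;!t:t:\phi$ from $(\mathfrak{M},x)\models t:\phi$, the evidence conjunct $t:\phi\in\mathcal{E}_{!t}(x)$ follows from the closure condition $t:\mathcal{E}_t(x)\subseteq\mathcal{E}_{!t}(x)$; the delicate part is the accessibility conjunct, which demands $(\mathfrak{M},y)\models t:\phi$ for every $y\in\mathcal{R}[x]$, i.e. both $\phi\in\mathcal{E}_t(y)$ and $(\mathfrak{M},u)\models\phi$ for all $u\in\mathcal{R}[y]$. The first uses monotonicity of $\mathcal{E}$ along $\mathcal{R}$ (yielding $\mathcal{E}_t(x)\subseteq\mathcal{E}_t(y)$), and the second uses transitivity of $\mathcal{R}$ (so $u\in\mathcal{R}[y]$ and $y\in\mathcal{R}[x]$ give $u\in\mathcal{R}[x]$, whence $(\mathfrak{M},u)\models\phi$). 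This is exactly why introspective intuitionistic Fitting models are required to be transitive and monotone in addition to satisfying the evidence-closure condition.
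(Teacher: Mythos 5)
Your proposal is correct and follows essentially the same route as the paper's proof: the same reduction of strong to weak soundness, the same substitution argument for $\phi\in\overline{\mathbf{L}}$, and the same axiom-by-axiom verification in which $(J)$ is handled by pushing the antecedents to a common $\leq$-successor and combining the evidence condition (i) with condition (3) (equivalently, the monotonicity lemma), $(F)$ by reflexivity of $\mathcal{R}$, and $(I)$ by the evidence-closure condition together with monotonicity of $\mathcal{E}$ along $\mathcal{R}$ and transitivity of $\mathcal{R}$. The only (harmless) imprecision is attributing both conjuncts of the $(+)$ case to condition (ii); the accessibility conjunct is simply inherited from the hypothesis $(\mathfrak{M},x)\models t:\phi$.
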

\begin{proof}
As in Lemma \ref{lem:genmkrtmodsoundness}, we may restrict ourselves to weak soundness only. Here, it again suffices to only verify $(\mathfrak{M},x)\models\phi$ for any $\phi\in\overline{\mathbf{L}}$ or $\phi$ being an instance of a justification axiom (depending on $\mathbf{LJL}_0$) as well as any $\mathfrak{M}\in\mathsf{CKFJL}$ and any $x\in\mathcal{D}(\mathfrak{M})$.

The case for $\phi\in\overline{\mathbf{L}}$ can be handled similarly as in Lemma \ref{lem:genmkrtmodsoundness}. We thus only show the validity of (1) $(J)$, (2) $(+)$ as well as (3) $(F)$ and (4) $(I)$ in their respective model classes. For this, let $\mathfrak{M}=\langle\mathfrak{F},\mathcal{R},\mathcal{E},\Vdash\rangle$ be an intuitionistic Fitting model with $\mathfrak{F}\in\mathsf{C}$.
\begin{enumerate}
\item We show
\[
(\mathfrak{M},x)\models t:(\phi\rightarrow\psi)\text{ impl. }(\mathfrak{M},x)\models (s:\phi\rightarrow[t\cdot s]:\psi)
\]
for any $\phi,\psi\in\mathcal{L}_J$, any $t,s\in Jt$ and any $x\in F$. For this, suppose $(\mathfrak{M},x)\models t:(\phi\rightarrow\psi)$, that is by definition $\phi\rightarrow\psi\in\mathcal{E}_t(x)$ as well as
\[
\forall y\in\mathcal{R}[x]\:(\mathfrak{M},y)\models\phi\rightarrow\psi.
\]
Let $y\geq x$ and suppose $(\mathfrak{M},y)\models s:\phi$, that is $\phi\in\mathcal{E}_s(y)$ and
\[
\forall z\in\mathcal{R}[y]\:(\mathfrak{M},z)\models\phi.
\]
By condition (2) of Definition \ref{def:genfittingmod}, this yields $\phi\rightarrow\psi\in\mathcal{E}_t(y)$. By condition (i), we have thus that $\psi\in\mathcal{E}_{[t\cdot s]}(y)$. Now, let $z\in\mathcal{R}[y]$. As above, we have $(\mathfrak{M},z)\models\phi$ and by condition (3) of Definition \ref{def:genfittingmod}, we have that $z\in\mathcal{R}[x]$ and thus $(\mathfrak{M},z)\models\phi\rightarrow\psi$. Hence, we obtain $(\mathfrak{M},z)\models\psi$.

Therefore, we have $\forall z\in\mathcal{R}[y]\:(\mathfrak{M},z)\models\psi$ and in combination with $\psi\in\mathcal{E}_{[t\cdot s]}(y)$, we get $(\mathfrak{M},y)\models [t\cdot s]:\psi$. As $y$ was arbitrary, we have $(\mathfrak{M},x)\models s:\phi\rightarrow [t\cdot s]:\psi$.
 
As $x\in F$ was arbitrary, it is that $(\mathfrak{M},x)\models t:(\phi\rightarrow\psi)\rightarrow (s:\phi\rightarrow[t\cdot s]:\psi)$.
\item Let $x\in F$ be arbitrary. Suppose $(\mathfrak{M},x)\models t:\phi$, that is
\[
\phi\in\mathcal{E}_t(x)\text{ and }\forall y\in\mathcal{R}[x]\; (\mathfrak{M},x)\models\phi.
\]
The former gives $\phi\in\mathcal{E}_{[t+s]}(x)$ by condition (ii) of Definition \ref{def:genfittingmod} and this combined with the latter gives
\[
(\mathfrak{M},x)\models [t+s]:\phi.
\]
As $x\in F$ war arbitrary, this yields $(\mathfrak{M},x)\models t:\phi\rightarrow[t+s]:\phi$. One shows $(\mathfrak{M},x)\models s:\phi\rightarrow[t+s]:\phi$ in a similar way.
\item Suppose $\mathfrak{M}$ is reflexive and let $x\in F$. Suppose
\[
(\mathfrak{M},x)\models t:\phi
\]
that is especially we have $\forall y\in\mathcal{R}[x]\;(\mathfrak{M},y)\models\phi$. As $\mathcal{R}$ is reflexive, we have $x\in\mathcal{R}[x]$ and thus $(\mathfrak{M},x)\models\phi$. As $x$ was arbitrary, we have
\[
(\mathfrak{M},x)\models t:\phi\rightarrow\phi.
\]
\item Let $\mathfrak{M}$ be introspective and let $x\in F$. Suppose that $(\mathfrak{M},x)\models t:\phi$, that is
\[
\phi\in\mathcal{E}_t(x)\text{ and }\forall y\in\mathcal{R}[x]\; (\mathfrak{M},x)\models\phi.
\]
The former gives at first $t:\phi\in\mathcal{E}_{!t}(x)$ by introspectivity. Now, let $y\in\mathcal{R}[x]$ be arbitrary. By the monotonicity aspect of introspectivity, we have $\phi\in\mathcal{E}_t(y)$ as $\phi\in\mathcal{E}_t(x)$. Now, let $z\in\mathcal{R}[y]$. By transitivity of $\mathcal{R}$, this implies $z\in\mathcal{R}[x]$ and thus $(\mathfrak{M},z)\models \phi$ by assumption. Summarized, we get
\[
\phi\in\mathcal{E}_t(y)\text{ and }\forall z\in\mathcal{R}[y]\; (\mathfrak{M},z)\models\phi,
\]
that is $(\mathfrak{M},y)\models t:\phi$ for all $y\in\mathcal{R}[x]$ and this combined with $t:\phi\in\mathcal{E}_{!t}(x)$ gives $(\mathfrak{M},x)\models !t:t:\phi$. As $x$ was arbitrary, we have $(\mathfrak{M},x)\models t:\phi\rightarrow !t:t:\phi$.
\end{enumerate}
\end{proof}
\subsection{Intuitionistic subset models}
The last semantics which we introduce, based on intuitionistic Kripke frames, extends the considerations of Lehmann and Studer from \cite{LS2019} about their subset models to these intermediate cases. This semantics seems to have not appeared in the literature before.
\begin{definition}\label{def:gensubsetmod}
Let $\mathfrak{F}=\langle F_0,\leq\rangle$ be a Kripke frame. An \emph{intuitionistic subset model over $\mathfrak{F}$} is a structure $\mathfrak{M}=\langle\mathfrak{F},F,\mathcal{E},\Vdash\rangle$ with $F\supseteq F_0$ $\mathcal{E}:Jt\to 2^{F\times F}$ and $\Vdash\subseteq F\times\mathcal{L}_J$ and which satisfies
\begin{enumerate}
\item $x\leq y$ and $x\Vdash p$ imply $y\Vdash p$ for all $p\in Var$,
\item $x\leq y$ implies $\mathcal{E}_t[y]\subseteq\mathcal{E}_t[x]$ where $\mathcal{E}_t[x]:=\{z\in F\mid (x,z)\in\mathcal{E}_t\}$ for all $t\in Jt$,
\end{enumerate}
for all $x,y\in F_0$ as well as 
\begin{enumerate}[(i)]
\item $x\not\Vdash\bot$,
\item $x\Vdash\phi\land\psi$ iff $x\Vdash\phi$ and $x\Vdash\psi$,
\item $x\Vdash\phi\lor\psi$ iff $x\Vdash\phi$ or $x\Vdash\psi$,
\item $x\Vdash\phi\rightarrow\psi$ iff $\forall y\geq x: y\not\Vdash\phi$ or $y\Vdash\psi$,
\item $x\Vdash t:\phi$ iff $\forall y\in\mathcal{E}_t[x]: y\Vdash\phi$,
\end{enumerate}
for any $x\in F_0$ and such that it satisfies:
\begin{enumerate}[(a)]
\item $\mathcal{E}_{[t+s]}[x]\subseteq\mathcal{E}_t[x]\cap\mathcal{E}_s[x]$;
\item $\mathcal{E}_{[t\cdot s]}[x]\subseteq\{y\in F\mid \forall\phi\in(\mathfrak{M})^x_{t,s}(y\Vdash\phi)\}$ where we define
\[
(\mathfrak{M})^v_{t,s}:=\{\phi\in\mathcal{L}_J\mid\exists\psi\in\mathcal{L}_J\forall y\in F(y\in\mathcal{E}_t[x]\Rightarrow y\Vdash\psi\rightarrow\phi\text{ and }y\in\mathcal{E}_s[x]\Rightarrow y\Vdash\psi)\}.
\]
\end{enumerate}
\end{definition}
We write $\mathcal{D}_0(\mathfrak{M})$ for $F_0$ and $\mathcal{D}(\mathfrak{M})$ for $F$. Also, given $x\in\mathcal{D}(\mathfrak{M})$, we write $(\mathfrak{M},x)\models\phi$ if $x\Vdash\phi$ and $(\mathfrak{M},x)\models\Gamma$ if $(\mathfrak{M},x)\models\gamma$ for all $\gamma\in\Gamma$, given $\Gamma\cup\{\phi\}\subseteq\mathcal{L}_J$. We write $\mathfrak{M}\models\phi$ if for all $x\in\mathcal{D}_0(\mathfrak{M})$, we have $(\mathfrak{M},x)\models\phi$ and similarly for sets $\Gamma$. Note the emphasis on $\mathcal{D}_0(\mathfrak{M})$, not $\mathcal{D}(\mathfrak{M})$.
\begin{lemma}\label{lem:gensubsetmodmono}
Let $\mathfrak{F}=\langle F_0,\leq\rangle$ be a Kripke frame and $\mathfrak{M}=\langle\mathfrak{F},F,\mathcal{E},\Vdash\rangle$ be an intuitionistic subset model over $\mathfrak{F}$. Then, for all $\phi\in\mathcal{L}_J$ and all $x,y\in F_0$:
\[
x\leq y\text{ and }x\Vdash\phi\text{ imply }y\Vdash\phi. 
\]
\end{lemma}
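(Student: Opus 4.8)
The plan is to prove the statement by induction on the structure of $\phi\in\mathcal{L}_J$, fixing $x,y\in F_0$ with $x\leq y$ throughout and showing $x\Vdash\phi\Rightarrow y\Vdash\phi$. Since the order $\leq$ lives only on $F_0$, every world reachable via $\geq$ from $x$ or from $y$ again lies in $F_0$, so the recursive clauses (i)--(v) of Definition~\ref{def:gensubsetmod} apply wherever I invoke them. The atomic and propositional cases are direct: for $\phi=\bot$ there is nothing to show, since $x\not\Vdash\bot$ by clause (i); and for $\phi=p\in Var$ the implication is precisely condition~(1) of the definition.

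The conjunction and disjunction cases are immediate from the induction hypothesis. Using clause (ii), $x\Vdash\phi\land\psi$ unfolds to $x\Vdash\phi$ and $x\Vdash\psi$; applying the induction hypothesis componentwise gives $y\Vdash\phi$ and $y\Vdash\psi$, hence $y\Vdash\phi\land\psi$. The disjunction case is identical using clause (iii). For implication I use the standard intuitionistic argument: suppose $x\Vdash\phi\rightarrow\psi$, so by clause (iv) we have $\forall w\geq x\,(w\not\Vdash\phi\text{ or }w\Vdash\psi)$. To establish $y\Vdash\phi\rightarrow\psi$, take an arbitrary $w\geq y$; by transitivity of $\leq$ we have $w\geq x$, so $w\not\Vdash\phi$ or $w\Vdash\psi$, which is exactly the condition clause (iv) demands at $y$. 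Note that here the induction hypothesis is not even needed, as monotonicity of implication is forced directly by transitivity.

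The only genuinely structural case is $\phi=t:\psi$, and this is where the design of subset models does the work. Assume $x\Vdash t:\psi$, that is, $\forall z\in\mathcal{E}_t[x]\,(z\Vdash\psi)$ by clause (v). From $x\leq y$, condition~(2) of Definition~\ref{def:gensubsetmod} yields the \emph{antitone} inclusion $\mathcal{E}_t[y]\subseteq\mathcal{E}_t[x]$. Consequently every $z\in\mathcal{E}_t[y]$ already lies in $\mathcal{E}_t[x]$ and therefore satisfies $z\Vdash\psi$; by clause (v) this gives $y\Vdash t:\psi$, completing the induction. I expect this step to be the main conceptual point: monotonicity upward along $\leq$ is secured not by any persistence of the evidence sets but by their \emph{shrinking}, so that strictly fewer worlds need to be checked at $y$ than at $x$. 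In particular the induction hypothesis for $\psi$ is not required either, since the witnesses $z\Vdash\psi$ are inherited verbatim from the evaluation at $x$.
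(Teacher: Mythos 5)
Your proof is correct and is exactly the straightforward structural induction the paper intends (the lemma is stated without proof, in analogy with Lemma \ref{lem:propmono}, whose proof is described as an easy induction). You correctly identify the two points that make it go through — that $\leq$ being a relation on $F_0$ keeps all quantified worlds in the range where clauses (i)--(v) apply, and that the antitone condition $\mathcal{E}_t[y]\subseteq\mathcal{E}_t[x]$ handles the modal case without invoking the induction hypothesis.
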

\begin{definition}
Let $\mathfrak{F}=\langle F_0,\leq\rangle$ be a Kripke frame and $\mathfrak{M}=\langle\mathfrak{F},F,\mathcal{E},\Vdash\rangle$ be an intuitionistic subset model. We call $\mathfrak{M}$
\begin{enumerate}
\item \emph{reflexive} if $x\in\mathcal{E}_t[x]$ for all $x\in F_0$ and all $t\in Jt$,
\item \emph{introspective} if $\mathcal{E}_{!t}[x]\subseteq\{y\in F\mid\forall\phi\in\mathcal{L}_J(x\Vdash t:\phi\Rightarrow y\Vdash t:\phi)\}$ for all $x\in F_0$ and all $t\in Jt$.
\end{enumerate}
\end{definition}
\begin{definition}
Let $\mathsf{C}$ be a class of intuitionistic subset models. We write $\Gamma\models_\mathsf{C}\phi$ if for all $\mathfrak{M}\in\mathsf{C}$ and all $x\in\mathcal{D}_0(\mathfrak{M})$: $(\mathfrak{M},x)\models\Gamma$ implies $(\mathfrak{M},x)\models\phi$.
\end{definition}
\begin{definition}
Let $\mathsf{C}$ be a class of Kripke frames. Then, we write:
\begin{enumerate}
\item $\mathsf{CKSJ}$ for the class of all intuitionistic subset models over frames from $\mathsf{C}$;
\item $\mathsf{CKSJT}$ for the class of all reflexive intuitionistic subset models over frames from $\mathsf{C}$;
\item $\mathsf{CKSJ4}$ for the class of all introspective intuitionistic subset models over frames from $\mathsf{C}$;
\item $\mathsf{CKSJT4}$ for the class of all reflexive and introspective intuitionistic subset models over frames from $\mathsf{C}$.
\end{enumerate}
\end{definition}
\begin{definition}
Let $\mathfrak{F}=\langle F_0,\leq\rangle$ be a Kripke frame and $\mathfrak{M}=\langle\mathfrak{F},F,\mathcal{E},\Vdash\rangle$ be an intuitionistic subset model over $\mathfrak{F}$. Let $CS$ be a constant specification (for some proof system). We say that $\mathfrak{M}$ \emph{respects} $CS$ if
\[
x\Vdash c:\phi
\]
for all $c:\phi\in CS$ and all $x\in F_0$.
\end{definition}
Given a class $\mathsf{C}$ of intuitionistic subset models, we write $\mathsf{C}_{CS}$ for the subclass of all models respecting a constant specification $CS$.
\begin{lemma}\label{lem:gensubsetmodsoundness}
Let $\mathbf{L}$ be an intermediate logic and $\mathbf{LJL}_0\in\{\mathbf{LJ}_0,\mathbf{LJT}_0,\mathbf{LJ4}_0,\mathbf{LJT4}_0\}$. Let $CS$ be a constant specification for  $\mathbf{LJL}_0$. Let $\mathsf{C}\in\mathsf{KFr}(\mathbf{L})$ and let $\mathsf{CKSJL}$ be the class of intuitionistic subset models corresponding to $\mathbf{LJL}_0$ and $\mathsf{C}$. For any $\Gamma\cup\{\phi\}\subseteq\mathcal{L}_J$:
\[
\Gamma\vdash_{\mathbf{LJL}_{CS}}\phi\text{ implies }\Gamma\models_{\mathsf{CKSJL}_{CS}}\phi.
\]
\end{lemma}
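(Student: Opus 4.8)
The plan is to follow the pattern of the previous soundness proofs, in particular Lemmas \ref{lem:algmkrtmodsoundness} and \ref{lem:genmkrtmodsoundness}. First I would reduce strong soundness to weak soundness: using the deduction theorem for $\mathbf{LJL}_{CS}$ together with compactness of $\vdash_{\mathbf{LJL}_{CS}}$, it suffices to show that $\vdash_{\mathbf{LJL}_0}\phi$ implies $\models_{\mathsf{CKSJL}}\phi$, exactly as in the displayed chain of implications in the proof of Lemma \ref{lem:algmkrtmodsoundness} (carried out for the local-truth consequence in Lemma \ref{lem:genmkrtmodsoundness}). For this weak version, by the inductive definition of $\mathbf{LJL}_0$ it then suffices to verify that every $\phi\in\overline{\mathbf{L}}$ and every instance of the justification axioms $(J),(+),(F),(I)$ available in the chosen $\mathbf{LJL}_0$ is forced at every $x\in\mathcal{D}_0(\mathfrak{M})$ of every model $\mathfrak{M}\in\mathsf{CKSJL}$, and that this property is preserved under modus ponens; the latter is immediate from clause (iv) of Definition \ref{def:gensubsetmod}.

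For the case $\phi\in\overline{\mathbf{L}}$, I would argue as in Lemma \ref{lem:genmkrtmodsoundness}, restricting attention to the underlying frame $\langle F_0,\leq\rangle$ on which clauses (i)--(iv) of Definition \ref{def:gensubsetmod} make $\Vdash$ behave like propositional forcing. Writing $\phi=\sigma(\psi)$ for some $\psi\in\mathbf{L}$ and $\sigma:Var\to\mathcal{L}_J$, I define a propositional Kripke model $\mathfrak{N}=\langle\langle F_0,\leq\rangle,\Vdash'\rangle$ by $y\Vdash' p$ iff $(\mathfrak{M},y)\models\sigma(p)$, where monotonicity of $\Vdash'$ follows from Lemma \ref{lem:gensubsetmodmono} applied to $\sigma(p)$. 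A routine induction then gives $(\mathfrak{M},y)\models\sigma(\chi)$ iff $(\mathfrak{N},y)\models\chi$ for all $\chi\in\mathcal{L}_0$ and all $y\in F_0$, and since $\langle F_0,\leq\rangle\in\mathsf{C}$ with $\mathsf{C}\in\mathsf{KFr}(\mathbf{L})$ and $\psi\in\mathbf{L}$, we obtain $(\mathfrak{N},x)\models\psi$, hence $(\mathfrak{M},x)\models\sigma(\psi)=\phi$.

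For the justification axioms I would fix $\mathfrak{M}=\langle\mathfrak{F},F,\mathcal{E},\Vdash\rangle\in\mathsf{CKSJL}$ and $x\in F_0$ and verify each scheme directly, noting that any $x'\geq x$ again lies in $F_0$ (as $\leq$ is the order on $F_0$), so that the model conditions apply, and that Lemma \ref{lem:gensubsetmodmono} supplies the needed monotonicity. The schemes $(+)$, $(F)$ and $(I)$ are straightforward: for $(+)$ one uses that $\mathcal{E}_{[t+s]}[x']\subseteq\mathcal{E}_t[x']$ by condition (a), so $y\Vdash\phi$ for every $y\in\mathcal{E}_{[t+s]}[x']$ whenever $x'\Vdash t:\phi$; for $(F)$ one uses reflexivity $x'\in\mathcal{E}_t[x']$ to pass from $x'\Vdash t:\phi$ to $x'\Vdash\phi$; and for $(I)$, assuming $x'\Vdash t:\phi$, introspectivity directly yields $y\Vdash t:\phi$ for all $y\in\mathcal{E}_{!t}[x']$, i.e.\ $x'\Vdash\;!t:t:\phi$.

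The main obstacle, exactly as in Lemma \ref{lem:algsubsetmodsoundness}, is the scheme $(J)$, whose validity rests on condition (b) together with the auxiliary set $(\mathfrak{M})^{x'}_{t,s}$. Assuming $x'\Vdash t:(\phi\rightarrow\psi)$ and $x'\Vdash s:\phi$ for some $x'\geq x$, I would show $\psi\in(\mathfrak{M})^{x'}_{t,s}$ by choosing the witness $\theta:=\phi$: every $y\in\mathcal{E}_t[x']$ satisfies $y\Vdash\phi\rightarrow\psi$ by clause (v), hence $y\Vdash\theta\rightarrow\psi$, while every $y\in\mathcal{E}_s[x']$ satisfies $y\Vdash\phi$, hence $y\Vdash\theta$. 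Condition (b) then forces $y\Vdash\psi$ for all $y\in\mathcal{E}_{[t\cdot s]}[x']$, which is precisely $x'\Vdash[t\cdot s]:\psi$; since $x'\geq x$ was arbitrary, $(\mathfrak{M},x)\models s:\phi\rightarrow[t\cdot s]:\psi$, and as $x$ ranges over all of $F_0$ the whole instance of $(J)$ is valid. The delicate point throughout is keeping track of which quantifiers range over $F_0$ versus the larger set $F$, but the construction of $(\mathfrak{M})^{x'}_{t,s}$ is tailored so that the witness $\theta=\phi$ always works.
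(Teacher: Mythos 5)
Your proposal is correct and follows essentially the same route as the paper: reduce strong to weak soundness as in the earlier soundness lemmas, treat $\phi\in\overline{\mathbf{L}}$ via the induced propositional Kripke model on $\langle F_0,\leq\rangle$, and verify the justification axioms directly, with the key step for $(J)$ being that $\psi\in(\mathfrak{M})^{x'}_{t,s}$ is witnessed by $\phi$ so that condition (b) yields $y\Vdash\psi$ for all $y\in\mathcal{E}_{[t\cdot s]}[x']$. Your explicit attention to the $F_0$ versus $F$ distinction and to monotonicity via Lemma \ref{lem:gensubsetmodmono} matches the paper's argument.
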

\begin{proof}
Reasoning as in Lemmas \ref{lem:genmkrtmodsoundness} and \ref{lem:genfitmodsoundness}, we restrict the argument and only show the validity of $(J),(+),(F)$ and $(I)$ in their respective model classes. For this, let $\mathfrak{M}=\langle\mathfrak{F},F,\mathcal{E},\Vdash\rangle$ be an intuitionistic subset model over a Kripke frame $\mathfrak{F}$ and let $x\in F_0$.
\begin{enumerate}
\item For $(J)$, suppose $x\Vdash t:(\phi\rightarrow\psi)$. Now, we want to show $x\Vdash s:\phi\rightarrow [t\cdot s]:\psi$. For this, let $y\geq x$. Note that then $y\in F_0$ as $\leq$ is only a relation on $F_0$. Suppose $y\Vdash s:\phi$, that is
\[
\forall z\in\mathcal{E}_s[y]\;z\Vdash\phi.\tag{$\dagger$}
\]
Further, $x\Vdash t:(\phi\rightarrow\psi)$ implies $y\Vdash t:(\phi\rightarrow\psi)$ by Lemma \ref{lem:gensubsetmodmono}, that is
\[
\forall z\in\mathcal{E}_t[y]\;z\Vdash\phi\rightarrow\psi.\tag{$\ddagger$}
\]
Let $z\in\mathcal{E}_{[t\cdot s]}[y]$. Then, by property (b) of Definition \ref{def:gensubsetmod}, we have 
\[
z\in\{w\in F\mid\forall\chi\in(\mathfrak{M})^y_{t,s}\; w\Vdash\chi\}
\]
and thus it suffices to show $\psi\in(\mathfrak{M})^y_{t,s}$. But this is clear from $(\dagger)$ and $(\ddagger)$. Thus,
\[
\forall z\in\mathcal{E}_{[t\cdot s]}[y]\;z\Vdash\psi
\]
which is $y\Vdash [t\cdot s]:\psi$ and consequently $x\Vdash s:\phi\rightarrow [t\cdot s]:\psi$. As $x$ was arbitrary, we have $x\Vdash t:(\phi\rightarrow\psi)\rightarrow (s:\phi\rightarrow [t\cdot s]:\psi)$ for all $x\in F_0$.
\item Suppose $x\Vdash t:\phi$. That is,
\[
\forall y\in\mathcal{E}_t[x]\;y\Vdash\phi.
\]
and by condition (a) of Definition \ref{def:gensubsetmod}, we have
\[
\forall y\in\mathcal{E}_{[t+s]}[x]\subseteq\mathcal{E}_t[x]\; y\Vdash\phi
\]
which is $x\Vdash [t+s]:\phi$. As $x$ was arbitrary, we have $x\Vdash t:\phi\rightarrow [t+s]:\phi$ for any $x\in F_0$. Similarly, one shows $x\Vdash s:\phi\rightarrow [t+s]:\phi$ for any $x\in F_0$.
\item Let $\mathfrak{M}$ be reflexive with $x\Vdash t:\phi$. Then, we have
\[
\forall y\in\mathcal{E}_t[x]\; y\Vdash\phi,
\]
that is as $\mathfrak{M}$ is reflexive $x\in\mathcal{E}_t[x]$ and thus $x\Vdash\phi$. As $x$ was arbitrary, we obtain $x\Vdash t:\phi\rightarrow\phi$ for any $x\in F_0$.
\item Let $\mathfrak{M}$ be introspective $x\Vdash t:\phi$. Then, we have
\[
\forall y\in\mathcal{E}_{!t}[x]\; y\Vdash t:\phi
\]
by definition of introspectivity but this is exactly $x\Vdash !t:t:\phi$. Again, we have $x\Vdash t:\phi\rightarrow !t:t:\phi$ for any $x\in F_0$ as $x$ was arbitrary.
\end{enumerate}
\end{proof}
\section{Completeness for frame semantics}\label{sec:comptheoframes}
In this section, we prove the corresponding completeness theorems for the intermediate justification logics together with their previously introduced semantics based on Mkrtychev, Fitting or subset models over intuitionistic Kripke frames. The permissible classes of frames for the completeness theorems derive, similarly as the permissible classes of Heyting algebras from the completeness theorems for the algebraic models, from the underlying intermediate logic where we especially rely on the \emph{global} completeness statement introduced earlier.
\subsection{Completeness w.r.t. intuitionistic Mkrtychev models}
\begin{definition}
Let $\mathfrak{F}=\langle F,\leq\rangle$ be a Kripke frame and let $\mathfrak{N}\in\mathsf{Mod}(\mathfrak{F};\mathcal{L}_0^\star)$. We define the \emph{canonical intuitionistic Mkrtychev model over $\mathfrak{N}$} as the structure $\mathfrak{M}^{c,M}_\mathfrak{N}=\langle\mathfrak{F},\mathcal{E}^c,\Vdash^c\rangle$ by setting:
\begin{enumerate}
\item $x\Vdash^c p$ iff $x\Vdash^*p$;
\item $\mathcal{E}^c_t(x):=\{\phi\in\mathcal{L}_J\mid x\Vdash^*\phi_t\}$.
\end{enumerate}
\end{definition}
\begin{lemma}\label{lem:genmkrtmodtruthlem}
Let $\mathfrak{F}=\langle F,\leq\rangle$ be a Kripke frame, let $\mathfrak{N}\in\mathsf{Mod}(\mathfrak{F};\mathcal{L}_0^\star)$ and let $\mathfrak{M}^{c,M}_\mathfrak{N}=\langle\mathfrak{F},\mathcal{E}^c,\Vdash^c\rangle$ as above. Then, for all $\phi\in\mathcal{L}_J$ and all $x\in F$:
\[
(\mathfrak{M}^{c,M}_\mathfrak{N},x)\models\phi\text{ iff }(\mathfrak{N},x)\models\phi^\star.
\]
\end{lemma}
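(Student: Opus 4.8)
The plan is to prove the equivalence by induction on the structure of $\phi\in\mathcal{L}_J$, exploiting the fact that the translation $\star$ collapses each modal subformula $t:\psi$ into a single fresh propositional variable $\psi_t\in Var^\star$. Consequently the induction is really only an induction on the propositional skeleton of $\phi$: the cases $\bot$, propositional variables $p$, and modal formulae $t:\psi$ all behave as base cases, while only $\land,\lor,\rightarrow$ require the induction hypothesis.

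For the base cases, first $(\mathfrak{M}^{c,M}_\mathfrak{N},x)\not\models\bot$ by definition of the local satisfaction relation, and likewise $(\mathfrak{N},x)\not\models\bot=\bot^\star$, so the equivalence is trivial. For a variable $p\in Var$, unfolding gives $(\mathfrak{M}^{c,M}_\mathfrak{N},x)\models p$ iff $x\Vdash^c p$ iff $x\Vdash^* p$ iff $(\mathfrak{N},x)\models p=p^\star$, where the middle step is just the definition of $\Vdash^c$. The modal case is the step where the construction of $\mathcal{E}^c$ pays off: by definition of satisfaction in an intuitionistic Mkrtychev model, $(\mathfrak{M}^{c,M}_\mathfrak{N},x)\models t:\psi$ iff $\psi\in\mathcal{E}^c_t(x)$, which by definition of $\mathcal{E}^c$ holds iff $x\Vdash^*\psi_t$; since $\psi_t$ is an atomic variable of $\mathcal{L}_0^\star$, this is exactly $(\mathfrak{N},x)\models\psi_t=(t:\psi)^\star$. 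No appeal to the induction hypothesis is needed here, precisely because $\star$ sends $t:\psi$ to an atom.

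For the inductive steps, the cases $\phi\land\psi$ and $\phi\lor\psi$ follow immediately by commuting $\star$ with the connective and applying the induction hypothesis at $x$. For $\phi\rightarrow\psi$, I would use that both models share the same underlying frame $\mathfrak{F}=\langle F,\leq\rangle$, so the quantification over $\leq$-successors agrees on both sides: $(\mathfrak{M}^{c,M}_\mathfrak{N},x)\models\phi\rightarrow\psi$ holds iff for every $y\geq x$ one has $(\mathfrak{M}^{c,M}_\mathfrak{N},y)\not\models\phi$ or $(\mathfrak{M}^{c,M}_\mathfrak{N},y)\models\psi$; applying the induction hypothesis at each such $y$ turns this into the corresponding statement about $\phi^\star$ and $\psi^\star$ in $\mathfrak{N}$, which is exactly $(\mathfrak{N},x)\models\phi^\star\rightarrow\psi^\star=(\phi\rightarrow\psi)^\star$.

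This lemma is essentially true by construction, so I do not anticipate a genuine obstacle; the only points requiring care are that $\psi_t$ must be read as an atom of $\mathcal{L}_0^\star$, so that its $\mathfrak{N}$-satisfaction coincides with $\Vdash^*$, and that the implication clause must be evaluated over the shared successor structure of $\mathfrak{F}$ rather than pointwise, which is why the induction hypothesis is stated and used uniformly over all $y\geq x$ rather than only at $x$.
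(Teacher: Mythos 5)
Your proposal is correct and follows essentially the same route as the paper's own proof: a structural induction in which $\bot$, variables, and $t:\psi$ are handled directly from the definitions of $\Vdash^c$ and $\mathcal{E}^c$ (with no induction hypothesis needed in the modal case, since $\star$ sends $t:\psi$ to the atom $\psi_t$), and the connectives $\land,\lor,\rightarrow$ are handled by the induction hypothesis, the last one uniformly over all $\leq$-successors in the shared frame. No discrepancies to report.
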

\begin{proof}
We prove the statement by induction on $\phi$. The claim is clear for $p\in Var$ by definition. Suppose the claim is true for $\phi,\psi$. Then, we have at first
\begin{align*}
(\mathfrak{M}^{c,M}_\mathfrak{N},x)\models\phi\land\psi&\text{ iff }(\mathfrak{M}^{c,M}_\mathfrak{N},x)\models\phi\text{ and }(\mathfrak{M}^{c,M}_\mathfrak{N},x)\models\psi\\
                                                    &\text{ iff }(\mathfrak{N},x)\models\phi^\star\text{ and }(\mathfrak{N},x)\models\psi^\star\\
                                                    &\text{ iff }(\mathfrak{N},x)\models(\phi\land\psi)^\star
\end{align*}
and similarly for $\lor$. For $\rightarrow$, we have
\begin{align*}
(\mathfrak{M}^{c,M}_\mathfrak{N},x)\models\phi\rightarrow\psi&\text{ iff }\forall y\geq x:(\mathfrak{M}^{c,M}_\mathfrak{N},y)\models\phi\text{ implies }(\mathfrak{M}^{c,M}_\mathfrak{N},y)\models\psi\\
                                                          &\text{ iff }\forall y\geq x:(\mathfrak{N},y)\models\phi^\star\text{ implies }(\mathfrak{N},y)\models\psi^\star\\
                                                          &\text{ iff }(\mathfrak{N},x)\models\phi^\star\rightarrow\psi^\star.
\end{align*}
Lastly, we have
\begin{align*}
(\mathfrak{M}^{c,M}_\mathfrak{N},x)\models t:\phi&\text{ iff }\phi\in\mathcal{E}^c_t(x)\\
                                              &\text{ iff }x\Vdash^*\phi_t\\
                                              &\text{ iff }(\mathfrak{N},x)\models\phi_t.
\end{align*}
\end{proof}
\begin{lemma}\label{lem:genmkrtmodwelldef}
Let $\mathfrak{F}=\langle F,\leq\rangle$ be a Kripke frame and let $\mathfrak{N}\in\mathsf{Mod}(\mathfrak{F};\mathcal{L}_0^\star)$ such that additionally $\mathfrak{N}\models (Th_{\mathbf{LJL}_{CS}})^\star$. Then $\mathfrak{M}^{c,M}_\mathfrak{N}$ is a well-defined intuitionistic Mkrtychev model. Further:
\begin{enumerate}[(a)]
\item if $(F)$ is an axiom scheme of $\mathbf{LJL}_0$, then $\mathfrak{M}^{c,M}_\mathfrak{N}$ is factive;
\item if $(I)$ is an axiom scheme of $\mathbf{LJL}_0$, then $\mathfrak{M}^{c,M}_\mathfrak{N}$ is introspective.
\end{enumerate}
\end{lemma}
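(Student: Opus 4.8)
The plan is to check, one by one, each defining condition of Definition~\ref{def:genmkrtmod} for $\mathfrak{M}^{c,M}_\mathfrak{N}$, and then properties (a) and (b) of Definition~\ref{def:genmkrtmodspecialclasses}, all from the single global hypothesis $\mathfrak{N}\models(Th_{\mathbf{LJL}_{CS}})^\star$. Throughout I use that $\phi\in\mathcal{E}^c_t(x)$ unfolds, by definition of the canonical model, to $(\mathfrak{N},x)\models\phi_t$, where $\phi_t=(t:\phi)^\star\in Var^\star$ is a \emph{propositional variable}. The recurring mechanism is the following: whenever $\chi$ is a theorem of $\mathbf{LJL}_{CS}$, its translation $\chi^\star$ lies in $(Th_{\mathbf{LJL}_{CS}})^\star$ and is hence forced at \emph{every} world of $\mathfrak{N}$; since $\star$ replaces each subformula $t:\phi$ by the single variable $\phi_t$, the justification axioms $(J),(+),(F),(I)$ become purely propositional implications between such variables, and the intuitionistic reading of $\rightarrow$ together with reflexivity of $\leq$ lets me apply them as a \emph{local} modus ponens at each $x$ (that is, $(\mathfrak{N},x)\models A\rightarrow B$ and $(\mathfrak{N},x)\models A$ give $(\mathfrak{N},x)\models B$, by instantiating the clause for $\rightarrow$ at $y=x$).

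First I would dispose of the persistence conditions (1) and (2). Condition (1) is simply the monotonicity of the atomic forcing inherited from $\mathfrak{N}$. For condition (2), $\phi\in\mathcal{E}^c_t(x)$ means $(\mathfrak{N},x)\models\phi_t$, and since $\phi_t$ is a variable, monotonicity of forcing in $\mathfrak{N}$ yields $(\mathfrak{N},y)\models\phi_t$ for all $y\geq x$, i.e.\ $\phi\in\mathcal{E}^c_t(y)$.

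Next come the closure conditions (i) and (ii). For (ii), take $\phi\in\mathcal{E}^c_t(x)$, so $(\mathfrak{N},x)\models\phi_t$; the instance $t:\phi\rightarrow[t+s]:\phi$ of $(+)$ is a theorem, so $\phi_t\rightarrow\phi_{[t+s]}$ is forced at $x$, and a local modus ponens gives $(\mathfrak{N},x)\models\phi_{[t+s]}$, i.e.\ $\phi\in\mathcal{E}^c_{[t+s]}(x)$; the case $\phi\in\mathcal{E}^c_s(x)$ is symmetric using the second form of $(+)$. Condition (i) is the step I expect to be the most delicate, since the definition of $\sqsupset$ must be matched exactly against $(J)$: unfolding $\phi\in\mathcal{E}^c_t(x)\sqsupset\mathcal{E}^c_s(x)$ produces a witness $\psi$ with $(\mathfrak{N},x)\models(\psi\rightarrow\phi)_t$ and $(\mathfrak{N},x)\models\psi_s$; the relevant instance of $(J)$ is $t:(\psi\rightarrow\phi)\rightarrow(s:\psi\rightarrow[t\cdot s]:\phi)$, whose translation $(\psi\rightarrow\phi)_t\rightarrow(\psi_s\rightarrow\phi_{[t\cdot s]})$ is forced at $x$; two successive local modus ponens steps then yield $(\mathfrak{N},x)\models\phi_{[t\cdot s]}$, that is $\phi\in\mathcal{E}^c_{[t\cdot s]}(x)$.

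Finally, for (a) and (b) I run the same idiom on the optional axioms. For factivity, $\phi\in\mathcal{E}^c_t(x)$ gives $(\mathfrak{N},x)\models\phi_t$, and the instance $t:\phi\rightarrow\phi$ of $(F)$ translates to $\phi_t\rightarrow\phi^\star$, so $(\mathfrak{N},x)\models\phi^\star$; the Truth Lemma~\ref{lem:genmkrtmodtruthlem} then converts this into $(\mathfrak{M}^{c,M}_\mathfrak{N},x)\models\phi$, which is precisely factivity. For introspectivity, $\phi\in\mathcal{E}^c_t(x)$ gives $(\mathfrak{N},x)\models\phi_t$, and the instance $t:\phi\rightarrow{!}t:t:\phi$ of $(I)$ translates to $\phi_t\rightarrow(t:\phi)_{!t}$, so $(\mathfrak{N},x)\models(t:\phi)_{!t}$, which is exactly $t:\phi\in\mathcal{E}^c_{!t}(x)$; since this holds for every $\phi\in\mathcal{E}^c_t(x)$, we obtain $t:\mathcal{E}^c_t(x)\subseteq\mathcal{E}^c_{!t}(x)$. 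The only genuine obstacle is the careful bookkeeping in (i); every other item is a direct application of the translation-plus-local-modus-ponens pattern.
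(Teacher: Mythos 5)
Your proposal is correct and follows essentially the same route as the paper's proof: persistence of $\Vdash^c$ and $\mathcal{E}^c$ from monotonicity of forcing in $\mathfrak{N}$ (the translated justification atoms $\phi_t$ being variables of $Var^\star$), the closure conditions (i) and (ii) from the globally forced $\star$-translations of $(J)$ and $(+)$ via modus ponens at $y=x$, and (a), (b) from the translations of $(F)$ (combined with the Truth Lemma) and $(I)$. No gaps.
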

\begin{proof}
We first show properties (1) and (2) of Definition \ref{def:genmkrtmod}. For this, let $x,y\in F$ with $x\leq y$. For (1), we have
\[
x\Vdash^c p\Rightarrow x\Vdash^* p\Rightarrow y\Vdash^*p\Rightarrow y\Vdash^c p
\]
and for (2), we have
\[
\phi\in\mathcal{E}_t(x)\Rightarrow x\Vdash^*\phi_t\Rightarrow y\Vdash^*\phi_t\Rightarrow \phi\in\mathcal{E}_t(y).
\]
Both follow from Lemma \ref{lem:propmono} applied to $\mathfrak{N}$.\\

For properties (i) and (ii), let $x\in F$ and $t,s\in Jt$. Then, at first for (i), let $\phi\in\mathcal{E}^c_t(x)\sqsupset\mathcal{E}^c_s(x)$, that is by definition $\exists\psi\in\mathcal{L}_J$:
\[
\psi\rightarrow\phi\in\mathcal{E}^c_t(x)\text{ and }\psi\in\mathcal{E}^c_s(x).
\]
Untangling the definition of $\mathcal{E}^c$, this is
\[
x\Vdash^*(\psi\rightarrow\phi)_t\text{ and }x\Vdash^*\psi_s.
\]
As we have $\mathfrak{N}\models(Th_{\mathbf{LJL}_{CS}})^\star$, it follows from the $\star$-translation of the axiom scheme $(J)$ that $\forall y\geq x$:
\[
y\Vdash^*(\psi\rightarrow\phi)_t\text{ and }y\Vdash^*\psi_s\text{ implies }y\Vdash^*\phi_{[t\cdot s]}
\]
and thus especially, as $x\geq x$, we have $x\Vdash^*\phi_{[t\cdot s]}$ and consequently $\phi\in\mathcal{E}^c_{[t\cdot s]}(x)$.

For (ii), let $\phi\in\mathcal{E}^c_t(x)\cup\mathcal{E}^c_s(x)$. Then, we have $x\Vdash^*\phi_t$ or $x\Vdash^*\phi_s$. By the $\star$-translation of the axiom scheme $(+)$ and $\mathfrak{N}\models(Th_{\mathbf{LJL}_{CS}})^\star$, we have in either case as before $x\Vdash^*\phi_{[t+s]}$.\\

Now, for (a), if $(F)$ is an axiom scheme of $\mathbf{LJL}_0$, then we have by $\mathfrak{N}\models(Th_{\mathbf{LJL}_{CS}})^\star$ again that
\[
x\Vdash^*\phi_t\text{ implies }(\mathfrak{N},x)\models\phi^\star.
\]
By the definition of $\mathcal{E}^c$ and Lemma \ref{lem:genmkrtmodtruthlem}, we obtain
\[
\phi\in\mathcal{E}^c_t(x)\text{ implies }(\mathfrak{M}^{c,M}_\mathfrak{N},x)\models\phi
\]
and thus $\mathfrak{M}^{c,M}_\mathfrak{N}$ is factive.\\

For (b), if $(I)$ is an axiom scheme of $\mathbf{LJL}_0$, then we have 
\[
x\Vdash^*\phi_t\text{ implies }x\Vdash^*(t:\phi)_{!t}
\]
and by definition that is
\[
\phi\in\mathcal{E}^c_t(x)\text{ implies }t:\phi\in\mathcal{E}^c_{!t}(x)
\]
which is $t:\mathcal{E}^c_t(x)\subseteq\mathcal{E}^c_{!t}(x)$ and thus $\mathfrak{M}^{c,M}_\mathfrak{N}$ is introspective.
\end{proof}
\begin{theorem}\label{thm:genmkrtmodcomp}
Let $\mathbf{L}$ be an intermediate logic, $\mathbf{LJL}_0\in\{\mathbf{LJ}_0,\mathbf{LJT}_0,\mathbf{LJ4}_0,\mathbf{LJT4}_0\}$ and let $CS$ be a constant specification for $\mathbf{LJL}_0$. Let $\mathsf{C}\in\mathsf{KFr}(\mathbf{L})\cap\mathsf{KFr}^g(\mathbf{L})$ and let $\mathsf{CKMJL}$ be the class of intuitionistic Mkrtychev models corresponding to $\mathbf{LJL}_0$ and $\mathsf{C}$. For any $\Gamma\cup\{\phi\}\subseteq\mathcal{L}_J$, we have:
\[
\Gamma\vdash_{\mathbf{LJL}_{CS}}\phi\text{ iff }\Gamma\models_{\mathsf{CKMJL}_{CS}}\phi.
\]
\end{theorem}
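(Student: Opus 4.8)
The plan is to establish the two directions separately, with soundness (Lemma \ref{lem:genmkrtmodsoundness}) giving the forward implication and a canonical-model construction giving the converse. The direction $\Gamma\vdash_{\mathbf{LJL}_{CS}}\phi\Rightarrow\Gamma\models_{\mathsf{CKMJL}_{CS}}\phi$ is already furnished by Lemma \ref{lem:genmkrtmodsoundness}, which uses precisely the \emph{local} completeness hypothesis $\mathsf{C}\in\mathsf{KFr}(\mathbf{L})$; no further work is required for it.

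For the converse I would argue by contraposition. Assume $\Gamma\not\vdash_{\mathbf{LJL}_{CS}}\phi$. By the translation Lemma \ref{lem:startrans}, this is equivalent to
\[
\Gamma^\star\cup(Th_{\mathbf{LJL}_{CS}})^\star\not\vdash_{\mathbf{L}^\star}\phi^\star.
\]
At this point I invoke the \emph{global} completeness hypothesis $\mathsf{C}\in\mathsf{KFr}^g(\mathbf{L})$, using (exactly as in the algebraic case) that strong global completeness w.r.t.\ a class of frames is independent of the underlying set of variables, so that it applies to $\mathbf{L}^\star=\mathbf{L}(Var^\star)$. This produces a model $\mathfrak{N}\in\mathsf{Mod}(\mathsf{C};\mathcal{L}_0^\star)$ over a frame $\mathfrak{F}\in\mathsf{C}$ with $\mathfrak{N}\models\Gamma^\star\cup(Th_{\mathbf{LJL}_{CS}})^\star$ \emph{globally} but $\mathfrak{N}\not\models\phi^\star$, i.e.\ there is a point $x\in F$ with $(\mathfrak{N},x)\not\models\phi^\star$. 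Global satisfaction moreover yields $(\mathfrak{N},x)\models\Gamma^\star$, so $x$ is the sought falsifying world.

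From $\mathfrak{N}$ I pass to the canonical intuitionistic Mkrtychev model $\mathfrak{M}^{c,M}_\mathfrak{N}$. Since $\mathfrak{N}\models(Th_{\mathbf{LJL}_{CS}})^\star$ globally, Lemma \ref{lem:genmkrtmodwelldef} guarantees that $\mathfrak{M}^{c,M}_\mathfrak{N}$ is a well-defined intuitionistic Mkrtychev model over $\mathfrak{F}\in\mathsf{C}$, and additionally factive (resp.\ introspective) whenever $(F)$ (resp.\ $(I)$) is an axiom scheme of $\mathbf{LJL}_0$; hence $\mathfrak{M}^{c,M}_\mathfrak{N}\in\mathsf{CKMJL}$. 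I then verify that it respects $CS$: for $c:\psi\in CS$ we have $\vdash_{\mathbf{LJL}_{CS}}c:\psi$, so $\psi_c=(c:\psi)^\star\in(Th_{\mathbf{LJL}_{CS}})^\star$, and global satisfaction gives $y\Vdash^{*}\psi_c$, that is $\psi\in\mathcal{E}^c_c(y)$, for every $y\in F$. Finally the truth Lemma \ref{lem:genmkrtmodtruthlem} transfers forcing: from $(\mathfrak{N},x)\models\Gamma^\star$ and $(\mathfrak{N},x)\not\models\phi^\star$ I obtain $(\mathfrak{M}^{c,M}_\mathfrak{N},x)\models\Gamma$ and $(\mathfrak{M}^{c,M}_\mathfrak{N},x)\not\models\phi$, whence $\Gamma\not\models_{\mathsf{CKMJL}_{CS}}\phi$.

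The crux of the argument is the appeal to \emph{global} rather than merely local completeness. Local completeness would only secure a single world at which $(Th_{\mathbf{LJL}_{CS}})^\star$ holds, whereas Lemma \ref{lem:genmkrtmodwelldef} requires $(Th_{\mathbf{LJL}_{CS}})^\star$ to hold at \emph{all} worlds of $\mathfrak{N}$, in order to validate the closure conditions (i), (ii) on $\mathcal{E}^c$ together with the factivity and introspectivity clauses uniformly, and likewise to ensure $CS$ is respected everywhere. One could instead start from local completeness and force the theory to hold globally by passing to the generated submodel $\mathfrak{N}\upharpoonright(\uparrow\{x\})$ via Lemma \ref{lem:propmono}, but this principal subframe need not lie in $\mathsf{C}$; invoking $\mathsf{C}\in\mathsf{KFr}^g(\mathbf{L})$ bypasses precisely this frame-membership gap, which is exactly why both completeness assumptions appear in the hypothesis.
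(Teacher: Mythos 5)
Your proposal is correct and follows essentially the same route as the paper: soundness via Lemma \ref{lem:genmkrtmodsoundness}, then contraposition through the $\star$-translation (Lemma \ref{lem:startrans}), global propositional completeness to obtain a countermodel $\mathfrak{N}$ satisfying $(Th_{\mathbf{LJL}_{CS}})^\star$ everywhere, and the canonical intuitionistic Mkrtychev model together with Lemmas \ref{lem:genmkrtmodwelldef} and \ref{lem:genmkrtmodtruthlem} to transfer the countermodel to $\mathsf{CKMJL}_{CS}$. Your closing remarks on why the global (rather than merely local) completeness hypothesis is needed accurately reflect the role it plays in the paper's argument.
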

\begin{proof}
The direction from left to right follows from Lemma \ref{lem:genmkrtmodsoundness}. For the converse, suppose $\Gamma\not\vdash_{\mathbf{LJL}_{CS}}\phi$. By Lemma \ref{lem:startrans}, we have 
\[
\Gamma^\star\cup (Th_{\mathbf{LJL}_{CS}})^\star\not\vdash_{\mathbf{L}^\star}\phi^\star
\]
and by assumption on the global strong completeness of $\mathbf{L}$ w.r.t. $\mathsf{C}$, there is a $\mathfrak{N}=\langle\mathfrak{F},\Vdash^*\rangle\in\mathsf{Mod}(\mathsf{C};\mathcal{L}_0^\star)$ such that
\[
\mathfrak{N}\models\Gamma^\star\cup (Th_{\mathbf{LJL}_{CS}})^\star\text{ but }\mathfrak{N}\not\models\phi^\star.
\]
By Lemma \ref{lem:genmkrtmodwelldef}, we have $\mathfrak{M}^{c,M}_\mathfrak{N}\in\mathsf{CKMJL}$ for the corresponding canonical intuitionistic Mkrtychev model. By Lemma \ref{lem:genmkrtmodtruthlem}, this yields
\[
\mathfrak{M}^{c,M}_\mathfrak{N}\models CS
\]
and thus $\mathfrak{M}^{c,M}_\mathfrak{N}\in\mathsf{CKMJL}_{CS}$ as well as
\[
\mathfrak{M}^{c,M}_\mathfrak{N}\models\Gamma\text{ but }\mathfrak{M}^{c,M}_\mathfrak{N}\not\models\phi.
\]
Hence, we have $\Gamma\not\models_{\mathsf{CKMJL}_{CS}}\phi$.
\end{proof}
\subsection{Completeness w.r.t. intuitionistic Fitting models}
\begin{definition}
Let $\mathfrak{F}=\langle F,\leq\rangle$ be a Kripke frame and let $\mathfrak{N}\in\mathsf{Mod}(\mathfrak{F};\mathcal{L}_0^\star)$. We define the \emph{canonical intuitionistic Fitting model over $\mathfrak{N}$} as the structure $\mathfrak{M}^{c,F}_\mathfrak{N}=\langle\mathfrak{F},\mathcal{R}^c,\mathcal{E}^c,\Vdash^c\rangle$ by setting:
\begin{enumerate}
\item $x\Vdash^c p$ iff $x\Vdash^*p$;
\item $\mathcal{E}^c_t(x):=\{\phi\in\mathcal{L}_J\mid x\Vdash^*\phi_t\}$;
\item $(x,y)\in\mathcal{R}^c$ iff $\forall t\in Jt\forall\phi\in\mathcal{L}_J\left(x\Vdash^*\phi_t\Rightarrow (\mathfrak{N},y)\models\phi^\star\right)$.
\end{enumerate}
\end{definition}
\begin{lemma}\label{lem:genfitmodtruthlem}
Let $\mathfrak{F}=\langle F,\leq\rangle$ be a Kripke frame, let $\mathfrak{N}\in\mathsf{Mod}(\mathfrak{F};\mathcal{L}_0^\star)$ and define $\mathfrak{M}^{c,F}_\mathfrak{N}$ as above. For any $\phi\in\mathcal{L}_J$ and all $x\in F$:
\[
(\mathfrak{M}^{c,F}_\mathfrak{N},x)\models\phi\text{ iff }(\mathfrak{N},x)\models\phi^\star.
\]
\end{lemma}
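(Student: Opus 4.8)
The plan is to prove the biconditional by induction on the structure of $\phi\in\mathcal{L}_J$, running in close parallel to the proof of Lemma \ref{lem:genmkrtmodtruthlem} for the canonical intuitionistic Mkrtychev model. Since $\mathfrak{M}^{c,F}_\mathfrak{N}$ and $\mathfrak{N}$ are built over the same frame $\mathfrak{F}$ and force propositional variables identically, the base case $\phi=p\in Var$ is immediate from the definition of $\Vdash^c$, and the cases $\bot,\land,\lor$ follow directly from the induction hypothesis. For $\phi\rightarrow\psi$ I would again use that the quantification over $\leq$-successors $y\geq x$ is literally the same relation on both sides, so that unfolding the satisfaction clause for $\rightarrow$ and applying the induction hypothesis at each such $y$ gives
\[
(\mathfrak{M}^{c,F}_\mathfrak{N},x)\models\phi\rightarrow\psi\ \text{ iff }\ \forall y\geq x\big((\mathfrak{N},y)\models\phi^\star\Rightarrow(\mathfrak{N},y)\models\psi^\star\big)\ \text{ iff }\ (\mathfrak{N},x)\models(\phi\rightarrow\psi)^\star,
\]
exactly as in the Mkrtychev case.

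The only genuinely new case is the justification modality $t:\phi$, where the target reads $(\mathfrak{M}^{c,F}_\mathfrak{N},x)\models t:\phi$ iff $(\mathfrak{N},x)\models(t:\phi)^\star$, i.e.\ iff $x\Vdash^*\phi_t$. By the satisfaction clause for intuitionistic Fitting models, the left-hand side is the conjunction of $\phi\in\mathcal{E}^c_t(x)$ and $\forall y\in\mathcal{R}^c[x]\,(\mathfrak{M}^{c,F}_\mathfrak{N},y)\models\phi$. The first conjunct unfolds by definition to exactly $x\Vdash^*\phi_t$, and by the induction hypothesis the second conjunct is equivalent to $\forall y\in\mathcal{R}^c[x]\,(\mathfrak{N},y)\models\phi^\star$. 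Thus the whole task reduces to showing that, given $x\Vdash^*\phi_t$, this accessibility requirement carries no additional content.

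This is precisely where the definition of $\mathcal{R}^c$ does all the work, and I expect it to be the only point requiring care rather than a real obstacle. For the direction from left to right it suffices to read off the first conjunct $\phi\in\mathcal{E}^c_t(x)$, which yields $x\Vdash^*\phi_t$ at once. For the converse, assuming $x\Vdash^*\phi_t$, the first conjunct $\phi\in\mathcal{E}^c_t(x)$ holds by definition; and for an arbitrary $y\in\mathcal{R}^c[x]$ I would instantiate the defining condition of $(x,y)\in\mathcal{R}^c$, namely $\forall s\in Jt\,\forall\chi\in\mathcal{L}_J\big(x\Vdash^*\chi_s\Rightarrow(\mathfrak{N},y)\models\chi^\star\big)$, at the renamed bound variables $s:=t$ and $\chi:=\phi$, which together with the hypothesis $x\Vdash^*\phi_t$ gives $(\mathfrak{N},y)\models\phi^\star$. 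Hence both conjuncts hold, so $(\mathfrak{M}^{c,F}_\mathfrak{N},x)\models t:\phi$, and the induction closes. The subtlety is entirely front-loaded into $\mathcal{R}^c$: it is defined so that any successor of $x$ is forced to satisfy $\phi^\star$ whenever $x$ carries the evidence $\phi_t$, making the box-like universal clause in the modality redundant relative to the evidence clause.
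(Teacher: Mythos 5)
Your proposal is correct and follows essentially the same route as the paper's proof: induction on $\phi$, with the only nontrivial case being $t:\phi$, where the equivalence $\phi\in\mathcal{E}^c_t(x)\Leftrightarrow x\Vdash^*\phi_t$ handles one direction and the definition of $\mathcal{R}^c$ (every $\mathcal{R}^c$-successor of $x$ satisfies $\phi^\star$ whenever $x\Vdash^*\phi_t$) together with the induction hypothesis makes the universal clause redundant in the other. The paper phrases the left-to-right direction contrapositively, but the content is identical.
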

\begin{proof}
The claim is again proved by induction on the structure of the formula. We only consider the modal case. Suppose the claim holds for all $x\in F$ and some $\phi\in\mathcal{L}_J$. 

At first, suppose $(\mathfrak{N},x)\models\phi_t$, i.e. $x\Vdash^*\phi_t$. Then, naturally $\phi\in\mathcal{E}^c_t(x)$ by definition. Further, let $y\in\mathcal{R}^c[x]$. Then, as $x\Vdash^*\phi_t$, we have $(\mathfrak{N},y)\models\phi^\star$ by definition and thus $(\mathfrak{M}^{c,F}_\mathfrak{N},x)\models\phi$ by induction hypothesis. Hence, we get 
\[
\phi\in\mathcal{E}^c_t(x)\text{ and }\forall y\in\mathcal{R}^c[x]\;(\mathfrak{M}^{c,F}_\mathfrak{N},x)\models\phi
\]
and consequently $(\mathfrak{M}^{c,F}_\mathfrak{N},x)\models t:\phi$. 

Conversely, suppose $(\mathfrak{N},x)\not\models\phi_t$, that is $x\not\Vdash^*\phi_t$. Then $\phi\not\in\mathcal{E}^c_t(x)$ by definition and thus
\[
(\mathfrak{M}^{c,F}_\mathfrak{N},x)\not\models t:\phi
\]
immediately by definition.
\end{proof}
\begin{lemma}\label{lem:genfitmodwelldef}
Let $\mathfrak{F}=\langle F,\leq\rangle$ be a Kripke frame and let $\mathfrak{N}\in\mathsf{Mod}(\mathfrak{F};\mathcal{L}_0^\star)$ such that additionally $\mathfrak{N}\models (Th_{\mathbf{LJL}_{CS}})^\star$. Then, $\mathfrak{M}^{c,F}_\mathfrak{N}$ is a well-defined intuitionistic Fitting model. Further:
\begin{enumerate}[(a)]
\item if $(F)$ is an axiom scheme of $\mathbf{LJL}_0$, then $\mathfrak{M}^{c,F}_\mathfrak{N}$ is reflexive;
\item if $(I)$ is an axiom scheme of $\mathbf{LJL}_0$, then $\mathfrak{M}^{c,F}_\mathfrak{N}$ is introspective.
\end{enumerate}
\end{lemma}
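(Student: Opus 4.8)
The plan is to verify the defining conditions of an intuitionistic Fitting model (Definition \ref{def:genfittingmod}) for $\mathfrak{M}^{c,F}_\mathfrak{N}$ one clause at a time, using throughout that $\Vdash^*$ is a Kripke forcing on $\mathfrak{N}$ and that $\mathfrak{N}\models(Th_{\mathbf{LJL}_{CS}})^\star$, and then to treat the factivity and introspectivity claims separately. The whole argument parallels the algebraic case of Lemma \ref{lem:algfittingmodwelldef} and the Mkrtychev case of Lemma \ref{lem:genmkrtmodwelldef}, with the $\star$-translation turning each justification axiom into a propositional fact valid everywhere in $\mathfrak{N}$.

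First I would settle the persistence conditions. Conditions (1) and (2) are immediate from Lemma \ref{lem:propmono} applied to $\mathfrak{N}$: since $x\Vdash^c p$ unfolds to $(\mathfrak{N},x)\models p$ and $\phi\in\mathcal{E}^c_t(x)$ unfolds to $x\Vdash^*\phi_t$, monotonicity of $\mathfrak{N}$ along $\leq$ transports both forward from $x$ to any $y\geq x$. Condition (3), that $x\leq y$ forces $\mathcal{R}^c[y]\subseteq\mathcal{R}^c[x]$, follows by the same monotonicity: if $(y,z)\in\mathcal{R}^c$ and $x\Vdash^*\phi_t$, then $y\Vdash^*\phi_t$ by Lemma \ref{lem:propmono}, whence $(\mathfrak{N},z)\models\phi^\star$, giving $(x,z)\in\mathcal{R}^c$. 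The closure conditions (i) and (ii) are handled exactly as in Lemma \ref{lem:genmkrtmodwelldef}: unfolding membership in $\mathcal{E}^c_t(x)\sqsupset\mathcal{E}^c_s(x)$ produces some $\psi$ with $x\Vdash^*(\psi\rightarrow\phi)_t$ and $x\Vdash^*\psi_s$, and the $\star$-translation of $(J)$, namely $(\psi\rightarrow\phi)_t\rightarrow(\psi_s\rightarrow\phi_{[t\cdot s]})$, which $\mathfrak{N}$ validates, yields $x\Vdash^*\phi_{[t\cdot s]}$, that is $\phi\in\mathcal{E}^c_{[t\cdot s]}(x)$; condition (ii) is the analogous one-line consequence of the $\star$-translation of $(+)$.

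For part (a) I would use the $\star$-translation of $(F)$, namely $\phi_t\rightarrow\phi^\star$, which holds at every world of $\mathfrak{N}$: if $x\Vdash^*\phi_t$ then $(\mathfrak{N},x)\models\phi^\star$, and since $t,\phi$ were arbitrary this is exactly $(x,x)\in\mathcal{R}^c$, so $\mathcal{R}^c$ is reflexive. Part (b) requires the three components of introspectivity. The evidence-closure $t:\mathcal{E}^c_t(x)\subseteq\mathcal{E}^c_{!t}(x)$ is the direct reading of the $\star$-translation of $(I)$, namely $\phi_t\rightarrow (t:\phi)_{!t}$: from $x\Vdash^*\phi_t$ one gets $x\Vdash^*(t:\phi)_{!t}$, that is $t:\phi\in\mathcal{E}^c_{!t}(x)$.

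The step I expect to be the crux is verifying the remaining two clauses of introspectivity, monotonicity and transitivity of $\mathcal{R}^c$, since these are the only places where the membership condition defining $\mathcal{R}^c$ and the $(I)$-axiom must be chained together rather than read off directly. For monotonicity, given $y\in\mathcal{R}^c[x]$ and $\phi\in\mathcal{E}^c_t(x)$, I would first lift via $(I)$ to $x\Vdash^*(t:\phi)_{!t}$, then apply the defining clause of $(x,y)\in\mathcal{R}^c$ to the term $!t$ and formula $t:\phi$, obtaining $(\mathfrak{N},y)\models (t:\phi)^\star=\phi_t$, that is $\phi\in\mathcal{E}^c_t(y)$. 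Transitivity runs by the same lifting: from $(x,y),(y,z)\in\mathcal{R}^c$ and $x\Vdash^*\phi_t$, the $(I)$-lift gives $x\Vdash^*(t:\phi)_{!t}$, the first relation yields $y\Vdash^*\phi_t$, and the second then yields $(\mathfrak{N},z)\models\phi^\star$, establishing $(x,z)\in\mathcal{R}^c$. This mirrors the chain $v(\phi_t)\leq^{\mathbf{A}} v((t:\phi)_{!t})\leq^{\mathbf{A}} w(\phi_t)\leq^{\mathbf{A}} u(\phi^\star)$ used in the algebraic Fitting case of Lemma \ref{lem:algfittingmodwelldef}.
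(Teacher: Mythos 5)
Your proposal is correct and follows essentially the same route as the paper's proof: persistence and antitonicity of $\mathcal{R}^c$ via Lemma \ref{lem:propmono}, closure conditions (i) and (ii) via the $\star$-translations of $(J)$ and $(+)$, reflexivity read off directly from the $\star$-translation of $(F)$, and monotonicity and transitivity of $\mathcal{R}^c$ obtained by exactly the paper's chaining of the $(I)$-lift $x\Vdash^*\phi_t\Rightarrow x\Vdash^*(t:\phi)_{!t}$ with the defining clause of $\mathcal{R}^c$ applied to $!t$ and $t:\phi$. You have also correctly identified that this chaining is the only non-immediate step.
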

\begin{proof}
For properties (1) - (3) of Definition \ref{def:genfittingmod}, let $x,y\in F$ with $x\leq y$. For (1) and (2), we have as before
\[
x\Vdash^c p\Rightarrow x\Vdash^* p\Rightarrow y\Vdash^* p\Rightarrow y\Vdash^c p
\]
and
\[
\phi\in\mathcal{E}_t(x)\Rightarrow x\Vdash^*\phi_t\Rightarrow y\Vdash^*\phi_t\Rightarrow \phi\in\mathcal{E}_t(y)
\]
by Lemma \ref{lem:propmono} for $\mathfrak{N}$. For (3), let $z\in\mathcal{R}[y]$, that is we have
\[
\forall t\in Jt\forall\phi\in\mathcal{L}_J\;(y\Vdash^*\phi_t\Rightarrow (\mathfrak{N},z)\models\phi^\star).
\]
Then, for any $t\in Jt$ and any $\phi\in\mathcal{L}_J$ we have, if $x\Vdash^*\phi_t$ that $y\Vdash^*\phi_t$ by Lemma \ref{lem:propmono} and thus by the above $(\mathfrak{N},z)\models\phi^\star$. Hence, $z\in\mathcal{R}[x]$ and so $\mathcal{R}[y]\subseteq\mathcal{R}[x]$.\\

We obtain properties (1) and (2) of Definition \ref{def:genfittingmod} in the same way as in the proof of Lemma \ref{lem:genmkrtmodwelldef}. For (3), let $z\in\mathcal{R}^c[y]$ for $x\leq y$, that is
\[
\forall t\in Jt\forall\phi\in\mathcal{L}_J\;(y\Vdash^*\phi_t\Rightarrow (\mathfrak{N},z)\models\phi^\star).
\]
If $x\Vdash^*\phi_t$, then $y\Vdash^*\phi_t$ by Lemma \ref{lem:propmono} and thus by the above $(\mathfrak{N},z)\models\phi^\star$. Hence, $z\in\mathcal{R}^c[y]$.\\

For property (i), let $\phi\in\mathcal{E}^c_t(x)\sqsupset\mathcal{E}^c_s(x)$, i.e.
\[
\exists\psi\in\mathcal{L}_J\left(\psi\rightarrow\phi\in\mathcal{E}^c_t(x)\text{ and }\psi\in\mathcal{E}^c_s(x)\right).
\]
Then, by definition, we have 
\[
x\Vdash^*(\psi\rightarrow\phi)_t\text{ and }x\Vdash^*\psi_s
\]
and thus, as $\mathfrak{N}\models(Th_{\mathbf{LJL}_{CS}})^\star$, we get
\[
x\Vdash^*\phi_{[t\cdot s]},
\]
that is $\phi\in\mathcal{E}^c_{t\cdot s}(x)$. 

For property (ii), note that $x\Vdash^*\phi_t$ implies $x\Vdash^*\phi_{[t+s]}$ again by $\mathfrak{N}\models(Th_{\mathbf{LJL}_{CS}})^\star$, hence $\phi\in\mathcal{E}^c_t(x)$ implies $\phi\in\mathcal{E}^c_{t+s}(x)$ and similarly for $\phi\in\mathcal{E}^c_s(x)$.\\

Suppose that $(F)$ is an axiom scheme of $\mathbf{LJL}_0$. Then, we have
\[
\forall t\in Jt\forall\phi\in\mathcal{L}_J\left(x\Vdash^*\phi_t\Rightarrow (\mathfrak{N},x)\models\phi^\star\right)
\]
as $\mathfrak{N}\models(Th_{\mathbf{LJL}_{CS}})^\star$ and this is exactly $(x,x)\in\mathcal{R}^c$.\\

Suppose that $(I)$ is an axiom scheme of $\mathbf{LJL}_0$. As in the case of intuitionistic Mkrtychev models, one shows
\[
t:\mathcal{E}^c_t(x)\subseteq\mathcal{E}^c_{!t}(x).
\]
For the transitivity of $\mathcal{R}^c$, let $(x,y),(y,z)\in\mathcal{R}^c$, that is, we have
\[
\forall t\in Jt\forall\phi\in\mathcal{L}_J\left(x\Vdash^*\phi_t\Rightarrow(\mathfrak{N},y)\models\phi^\star\right)
\]
as well as
\[
\forall t\in Jt\forall\phi\in\mathcal{L}_J\left(y\Vdash^*\phi_t\Rightarrow(\mathfrak{N},z)\models\phi^\star\right).
\]
As $(I)$ is an axiom scheme and $\mathfrak{N}\models(Th_{\mathbf{LJL}_{CS}})^\star$, we have
\[
w\Vdash^*\phi_t\Rightarrow w\Vdash^* (t:\phi)_{!t}
\]
for any $w\in F$. Thus, in particular, we have
\[
x\Vdash^*\phi_t\Rightarrow x\Vdash^*(t:\phi)_{!t}\Rightarrow y\Vdash^*\phi_t\Rightarrow (\mathfrak{N},z)\models\phi^\star
\]
using Lemma \ref{lem:genfitmodtruthlem}. By definition, this yields $(x,z)\in\mathcal{R}^c$.

For the monotonicity, let $y\in\mathcal{R}^c[x]$ and let $\phi\in\mathcal{E}_t(x)$. The former gives 
\[
\forall t\in Jt\forall\phi\in\mathcal{L}_J\;(x\Vdash^*\phi_t\Rightarrow (\mathfrak{N},y)\models\phi^\star)
\]
and the latter gives $x\Vdash^*\phi_t$. As $\mathfrak{N}\models(Th_{\mathbf{LJL}_{CS}})^\star$, we have especially $x\Vdash^*(t:\phi_{!t})$. By the above, this gives us $(\mathfrak{N},y)\models\phi_t$, that is $y\Vdash^*\phi_t$ and thus $\phi\in\mathcal{E}^c_t(y)$. Hence, $\mathfrak{M}^{c,F}_\mathfrak{N}$ is monotone and it follows that $\mathfrak{M}^{c,F}_\mathfrak{N}$ is introspective.
\end{proof}
\begin{theorem}\label{thm:genfittingmodcomp}
Let $\mathbf{L}$ be an intermediate logic, $\mathbf{LJL}_0\in\{\mathbf{LJ}_0,\mathbf{LJT}_0,\mathbf{LJ4}_0,\mathbf{LJT4}_0\}$ and let $CS$ be a constant specification for $\mathbf{LJL}_0$. Let $\mathsf{C}\in\mathsf{KFr}(\mathbf{L})\cap\mathsf{KFr}^g(\mathbf{L})$ and let $\mathsf{CKFJL}$ be the class of intuitionistic Fitting models corresponding to $\mathbf{LJL}_0$ and $\mathsf{C}$. For any $\Gamma\cup\{\phi\}\subseteq\mathcal{L}_J$, we have:
\[
\Gamma\vdash_{\mathbf{LJL}_{CS}}\phi\text{ iff }\Gamma\models_{\mathsf{CKFJL}_{CS}}\phi.
\]
\end{theorem}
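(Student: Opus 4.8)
The plan is to mirror the proof of Theorem~\ref{thm:genmkrtmodcomp} almost verbatim, substituting the intuitionistic Fitting ingredients for the Mkrtychev ones. The forward direction $\Gamma\vdash_{\mathbf{LJL}_{CS}}\phi\Rightarrow\Gamma\models_{\mathsf{CKFJL}_{CS}}\phi$ is exactly the soundness statement already established in Lemma~\ref{lem:genfitmodsoundness}, so nothing new is required there.

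For the converse I would argue contrapositively. Assuming $\Gamma\not\vdash_{\mathbf{LJL}_{CS}}\phi$, the first step is to push the failure of derivability into the extended propositional system via Lemma~\ref{lem:startrans}, obtaining $\Gamma^\star\cup(Th_{\mathbf{LJL}_{CS}})^\star\not\vdash_{\mathbf{L}^\star}\phi^\star$. Since $\mathbf{L}^\star=\mathbf{L}(Var^\star)$ and $\mathsf{C}$ is assumed to be globally strongly complete for $\mathbf{L}$, hence for $\mathbf{L}^\star$, this non-derivability yields a Kripke model $\mathfrak{N}=\langle\mathfrak{F},\Vdash^*\rangle\in\mathsf{Mod}(\mathsf{C};\mathcal{L}_0^\star)$ with $\mathfrak{N}\models\Gamma^\star\cup(Th_{\mathbf{LJL}_{CS}})^\star$ but $\mathfrak{N}\not\models\phi^\star$.

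Next I would invoke the structural lemmas on the canonical construction. By Lemma~\ref{lem:genfitmodwelldef}, the canonical intuitionistic Fitting model $\mathfrak{M}^{c,F}_\mathfrak{N}$ is a well-defined member of $\mathsf{CKFJL}$, reflexive if $(F)$ is present and introspective if $(I)$ is present, and it respects $CS$ because $\mathfrak{N}\models(Th_{\mathbf{LJL}_{CS}})^\star$ covers in particular all $\star$-images of the constant-specification axioms, so $\mathfrak{M}^{c,F}_\mathfrak{N}\in\mathsf{CKFJL}_{CS}$. Applying the truth lemma (Lemma~\ref{lem:genfitmodtruthlem}), which equates $(\mathfrak{M}^{c,F}_\mathfrak{N},x)\models\chi$ with $(\mathfrak{N},x)\models\chi^\star$ for every $\chi\in\mathcal{L}_J$ and every world $x$, transfers the global satisfaction back: $\mathfrak{M}^{c,F}_\mathfrak{N}\models\Gamma$ while $\mathfrak{M}^{c,F}_\mathfrak{N}\not\models\phi$ at the world witnessing $\mathfrak{N}\not\models\phi^\star$. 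This delivers $\Gamma\not\models_{\mathsf{CKFJL}_{CS}}\phi$, as required.

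The only conceptual point worth flagging, and the reason the hypothesis asks for $\mathsf{C}\in\mathsf{KFr}^g(\mathbf{L})$ rather than mere local completeness, is that the canonical Fitting accessibility relation $\mathcal{R}^c$ is defined through $x\Vdash^*\phi_t\Rightarrow(\mathfrak{N},y)\models\phi^\star$, referring to satisfaction of the axioms at \emph{arbitrary} worlds $y$. The well-definedness of $\mathcal{R}^c$ and especially the modal case of the truth lemma, whose forward direction reasons over all $y\in\mathcal{R}^c[x]$, therefore require the theory $(Th_{\mathbf{LJL}_{CS}})^\star$ to hold globally in $\mathfrak{N}$, not merely at a single point. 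Since these two lemmas are already available, everything else is a routine reassembly of the same three ingredients used in Theorem~\ref{thm:genmkrtmodcomp}, and I expect no genuine obstacle beyond keeping track of this global-versus-local distinction.
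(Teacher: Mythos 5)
Your proposal is correct and coincides with the paper's intended argument: the paper states this theorem without an explicit proof precisely because it is the verbatim analogue of Theorem~\ref{thm:genmkrtmodcomp}, assembled from Lemma~\ref{lem:startrans}, global strong completeness of $\mathbf{L}^\star$ w.r.t.\ $\mathsf{C}$, and Lemmas~\ref{lem:genfitmodwelldef} and~\ref{lem:genfitmodtruthlem}, exactly as you describe. Your closing remark on why $\mathsf{KFr}^g(\mathbf{L})$ rather than mere local completeness is needed matches the paper's own motivation for introducing the global notion.
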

\subsection{Completeness w.r.t. intuitionistic subset models}
\begin{definition}\label{def:cansubmod}
Let $\mathfrak{F}=\langle F_0,\leq\rangle$ be a Kripke frame and let $\mathfrak{N}\in\mathsf{Mod}(\mathfrak{F};\mathcal{L}_0^\star)$. We define the \emph{canonical intuitionistic subset model over $\mathfrak{N}$} as the structure $\mathfrak{M}^{c,S}_\mathfrak{N}=\langle\mathfrak{F},F^c,\mathcal{E}^c,\Vdash^c\rangle$ by setting:
\begin{enumerate}
\item $F^c=F_0\cup\bigcup_{x\in F_0}\{x_t\mid t\in Jt\}$;
\item $(x,y)\in\mathcal{E}^c_t$ iff $\forall\phi\in\mathcal{L}_J\left(x\Vdash^*\phi_t\Rightarrow y\Vdash^c\phi\right)$ for all $x,y\in F^c$;
\item for $x\in F_0$ and $t\in Jt$:
\begin{enumerate}
\item $x\Vdash^c\phi$ iff $(\mathfrak{N},x)\models\phi^\star$;
\item $x_t\Vdash^c\phi$ iff $(\mathfrak{N},x)\models\phi_t$.
\end{enumerate}
\end{enumerate}
\end{definition}
\begin{lemma}\label{lem:gensubsetmodtruthlem}
Let $\mathfrak{F}=\langle F_0,\leq\rangle$ be a Kripke frame, let $\mathfrak{N}\in\mathsf{Mod}(\mathfrak{F};\mathcal{L}_0^\star)$ and define $\mathfrak{M}^{c,S}_\mathfrak{N}$ as above. For any $\phi\in\mathcal{L}_J$ and any $x\in F_0$:
\[
(\mathfrak{M}^{c,S}_\mathfrak{N},x)\models\phi\text{ iff }(\mathfrak{N},x)\models\phi^\star.
\]
\end{lemma}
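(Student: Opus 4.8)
For points $x\in F_0$ the asserted equivalence is, strictly speaking, read straight off the construction: since $(\mathfrak{M}^{c,S}_\mathfrak{N},x)\models\phi$ abbreviates $x\Vdash^c\phi$, and clause 3(a) of Definition \ref{def:cansubmod} stipulates $x\Vdash^c\phi$ iff $(\mathfrak{N},x)\models\phi^\star$ for every $\phi\in\mathcal{L}_J$, the statement holds by definition. The plan is therefore not to unwind a recursion but to confirm that this directly stipulated forcing is internally coherent, i.e. that $\Vdash^c$ genuinely obeys the recursive satisfaction clauses (i)--(v) of Definition \ref{def:gensubsetmod}, so that the clause-(v) reading of $t:\psi$ is legitimately available. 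For the Boolean clauses (i)--(iv) this is automatic: $\star$ commutes with $\bot,\land,\lor,\rightarrow$, so 3(a) transports the corresponding Kripke clauses for $\mathfrak{N}$ verbatim, where the $\rightarrow$-clause uses that $\leq$ relates only points of $F_0$ (so 3(a) applies to every $y\geq x$) together with the monotonicity of $\mathfrak{N}$ from Lemma \ref{lem:propmono}.

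The only case carrying real content, and the one I expect to be the main obstacle, is the modal clause $\phi=t:\psi$. Here I must reconcile the stipulated value $x\Vdash^c t:\psi$, which by 3(a) equals $(\mathfrak{N},x)\models(t:\psi)^\star=(\mathfrak{N},x)\models\psi_t$, i.e. $x\Vdash^*\psi_t$, with the reading demanded by clause (v), namely $\forall y\in\mathcal{E}^c_t[x]:\ y\Vdash^c\psi$. One direction is straightforward: if $x\Vdash^*\psi_t$, then by the very definition of $\mathcal{E}^c_t$ in clause 2 of Definition \ref{def:cansubmod}, every $y\in\mathcal{E}^c_t[x]$ satisfies $y\Vdash^c\psi$ (instantiate the defining implication at $\chi=\psi$), so the clause-(v) value is forced to be true as well.

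The converse is where the auxiliary worlds $x_t$ are indispensable, and this is the crux of the argument. The point $x_t$ is designed to be a canonical witness: by clause 3(b), $x_t\Vdash^c\chi$ holds iff $(\mathfrak{N},x)\models\chi_t$, i.e. iff $x\Vdash^*\chi_t$, so the membership condition of clause 2 for the pair $(x,x_t)$ collapses to the tautology ``$x\Vdash^*\chi_t\Rightarrow x\Vdash^*\chi_t$'', whence $x_t\in\mathcal{E}^c_t[x]$ unconditionally. Consequently, if $\forall y\in\mathcal{E}^c_t[x]:\ y\Vdash^c\psi$, then instantiating at $y=x_t$ gives $x_t\Vdash^c\psi$, which by 3(b) unwinds to $x\Vdash^*\psi_t$, closing the equivalence. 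I would finally note that the Boolean reasoning never leaves $F_0$, whereas this modal step leaves $F_0$ only to land on the purpose-built witnesses $x_t$, so no forcing value at a non-normal world is ever needed beyond the one fixed by 3(b) --- which is precisely why the enlarged domain $F^c$ is introduced in the first place.
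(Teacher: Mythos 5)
Your proposal is correct and its core is exactly the paper's proof: for $x\in F_0$ the lemma holds immediately by clause 3(a) of Definition \ref{def:cansubmod}, since $(\mathfrak{M}^{c,S}_\mathfrak{N},x)\models\phi$ just abbreviates $x\Vdash^c\phi$. The additional coherence check you carry out (that $\Vdash^c$ obeys clauses (i)--(v), with the witness worlds $x_t$ handling the modal clause) is not needed for this lemma but reproduces, correctly, part of the paper's separate well-definedness result, Lemma \ref{lem:gensubsetmodwelldef}.
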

\begin{proof}
This is clear by definition as we have $(\mathfrak{M}^{c,S}_\mathfrak{N},x)\models\phi$ iff $x\Vdash^c\phi$ iff $(\mathfrak{N},x)\models\phi^\star$, given a $x\in F_0$.
\end{proof}
The simplicity of the above lemma is in contrast to the truth lemmas for the previous canonical models over Kripke frames. In the context of intuitionistic subset models, the relation $\Vdash$ completely encodes the truth values of formulae to be able to cope with "irregular" worlds. This comes with the expense of conditions of well-definedness for $\Vdash$ and thus, the previous complexity of showing an equivalence like the one of the above lemma is shifted into the following result.
\begin{lemma}\label{lem:gensubsetmodwelldef}
Let $\mathfrak{F}=\langle F_0,\leq\rangle$ be a Kripke frame and let $\mathfrak{N}\in\mathsf{Mod}(\mathfrak{F};\mathcal{L}_0^\star)$ such that additionally $\mathfrak{N}\models (Th_{\mathbf{LJL}_{CS}})^\star$. Then, $\mathfrak{M}^{c,S}_\mathfrak{N}$ is a well-defined intuitionistic subset model. Further:
\begin{enumerate}[(I)]
\item if $(F)$ is an axiom scheme of $\mathbf{LJL}_0$, then $\mathfrak{M}^{c,S}_\mathfrak{N}$ is reflexive;
\item if $(I)$ is an axiom scheme of $\mathbf{LJL}_0$, then $\mathfrak{M}^{c,S}_\mathfrak{N}$ is introspective.
\end{enumerate}
\end{lemma}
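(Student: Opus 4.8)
The plan is to run through the conditions of Definition \ref{def:gensubsetmod} one by one, exploiting that, by clause (3a) of Definition \ref{def:cansubmod} together with the recursive behaviour of $\star$, the relation $\Vdash^c$ on the ``real'' worlds $F_0$ is just a relabelling of the satisfaction of $\mathfrak{N}$: namely $x\Vdash^c\phi$ iff $(\mathfrak{N},x)\models\phi^\star$ for $x\in F_0$, which is the content of Lemma \ref{lem:gensubsetmodtruthlem}. Since $\phi_t=(t:\phi)^\star$ is a variable in $Var^\star$, the statements $x\Vdash^*\phi_t$ and $(\mathfrak{N},x)\models\phi_t$ coincide, and I would use this identification throughout.

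First I would dispatch the persistence conditions (1) and (2). Condition (1) is immediate from persistence of $\Vdash^*$ in $\mathfrak{N}$; for (2), if $z\in\mathcal{E}^c_t[y]$ and $x\leq y$, then $x\Vdash^*\phi_t$ implies $y\Vdash^*\phi_t$ by Lemma \ref{lem:propmono}, whence $z\Vdash^c\phi$, so $z\in\mathcal{E}^c_t[x]$. The propositional clauses (i)--(iv) for $x\in F_0$ then reduce mechanically, via $x\Vdash^c\chi$ iff $(\mathfrak{N},x)\models\chi^\star$ and the commutation of $\star$ with the connectives, to the matching clauses of the Kripke satisfaction of $\mathfrak{N}$ (for $\rightarrow$ using that $y\geq x$ ranges over $F_0$).

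The crux is clause (v), and here the auxiliary worlds $x_t$ do the work. For $x\in F_0$ one has $x\Vdash^c t:\phi$ iff $x\Vdash^*\phi_t$, so I must show $x\Vdash^*\phi_t$ iff $y\Vdash^c\phi$ for all $y\in\mathcal{E}^c_t[x]$. The forward direction is trivial: instantiating the defining condition of $y\in\mathcal{E}^c_t[x]$ at $\phi$ gives $y\Vdash^c\phi$. For the converse the point is that $x_t\in\mathcal{E}^c_t[x]$: by clause (3b), $x_t\Vdash^c\chi$ holds iff $x\Vdash^*\chi_t$, so the implication $x\Vdash^*\chi_t\Rightarrow x_t\Vdash^c\chi$ is valid for every $\chi$ and hence $(x,x_t)\in\mathcal{E}^c_t$; applying the hypothesis at $y=x_t$ yields $x_t\Vdash^c\phi$, i.e.\ $x\Vdash^*\phi_t$. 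This is exactly why the extra worlds are introduced---they supply a canonical $\mathcal{E}^c_t$-successor forcing the evidence semantics to agree with the $\star$-image.

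The regularity condition (a), reflexivity (I), and introspectivity (II) then follow from the $\star$-translations of the justification axioms, using $\mathfrak{N}\models(Th_{\mathbf{LJL}_{CS}})^\star$: for (a) the $\star$-image $\phi_t\rightarrow\phi_{[t+s]}$ of $(+)$ gives $\mathcal{E}^c_{[t+s]}[x]\subseteq\mathcal{E}^c_t[x]$ and symmetrically for $s$; reflexivity uses the $\star$-image $\phi_t\rightarrow\phi^\star$ of $(F)$ to obtain $(x,x)\in\mathcal{E}^c_t$; and introspectivity uses the $\star$-image $\phi_t\rightarrow(t:\phi)_{!t}$ of $(I)$, instantiating membership in $\mathcal{E}^c_{!t}[x]$ at $\chi=t:\phi$ to conclude $y\Vdash^c t:\phi$ whenever $x\Vdash^c t:\phi$. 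I expect condition (b) to be the main obstacle, as it marries the witness-world construction with $(J)$: given $y\in\mathcal{E}^c_{[t\cdot s]}[x]$ it suffices to show $x\Vdash^*\phi_{[t\cdot s]}$ for each $\phi\in(\mathfrak{M}^{c,S}_\mathfrak{N})^x_{t,s}$, and I would fix such a $\phi$ with its witness $\psi$ and test at the two special successors: from $x_t\in\mathcal{E}^c_t[x]$ and $x_s\in\mathcal{E}^c_s[x]$ one gets $x_t\Vdash^c\psi\rightarrow\phi$ and $x_s\Vdash^c\psi$, that is $x\Vdash^*(\psi\rightarrow\phi)_t$ and $x\Vdash^*\psi_s$, whereupon the $\star$-image $(\psi\rightarrow\phi)_t\rightarrow(\psi_s\rightarrow\phi_{[t\cdot s]})$ of $(J)$ delivers $x\Vdash^*\phi_{[t\cdot s]}$ and thus $y\Vdash^c\phi$. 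The delicate point is to keep track of which worlds lie in which $\mathcal{E}^c$-image and to verify that a single witness $\psi$ governs both special successors $x_t$ and $x_s$ simultaneously.
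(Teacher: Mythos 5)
Your proposal is correct and follows essentially the same route as the paper: the same reduction of clauses (i)--(v) to the Kripke semantics of $\mathfrak{N}$ via the $\star$-translation, the same use of the auxiliary world $x_t$ (with $(x,x_t)\in\mathcal{E}^c_t$ holding trivially by clause (3b)) to get the converse direction of (v), and the same appeal to the $\star$-images of $(+)$, $(J)$, $(F)$, $(I)$ for regularity, reflexivity and introspectivity. The only cosmetic difference is in condition (b), where you instantiate the witness condition directly at $x_t$ and $x_s$ instead of citing the already-established clause (v); since (v) is itself proved via exactly those worlds, the content is identical.
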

\begin{proof}
We begin with properties (i) - (v) from Definition \ref{def:gensubsetmod}. For this, let $x\in F_0$. The properties (i) - (iii) are immediate by using the respective properties of $\Vdash^*$ and the fact that $\star$ commutes with $\bot,\land,\lor$.

For (iv), note that we have
\begin{align*}
x\Vdash^c\phi\rightarrow\psi&\text{ iff }(\mathfrak{N},x)\models\phi^\star\rightarrow\psi^\star\\
                            &\text{ iff }\forall y\geq x\left((\mathfrak{N},y)\not\models\phi^\star\text{ or }(\mathfrak{N},y)\models\psi^\star\right)\\
                            &\text{ iff }\forall y\geq x\left(y\not\Vdash^c\phi\text{ or }y\Vdash^c\psi\right)
\end{align*}
where it is instrumental that $\leq$ is a relation on $F_0$ only.

For (v), we have for one by definition that
\[
\forall y\in\mathcal{E}^c_t[x]\forall\phi\in\mathcal{L}_J\left((\mathfrak{N},x)\models\phi_t\Rightarrow (\mathfrak{N},y)\models\phi^\star\right),
\]
that is we have
\begin{align*}
x\Vdash^ct:\phi&\Rightarrow x\Vdash^*\phi_t\\
               &\Rightarrow\forall y\in\mathcal{E}^c_t[x]\left((\mathfrak{N},y)\models\phi^\star\right)\\
               &\Leftrightarrow\forall y\in\mathcal{E}^c_t[x]\left(y\Vdash^c\phi\right).
\end{align*}
For another, we have $x_t\in\mathcal{E}_t[x]$ as we know $x_t\Vdash^c\phi$ iff $x\Vdash^*\phi_t$ by definition. Thus, if $x\not\Vdash^ct:\phi$, then $x\not\Vdash^*\phi_t$ and $x_t\not\Vdash^c\phi$. Therefore
\[
x\not\Vdash^ct:\phi\Rightarrow \exists y\in\mathcal{E}_t[x] \left(y\not\Vdash^c\phi\right).
\]
Concluding, we have $x\Vdash^ct:\phi$ iff $\forall y\in\mathcal{E}^c_t[x]\left(y\Vdash^c\phi\right)$.\\

Regarding properties (1) and (2) of Definition \ref{def:gensubsetmod}, let $x\leq y$ for $x,y\in F_0$. Property (1) follows as in the proof of Lemma \ref{lem:genfitmodwelldef} by (3).(a) of Definition \ref{def:cansubmod}. For property (2), let $z\in\mathcal{E}^c_t[y]$. Thus, we have
\[
\forall\phi\in\mathcal{L}_J\;(y\Vdash^ct:\phi\Rightarrow z\Vdash^c\phi)
\]
and if $x\Vdash^ct:\phi$, then as $x,y\in F_0$, we get $x\Vdash^*\phi_t$ and thus $y\Vdash^*\phi_t$ which is $y\Vdash^ct:\phi$. By the above, we have $z\Vdash^c\phi$ and hence $z\in\mathcal{E}^c_t[x]$.\\

Now, on to properties (a), (b) of Definition \ref{def:gensubsetmod}. For (a), let $y\in\mathcal{E}^c_{[t+s]}[x]$, that is we have
\[
\forall\phi\in\mathcal{L}_J (x\Vdash^*\phi_{[t+s]}\Rightarrow (\mathfrak{N},y)\models\phi^\star)
\]
by definition. Now, by assumption as $\mathfrak{N}\models(Th_{\mathbf{LJL}_{CS}})^\star$ we have $x\Vdash^*\phi_t$ implies $x\Vdash^*\phi_{[t+s]}$ and $x\Vdash^*\phi_s$ implies $x\Vdash^*\phi_{[t+s]}$. Therefore, we obtain
\[
\forall\phi\in\mathcal{L}_J (x\Vdash^*\phi_t\Rightarrow x\Vdash^*\phi_{[t+s]}\Rightarrow (\mathfrak{N},y)\models\phi^\star)
\]
and
\[
\forall\phi\in\mathcal{L}_J (x\Vdash^*\phi_s\Rightarrow x\Vdash^*\phi_{[t+s]}\Rightarrow (\mathfrak{N},y)\models\phi^\star)
\]
which is $y\in\mathcal{E}^c_t[x]\cap\mathcal{E}^c_s[x]$.

For (b), let $y\in\mathcal{E}^c_{[t\cdot s]}[x]$, that is
\[
\forall\phi\in\mathcal{L}_J (x\Vdash^*\phi_{[t\cdot s]}\Rightarrow (\mathfrak{N},y)\models\phi^\star).\tag{$\dagger$}
\]
Let $\phi\in (\mathfrak{M}^{c,S}_\mathfrak{N})^x_{t,s}$, that is there is a $\psi\in\mathcal{L}_J$ such that
\[
\forall z\in F^c(z\in\mathcal{E}^c_t[x]\Rightarrow z\Vdash^c\psi\rightarrow\phi\text{ and } z\in\mathcal{E}^c_s[x]\Rightarrow z\Vdash^c\psi).
\]
By property (v), we have that $x\Vdash^c t:(\psi\rightarrow\phi)$ and $x\Vdash^c s:\psi$, i.e. by definition as $x\in F_0$:
\[
(\mathfrak{N},x)\models(\psi\rightarrow\phi)_t\text{ and }(\mathfrak{N},x)\models\psi_s
\]
and by $\mathfrak{N}\models(Th_{\mathbf{LJL}_{CS}})^\star$ and axiom ($J$), we get
\[
(\mathfrak{N},x)\models\phi_{[t\cdot s]}.
\]
Thus, by $(\dagger)$, we have $(\mathfrak{N},y)\models\phi^\star$ and by definition this gives $y\Vdash^c\phi$.\\

Assume second to last that $(F)$ is an axiom scheme of $\mathbf{LJL}_0$. Then, we have
\[
x\Vdash^*\phi_t\Rightarrow (\mathfrak{N},x)\models\phi^\star
\]
for all $x\in F_0$ and all $\phi\in\mathcal{L}_J$, $t\in Jt$ as $\mathfrak{N}\models (Th_{\mathbf{LJL}_{CS}})^\star$ and thus, by definition we have $x\in\mathcal{E}^c_t[x]$ for all $t\in Jt$.\\

Assume last that $(I)$ is an axiom scheme of $\mathbf{LJL}_0$. Let $y\in\mathcal{E}_{!t}[x]$, that is 
\[
\forall\phi\in\mathcal{L}_J (x\Vdash^*\phi_{!t}\Rightarrow (\mathfrak{N},y)\models\phi^\star).\tag{$\ddagger$}
\]
Let $\phi\in\mathcal{L}_J$ and assume $x\Vdash^c t:\phi$, that is $x\Vdash^*\phi_t$. Then, as $\mathfrak{N}\models (Th_{\mathbf{LJL}_{CS}})^\star$, we have $x\Vdash^*(t:\phi)_{!t}$. By $(\ddagger)$ we get $(\mathfrak{N},y)\models (t:\phi)^\star$, that is $(\mathfrak{N},y)\models\phi_t$ and thus by definition $y\Vdash^c t:\phi$.
\end{proof}
\begin{theorem}\label{thm:gensubsetmodcomp}
Let $\mathbf{L}$ be an intermediate logic, $\mathbf{LJL}_0\in\{\mathbf{LJ}_0,\mathbf{LJT}_0,\mathbf{LJ4}_0,\mathbf{LJT4}_0\}$ and let $CS$ be a constant specification for $\mathbf{LJL}_0$. Let $\mathsf{C}\in\mathsf{KFr}(\mathbf{L})\cap\mathsf{KFr}^g(\mathbf{L})$ and let $\mathsf{CKSJL}$ be the class of intuitionistic subset models corresponding to $\mathbf{LJL}_0$ and $\mathsf{C}$. Then, for any $\Gamma\cup\{\phi\}\subseteq\mathcal{L}_J$, we have:
\[
\Gamma\vdash_{\mathbf{LJL}_{CS}}\phi\text{ iff }\Gamma\models_{\mathsf{CKSJL}_{CS}}\phi.
\]
\end{theorem}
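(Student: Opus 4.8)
The plan is to follow exactly the template of the proof of Theorem~\ref{thm:genmkrtmodcomp}, simply replacing the canonical intuitionistic Mkrtychev model by the canonical intuitionistic subset model of Definition~\ref{def:cansubmod} and invoking the truth Lemma~\ref{lem:gensubsetmodtruthlem} and the well-definedness Lemma~\ref{lem:gensubsetmodwelldef} in place of their Mkrtychev counterparts \ref{lem:genmkrtmodtruthlem} and \ref{lem:genmkrtmodwelldef}. The direction from left to right is immediate from the soundness result, Lemma~\ref{lem:gensubsetmodsoundness}, so the work lies entirely in the converse, which I would argue contrapositively.

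Suppose $\Gamma\not\vdash_{\mathbf{LJL}_{CS}}\phi$. By the $\star$-translation Lemma~\ref{lem:startrans} this is equivalent to $\Gamma^\star\cup(Th_{\mathbf{LJL}_{CS}})^\star\not\vdash_{\mathbf{L}^\star}\phi^\star$ over the extended language $\mathcal{L}_0^\star$. Since $\mathsf{C}\in\mathsf{KFr}^g(\mathbf{L})$, the logic $\mathbf{L}$ — and hence $\mathbf{L}^\star=\mathbf{L}(Var^\star)$, as the choice of variable set is immaterial — is strongly globally complete w.r.t. $\mathsf{C}$, so there is a Kripke model $\mathfrak{N}=\langle\mathfrak{F},\Vdash^*\rangle\in\mathsf{Mod}(\mathsf{C};\mathcal{L}_0^\star)$ with $\mathfrak{N}\models\Gamma^\star\cup(Th_{\mathbf{LJL}_{CS}})^\star$ but $\mathfrak{N}\not\models\phi^\star$. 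It is precisely the \emph{global} (rather than local) consequence that is needed here, as we must validate the entire theory $(Th_{\mathbf{LJL}_{CS}})^\star$ at every world of $\mathfrak{N}$ simultaneously.

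Given such an $\mathfrak{N}$, I would pass to the canonical intuitionistic subset model $\mathfrak{M}^{c,S}_\mathfrak{N}$ over it. Because $\mathfrak{N}\models(Th_{\mathbf{LJL}_{CS}})^\star$, Lemma~\ref{lem:gensubsetmodwelldef} guarantees that $\mathfrak{M}^{c,S}_\mathfrak{N}$ is a well-defined intuitionistic subset model over a frame in $\mathsf{C}$, and that it is reflexive (respectively introspective) whenever $(F)$ (respectively $(I)$) is an axiom scheme of $\mathbf{LJL}_0$; hence $\mathfrak{M}^{c,S}_\mathfrak{N}\in\mathsf{CKSJL}$. The truth Lemma~\ref{lem:gensubsetmodtruthlem} then transfers satisfaction: for every $x\in F_0$ and every $\chi\in\mathcal{L}_J$ we have $(\mathfrak{M}^{c,S}_\mathfrak{N},x)\models\chi$ iff $(\mathfrak{N},x)\models\chi^\star$. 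Applying this to each $\gamma\in\Gamma$ and to $\phi$ yields $\mathfrak{M}^{c,S}_\mathfrak{N}\models\Gamma$ but $\mathfrak{M}^{c,S}_\mathfrak{N}\not\models\phi$, where it is essential that the consequence relation $\models$ for subset models quantifies only over $\mathcal{D}_0(\mathfrak{M}^{c,S}_\mathfrak{N})=F_0$, matching exactly the range of the truth lemma.

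It remains to check that $\mathfrak{M}^{c,S}_\mathfrak{N}$ respects $CS$: for any $c:\phi\in CS$ we have $\vdash_{\mathbf{LJL}_{CS}}c:\phi$, so $(c:\phi)^\star=\phi_c\in(Th_{\mathbf{LJL}_{CS}})^\star$ and thus $\mathfrak{N}\models\phi_c$; the truth lemma then gives $(\mathfrak{M}^{c,S}_\mathfrak{N},x)\models c:\phi$ for all $x\in F_0$, so $\mathfrak{M}^{c,S}_\mathfrak{N}\in\mathsf{CKSJL}_{CS}$. Combined with the previous paragraph this produces $\Gamma\not\models_{\mathsf{CKSJL}_{CS}}\phi$, completing the contrapositive. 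Since all the heavy lifting has been deferred to Lemmas~\ref{lem:gensubsetmodwelldef} and~\ref{lem:gensubsetmodtruthlem}, the only genuinely delicate point at the level of the theorem is the domain mismatch: the subset model carries auxiliary worlds $x_t\notin F_0$, but as both the truth lemma and the semantic consequence relation are anchored at $F_0$, these irregular worlds serve only as evidence targets and never as points of evaluation, so no separate truth lemma for them is required and the assembly goes through unobstructed.
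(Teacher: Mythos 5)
Your proposal is correct and is exactly the argument the paper intends: Theorem~\ref{thm:gensubsetmodcomp} is stated without proof precisely because it follows the template of Theorem~\ref{thm:genmkrtmodcomp}, substituting the canonical intuitionistic subset model together with Lemmas~\ref{lem:gensubsetmodtruthlem} and~\ref{lem:gensubsetmodwelldef}, and your handling of the auxiliary worlds $x_t$ and of the $CS$-respect condition is accurate. No gaps.
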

\section{Intermediate Modal Logics}
In this section, we give an overview of the (semantic) theory of intermediate modal logics as commonly defined. These will prove to be the natural choice for corresponding modal logics with respect to our intermediate justification logics.

To define intermediate modal logics, we consider a usual modal language with a single modality $\Box$ given by
\[
\mathcal{L}_\Box:\phi::=\bot\mid p\mid (\phi\land\phi)\mid (\phi\lor\phi)\mid (\phi\rightarrow\phi)\mid \Box\phi
\]
where again $p\in Var$. Propositional substitutions naturally generalize to $\sigma:Var\to\mathcal{L}_\Box$ and we still write $\sigma(\phi)$ for the image of $\phi$ under the natural extension of $\sigma$ to $\mathcal{L}_\Box$ by commuting with $\land,\lor,\rightarrow,\bot$ and $\Box$. 
\subsection{Proof systems}
\begin{definition}
An \emph{intermediate modal logic} is a set $\mathbf{IML}\subsetneq\mathcal{L}_\Box$ such that
\begin{enumerate}
\item $\mathbf{IPC}\cup (K)\subseteq\mathbf{IML}$ where $(K)$ is given by 
\[
\Box(\phi\rightarrow\psi)\rightarrow (\Box\phi\rightarrow\Box\psi),
\]
\item $\mathbf{IML}$ is closed under substitution in $\mathcal{L}_\Box$,
\item $\mathbf{IML}$ is closed under modus ponens,
\item $\mathbf{IML}$ is closed under \emph{necessitation}, that is $\phi\in\mathbf{IML}$ implies $\Box\phi\in\mathbf{IML}$.
\end{enumerate}
\end{definition} 
We denote the smallest such logic by $\mathbf{IPCK}$. To give more direct definitions of axiomatic extensions, we introduce the following notation. Given sets $\Gamma,\Delta\subseteq\mathcal{L}_\Box$, we write $\Gamma\oplus\Delta$ for the \emph{normal closure of} $\Gamma\cup\Delta$, that is for the smallest set $\Phi\subseteq\mathcal{L}_\Box$ with:
\begin{enumerate}
\item $\Gamma\cup\Delta\subseteq\Phi$;
\item $\Phi$ is closed under modus ponens;
\item $\Phi$ is closed under substitution in $\mathcal{L}_\Box$;
\item $\Phi$ is closed under necessitation.
\end{enumerate}
In particular, in the following, we consider the two axiom schemes
\begin{enumerate}
\item [($T$)] $\Box\phi\rightarrow\phi$,
\item [($4$)] $\Box\phi\rightarrow\Box\Box\phi$,
\end{enumerate}
and, given an intermediate logic $\mathbf{L}$, we write:
\begin{enumerate}
\item $\mathbf{LK}:=\mathbf{IPCK}\oplus\mathbf{L}$;
\item $\mathbf{LT}:=\mathbf{LK}\oplus (T)$;
\item $\mathbf{LK4}:=\mathbf{LK}\oplus (4)$;
\item $\mathbf{LS4}:=\mathbf{LK}\oplus (T)\oplus (4)$.
\end{enumerate}
As before with the intermediate justification logics, given a set $\Gamma\subseteq\mathcal{L}_\Box$ and an intermediate modal logic $\mathbf{IML}$, we write
\[
\Gamma\vdash_{\mathbf{IML}}\phi\text{ iff }\exists\gamma_1,\dots,\gamma_n\left(\bigwedge_{i=1}^n\gamma_i\rightarrow\phi\in\mathbf{IML}\right).
\]
Naturally, all these intermediate modal logics enjoy the deduction theorem.

Given an intermediate logic $\mathbf{L}$, the corresponding modal logics $\mathbf{LK}$, $\mathbf{LT}$, $\mathbf{LK4}$, $\mathbf{LS4}$ are natural correspondents  for the intermediate justification logics $\mathbf{LJ}$, $\mathbf{LJT}$, $\mathbf{LJ4}$, $\mathbf{LJT4}$, respectively, and we will later confirm this in many cases by a general realization theorem.
\subsection{Semantics and Completeness}
We will need some semantical notions regarding intermediate modal logics for the to-follow model theoretical considerations regarding realizations. For this, we introduce so called intuitionistic modal Kripke models which go back to Ono's work \cite{Ono1977}.
\begin{definition}
An \emph{intuitionistic modal Kripke frame} is a structure $\mathfrak{F}=\langle F,\leq,\mathcal{R}\rangle$ where $\langle F,\leq\rangle$ is a partial order and $\mathcal{R}\subseteq F\times F$ with
\[
x\leq y\Rightarrow\mathcal{R}[y]\subseteq\mathcal{R}[x].
\]
An \emph{intuitionistic modal Kripke model} over $\mathfrak{F}$ is a structure $\mathfrak{M}=\langle\mathfrak{F},\Vdash\rangle$ where $\Vdash\subseteq F\times Var$ such that
\[
\left(x\Vdash p\text{ and }x\leq y\right)\Rightarrow y\Vdash p.
\]
\end{definition}
We write $\mathcal{D}(\mathfrak{M}):=\mathcal{D}(\mathfrak{F}):=F$. Further, given a class $\mathsf{C}$ of intuitionistic modal Kripke frames, we write $\mathsf{Mod}(\mathsf{C})$ for the class of all models over these frames.

Let $\mathfrak{M}=\langle F,\leq,\mathcal{R},\Vdash\rangle$ be an intuitionistic modal Kripke model and let $x\in\mathcal{D}(\mathfrak{M})$. We define the relation $\models$ recursively as follows:
\begin{itemize}
\item $(\mathfrak{M},x)\not\models\bot$;
\item $(\mathfrak{M},x)\models p$ iff $x\Vdash p$ for $p\in Var$;
\item $(\mathfrak{M},x)\models \phi\land\psi$ iff $(\mathfrak{M},x)\models\phi$ and $(\mathfrak{M},x)\models\psi$;
\item $(\mathfrak{M},x)\models \phi\lor\psi$ iff $(\mathfrak{M},x)\models\phi$ or $(\mathfrak{M},x)\models\psi$;
\item $(\mathfrak{M},x)\models \phi\rightarrow\psi$ iff $\forall y\geq x\left((\mathfrak{M},y)\models\phi\text{ implies }(\mathfrak{M},y)\models\psi\right)$;
\item $(\mathfrak{M},x)\models\Box\phi$ iff $\forall y\in\mathcal{R}[x]\left((\mathfrak{M},y)\models\phi\right)$.
\end{itemize}
We write $(\mathfrak{M},x)\models\Gamma$ if $(\mathfrak{M},x)\models\gamma$ for all $\gamma\in\Gamma$, given $\Gamma\subseteq\mathcal{L}_\Box$.
\begin{definition}
Let $\mathsf{C}$ be a class of intuitionistic modal Kripke models and $\Gamma\cup\{\phi\}\subseteq\mathcal{L}_\Box$. We write $\Gamma\models_\mathsf{C}\phi$ if
\[
\forall\mathfrak{M}\in\mathsf{C}\forall x\in\mathcal{D}(\mathfrak{M})\Big((\mathfrak{M},x)\models\Gamma\Rightarrow(\mathfrak{M},x)\models\phi\Big).
\]
If $\mathsf{C}$ is a class of intuitionistic modal Kripke frames, we write $\Gamma\models_\mathsf{C}\phi$ if $\Gamma\models_{\mathsf{Mod}(\mathsf{C})}\phi$.
\end{definition}
\begin{definition}
Let $\mathbf{IML}$ be an intermediate modal logic. $\mathbf{IML}$ is \emph{(strongly) Kripke complete w.r.t. a class $\mathsf{C}$} of intuitionistic modal frames, if
\[
\Gamma\vdash_{\mathbf{IML}}\phi\text{ iff }\Gamma\models_\mathsf{C}\phi
\]
for all $\Gamma\cup\{\phi\}\subseteq\mathcal{L}_\Box$.
\end{definition}
We write $\mathsf{C}\in\mathsf{MFr}(\mathbf{IML})$ in the above case. As there are Kripke incomplete intermediate logics (recall \cite{She1977}), there are Kripke incomplete intermediate modal logics. So again, the above notation is meant to be read as to include an existence statement.
\section{Substitutions and Realization}
As touched upon in the introduction, our approach for establishing a realization theorem between the intermediate justification logics and the previously introduced intermediate modal logics is based on Fitting's work \cite{Fit2016} about a semantic, but non-constructive, proof of the classical realization theorems. We adapt this proof using the previously introduced intuitionistic Fitting models.

The central points of this approach to the realization theorem are, for one, using a specific canonical model for the (intermediate) justification logic and translating this to an appropriate model for the (intermediate) modal logic to obtain the existence of so-called quasi-realizations (which will be precisely defined later on). The second central point is a constructive extraction of normal realizations from quasi-realizations using a recursion on the structure of the modal formula. 

For all these considerations, the following subsections introduce some technical tools.
\subsection{Annotated Modal Formulae}
To keep track over different $\Box$-symbols in modal formulae, which are potentially realized by different justification terms, we consider an annotated modal language given by 
\[
\mathcal{L}'_{\Box}:\phi'::=\bot\mid p\mid (\phi'\land\phi')\mid (\phi'\lor\phi')\mid (\phi'\rightarrow\phi')\mid \Box_n\phi'
\]
with $n\in\mathbb{N}$ and $p\in Var$. There is a natural projection from $\mathcal{L}'_{\Box}$ to $\mathcal{L}_{\Box}$ by dropping all $\Box$-annotations and we denote it by $(\cdot)^\bullet$. Followingly, we call an annotated formula $\chi\in\mathcal{L}'_\Box$ an annotation of a non-annotated formula $\phi$ if $\chi^\bullet=\phi$ and then also often write $\phi'$ for $\chi$.

We call an annotation $\phi'$ uniquely annotated if no $\Box$-index occurs more than once.
\subsection{Substitutions and Realizations}
We define the following operation $\real{\cdot}$ (following Fitting's \cite{Fit2016}), collecting all possible realizations of an annotated formula, given extra information on polarity.

More precisely, we define $\real{\cdot}:\{T,F\}\times\mathcal{L}'_{\Box}\to\mathcal{P}(\{T,F\}\times\mathcal{L}_{J})$ by recursion on $\mathcal{L}'_{\Box}$:
\begin{enumerate}
\item  For $\phi'\in Var\cup\{\bot\}$:\\
$\real{T,\phi'}:=\{(T,\phi')\}$; $\real{F,\phi'}=\{(F,\phi')\}$;\\
\item For $\circ\in\{\land,\lor\}$:\\
$\real{T,\phi'\circ\psi'}:=\{(T,\alpha\circ\beta)\mid (T,\alpha)\in\real{T,\phi'}, (T,\beta)\in\real{T,\psi'}\}$;\\
$\real{F,\phi'\circ\psi'}:=\{(F,\alpha\circ\beta)\mid (F,\alpha)\in\real{F,\phi'}, (F,\beta)\in\real{F,\psi'}\}$;\\
\item $\real{T,\phi'\rightarrow\psi'}:=\{(T,\alpha\rightarrow\beta)\mid (F,\alpha)\in\real{F,\phi'},(T,\beta)\in\real{T,\psi'}\}$;\\
$\real{F,\phi'\rightarrow\psi'}:=\{(F,\alpha\rightarrow\beta)\mid (T,\alpha)\in\real{T,\phi'}, (F,\beta)\in\real{F,\psi'}\}$;\\
\item $\real{T,\Box_n\phi'}:=\{(T,v_n:\alpha)\mid (T,\alpha)\in\real{T,\phi'}\}$;\\
$\real{F,\Box_n\phi'}:=\{(F,t:\alpha)\mid (F,\alpha)\in\real{F,\phi'}, t\in Jt\}$.
\end{enumerate}
Recalling a comment from \cite{Fit2016}, we remark that the symbols $T,F$ stem from the proof theoretic context of using tableau theorem proving. Here however, they are used just as syntactic bookkeeping of polarities.
\begin{definition}
A \emph{realization of a formula $\phi\in\mathcal{L}_\Box$} is any $\psi$ with $(F,\psi)\in\real{F,\phi'}$ where $\phi'$ is an unique annotation of $\phi'$.
\end{definition}
\begin{definition}
A \emph{justification substitution} is a function $\sigma:V\to Jt$. This function naturally extends to a function $\sigma:Jt\to Jt$ by 
\begin{enumerate}
\item $\sigma([t*s]):=\sigma(t)*\sigma(s)$ for $*\in\{\cdot,+\}$,
\item $\sigma(!t):=!\sigma(t)$.
\end{enumerate}
Further, $\sigma$ also extends to a function $\sigma:\mathcal{L}_{J}\to\mathcal{L}_{J}$ by commuting with all connectives $\land,\lor,\rightarrow,\bot$ and setting
\[
\sigma(t:\phi):=\sigma(t):\sigma(\phi).
\]
Note, that we write $\sigma$ throughout for all extensions.
\end{definition}
Given a formula $\phi$ or term $t$, we also write $\phi\sigma$ or $t\sigma$ for the image of it under a justification substitution $\sigma$.
\begin{lemma}\label{lem:sublemma}
Let $\mathbf{L}$ be an intermediate logic $\mathbf{LJL}\in\{\mathbf{LJ},\mathbf{LJT},\mathbf{LJ4},\mathbf{LJT4}\}$. If $\mathbf{LJL}\vdash\phi$, then $\mathbf{LJL}\vdash\phi\sigma$ for any justification substitution $\sigma$.
\end{lemma}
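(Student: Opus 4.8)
The plan is to argue by structural induction on the derivation witnessing $\mathbf{LJL}\vdash\phi$. Recall that $\mathbf{LJL}=\mathbf{LJL}_{TCS_{\mathbf{LJL}_0}}$, so $Th_{\mathbf{LJL}}$ is the smallest subset of $\mathcal{L}_J$ containing (a) every propositional substitution instance of a theorem of $\mathbf{L}$, that is every element of $\overline{\mathbf{L}}$, (b) every instance of the justification axiom schemes belonging to $\mathbf{LJL}_0$, and (c) every formula of the total constant specification $TCS_{\mathbf{LJL}_0}$, and which is closed under modus ponens. It therefore suffices to verify that $\phi\sigma\in Th_{\mathbf{LJL}}$ whenever $\phi$ is one of the generators (a)--(c), and that this property is preserved under modus ponens.

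The modus ponens step is immediate: since a justification substitution commutes with $\rightarrow$, we have $(\psi\rightarrow\phi)\sigma=\psi\sigma\rightarrow\phi\sigma$, so if both $\psi\sigma$ and $(\psi\rightarrow\phi)\sigma$ are theorems, then so is $\phi\sigma$ by one further application of modus ponens. For the justification axioms I would check that $\sigma$ sends each instance to an instance of the \emph{same} scheme: using $[t\cdot s]\sigma=[t\sigma\cdot s\sigma]$, $[t+s]\sigma=[t\sigma+s\sigma]$ and $(!t)\sigma=\,!(t\sigma)$ together with $(t{:}\chi)\sigma=t\sigma{:}\chi\sigma$, an instance of $(J)$, $(+)$, $(F)$ or $(I)$ built from terms $t,s$ and formulae $\phi,\psi$ is carried to the instance of the same scheme built from $t\sigma,s\sigma$ and $\phi\sigma,\psi\sigma$, hence remains an axiom of $\mathbf{LJL}_0$ and a fortiori a theorem. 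For generator (a), suppose $\phi=\tau(\psi)$ with $\psi\in\mathbf{L}$ and $\tau$ a propositional substitution in $\mathcal{L}_J$; then the composite $\rho\colon Var\to\mathcal{L}_J$ given by $\rho(p):=\tau(p)\sigma$ is again a propositional substitution in $\mathcal{L}_J$, and a routine induction on $\chi\in\mathcal{L}_0$ (using that both $\sigma$ and $\tau$ commute with $\land,\lor,\rightarrow,\bot$) yields $\tau(\chi)\sigma=\rho(\chi)$. Hence $\phi\sigma=\rho(\psi)\in\overline{\mathbf{L}}$ is a theorem.

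The point requiring the most care, and the step I expect to be the main obstacle, is generator (c). A formula of $TCS_{\mathbf{LJL}_0}$ has the shape $c_{i_n}{:}\dots{:}c_{i_1}{:}\chi$ with each $c_{i_k}\in C$ a constant and $\chi$ an axiom instance of $\mathbf{LJL}_0$. Here one must observe that a justification substitution is defined only on the variables $V$ and leaves the constants of $C$ untouched, so $c\sigma=c$ for every $c\in C$ and consequently $(c_{i_n}{:}\dots{:}c_{i_1}{:}\chi)\sigma=c_{i_n}{:}\dots{:}c_{i_1}{:}\chi\sigma$. By the analysis already carried out for generators (a) and (b), the inner formula $\chi\sigma$ is again an axiom instance of $\mathbf{LJL}_0$ of the same kind; therefore the whole formula is again a member of the total constant specification and hence a theorem of $\mathbf{LJL}$. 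With the three base cases and the modus ponens step in place, the induction closes and the lemma follows. The genuinely delicate bookkeeping is thus confined to confirming that constants are fixed by $\sigma$ and that the commutation identity $\tau(\chi)\sigma=\rho(\chi)$ holds; everything else reduces to the fact that justification substitutions commute with all the connectives and term constructors.
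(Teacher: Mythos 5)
Your proof is correct and is essentially the argument the paper has in mind: the paper only remarks that the proof ``is similar to the classical case'' and cites the literature, and the standard classical proof is exactly this induction over the modus-ponens closure of the generators, checking that $\sigma$ maps $\overline{\mathbf{L}}$-instances, justification-axiom instances, and $TCS$-formulae back into the same classes. You correctly isolate the two points that actually need care --- that $\sigma$ fixes constants (so $TCS$-membership is preserved, which is where totality of the constant specification is used) and the composition identity $\tau(\chi)\sigma=\rho(\chi)$ --- so nothing is missing.
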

The proof is similar to the classical case (see e.g. \cite{KS2019}). For the upcoming investigations, we will need some further vocabulary regarding justification substitutions. At first, given a justification formula $\phi\in\mathcal{L}_J$ or justification term $t\in Jt$, we write $\mathrm{jvar}(\phi)$ or $\mathrm{jvar}(t)$ for the sets of all justification variables $x\in V$ occurring in $\phi$ or $t$, respectively.
\begin{definition}[Fitting \cite{Fit2016}]
Let $\sigma$ be a justification substitution and $\phi'$ be a uniquely annotated formula. We write $\mathrm{dom}(\sigma)=\{x\in V\mid x\sigma\neq x\}$. Further, we say:
\begin{enumerate}
\item $\sigma$ meets the \emph{no-new-variables} condition if $\mathrm{jvar}(x\sigma)\subseteq\{x\}$ for all $x\in V$;
\item $\sigma$ \emph{lives on} $\phi'$ if $x_k\in\mathrm{dom}(\sigma)$ implies that $\Box_k$ occurs in $\phi'$;
\item $\sigma$ \emph{lives away from} $\phi'$ if $x_k\in\mathrm{dom}(\sigma)$ implies that $\Box_k$ does not occur in $\phi'$.
\end{enumerate}
\end{definition}
There is a natural way of combining justification substitutions by iteratively applying them. For this, given two justification substitutions $\sigma$, $\sigma'$, we write $\sigma\sigma'$ for the substitution defined by
\[
x\mapsto\sigma'(\sigma(x))
\]
for all $x\in V$.
\begin{lemma}[Fitting \cite{Fit2016}]
Let $\phi'\in\mathcal{L}'_\Box$ be uniquely annotated, $\sigma_0$ a justification substitution that lives on $\phi'$ and $\sigma_1$ be a justification substitution that lives away from $\phi'$. Then:
\begin{enumerate}
\item $(T,\alpha)\in\real{T,\phi'}$ implies $(T,\alpha\sigma_1)\in\real{T,\phi'}$;\\
$(F,\alpha)\in\real{F,\phi'}$ implies $(F,\alpha\sigma_1)\in\real{F,\phi'}$.
\item If $\sigma_0,\sigma_1$ meet the no-new-variable condition, then $\sigma_0\sigma_1=\sigma_1\sigma_0$.
\end{enumerate} 
\end{lemma}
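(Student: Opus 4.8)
The plan is to treat the two claims separately: claim (1) by a simultaneous induction on the structure of $\phi'$ carried out over both polarities $T$ and $F$ at once, and claim (2) by a direct computation on variables after first establishing that $\mathrm{dom}(\sigma_0)$ and $\mathrm{dom}(\sigma_1)$ are disjoint.

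For (1), I would prove the two implications ``$(T,\alpha)\in\real{T,\phi'}$ implies $(T,\alpha\sigma_1)\in\real{T,\phi'}$'' and ``$(F,\alpha)\in\real{F,\phi'}$ implies $(F,\alpha\sigma_1)\in\real{F,\phi'}$'' jointly, since the recursion clauses for $\rightarrow$ mix the two polarities. The base case $\phi'\in Var\cup\{\bot\}$ is immediate, as the sole realization is $\phi'$ itself, which contains no justification variables and so is fixed by $\sigma_1$. For $\circ\in\{\land,\lor\}$ and for $\rightarrow$, I would first note that \emph{living away from} $\phi'\circ\psi'$ (resp.\ $\phi'\rightarrow\psi'$) is inherited by the immediate subformulae, because any $\Box_k$ occurring in a subformula occurs in the whole formula; the claim then follows from the induction hypotheses on the subformulae together with the fact that $\sigma_1$ commutes with the connectives, giving $(\alpha\circ\beta)\sigma_1=\alpha\sigma_1\circ\beta\sigma_1$ and likewise for $\rightarrow$.

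The only genuinely load-bearing case, and where I expect the (small) obstacle to sit, is $\Box_n\phi'$ under $T$-polarity: here $\real{T,\Box_n\phi'}$ forces the realizing term to be the distinguished variable $v_n$, so I must verify that $\sigma_1$ fixes $v_n$. Since $\Box_n$ visibly occurs in $\Box_n\phi'$ and $\sigma_1$ lives away from $\Box_n\phi'$, the contrapositive of the \emph{lives away} condition gives that the variable indexed by $n$ lies outside $\mathrm{dom}(\sigma_1)$, so $\sigma_1(v_n)=v_n$; combined with the induction hypothesis for $\phi'$ (which again lives away, as above) and the identity $(v_n:\alpha)\sigma_1=v_n:\alpha\sigma_1$, this yields $(T,v_n:\alpha\sigma_1)\in\real{T,\Box_n\phi'}$. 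Under $F$-polarity the matter is easier: $\real{F,\Box_n\phi'}$ permits an arbitrary term $t$, and $t\sigma_1\in Jt$, so no condition on $\Box_n$ is needed and the induction hypothesis on $\phi'$ already suffices.

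For (2), I would first show $\mathrm{dom}(\sigma_0)\cap\mathrm{dom}(\sigma_1)=\emptyset$: if some $x_k$ lay in both domains, then $\sigma_0$ living on $\phi'$ would force $\Box_k$ to occur in $\phi'$ while $\sigma_1$ living away from $\phi'$ would force it not to. Since a substitution is determined by its action on $V$, it then suffices to check $\sigma_1(\sigma_0(x_k))=\sigma_0(\sigma_1(x_k))$ for each $x_k$. If $x_k$ lies in neither domain, both sides equal $x_k$. If $x_k\in\mathrm{dom}(\sigma_0)$, then $x_k\notin\mathrm{dom}(\sigma_1)$, so $\sigma_0(\sigma_1(x_k))=\sigma_0(x_k)$; by the no-new-variable condition $\mathrm{jvar}(\sigma_0(x_k))\subseteq\{x_k\}$, and as $\sigma_1$ fixes $x_k$ it fixes $\sigma_0(x_k)$, so $\sigma_1(\sigma_0(x_k))=\sigma_0(x_k)$ too. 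The case $x_k\in\mathrm{dom}(\sigma_1)$ is symmetric, invoking the no-new-variable condition for $\sigma_1$. Hence the two composites agree on every variable and therefore as substitutions on all of $Jt$ and $\mathcal{L}_J$, giving $\sigma_0\sigma_1=\sigma_1\sigma_0$.
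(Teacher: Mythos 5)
Your proof is correct and is essentially the argument the paper leaves to the cited source (\cite{Fit2016}): a simultaneous induction over both polarities for part (1), with the only substantive step being that $\sigma_1(v_n)=v_n$ in the $T\Box_n$ case because $\Box_n$ occurs in $\Box_n\phi'$, and a variable-by-variable computation for part (2) after observing that \emph{lives on} and \emph{lives away} force $\mathrm{dom}(\sigma_0)\cap\mathrm{dom}(\sigma_1)=\emptyset$. You also handle the paper's composition convention ($\sigma\sigma'$ meaning $x\mapsto\sigma'(\sigma(x))$) correctly, so nothing is missing.
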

A proof can also be found in \cite{Fit2016}.
\section{Unified Quasi-Realizations for Intermediate Modal Logics}
Following Fitting in \cite{Fit2016}, we define the mapping $\quasi{\cdot}:\{T,F\}\times\mathcal{L}'_{\Box}\to\mathcal{P}(\{T,F\}\times\mathcal{L}_{J})$ by recursion on $\mathcal{L}'_{\Box}$:
\begin{enumerate}
\item For $\phi'\in Var\cup \{\bot\}$:\\
$\quasi{T,\phi'}:=\{(T,\phi')\}$; $\quasi{F,\phi'}=\{(F,\phi')\}$;\\
\item For $\circ\in\{\land,\lor\}$:\\
$\quasi{T,\phi'\circ\psi'}:=\{(T,\alpha\circ\beta)\mid (T,\alpha)\in\quasi{T,\phi'},(T,\beta)\in\quasi{T,\psi'}\}$;\\
$\quasi{F,\phi'\circ\psi'}:=\{(F,\alpha\circ\beta)\mid (F,\alpha)\in\quasi{F,\phi'},(F,\beta)\in\quasi{F,\psi'}\}$;\\
\item $\quasi{T,\phi'\rightarrow\psi'}:=\{(T,\alpha\rightarrow\beta)\mid (F,\alpha)\in\quasi{F,\phi'},(T,\beta)\in\quasi{T,\psi'}\}$;\\
$\quasi{F,\phi'\rightarrow\psi'}:=\{(F,\bigwedge_{i=1}^k\alpha_i\rightarrow\bigvee_{j=1}^l\beta_j)\mid (T,\alpha_i)\in\quasi{T,\phi'}, (F,\beta_j)\in\quasi{F,\psi'}\}$;\\
\item $\quasi{T,\Box_n\phi'}:=\{(T,x_n:\alpha)\mid (T,\alpha)\in\quasi{T,\phi'}\}$;\\
$\quasi{F,\Box_n\phi'}:=\{(F,t:(\alpha_1\lor\dots\lor\alpha_n))\mid (F,\alpha_i)\in\quasi{F,\phi'} (i\leq n),t\in Jt\}$;
\end{enumerate}
\begin{definition}
Given an uniquely annotated modal formula $\phi'$, a \emph{quasi-realization for} $\phi'$ is a formula $\alpha_1\lor\dots\lor\alpha_n$ with $(F,\alpha_i)\in\quasi{F,\phi'}$ ($i\leq n$). A quasi-realization for a modal formula $\phi$ is any quasi-realization for any unique annotation $\phi'$ of $\phi$.
\end{definition}
\subsection{Canonical models for intermediate justification logics, revisited}\label{sec:canmodcon}
The completeness proofs for intermediate justification logics (w.r.t. intuitionistic Fitting models) provided in Section \ref{sec:comptheoframes} of this paper are relying on a kind of canonical model construction, relative, however, to a given intuitionistic frame (and a corresponding propositional model) to achieve the respective generality w.r.t. the class of frames.

Achieving these results with the ``usual`` canonical model construction based on a frame over partially ordered maximal consistent sets or tableaux (such as the one defined later) is \emph{sometimes} even impossible, as one can not always control the properties of the partial order such that it lays in a desired class of frames. 

For instance, the canonical model which we are about to present is, if constructed over a classical justification logic, not a single world model, but one with isolated single worlds w.r.t. the partial order. The previous completeness theorems, however, provide completeness for classical justification logic w.r.t. single world models based on the corresponding completeness theorem for classical propositional logic and single world frames.\\

This usual (or standard) canonical model construction is, however, the main tool of the present section as we need to have precise control over the frame of the (canonical) model in question. So, this subsection now recalls and appropriately adapts this construction from the case of propositional intermediate logics (see \cite{CZ1997} for a comprehensive treatment of this propositional case for intermediate logics).

Throughout, let $\mathbf{L}$ be an intermediate logic and $\mathbf{LJL}$ be one of the associated intermediate justification logics with total constant specification.
\begin{definition}
A \emph{tableau} is a tuple $\tau=(\Gamma,\Delta)$ with $\Gamma,\Delta\subseteq\mathcal{L}_{J}$. $\tau$ is called $\mathbf{LJL}$-consistent if
\[
\Gamma\not\vdash_{\mathbf{LJL}}\phi_1\lor\dots\lor\phi_n
\]
for all $\phi_i\in\Delta$. $\tau$ is called maximal if $\Gamma\cup\Delta=\mathcal{L}_J$.
\end{definition}
\begin{lemma}[Lindenbaum]
Every $\mathbf{LJL}$-consistent tableau $\tau$ can be extended to a maximal $\mathbf{LJL}$-consistent tableau.
\end{lemma}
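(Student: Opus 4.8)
The plan is to carry out the standard Lindenbaum construction adapted to the tableau (pair) formulation, enumerating the countable language $\mathcal{L}_J$ and deciding formula by formula whether to place it on the left or the right component of the tableau, maintaining $\mathbf{LJL}$-consistency throughout. First I would note that $\mathcal{L}_J$ is countable (as $V$, $C$, hence $Jt$ and $\mathcal{L}_J$, are all countable), so fix an enumeration $\mathcal{L}_J=\{\psi_0,\psi_1,\dots\}$. Starting from $\tau_0:=\tau=(\Gamma,\Delta)$, I would build an increasing chain $\tau_0\subseteq\tau_1\subseteq\dots$ of $\mathbf{LJL}$-consistent tableaux by setting, given $\tau_n=(\Gamma_n,\Delta_n)$,
\[
\tau_{n+1}:=\begin{cases}(\Gamma_n\cup\{\psi_n\},\Delta_n)&\text{if this is }\mathbf{LJL}\text{-consistent},\\(\Gamma_n,\Delta_n\cup\{\psi_n\})&\text{otherwise}.\end{cases}
\]

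The crux is the following splitting lemma: if $(\Gamma,\Delta)$ is $\mathbf{LJL}$-consistent and $(\Gamma\cup\{\psi\},\Delta)$ is not, then $(\Gamma,\Delta\cup\{\psi\})$ is. I would argue by contradiction. Inconsistency of $(\Gamma\cup\{\psi\},\Delta)$ yields $\phi_1,\dots,\phi_n\in\Delta$ with $\Gamma\cup\{\psi\}\vdash_{\mathbf{LJL}}\Phi$, where $\Phi:=\phi_1\lor\dots\lor\phi_n$; by the deduction theorem for $\mathbf{LJL}$ this gives $\Gamma\vdash_{\mathbf{LJL}}\psi\rightarrow\Phi$. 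Inconsistency of $(\Gamma,\Delta\cup\{\psi\})$ yields a disjunction of members of $\Delta\cup\{\psi\}$ provable from $\Gamma$; since $(\Gamma,\Delta)$ itself is consistent, $\psi$ must occur among the disjuncts, so after grouping (using the $\mathbf{IPC}$-provable associativity, commutativity and idempotence of $\lor$) I can write $\Gamma\vdash_{\mathbf{LJL}}\psi\lor\delta$ with $\delta$ a possibly empty disjunction of members of $\Delta$. Since $(\psi\lor\delta)\land(\psi\rightarrow\Phi)\rightarrow(\Phi\lor\delta)$ is a substitution instance of an $\mathbf{IPC}$-theorem and thus lies in $\overline{\mathbf{L}}\subseteq\mathbf{LJL}$, closure under modus ponens gives $\Gamma\vdash_{\mathbf{LJL}}\Phi\lor\delta$. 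But $\Phi\lor\delta$ is a disjunction purely of elements of $\Delta$, contradicting the consistency of $(\Gamma,\Delta)$. The degenerate cases where $\Phi$ or $\delta$ is empty are handled by reading the empty disjunction as $\bot$ and invoking $(A9)$.

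Finally I would pass to the limit $\tau_\omega:=(\Gamma_\omega,\Delta_\omega)$ with $\Gamma_\omega:=\bigcup_n\Gamma_n$ and $\Delta_\omega:=\bigcup_n\Delta_n$. Maximality is immediate, since every $\psi_n$ is placed into one of the two components at stage $n+1$, whence $\Gamma_\omega\cup\Delta_\omega=\mathcal{L}_J$. Consistency follows from the finitary character of $\vdash_{\mathbf{LJL}}$: any putative witness $\Gamma_\omega\vdash_{\mathbf{LJL}}\phi_1\lor\dots\lor\phi_n$ with $\phi_i\in\Delta_\omega$ uses only finitely many premises from $\Gamma_\omega$ and finitely many $\phi_i$, all of which already appear in some $\tau_N$ of the increasing chain, contradicting the consistency of $\tau_N$. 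The only genuine work is the splitting lemma, and within it the point to get right is the interplay of the deduction theorem with the intuitionistically valid disjunction manipulation; this is precisely where the argument uses that $\mathbf{L}$ extends $\mathbf{IPC}$ rather than relying on classical reasoning.
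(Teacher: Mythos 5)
Your proof is correct and is exactly the argument the paper has in mind: the paper gives no details, merely calling the lemma ``a straightforward generalization of the propositional case'' with a reference to Chagrov--Zakharyaschev, and your enumeration-plus-splitting-lemma construction (with the intuitionistically valid disjunction shuffle $(\psi\lor\delta)\land(\psi\rightarrow\Phi)\rightarrow(\Phi\lor\delta)$ doing the key work) is precisely that standard tableau Lindenbaum argument, correctly using only the deduction theorem and the finitary character of $\vdash_{\mathbf{LJL}}$.
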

The proof is a straightforward generalization of the propositional case (see e.g. \cite{CZ1997}). We then can form the desired model. The main difference to Fitting's canonical model used in \cite{Fit2016}, besides the additional partial order to handle the intuitionistic implication, is the use of these tableaux instead of single maximal consistent sets as common in the study of intermediate logics (see e.g.  \cite{CZ1997}) as one can not control falsified formulae by their negation.
\begin{definition}
The \emph{standard canonical intuitionistic Fitting model for $\mathbf{LJL}$} is the structure $\mathfrak{M}^{sc}(\mathbf{LJL})=\langle\mathcal{W}^{sc},\preceq^{sc},\mathcal{R}^{sc},\mathcal{E}^{sc},\Vdash^{sc}\rangle$ which is defined by
\begin{enumerate}
\item $\mathcal{W}^{sc}:=\{\tau=(\Gamma,\Delta)\mid \tau\text{ is }\mathbf{LJL}\text{-consistent and maximal}\}$,
\item $\tau\preceq^{sc}\tau'$ iff $\Gamma\subseteq\Gamma'$ iff $\Delta\supseteq\Delta'$,
\item $\tau\mathcal{R}^{sc}\tau'$ iff $\Gamma^\#\subseteq\Gamma'$,
\item $\mathcal{E}^{sc}_t(\tau)=\{\phi\mid t:\phi\in\Gamma\}$,
\item $\Vdash^{sc}(p)=\{\tau=(\Gamma,\Delta)\mid p\in\Gamma\}$,
\end{enumerate}
where $\tau=(\Gamma,\Delta)$ and $\tau'=(\Gamma',\Delta')$.
\end{definition}
\begin{theorem}
Let $\mathbf{L}$ be an intermediate logic, $\mathbf{LJL}\in\{\mathbf{LJ},\mathbf{LJT},\mathbf{LJ4},\mathbf{LJT4}\}$ and $\mathfrak{M}^{sc}(\mathbf{LJL})=\langle\mathcal{W}^{sc},\preceq^{sc},\mathcal{R}^{sc},\mathcal{E}^{sc},\Vdash^{sc}\rangle$ its canonical model. For any $\phi\in\mathcal{L}_J$ and any $\tau=(\Gamma,\Delta)\in\mathcal{W}^{sc}$:
\begin{enumerate}
\item $\phi\in\Gamma\Rightarrow (\mathfrak{M}^{sc}(\mathbf{LJL}),\tau)\models\phi$;
\item $\phi\in\Delta\Rightarrow (\mathfrak{M}^{sc}(\mathbf{LJL}),\tau)\not\models\phi$.
\end{enumerate}
\end{theorem}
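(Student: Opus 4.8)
The plan is to prove (1) and (2) simultaneously by induction on the structure of $\phi$, the induction being understood to range over all $\tau=(\Gamma,\Delta)\in\mathcal{W}^{sc}$ at once so that the hypothesis is available at every world. First I would record the standard facts about maximal $\mathbf{LJL}$-consistent tableaux. From consistency one gets $\Gamma\cap\Delta=\emptyset$ (a common element $\chi$ would give $\Gamma\vdash_{\mathbf{LJL}}\chi$ with $\chi\in\Delta$), so together with maximality $\Delta=\mathcal{L}_J\setminus\Gamma$, and hence the hypothesis $\phi\in\Delta$ just means $\phi\notin\Gamma$. In the same way $\Gamma$ is deductively closed and prime: if $\Gamma\vdash_{\mathbf{LJL}}\chi$ then $\chi\in\Gamma$, and if $\phi\lor\psi\in\Gamma$ then $\phi\in\Gamma$ or $\psi\in\Gamma$ (otherwise both lie in $\Delta$ and $\Gamma\vdash_{\mathbf{LJL}}\phi\lor\psi$ contradicts consistency). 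I would also prove the witnessing property for implication: if $\phi\rightarrow\psi\notin\Gamma$, then by the deduction theorem $(\Gamma\cup\{\phi\},\{\psi\})$ is $\mathbf{LJL}$-consistent, so by the Lindenbaum lemma it extends to some maximal consistent $\tau'=(\Gamma',\Delta')$ with $\tau\preceq^{sc}\tau'$, $\phi\in\Gamma'$ and $\psi\in\Delta'$.

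The base cases are immediate: $(\mathfrak{M}^{sc}(\mathbf{LJL}),\tau)\not\models\bot$ always while $\bot\notin\Gamma$ by consistency, and for $p\in Var$ the clause $(\mathfrak{M}^{sc}(\mathbf{LJL}),\tau)\models p$ iff $p\in\Gamma$ is exactly the definition of $\Vdash^{sc}$. The cases for $\land$ and $\lor$ follow from deductive closure and primeness, applying the induction hypothesis to the immediate subformulae; e.g.\ for $\lor$, part (1) uses primeness and part (2) uses that $\phi\lor\psi\notin\Gamma$ forces $\phi,\psi\notin\Gamma$ by closure under $(A6),(A7)$. For $\rightarrow$, part (1) takes any $\tau'\succeq^{sc}\tau$ with $(\mathfrak{M}^{sc}(\mathbf{LJL}),\tau')\models\phi$, concludes $\phi\in\Gamma'$ by the induction hypothesis, hence $\psi\in\Gamma'$ since $\Gamma\subseteq\Gamma'$ and $\Gamma$ is closed under modus ponens, and then applies the hypothesis again; part (2) uses the witnessing world $\tau'$ constructed above, where the hypothesis gives $(\mathfrak{M}^{sc}(\mathbf{LJL}),\tau')\models\phi$ and $(\mathfrak{M}^{sc}(\mathbf{LJL}),\tau')\not\models\psi$, refuting the implication at $\tau$.

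The justification case is where the evidence and accessibility components do the work, but here the direction that in ordinary modal canonical-model arguments requires an existence lemma becomes trivial. Recall that $(\mathfrak{M}^{sc}(\mathbf{LJL}),\tau)\models t:\phi$ holds iff $\phi\in\mathcal{E}^{sc}_t(\tau)$ and $(\mathfrak{M}^{sc}(\mathbf{LJL}),\tau')\models\phi$ for all $\tau'\in\mathcal{R}^{sc}[\tau]$, where $\mathcal{E}^{sc}_t(\tau)=\{\phi\mid t:\phi\in\Gamma\}$ and $\Gamma^\#=\{\psi\mid t:\psi\in\Gamma\text{ for some }t\in Jt\}$. For part (2), if $t:\phi\in\Delta$ then $t:\phi\notin\Gamma$, so $\phi\notin\mathcal{E}^{sc}_t(\tau)$ and the first conjunct of the truth clause already fails, giving $(\mathfrak{M}^{sc}(\mathbf{LJL}),\tau)\not\models t:\phi$ with no appeal to $\mathcal{R}^{sc}$ at all. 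For part (1), if $t:\phi\in\Gamma$ then $\phi\in\mathcal{E}^{sc}_t(\tau)$ directly, and moreover $\phi\in\Gamma^\#$, so for every $\tau'=(\Gamma',\Delta')$ with $\tau\mathcal{R}^{sc}\tau'$, i.e.\ $\Gamma^\#\subseteq\Gamma'$, we have $\phi\in\Gamma'$ and the induction hypothesis yields $(\mathfrak{M}^{sc}(\mathbf{LJL}),\tau')\models\phi$; both conjuncts hold, so $(\mathfrak{M}^{sc}(\mathbf{LJL}),\tau)\models t:\phi$.

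The main obstacle is thus not the justification modality, which is handled purely by the encoding in $\mathcal{E}^{sc}$ and $\mathcal{R}^{sc}$, but the implication case together with the supporting tableau facts: establishing primeness and the Lindenbaum extension so that part (2) for $\rightarrow$ has a genuine witness world above $\tau$ forcing $\phi$ while refuting $\psi$. This is exactly the point where the two-sided tableau formulation, rather than single maximal consistent sets, is needed, since in the intermediate setting one cannot recover a refuted formula from the presence of its negation. I would also, as a prerequisite that the statement tacitly assumes, have checked that $\mathfrak{M}^{sc}(\mathbf{LJL})$ genuinely satisfies the frame and persistence conditions of an intuitionistic Fitting model, in particular $\tau\preceq^{sc}\tau'\Rightarrow\mathcal{R}^{sc}[\tau']\subseteq\mathcal{R}^{sc}[\tau]$ and the monotonicity of $\mathcal{E}^{sc}$, so that the semantic clauses invoked throughout the induction are legitimate.
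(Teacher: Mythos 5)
Your proof is correct and follows essentially the same route as the paper: induction on $\phi$ over all tableaux simultaneously, with the justification case handled exactly as in the paper (part (1) via $\phi\in\mathcal{E}^{sc}_t(\tau)$ together with $\Gamma^\#\subseteq\Gamma'$ for accessible worlds, part (2) by failure of the evidence conjunct alone). The only difference is that you spell out the propositional cases and the supporting tableau facts (primeness, Lindenbaum, the witness world for $\rightarrow$), which the paper delegates to the standard treatment in Chagrov--Zakharyaschev.
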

\begin{proof}
The proof is a simple extension of the similar results in the propositional case which can be found in \cite{CZ1997}. Similarly, we proceed by induction on $\phi$ and as the reasoning for the propositional connectives and atomic formulae given in \cite{CZ1997} also applies here, we only consider the case for $t:\phi$ where we assume
\begin{enumerate}
\item $\phi\in\Gamma'\Rightarrow (\mathfrak{M}^{sc}(\mathbf{LJL}),\tau')\models\phi$,
\item $\phi\in\Delta'\Rightarrow (\mathfrak{M}^{sc}(\mathbf{LJL}),\tau')\not\models\phi$,
\end{enumerate}
for any $\tau'=(\Gamma',\Delta')\in\mathcal{W}^{sc}$.\\

For (1), let $\tau=(\Gamma,\Delta)\in\mathcal{W}^{sc}$ and assume $t:\phi\in\Gamma$. Then, by definition $\phi\in\mathcal{E}^{sc}_t(\tau)$ and also for any $\tau'\in\mathcal{R}^{sc}[\tau]$: $\phi\in\Gamma'$. By induction hypothesis, we have $(\mathfrak{M}^{sc}(\mathbf{LJL}),\tau')\models\phi$ for any $\tau'\in\mathcal{R}^{sc}[\tau]$ and combined with $\phi\in\mathcal{E}^{sc}_t(\tau)$, we have $(\mathfrak{M}^{sc}(\mathbf{LJL}),\tau)\models t:\phi$.\\

Conversely, for (2), assume $t:\phi\in\Delta$. As $\tau$ is $\mathbf{LJL}$-consistent, we have $t:\phi\not\in\Gamma$ and thus $\phi\not\in\mathcal{E}^{sc}_t(\tau)$. Thus, immediately we have $(\mathfrak{M}^{sc}(\mathbf{LJL}),\tau)\not\models t:\phi$.
\end{proof}
\subsection{The main results}
We formulate the main lemma and the theorem on existence of quasi-realizations in the vein of Fitting's \cite{Fit2016}. For this, we also introduce the following notation:
\begin{definition}
Let $\mathfrak{F}$ be a Kripke frame and let $\mathfrak{M}=\langle\mathfrak{F},\mathcal{R},\mathcal{E},\mathcal{V}\rangle$ be a intuitionistic Fitting model over $\mathfrak{F}$. Let $x\in\mathcal{D}(\mathfrak{F})$ and let further $\phi'$ be an uniquely annotated modal formula. We write
\begin{enumerate}
\item $(\mathfrak{M},x)\models\quasi{T,\phi'}$ if $(\mathfrak{M},x)\models\alpha$ for all $(T,\alpha)\in\quasi{T,\phi'}$,
\item $(\mathfrak{M},x)\models\quasi{F,\phi'}$ if $(\mathfrak{M},x)\not\models\alpha$ for all $(F,\alpha)\in\quasi{F,\phi'}$.
\end{enumerate}
\end{definition}
\begin{lemma}
Let $\mathbf{L}$ be an intermediate logic and let $\mathbf{LJL}\in\{\mathbf{LJ},\mathbf{LJT},\mathbf{LJ4},\mathbf{LJT4}\}$ be a corresponding intermediate justification logic.

Let $\mathfrak{M}^{sc}:=\mathfrak{M}^{sc}(\mathbf{LJL})=\langle\mathcal{W}^{sc},\preceq^{sc},\mathcal{R}^{sc},\mathcal{E}^{sc},\Vdash^{sc}\rangle$ be the canonical model for $\mathbf{LJL}$ and define $\mathfrak{N}:=\langle\mathcal{W}^{sc},\preceq^{sc},\mathcal{R}^{sc},\Vdash^{sc}\rangle$. For all uniquely annotated $\phi'\in\mathcal{L}'_{\Box}$ and all $\tau\in\mathcal{W}^{sc}$:
\begin{enumerate}
\item $(\mathfrak{M}^{sc},\tau)\models\quasi{T,\phi'}\Rightarrow(\mathfrak{N},\tau)\models(\phi')^\bullet$;
\item $(\mathfrak{M}^{sc},\tau)\models\quasi{F,\phi'}\Rightarrow(\mathfrak{N},\tau)\not\models(\phi')^\bullet$.
\end{enumerate}
\end{lemma}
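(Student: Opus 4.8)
The plan is to prove statements (1) and (2) \emph{simultaneously} by induction on the structure of the uniquely annotated formula $\phi'$. Before starting, I would sharpen the truth lemma preceding this statement: since every $\tau=(\Gamma,\Delta)\in\mathcal{W}^{sc}$ is maximal and $\mathbf{LJL}$-consistent, that theorem upgrades to the biconditional $(\mathfrak{M}^{sc},\tau)\models\phi$ iff $\phi\in\Gamma$ (equivalently $(\mathfrak{M}^{sc},\tau)\not\models\phi$ iff $\phi\in\Delta$), from which $\Gamma$ is deductively closed, i.e.\ $\Gamma\vdash_{\mathbf{LJL}}\phi$ implies $\phi\in\Gamma$. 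I would also record, by a routine side induction on $\phi'$, that the sets $\quasi{T,\phi'}$ and $\quasi{F,\phi'}$ are always non-empty; this is what lets me read off information about a single realization from a hypothesis quantifying over all of them.

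The base case $\phi'\in Var\cup\{\bot\}$ is immediate, since $\Vdash^{sc}$ is defined identically on atoms in $\mathfrak{M}^{sc}$ and $\mathfrak{N}$ and $\bot$ is forced nowhere. For $\circ\in\{\land,\lor\}$ the cases split by matching versus opposing polarity. In the \emph{matching} cases ($\land$ for (1) and $\lor$ for (2)) I extract each component directly: choosing an arbitrary realization of the other subformula (possible by non-emptiness) shows $(\mathfrak{M}^{sc},\tau)\models\quasi{T,\psi'}$ and $(\mathfrak{M}^{sc},\tau)\models\quasi{T,\chi'}$ (resp.\ the $F$-versions), and the induction hypothesis closes the case. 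In the \emph{opposing} cases ($\lor$ for (1) and $\land$ for (2)) I argue by contradiction: were neither sub-statement to hold, I could pick a realization failing for $\psi'$ and one for $\chi'$ and combine them into a realization in $\quasi{\cdot,\psi'\circ\chi'}$ contradicting the hypothesis; hence one sub-statement holds, and the induction hypothesis together with the local clause for $\lor$ (resp.\ $\land$) in $\mathfrak{N}$ finishes it.

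The implication and modal cases of statement (1) are where the intuitionistic order and the accessibility relation enter, and both directions of the induction hypothesis are used. For $\phi'=\psi'\rightarrow\chi'$, I fix $\tau'\succeq^{sc}\tau$ with $(\mathfrak{N},\tau')\models(\psi')^\bullet$; the contrapositive of (2) at $\tau'$ yields some $(F,\alpha_0)\in\quasi{F,\psi'}$ with $(\mathfrak{M}^{sc},\tau')\models\alpha_0$, and then the hypothesis $(\mathfrak{M}^{sc},\tau)\models\alpha_0\rightarrow\beta$ for every $(T,\beta)\in\quasi{T,\chi'}$, read off at $\tau'$, gives $(\mathfrak{M}^{sc},\tau')\models\quasi{T,\chi'}$, whence (1) at $\tau'$ delivers $(\mathfrak{N},\tau')\models(\chi')^\bullet$. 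For $\phi'=\Box_n\psi'$, the hypothesis on the terms $x_n:\alpha$ transfers, at each $\tau''\in\mathcal{R}^{sc}[\tau]$, to $(\mathfrak{M}^{sc},\tau'')\models\quasi{T,\psi'}$, and (1) at $\tau''$ gives $(\mathfrak{N},\tau'')\models(\psi')^\bullet$; since $\mathcal{R}^{sc}$ is shared between $\mathfrak{M}^{sc}$ and $\mathfrak{N}$, this is exactly $(\mathfrak{N},\tau)\models\Box(\psi')^\bullet$.

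I expect the main obstacle to be statement (2) in the implication and modal cases, where one must \emph{produce a witness world} rather than merely transfer information. For $\psi'\rightarrow\chi'$ I would form the tableau $(\Gamma\cup A,\,B)$ with $A=\{\alpha\mid(T,\alpha)\in\quasi{T,\psi'}\}$ and $B=\{\beta\mid(F,\beta)\in\quasi{F,\chi'}\}$ and show it is $\mathbf{LJL}$-consistent: an inconsistency would give, via the deduction theorem and deductive closure of $\Gamma$, some $\bigwedge_i\alpha_i\rightarrow\bigvee_j\beta_j\in\Gamma$, contradicting the hypothesis $(\mathfrak{M}^{sc},\tau)\models\quasi{F,\psi'\rightarrow\chi'}$ through the sharpened truth lemma. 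Lindenbaum's lemma then yields $\tau'\succeq^{sc}\tau$ with $(\mathfrak{M}^{sc},\tau')\models\quasi{T,\psi'}$ and $(\mathfrak{M}^{sc},\tau')\models\quasi{F,\chi'}$, and the induction hypothesis makes $\tau'$ a point refuting $(\psi')^\bullet\rightarrow(\chi')^\bullet$. The modal case $\Box_n\psi'$ is analogous but additionally invokes the Lifting Lemma: to see that the tableau $(\Gamma^\#,B)$ with $B=\{\beta\mid(F,\beta)\in\quasi{F,\psi'}\}$ is consistent, a derivation $\Gamma^\#\vdash_{\mathbf{LJL}}\beta_1\lor\dots\lor\beta_l$ would, after lifting the premises $t_i:\psi_i\in\Gamma$ witnessing membership in $\Gamma^\#$, produce a term $s$ with $s:(\beta_1\lor\dots\lor\beta_l)\in\Gamma$; but $(F,s:(\beta_1\lor\dots\lor\beta_l))\in\quasi{F,\Box_n\psi'}$, contradicting the hypothesis. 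Lindenbaum then supplies $\tau''\in\mathcal{R}^{sc}[\tau]$ with $(\mathfrak{M}^{sc},\tau'')\models\quasi{F,\psi'}$, and (2) at $\tau''$ refutes $\Box(\psi')^\bullet$ at $\tau$. The delicate points are the repeated use of the biconditional truth lemma and of the Lifting Lemma to turn propositional consistency facts into the justified formulae appearing in the quasi-realization clauses.
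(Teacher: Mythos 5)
Your proposal is correct and follows essentially the same route as the paper's proof: induction on the annotated formula, the upgraded (biconditional) truth lemma via maximality, Lindenbaum extensions of the tableaux $(\Gamma\cup A,B)$ and $(\Gamma^{\#},B)$ for the $F{\rightarrow}$ and $F\Box$ witness worlds, and the Lifting Lemma in the $F\Box$ consistency argument. The only differences are that you spell out the $\land/\lor$ cases and the non-emptiness of $\quasi{T,\phi'}$ and $\quasi{F,\phi'}$, which the paper omits as routine.
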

\begin{proof}
The proof is a modification of Fitting's from \cite{Fit2016}. We recite essential parts here for completeness. The proof proceeds by induction on the structure of $\phi'$. In the following, let $\tau=(\Gamma,\Delta)\in\mathcal{W}^{sc}$.\\

The statement is immediate for $\bot$ and for $p\in Var$. Suppose for the induction step that $\phi',\psi'$ are formulae with properties (1) and (2). We omit the induction steps for $\land$ and $\lor$. For $\rightarrow$ and $\Box_n$, we give, however, the following arguments:
\begin{enumerate}[(i)]
\item For (1), assume that $(\mathfrak{M}^{sc},\tau)\models\quasi{T,\phi'\rightarrow\psi'}$. Let $\tau'\in\mathcal{W}^{sc}$ be such that $\tau\preceq^{sc}\tau'$. Then, if $(\mathfrak{M}^{sc},\tau')\models\quasi{F,\phi'}$, then by induction hypothesis, we have $(\mathfrak{N},\tau')\not\models(\phi')^\bullet$. If $(\mathfrak{M}^{sc},\tau')\not\models\quasi{F,\phi'}$, then there is a $\alpha$ with $(F,\alpha)\in\quasi{F,\phi'}$ such that $(\mathfrak{M}^{sc},\tau)\models\alpha$. For any $\beta$ with $(T,\beta)\in\quasi{T,\psi'}$, we have $(T,\alpha\rightarrow\beta)\in\quasi{T,\phi'\rightarrow\psi'}$ by definition. By assumption of $(\mathfrak{M}^{sc},\tau)\models\quasi{T,\phi'\rightarrow\psi'}$, we have $(\mathfrak{M}^{sc},\tau)\models\alpha\rightarrow\beta$ and thus $(\mathfrak{M}^{sc},\tau')\models\alpha$ implies $(\mathfrak{M}^{sc},\tau')\models\beta$. As $\beta$ was arbitrary, we have $(\mathfrak{M}^{sc},\tau')\models\quasi{T,\psi'}$ and thus $(\mathfrak{N},\tau')\models(\psi')^\bullet$. Combined, we have $(\mathfrak{N},\tau)\models(\phi')^\bullet\rightarrow(\psi')^\bullet$.\\

Assume for (2) that $(\mathfrak{M}^{sc},\tau)\models\quasi{F,\phi'\rightarrow\psi'}$. For any $(T,\alpha_i)\in\quasi{T,\phi'}$ and any $(F,\beta_j)\in\quasi{F,\psi'}$, we have $(F,\bigwedge_i\alpha_i\rightarrow\bigvee_j\beta_j)\in\quasi{F,\phi'\rightarrow\psi'}$. Then, for $\tau=(\Gamma,\Delta)$, the tableau 
\[
\mu=(\Gamma\cup\{\alpha\mid (T,\alpha)\mid\quasi{T,\phi'}\},\{\beta\mid (F,\beta)\in\quasi{F,\psi'}\})
\]
is $\mathbf{LJL}$-consistent. For this, suppose not. Then, there are $\alpha_i\in\{\alpha\mid (T,\alpha)\mid\quasi{T,\phi'}\}$, $\beta_j\in\{\beta\mid (F,\beta)\in\quasi{F,\psi'}\}$ such that
\[
\Gamma\cup\{\alpha_1,\dots,\alpha_k\}\vdash_{\mathbf{LJL}}\beta_1\lor\dots\lor\beta_l
\]
By the deduction theorem we have
\[
\Gamma\vdash_{\mathbf{LJL}}\bigwedge_{i=1}^k\alpha_i\rightarrow\bigvee_{j=1}^l\beta_j
\]
which gives $\bigwedge_{i=1}^k\alpha_i\rightarrow\bigvee_{j=1}^l\beta_j\in\Gamma$ and therefore $(\mathfrak{M}^{sc},\tau)\models\bigwedge_{i=1}^k\alpha_i\rightarrow\bigvee_{j=1}^l\beta_j$ and this is a contradiction to $(\mathfrak{M}^{sc},\tau)\models\quasi{F,\phi'\rightarrow\psi'}$.

As $\mu$ is consistent, there is an extension to a maximal $\mathbf{LJL}$-consistent tableau $\tau'$. By construction, we have $\tau\preceq^{sc}\tau'$. Also, we have again by construction that
\[
(\mathfrak{M}^{sc},\tau')\models\quasi{T,\phi'}\text{ and }(\mathfrak{M}^{sc},\tau')\models\quasi{F,\psi'}.
\]
By the induction hypothesis, we have 
\[
(\mathfrak{N},\tau')\models(\phi')^\bullet\text{ and }(\mathfrak{N},\tau')\not\models (\psi')^\bullet
\]
which is $(\mathfrak{N},\tau)\not\models (\phi')^\bullet\rightarrow (\psi')^\bullet$ as $\tau\preceq^{sc}\tau'$.
\item For (1), suppose $(\mathfrak{M}^{sc},\tau)\models\quasi{T,\Box_n\phi'}$. This gives by definition $(\mathfrak{M}^{sc},\tau)\models x_n:\alpha$ for all $(T,\alpha)\in\quasi{T,\phi'}$. Thus, by definition of $\mathfrak{M}^{sc}$ we have $x_n:\alpha\in\Gamma$ and therefore $\alpha\in\Gamma^\#$. Let $\tau'=(\Gamma',\Delta')\in\mathcal{W}^{sc}$ with $\tau\mathcal{R}^{sc}\tau'$, then especially $\alpha\in\Gamma'$ by definition of $\mathcal{R}^{sc}$. As $\alpha$ was arbitrary, this entails
\[
(\mathfrak{M}^{sc},\tau')\models\quasi{T,\phi'}
\]
and by induction hypothesis, we have $(\mathfrak{N},\tau')\models(\phi')^\bullet$. As $\tau'$ was arbitrary, we have $(\mathfrak{N},\tau)\models\Box(\phi')^\bullet$.\\

For (2), suppose $(\mathfrak{M}^{sc},\tau)\models\quasi{F,\Box_n\phi'}$. Then, the tableau
\[
\mu=(\Gamma^\#,\{\alpha\mid (F,\alpha)\in\quasi{F,\phi'}\})
\]
is $\mathbf{LJL}$-consistent. Suppose not, then there are $\gamma_i\in\Gamma^\#$ with
\[
\{\gamma_1,\dots,\gamma_n\}\vdash_{\mathbf{LJL}}\alpha_1\lor\dots\lor\alpha_n.
\]
As $\gamma_i\in\Gamma^\#$, we have $t_i:\gamma_i\in\Gamma$ for some $t_i$. Thus, by the lifting lemma there is a $t\in Jt$ with
\[
\{t_1:\gamma_1,\dots,t_n:\gamma_n\}\vdash_{\mathbf{LJL}}t:(\alpha_1\lor\dots\lor\alpha_n).
\]
But then, by maximality of $\tau$ we have $t:(\alpha_1\lor\dots\lor\alpha_n)\in\Gamma$, i.e. $(\mathfrak{M}^{sc},\tau)\models t:(\alpha_1\lor\dots\lor\alpha_n)$, a contradiction to $(\mathfrak{M}^{sc},\tau)\models\quasi{F,\Box_n\phi'}$.

As $\mu$ is consistent, its has an extension to a maximal consistent tableau $\tau'$. Now, for this tableau $\tau'$, we have $\tau\mathcal{R}^{sc}\tau'$ by construction and we have, also by construction, that
\[
(\mathfrak{M}^{sc},\tau')\models\quasi{F,\phi'}.
\]
By the induction hypothesis, we have $(\mathfrak{N},\tau')\not\models(\phi')^\bullet$, and therefore by definition $(\mathfrak{N},\tau)\not\models\Box(\phi')^\bullet$.
\end{enumerate}
\end{proof}
\begin{theorem}\label{thm:bimodquasireal}
Let $\mathbf{L}$ be an intermediate logic and let $\mathbf{LJL}\in\{\mathbf{LJ},\mathbf{LJT},\mathbf{LJ4},\mathbf{LJT4}\}$ be a corresponding justification logic. Further, let $\mathbf{LML}$ be the modal logic corresponding to $\mathbf{LJL}$ and let $\mathsf{C}\in\mathsf{MFr}(\mathbf{LML})$. Suppose that $\langle\mathcal{W}^{sc},\preceq^{sc},\mathcal{R}^{sc}\rangle\in\mathsf{C}$ where $\mathfrak{M}^{sc}(\mathbf{LJL})=\langle\mathcal{W}^{sc},\preceq^{sc},\mathcal{R}^{sc},\mathcal{E}^{sc},\Vdash^{sc}\rangle$ is the canonical model for $\mathbf{LJL}$.

If $\mathbf{LML}\vdash\phi$, then there exists a quasi-realization $\alpha_1\lor\dots\lor\alpha_n$ of $\phi$ with $\mathbf{LJL}\vdash\alpha_1\lor\dots\lor\alpha_n$.
\end{theorem}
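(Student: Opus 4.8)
The plan is to prove the contrapositive: I will assume that \emph{no} quasi-realization of $\phi$ is provable in $\mathbf{LJL}$ and deduce that $\mathbf{LML}\not\vdash\phi$. First I would fix an arbitrary unique annotation $\phi'$ of $\phi$, so that $(\phi')^\bullet=\phi$, and introduce the set of candidate disjuncts
\[
D:=\{\alpha\mid (F,\alpha)\in\quasi{F,\phi'}\}.
\]
The crucial observation is that the failure assumption translates \emph{exactly} into a tableau-consistency statement. Indeed, a quasi-realization of $\phi'$ is by definition a finite disjunction $\alpha_1\lor\dots\lor\alpha_n$ with each $\alpha_i\in D$, so assuming that none of these is $\mathbf{LJL}$-provable says precisely that $\emptyset\not\vdash_{\mathbf{LJL}}\alpha_1\lor\dots\lor\alpha_n$ for all finite choices $\alpha_1,\dots,\alpha_n\in D$. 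By the definition of $\mathbf{LJL}$-consistency, this is exactly the statement that the tableau $\tau_0:=(\emptyset,D)$ is $\mathbf{LJL}$-consistent.

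Next I would invoke the Lindenbaum lemma to extend $\tau_0$ to a maximal $\mathbf{LJL}$-consistent tableau $\tau=(\Gamma,\Delta)$; since the extension only enlarges both components, $D\subseteq\Delta$ is retained. Applying clause (2) of the canonical model theorem to each $\alpha\in\Delta$ gives $(\mathfrak{M}^{sc},\tau)\not\models\alpha$, and in particular $(\mathfrak{M}^{sc},\tau)\not\models\alpha$ for every $(F,\alpha)\in\quasi{F,\phi'}$. By the definition of the relation $(\mathfrak{M}^{sc},\tau)\models\quasi{F,\phi'}$, this is precisely $(\mathfrak{M}^{sc},\tau)\models\quasi{F,\phi'}$, so clause (2) of the preceding lemma yields
\[
(\mathfrak{N},\tau)\not\models(\phi')^\bullet=\phi,
\]
where $\mathfrak{N}=\langle\mathcal{W}^{sc},\preceq^{sc},\mathcal{R}^{sc},\Vdash^{sc}\rangle$ is the associated intuitionistic modal Kripke model.

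Finally, the hypothesis $\langle\mathcal{W}^{sc},\preceq^{sc},\mathcal{R}^{sc}\rangle\in\mathsf{C}$ guarantees $\mathfrak{N}\in\mathsf{Mod}(\mathsf{C})$, so the point $\tau$ witnesses $\not\models_\mathsf{C}\phi$ (taking the empty premise set). Since $\mathsf{C}\in\mathsf{MFr}(\mathbf{LML})$, strong Kripke completeness of $\mathbf{LML}$ with respect to $\mathsf{C}$ gives $\mathbf{LML}\not\vdash\phi$, which completes the contrapositive.

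I expect the only genuine content to lie in the very first step, namely recognizing that ``no quasi-realization is $\mathbf{LJL}$-provable'' is literally the $\mathbf{LJL}$-consistency of $(\emptyset,D)$; once this dictionary entry is in place, the argument is just a chain through the Lindenbaum lemma, the canonical model truth theorem, the quasi-realization transfer lemma, and modal completeness. A minor point to handle carefully is that $D$ may be infinite, which is exactly why the consistency must be phrased through arbitrary finite disjunctions rather than a single formula, matching the definition of $\mathbf{LJL}$-consistency of a tableau; note also that $\quasi{F,\phi'}\neq\emptyset$ always, so at least the trivial single-disjunct quasi-realizations exist and $D$ is nonempty.
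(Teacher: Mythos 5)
Your proposal is correct and follows essentially the same route as the paper's own proof: assume no quasi-realization is provable, observe that this is exactly the $\mathbf{LJL}$-consistency of the tableau $(\emptyset,\{\alpha\mid(F,\alpha)\in\quasi{F,\phi'}\})$, extend by Lindenbaum, read off $(\mathfrak{M}^{sc},\tau)\models\quasi{F,\phi'}$ from the canonical truth theorem, transfer to $(\mathfrak{N},\tau)\not\models\phi$ via the preceding lemma, and conclude by Kripke completeness of $\mathbf{LML}$ w.r.t. $\mathsf{C}$. Your extra remarks on the finiteness of disjunctions and the nonemptiness of $\quasi{F,\phi'}$ are correct clarifications of points the paper leaves implicit.
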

\begin{proof}
Suppose $\not\vdash_{\mathbf{LJL}}\alpha_1\lor\dots\lor\alpha_n$ for all $\alpha_i\in\quasi{F,\phi'}$. Then the tableau
\[
(\emptyset,\{\alpha\mid (F,\alpha)\in\quasi{F,\phi'}\})
\]
is $\mathbf{LJL}$-consistent. Thus, it extends to a maximal consistent tableau $\tau=(\Gamma,\Delta)\in\mathcal{W}^{sc}$. As $\{\alpha\mid (F,\alpha)\in\quasi{F,\phi'}\}\subseteq\Delta$, we have $(\mathfrak{M}^{sc}(\mathbf{LJL}),\tau)\models\quasi{F,\phi'}$. By the previous lemma, we have $(\mathfrak{N},\tau)\not\models(\phi')^\bullet$ for $\mathfrak{N}:=\langle\mathcal{W}^{sc},\preceq^{sc},\mathcal{R}^{sc},\Vdash^{sc}\rangle$. As $\langle\mathcal{W}^{sc},\preceq^{sc},\mathcal{R}^{sc}\rangle\in\mathsf{C}$, we have by the choice of $\mathsf{C}$ that $\mathbf{LML}\not\vdash\phi$.
\end{proof}
\section{Unified Realization for Intermediate Modal Logics}
In this section, we adapt Fittings algorithm for the construction of realizations from quasi-realizations to the intermediate case, culminating in a realization theorem for intermediate modal logics. This amounts, modulo some modifications in the $\rightarrow$-case, to verifying that Fitting's construction and proof from \cite{Fit2016} also works in $\mathbf{IPCJ}$.\\

Following  \cite{Fit2016}, we introduce a special notation for the following algorithm transforming Quasi-Realizations into Realizations.
\begin{definition}
Let $\phi'\in\mathcal{L}'_{\Box}$ and $\Gamma\cup\{\psi\}\subseteq\mathcal{L}_{J}$ where $\Gamma$ is finite. Let $\sigma$ be a justification substitution. We write:
\begin{enumerate}
\item $\Gamma\overset{T\phi'}{\longrightarrow}(\psi,\sigma)$ if (1) $\{T\}\times\Gamma\subseteq\quasi{T,\phi'}$, (2) $(T,\psi)\in\real{T,\phi'}$ and
\[
(3)\;\mathbf{IPCJ}\vdash\psi\rightarrow \left(\bigwedge\Gamma\right)\sigma;
\]
\item $\Gamma\overset{F\phi'}{\longrightarrow}(\psi,\sigma)$ if (1) $\{F\}\times\Gamma\subseteq\quasi{F,\phi'}$, (2) $(F,\psi)\in\real{F,\phi'}$ and 
\[
(3)\;\mathbf{IPCJ}\vdash\left(\bigvee\Gamma\right)\sigma\rightarrow\psi.
\]
\end{enumerate}
\end{definition}
The following algorithm is a slight modification of that Fitting from \cite{Fit2016}.
\begin{algorithm}\label{alg:quasitoreal}
\begin{description}
\item[Atomic Case] The atomic propositions have a trivial realization through the empty realization function $\varepsilon$:
\begin{gather*}\{p\}\overset{Tp}{\longrightarrow}(p,\varepsilon)\qquad\{p\}\overset{Fp}{\longrightarrow}(p,\varepsilon)\\
\{\bot\}\overset{T\bot}{\longrightarrow}(\bot,\varepsilon)\qquad\{\bot\}\overset{F\bot}{\longrightarrow}(\bot,\varepsilon)
\end{gather*}
\item[$T\land$ Case] \[
\frac{\{\alpha_1,\dots,\alpha_k\}\overset{F\phi'}{\longrightarrow}(\chi,\sigma_{\phi'})\quad\{\beta_1,\dots,\beta_k\}\overset{T\psi'}{\longrightarrow}(\xi,\sigma_{\psi'})}{\{\alpha_1\land \beta_1,\dots,\alpha_k\land \beta_k\}\overset{T\phi'\land \psi'}{\longrightarrow}((\chi\land \xi)\sigma_{\phi'}\sigma_{\psi'},\sigma_{\phi'}\sigma_{\psi'})}
\]
\item[$F\land$ Case] \[
\frac{\{\alpha_1,\dots,\alpha_k\}\overset{T\phi'}{\longrightarrow}(\chi,\sigma_{\phi'})\quad\{\beta_1,\dots,\beta_k\}\overset{F\psi'}{\longrightarrow}(\xi,\sigma_{\psi'})}{\{\alpha_1\land \beta_1,\dots,\alpha_k\land \beta_k\}\overset{F\phi'\land \psi'}{\longrightarrow}((\chi\land \xi)\sigma_{\phi'}\sigma_{\psi'},\sigma_{\phi'}\sigma_{\psi'})}
\]
\item[$T\lor$ Case] \[
\frac{\{\alpha_1,\dots,\alpha_k\}\overset{F\phi'}{\longrightarrow}(\chi,\sigma_{\phi'})\quad\{\beta_1,\dots,\beta_k\}\overset{T\psi'}{\longrightarrow}(\xi,\sigma_{\psi'})}{\{\alpha_1\lor \beta_1,\dots,\alpha_k\lor \beta_k\}\overset{T\phi'\lor \psi'}{\longrightarrow}((\chi\lor \xi)\sigma_{\phi'}\sigma_{\psi'},\sigma_{\phi'}\sigma_{\psi'})}
\]
\item[$F\lor$ Case] \[
\frac{\{\alpha_1,\dots,\alpha_k\}\overset{T\phi'}{\longrightarrow}(\chi,\sigma_{\phi'})\quad\{\beta_1,\dots,\beta_k\}\overset{F\psi'}{\longrightarrow}(\xi,\sigma_{\psi'})}{\{\alpha_1\lor \beta_1,\dots,\alpha_k\lor \beta_k\}\overset{F\phi'\lor \psi'}{\longrightarrow}((\chi\lor \xi)\sigma_{\phi'}\sigma_{\psi'},\sigma_{\phi'}\sigma_{\psi'})}
\]
\item[$T\rightarrow$ Case] \[
\frac{\{\alpha_1,\dots,\alpha_k\}\overset{F\phi'}{\longrightarrow}(\chi,\sigma_{\phi'})\quad\{\beta_1,\dots,\beta_k\}\overset{T\psi'}{\longrightarrow}(\xi,\sigma_{\psi'})}{\{\alpha_1\rightarrow \beta_1,\dots,\alpha_k\rightarrow \beta_k\}\overset{T\phi'\rightarrow \psi'}{\longrightarrow}(\chi\sigma_{\psi'}\rightarrow \xi\sigma_{\phi'},\sigma_{\phi'}\sigma_{\psi'})}
\]
\item[$F\rightarrow$ Case] \[
\frac{\Gamma_1\cup\dots\cup\Gamma_k\overset{T\phi'}{\longrightarrow}(\chi,\sigma_{\phi'})\quad\Delta_1\cup\dots\cup\Delta_k\overset{F\psi'}{\longrightarrow}(\xi,\sigma_{\psi'})}{\left\{\bigwedge\Gamma_1\rightarrow\bigvee\Delta_1,\dots,\bigwedge\Gamma_k\rightarrow\bigvee\Delta_k\right\}\overset{F\phi'\rightarrow \psi'}{\longrightarrow}(\chi\sigma_{\psi'}\rightarrow \xi\sigma_{\phi'},\sigma_{\phi'}\sigma_{\psi'})}
\]
\item[$T\Box$ Case] \[
\frac{\{\alpha_1,\dots,\alpha_k\}\overset{T\phi'}{\longrightarrow}(\chi,\sigma_{\phi'})}{\{v_n:\alpha_1,\dots,v_n:\alpha_k\}\overset{T\Box_n\phi'}{\longrightarrow}(v_n:\chi\sigma,\sigma_{\phi'}\sigma)}\quad\parbox{14em}{with $\mathbf{IPCJ}\vdash t_i:(\chi\rightarrow \alpha_i\sigma_{\phi'})$ and $\sigma(v_n)=[s\cdot v_n]$, $\sigma(v_k)=v_k$ for $k\neq n$ where $s=[t_1+\dots+t_n]$.}
\]
\item[$F\Box$ Case] \[
\frac{\Gamma_1\cup\dots\cup\Gamma_k\overset{F\phi'}{\longrightarrow}(\chi,\sigma_{\phi'})}{\left\{t_1:\bigvee\Gamma_1,\dots,t_k:\bigvee\Gamma_k\right\}\overset{F\Box_n\phi'}{\longrightarrow}(t\sigma_{\phi'}:\chi,\sigma_{\phi'})}\quad\parbox{15em}{with $\mathbf{IPCJ}\vdash u_i:(\bigvee\Gamma_1\sigma_{\phi'}\rightarrow \chi)$ and $t=[[u_1\cdot t_1]+\dots+[u_k\cdot t_k]]$.}
\]
\end{description}
\end{algorithm}
\begin{theorem}\label{thm:algcorrect}
Let $\phi'\in\mathcal{L}'_{\Box}$ and let $\Gamma\subseteq\mathcal{L}_{J}$ be nonempty and finite. Then:
\begin{enumerate}
\item If $\{T\}\times\Gamma\subseteq\quasi{T,\phi'}$, then there are $\psi\in\mathcal{L}_{J}$ and a justification substitution $\sigma$ such that $\Gamma\overset{T\phi'}\longrightarrow(\psi,\sigma)$.
\item If $\{F\}\times\Gamma\subseteq\quasi{F,\phi'}$, then there are $\psi\in\mathcal{L}_{J}$ and a justification substitution $\sigma$ such that $\Gamma\overset{F\phi'}\longrightarrow(\psi,\sigma)$.
\end{enumerate}
\end{theorem}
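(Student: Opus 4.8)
The plan is to prove parts (1) and (2) \emph{simultaneously} by structural induction on the uniquely annotated formula $\phi'$, following exactly the recursive schema of Algorithm \ref{alg:quasitoreal}: each clause of the algorithm corresponds to one inductive case, and the task is to check that the clause is applicable under the hypothesis $\{T\}\times\Gamma\subseteq\quasi{T,\phi'}$ (respectively $\{F\}\times\Gamma\subseteq\quasi{F,\phi'}$) and that its output $(\psi,\sigma)$ satisfies the three defining conditions of $\Gamma\overset{T\phi'}{\longrightarrow}(\psi,\sigma)$ (respectively $\Gamma\overset{F\phi'}{\longrightarrow}(\psi,\sigma)$). The base case $\phi'\in Var\cup\{\bot\}$ is immediate: the hypothesis forces $\Gamma=\{\phi'\}$, since $\Gamma$ is nonempty and $\quasi{T,\phi'}$, $\quasi{F,\phi'}$ are singletons; the empty substitution $\varepsilon$ works, condition (2) is the definitional clause for atoms, and condition (3) reduces to $\mathbf{IPCJ}\vdash\phi'\rightarrow\phi'$.

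For the propositional connectives one first parses $\Gamma$ according to the definition of $\quasi{\cdot}$ (e.g. in the $T(\phi'\land\psi')$ case every member of $\Gamma$ has the shape $\alpha\land\beta$ with the two components lying in the appropriate quasi-realization sets), so that the premise sets to which the induction hypothesis applies are well-defined. Condition (1) of the output is then literally the hypothesis, condition (2) is bookkeeping against the recursive definitions of $\quasi{\cdot}$ and $\real{\cdot}$, and condition (3) is obtained by intuitionistic propositional reasoning from the two inductively supplied derivations, pushed through the substitutions via Lemma \ref{lem:sublemma} (invariance of $\vdash_{\mathbf{IPCJ}}$ under justification substitution). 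Crucially, one maintains throughout the invariants that each $\sigma_{\phi'}$ \emph{lives on} $\phi'$ and meets the no-new-variables condition; since $\phi'\rightarrow\psi'$ and the other binary formulae are uniquely annotated, $\phi'$ and $\psi'$ share no $\Box$-indices, so the two substitutions touch disjoint variables and hence commute by the Fitting substitution lemma recalled above. This commutation is exactly what makes condition (3) derivable. The most delicate propositional case is $F(\phi'\rightarrow\psi')$, where the members of $\Gamma$ have the compound form $\bigwedge\Gamma_j\rightarrow\bigvee\Delta_j$, the induction hypothesis must be applied to the unions $\bigcup_j\Gamma_j$ and $\bigcup_j\Delta_j$, and one chases the disjunction through both substitutions, using $\sigma_{\phi'}\sigma_{\psi'}=\sigma_{\psi'}\sigma_{\phi'}$, to produce the realization $\chi\sigma_{\psi'}\rightarrow\xi\sigma_{\phi'}$.

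The main obstacle, as expected, is the two modal cases, which are the only ones genuinely using the justification machinery. In the $T\Box_n\phi'$ case the hypothesis forces every member of $\Gamma$ to be of the form $x_n:\alpha_i$ with $(T,\alpha_i)\in\quasi{T,\phi'}$; applying the induction hypothesis to $\{\alpha_1,\dots,\alpha_k\}$ yields $\chi,\sigma_{\phi'}$ with $\mathbf{IPCJ}\vdash\chi\rightarrow\alpha_i\sigma_{\phi'}$ for each $i$, whence the Lifting Lemma supplies (variable-free) terms $t_i$ with $\mathbf{IPCJ}\vdash t_i:(\chi\rightarrow\alpha_i\sigma_{\phi'})$. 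With $s=[t_1+\dots+t_k]$ and the prescribed substitution $\sigma(v_n)=[s\cdot v_n]$, condition (3) is assembled from the $(+)$-axiom (to pass from each $t_i$ to $s$) and the $(J)$-axiom (to push the application inside the modality, giving $v_n:\chi\rightarrow[s\cdot v_n]:\alpha_i\sigma_{\phi'}$). The $F\Box_n\phi'$ case is dual: the induction hypothesis applied to $\bigcup_j\Gamma_j$ gives $\chi,\sigma_{\phi'}$ and, via the Lifting Lemma, terms $u_j$ with $\mathbf{IPCJ}\vdash u_j:((\bigvee\Gamma_j)\sigma_{\phi'}\rightarrow\chi)$, and the composite term $t=[[u_1\cdot t_1]+\dots+[u_k\cdot t_k]]$ is justified by iterating $(J)$ and $(+)$ together with the $\lor$-elimination scheme $(A8)$. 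In both modal cases the membership condition (2) requires that applying the freshly introduced substitution keeps $\chi$ inside $\real{T,\phi'}$ (respectively the compound term inside $\real{F,\Box_n\phi'}$): this holds because the new substitution only disturbs the index $n$, and since $\Box_n$ is the outermost connective of the uniquely annotated $\Box_n\phi'$ it does not reoccur in $\phi'$, so the substitution \emph{lives away from} $\phi'$ and the Fitting substitution lemma applies. Checking finally that each output $\sigma$ again lives on $\Box_n\phi'$ and meets the no-new-variables condition preserves the invariants the enclosing cases rely on, and closes the induction.
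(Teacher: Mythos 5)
Your proposal is correct and follows essentially the same route as the paper's proof: structural induction on the uniquely annotated $\phi'$ tracking the clauses of Algorithm \ref{alg:quasitoreal}, with Fitting's substitution machinery (commutation of substitutions living on disjoint annotations, the no-new-variables invariant) handling conditions (2) and (3), the Lifting Lemma supplying the terms in the $\Box$-cases, and the $F{\rightarrow}$ case — the one genuinely modified relative to Fitting's classical argument because of the changed definition of $\quasi{F,\phi'\rightarrow\psi'}$ — treated exactly as in the paper by applying the induction hypothesis to the unions $\bigcup_j\Gamma_j$ and $\bigcup_j\Delta_j$ and discharging condition (3) with an intuitionistically valid propositional scheme. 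The only presentational difference is that the paper delegates the atomic, $T{\rightarrow}$, $T\Box$ and $F\Box$ cases to \cite{Fit2016} whereas you sketch them explicitly, but the arguments are the same.
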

\begin{proof}
By recursion on $\phi'$, using Algorithm \ref{alg:quasitoreal}, we construct the desired pair $(\psi,\sigma)$. Note, that the $T$ and $F$ rules of the algorithm preserve $\{T\}\times\Gamma\subseteq\quasi{T,\phi'}$ or $\{F\}\times\Gamma\subseteq\quasi{F,\phi'}$, respectively.

The cases for $T\rightarrow$, $T\Box$, $F\Box$ as well as the atomic case were handled in \cite{Fit2016} and the arguments also apply here. We omit the cases for $\land$ and $\lor$ as they are quite elementary. We give the one of $T\rightarrow$ in some detail. The main difference is however in the case for $F\rightarrow$, as we have modified the definition of $\quasi{F,\phi'\rightarrow\psi'}$.
\begin{description}
\item [($F\rightarrow$)] 
Assume that we have
\begin{enumerate}[(i)]
\item $\Gamma_1\cup\dots\cup\Gamma_k\overset{T\phi'}{\longrightarrow}(\chi,\sigma_{\phi'})$,
\item $\Delta_1\cup\dots\cup\Delta_k\overset{F\psi'}{\longrightarrow}(\xi,\sigma_{\psi'})$.
\end{enumerate}
Then we have
\[
\{\bigwedge\Gamma_1\rightarrow\bigvee\Delta_1,\dots,\bigwedge\Gamma_k\rightarrow\bigvee\Delta_k\}\subseteq\quasi{F,\phi'\rightarrow \psi'}.
\]
We have, by the requirements on substitutions (similarly as in \cite{Fit2016}), that $\sigma_{\phi'}\sigma_{\psi'}=\sigma_{\psi'}\sigma_{\phi'}$ and by (i) and (ii), we have 
\[
(T,\chi\sigma_{\psi'})\in\real{T,{\phi'}}\text{ and }(F,\xi\sigma_{\phi'})\in\real{F,{\psi'}}
\]
as we have $(T,\chi)\in\real{T,{\phi'}}$ and $\real{\cdot}$ is closed under substitutions (and similarly for $\real{F,B}$). Thus, we have
\[
(F,\chi\sigma_{\psi'}\rightarrow \xi\sigma_{\phi'})\in\real{F,{\phi'}\rightarrow {\psi'}}.
\]
Now, we obtain
\[
\mathbf{IPCJ}\vdash\bigvee_{i=1}^k\left(\bigwedge\Gamma_i\rightarrow\bigvee\Delta_i\right)\sigma_{\phi'}\sigma_{\psi'}\rightarrow (\chi\sigma_{\psi'}\rightarrow \xi\sigma_{\phi'})
\]
as by (i), we have
\[
\mathbf{IPCJ}\vdash \chi\sigma_{\psi'}\rightarrow \left(\bigwedge_{i=1}^k\bigwedge\Gamma_i\right)\sigma_{\phi'}\sigma_{\psi'}
\]
and by (ii), we have
\[
\mathbf{IPCJ}\vdash\left(\bigvee_{i=1}^k\bigvee\Delta_i\right)\sigma_{\psi'}\sigma_{\phi'}\rightarrow \xi\sigma_{\phi'}.
\]
As $\sigma_{\phi'}\sigma_{\psi'}=\sigma_{\psi'}\sigma_{\phi'}$ and by the other properties of substitutions, we obtain the claim by utilizing the following validity of intuitionistic logic:
\[
\mathbf{IPCJ}\vdash\bigwedge_{i=1}^k\bigwedge\Gamma_i\land\bigvee^k_{i=1}\left(\bigwedge\Gamma_i\rightarrow\bigvee\Delta_i\right)\rightarrow\bigvee_{i=1}^k\bigvee\Delta_i.
\]
\item [($T\rightarrow$)] Suppose that 
\begin{enumerate}[(i)]
\item $\{\alpha_1,\dots,\alpha_k\}\overset{F{\phi'}}{\longrightarrow}(\chi,\sigma_{\phi'})$,
\item $\{\beta_1,\dots,\beta_k\}\overset{T{\psi'}}{\longrightarrow}(\xi,\sigma_{\psi'})$.
\end{enumerate}
Then we naturally have
\[
\{\alpha_1\rightarrow \beta_1,\dots,\alpha_k\rightarrow \beta_k\}\subseteq\quasi{T,\phi'\rightarrow\psi'}.
\]
As before, one shows $\sigma_{\phi'}\sigma_{\psi'}=\sigma_{\psi'}\sigma_{\phi'}$ and also similarly one shows
\[
(T,\chi\sigma_{\psi'}\rightarrow \xi\sigma_{\phi'})\in\real{T,{\phi'}\rightarrow {\psi'}}.
\]
Now, have by (i) that
\[
\mathbf{IPCJ}\vdash\left(\bigvee_{i=1}^k \alpha_k\right)\sigma_{\phi'}\rightarrow \chi
\]
and by (ii):
\[
\mathbf{IPCJ}\vdash \xi\rightarrow\left(\bigwedge_{i=1}^k\beta_k\right)\sigma_{\psi'}
\]
We obtain
\[
\mathbf{IPCJ}\vdash (\chi\sigma_{\psi'}\rightarrow \xi\sigma_{\phi'})\rightarrow\bigwedge_{i=1}^k(\alpha_i\rightarrow \beta_i)\sigma_{\phi'}\sigma_{\psi'}
\]
by utilizing the following validity of intuitionistic logic:
\[
\mathbf{IPCJ}\vdash\left(\bigvee_{i=1}^k\alpha_i\rightarrow\bigwedge_{i=1}^k \beta_i\right)\rightarrow\bigwedge_{i=1}^k(\alpha_i\rightarrow \beta_i).
\]
\end{description}
\end{proof}
\begin{theorem}\label{thm:bimodreal}
Let $\mathbf{L}$ be an intermediate logic and let $\mathbf{LJL}\in\{\mathbf{LJ},\mathbf{LJT},\mathbf{LJ4},\mathbf{LJT4}\}$ be a corresponding justification logic. Further, let $\mathbf{LML}$ be the modal logic corresponding to $\mathbf{LJL}$ and let $\mathsf{C}\in\mathsf{MFr}(\mathbf{LML})$. Let $\mathfrak{M}^{sc}(\mathbf{LJL})=\langle\mathcal{W}^{sc},\preceq^{sc},\mathcal{R}^{sc},\mathcal{E}^{sc},\Vdash^{sc}\rangle$ be the canonical model of $\mathbf{LJL}$ and suppose that $\langle\mathcal{W}^{sc},\preceq^{sc},\mathcal{R}^{sc}\rangle\in\mathsf{C}$.

If $\mathbf{LML}\vdash\phi$, then there exists a $\psi\in\mathcal{L}_J$ with $(F,\psi)\in\real{F,\phi'}$ with $\mathbf{LJL}\vdash\psi$.
\end{theorem}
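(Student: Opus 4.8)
The plan is to obtain Theorem \ref{thm:bimodreal} as an essentially immediate consequence of the two substantial results already established: the existence of a provable quasi-realization (Theorem \ref{thm:bimodquasireal}) and the correctness of the algorithm converting quasi-realizations into genuine realizations (Theorem \ref{thm:algcorrect}). The glue between them will be the substitution lemma (Lemma \ref{lem:sublemma}).

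First I would assume $\mathbf{LML}\vdash\phi$. Since the hypothesis $\langle\mathcal{W}^{sc},\preceq^{sc},\mathcal{R}^{sc}\rangle\in\mathsf{C}$ is exactly what Theorem \ref{thm:bimodquasireal} requires, that theorem yields a unique annotation $\phi'$ of $\phi$ together with formulae $\alpha_1,\dots,\alpha_n$ with $(F,\alpha_i)\in\quasi{F,\phi'}$ for every $i\leq n$ and with $\mathbf{LJL}\vdash\alpha_1\lor\dots\lor\alpha_n$. Setting $\Gamma:=\{\alpha_1,\dots,\alpha_n\}$, this is precisely the condition $\{F\}\times\Gamma\subseteq\quasi{F,\phi'}$, so part (2) of Theorem \ref{thm:algcorrect} applies and produces a formula $\psi\in\mathcal{L}_J$ and a justification substitution $\sigma$ with $\Gamma\overset{F\phi'}{\longrightarrow}(\psi,\sigma)$. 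Unfolding this relation gives me both $(F,\psi)\in\real{F,\phi'}$ — so that $\psi$ is already a realization of $\phi$ in the required sense — and the provability $\mathbf{IPCJ}\vdash(\bigvee\Gamma)\sigma\rightarrow\psi$.

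It then remains only to promote the provability of $\psi$ from $\mathbf{IPCJ}$ to $\mathbf{LJL}$. Since $\sigma$ commutes with $\lor$, I have $(\bigvee\Gamma)\sigma=\alpha_1\sigma\lor\dots\lor\alpha_n\sigma$, and applying Lemma \ref{lem:sublemma} to $\mathbf{LJL}\vdash\alpha_1\lor\dots\lor\alpha_n$ yields $\mathbf{LJL}\vdash(\bigvee\Gamma)\sigma$. Because $\mathbf{IPC}\subseteq\mathbf{L}$ and the base justification axioms $(J),(+)$ lie in every $\mathbf{LJL}$, the inclusion $\mathbf{IPCJ}\subseteq\mathbf{LJL}$ holds at the level of theorems, so $\mathbf{IPCJ}\vdash(\bigvee\Gamma)\sigma\rightarrow\psi$ upgrades to $\mathbf{LJL}\vdash(\bigvee\Gamma)\sigma\rightarrow\psi$. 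One application of modus ponens then delivers $\mathbf{LJL}\vdash\psi$, which is the claim.

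The proof carries no genuine obstacle, since all the hard work — the model-theoretic construction forcing a provable quasi-realization to exist, and the constructive extraction of a normal realization via Algorithm \ref{alg:quasitoreal} — is discharged by the two cited theorems. The only points demanding care are bookkeeping: I must ensure that the annotation $\phi'$ handed over by Theorem \ref{thm:bimodquasireal} is the very one fed into the algorithm, so that $(F,\psi)\in\real{F,\phi'}$ really certifies $\psi$ as a realization of the same $\phi$; and I must verify the inclusion $\mathbf{IPCJ}\subseteq\mathbf{LJL}$ so that the closing modus ponens is legitimate in $\mathbf{LJL}$ rather than merely in $\mathbf{IPCJ}$.
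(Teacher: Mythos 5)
Your proposal is correct and follows exactly the paper's own argument: invoke Theorem \ref{thm:bimodquasireal} for a provable quasi-realization, feed it into Theorem \ref{thm:algcorrect} to obtain $(F,\psi)\in\real{F,\phi'}$ with $\mathbf{IPCJ}\vdash(\alpha_1\lor\dots\lor\alpha_n)\sigma\rightarrow\psi$, and close with Lemma \ref{lem:sublemma} and modus ponens in $\mathbf{LJL}$. The bookkeeping points you flag (reusing the same annotation $\phi'$ and the inclusion $\mathbf{IPCJ}\subseteq\mathbf{LJL}$) are left implicit in the paper but are indeed the only things to check.
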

\begin{proof}
By Theorem \ref{thm:bimodquasireal}, $\mathbf{LML}\vdash\phi$ implies that there exist $(F,\alpha_i)\in\quasi{F,\phi'}$, for some annotation $\phi'$ of $\phi$, such that $\mathbf{LJL}\vdash\alpha_1\lor\dots\lor\alpha_n$.\\

Now, by Theorem \ref{thm:algcorrect}, there is a $\sigma$ and a $(F,\psi)\in\real{F,\phi'}$ such that $\{(F,\alpha_i)\mid i\leq n\}\overset{F\phi'}{\longrightarrow}(\psi,\sigma)$. By definition, $\mathbf{IPCJ}\vdash(\alpha_1\lor\dots\lor\alpha_n)\sigma\rightarrow\psi$ and thus $\mathbf{LJL}\vdash\psi$ as Lemma \ref{lem:sublemma} implies $\mathbf{LJL}\vdash(\alpha_1\lor\dots\lor\alpha_n)\sigma$.
\end{proof}
\section{Conclusion}
The completeness theorems proved in this paper show that a unified completeness theorem stands behind many central completeness result for justification logics in the literature, lifting classes of algebras or classes of Kripke frames, complete for some intermediate logic, to a complete model class for the corresponding justification logic. Key to this is of course the strong completeness assumption of the underlying propositional logic and, in particular, that of global completeness in the case of Kripke frames.

We want to acknowledge that the algebraic completeness results can be generalized in an immediate way. E.g., consider an algebraic Fitting model $\mathfrak{M}=\langle\mathbf{A},\mathcal{W},\mathcal{R},\mathcal{E},\mathcal{V}\rangle$ over a complete Heyting algebra $\mathbf{A}$. $\mathbf{A}$ can be generalized to not be complete but only complete for sets $X\subseteq A$ of cardinality $\leq\mathrm{card}(\mathcal{W})$, similarly as in the case of the Kripke-models taking values in Heyting algebras for intuitionistic modal logics from Ono \cite{Ono1977}. This of course also applies to the algebraic subset models.\\

In a similar sense as with the completeness theorem, the realization theorem proved here shows that there is a unified result behind many of the previous realization theorems from the literature. Regarding realization, similar remarks as in Fitting's \cite{Fit2016} apply here. For one, while the merging (or condensing) of quasi-realizations into realizations is proved constructively, the existence of quasi-realizations is proved non-constructively. 

Historically of course, the existence of realizations in the classical case was proved constructively using structural proof theory for the modal logic in question and similarly one can (in some cases) use proof-theoretic formulations in terms of e.g. cut-free sequent calculi to show the existence of quasi-realizations constructively (see e.g. \cite{AF2019}) which combined with the above condensing results gives constructive realization. In comparison to classical and intuitionistic modal logics, where such a sequent formulation is available, intermediate logics and their modal extensions usually require more sophisticated proof-theoretic formalisms, using, e.g., hypersequents. Following this vein, in \cite{Pis2019}, a realization theorem for the G\"odel justification logics is proved constructively using a hypersequent formulation of the corresponding modal logic and an appropriate adoption of Fitting's approach to realization merging from \cite{Fit2009}. This may be extended to a larger class of intermediate justification logics, using the work on structural proof calculi for intermediate logics from e.g. \cite{CGT2008} if appropriately extended to intermediate modal logics. Using the above condensing results, it even suffices to use structural proof theory to just construct quasi-realizations for these intermediate cases.

It shall be noted that even this non-constructive approach is limited to intermediate logics where the corresponding modal logic is Kripke complete with certain additional requirements on the class of frames. While this condition is very broad (as will be exhibited in the following examples) it for certain does not encompass Kripke incomplete intermediate logics and their corresponding modal logics (see again \cite{She1977}).\\

We end this section by a review of some old results reobtained as corollaries of the here proved completeness and realization theorems and by mentioning some new intermediate justification logics and their corresponding completeness and realization results as obtainable through this paper.
\subsection{Some intermediate logics and their semantics}
\subsubsection{$\mathbf{IPC}$, $\mathbf{G}$ and $\mathbf{C}$}
We have the following algebraic completeness theorems for $\mathbf{IPC}$, $\mathbf{G}$ and $\mathbf{C}$ as introduced before.

We use $\mathsf{H}$ to denote the class of all Heyting algebras. An important instance of a linearly ordered Heyting algebra is the \emph{standard G\"odel algebra} $\mathbf{[0,1]_G}$ given by
\[
\mathbf{[0,1]_G}:=\langle[0,1],\min,\max,\rightsquigarrow,0,1\rangle
\]
where
\[
x\rightsquigarrow y:=\begin{cases}1&\text{if }x\leq y\\y&\text{otherwise}\end{cases}
\]
for $x,y\in [0,1]$. The natural canonical choice of a Boolean algebra is the algebra
\[
\mathbf{\{0,1\}_B}:=\langle\{0,1\},\min,\max,\rightsquigarrow,0,1\rangle
\]
with the above function $\rightsquigarrow$ restricted to $\{0,1\}$.
\begin{theorem}
We have the following algebraic completeness results:
\begin{enumerate}
\item $\mathbf{IPC}$ is strongly complete with respect to $\mathsf{H}_{fin}$;
\item $\mathbf{G}$ is strongly complete with respect to $\mathbf{[0,1]_G}$;
\item $\mathbf{C}$ is strongly complete with respect to $\mathbf{\{0,1\}_B}$.
\end{enumerate}
\end{theorem}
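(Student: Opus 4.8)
The plan is to prove all three parts uniformly by reducing each to the strong completeness of the underlying propositional logic with respect to its full variety (available from the Fact above) and then transferring countermodels from the variety into the designated class of algebras.

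I would first dispose of soundness, i.e. the direction $\Gamma\vdash\phi\Rightarrow\Gamma\models_\mathsf{C}\phi$, for all three cases simultaneously. Each target class sits inside the corresponding variety: $\mathsf{H}_{fin}$ consists of Heyting algebras; $\mathbf{[0,1]_G}$ is a linearly ordered, hence prelinear, Heyting algebra and so validates $(LIN)$; and $\mathbf{\{0,1\}_B}$ is a Boolean algebra and so validates $(LEM)$. Since validity is inherited by subclasses, soundness follows at once from the Fact applied to $\mathsf{Alg}(\mathbf{IPC})$, $\mathsf{Alg}(\mathbf{G})$ and $\mathsf{Alg}(\mathbf{C})$ respectively. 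For completeness I would argue contrapositively: assuming $\Gamma\not\vdash\phi$, the Fact yields an algebra $\mathbf{A}$ in the full variety and an evaluation $f$ with $f[\Gamma]\subseteq\{1^\mathbf{A}\}$ and $f(\phi)<1^\mathbf{A}$, so the remaining work is to replace $(\mathbf{A},f)$ by a countermodel living in the target class. Throughout I may assume $\mathbf{A}$ is countable, passing to the subalgebra generated by $\{f(p)\mid p\in Var\}$, since the language has only countably many variables.

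For $\mathbf{C}$ the transfer is by a quotient argument. Given the Boolean countermodel $(\mathbf{A},f)$, I would note $\neg^\mathbf{A}f(\phi)\neq 0^\mathbf{A}$ and extend the principal filter it generates to an ultrafilter $U$ by the ultrafilter lemma. Then $1^\mathbf{A}\in U$, so $U$ contains every $f(\gamma)$, while $f(\phi)\notin U$; the quotient map $h\colon\mathbf{A}\to\mathbf{A}/U\cong\mathbf{\{0,1\}_B}$ is a homomorphism, and $h\circ f$ is the desired $\mathbf{\{0,1\}_B}$-evaluation refuting $\phi$ off $\Gamma$. For $\mathbf{G}$ I would first produce a linearly ordered countermodel: using that a Gödel algebra is prelinear, I would extend $F$ to a prime filter with $f(\phi)\notin F$ via the prime filter theorem (possible as $1^\mathbf{A}\not\leq f(\phi)$), so that the quotient $\mathbf{A}/F$ is a chain in which the image of $\phi$ is $<1$ and each image of $\gamma\in\Gamma$ is $1$. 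This countable chain then embeds into $\mathbf{[0,1]_G}$: any countable chain with least and greatest elements admits an order embedding into $[0,1]$ sending bottom to $0$ and top to $1$, and such an order embedding automatically preserves $\min$, $\max$ and the Gödel implication, since in a chain $a\rightarrow b=1$ when $a\leq b$ and $a\rightarrow b=b$ otherwise, matching $\rightsquigarrow$. Composing yields a $\mathbf{[0,1]_G}$-evaluation as required.

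The case of $\mathbf{IPC}$ is the delicate one and I expect it to be the main obstacle, since finite algebras are not preserved under the constructions that keep an infinite premise set satisfied. Here I would invoke the finite model property of $\mathbf{IPC}$: from the countermodel in the full variety one manufactures a finite Heyting algebra by collapsing $\mathbf{A}$ onto the finitely generated structure determined by the values of the relevant formulas, defining the relative pseudocomplement on the collapse so as to obtain a genuine finite Heyting algebra in which $\phi$ is still refuted off $\Gamma$ (the filtration-style argument recorded in \cite{Ono2019}). Making this collapse respect all of $\Gamma$ at once, rather than merely each finite fragment, is precisely the subtle point, and it is here that the finite model property of $\mathbf{IPC}$ does the decisive work; granting it, the construction delivers a member of $\mathsf{H}_{fin}$ together with an evaluation witnessing $\Gamma\not\models_{\mathsf{H}_{fin}}\phi$, completing the proof.
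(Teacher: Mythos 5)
Your treatments of items (2) and (3) are correct and are, as far as I can tell, exactly the folklore arguments the paper is alluding to (the paper itself gives no proof, only the remark that the items are folklore and a pointer to Dummett for item (2)): for $\mathbf{C}$ you collapse a Boolean countermodel along an ultrafilter separating $1^\mathbf{A}$ from $f(\phi)$, and for $\mathbf{G}$ you quotient a countable G\"odel-algebra countermodel by a prime filter (prelinearity makes the quotient a chain) and order-embed the resulting countable bounded chain into $[0,1]$, which indeed preserves $\min$, $\max$ and the residuum. Both transfers keep every premise at $1$ because the separating filter automatically contains $1^\mathbf{A}$, so the infinite premise set causes no difficulty there.

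Item (1) is where the proposal breaks down, and the gap you yourself flag (``making this collapse respect all of $\Gamma$ at once'') cannot be closed: a filtration controls only finitely many formula values, and the finite model property of $\mathbf{IPC}$ yields completeness w.r.t.\ $\mathsf{H}_{fin}$ only for \emph{finite} premise sets. For infinite $\Gamma$ the claim is in fact false, so no argument can succeed. Take $\Gamma:=\{p_{n+1}\rightarrow p_n\mid n\in\mathbb{N}\}\cup\{(p_n\rightarrow p_{n+1})\rightarrow p_{n+1}\mid n\in\mathbb{N}\}$ and $\phi:=p_0$. The evaluation into $\mathbf{[0,1]_G}$ with $f(p_n)=1/(n+2)$ sends all of $\Gamma$ to $1$ while $f(p_0)=1/2<1$ (equivalently, the Kripke model on $\omega$ with $\llbracket p_n\rrbracket=\{k\mid k\geq n+1\}$ refutes $p_0$ at the root while globally satisfying $\Gamma$), so $\Gamma\not\vdash_{\mathbf{IPC}}p_0$ by soundness. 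On the other hand, in any Heyting algebra $\mathbf{A}$ with $f[\Gamma]\subseteq\{1^\mathbf{A}\}$ one has $f(p_n)\rightarrow^\mathbf{A}f(p_{n+1})=f(p_{n+1})\leq^\mathbf{A}f(p_n)$ (using $y\leq^\mathbf{A}x\rightarrow^\mathbf{A}y$ together with the second family of premises), so for each $n$ either $f(p_{n+1})<^\mathbf{A}f(p_n)$ strictly or $f(p_n)=f(p_{n+1})=1^\mathbf{A}$; if $\mathbf{A}$ is finite the strict descents must stop, forcing $f(p_N)=1^\mathbf{A}$ for some $N$ and hence $f(p_0)=1^\mathbf{A}$. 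Thus $\Gamma\models_{\mathsf{H}_{fin}}p_0$ but $\Gamma\not\vdash_{\mathbf{IPC}}p_0$: item (1) as stated (with arbitrary $\Gamma$) is simply not a theorem, and your proof cannot be repaired without either restricting $\Gamma$ to be finite or replacing $\mathsf{H}_{fin}$ by a class (such as all Heyting algebras) that is closed under the constructions needed to satisfy infinitely many premises simultaneously. This is worth reporting, since the paper later feeds infinite premise sets of the form $\Gamma^\star\cup(Th_{\mathbf{LJL}_{CS}})^\star$ into exactly this completeness statement.
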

All items are folklore. For item (2), see especially \cite{Dum1959}.\\

Further, we have the following Kripke-style completeness theorems for $\mathbf{IPC}$, $\mathbf{G}$ and $\mathbf{CPC}$.
\begin{definition}
Let $\mathfrak{F}=\langle F,\leq\rangle$ be a Kripke frame. $\mathfrak{F}$ is called
\begin{enumerate}
\item \emph{connected} if $\forall x,y,z\in F\left(x\leq y\land x\leq z\Rightarrow y\leq z\lor z\leq y\right)$,
\item \emph{of bounded cardinality $n$} if $\forall x_0,x_1,\dots,x_n\in F\left(\bigwedge_{i=1}^nx_0\leq x_i\Rightarrow \bigwedge_{i\neq j}x_i=x_j\right)$.
\end{enumerate}
\end{definition}
We write $\mathsf{IF}$ for the class of all intuitionistic Kripke frames and $\mathsf{CIF}$ be the class of all connected intuitionistic Kripke frames. Further, given a class $\mathsf{C}$ of intuitionistic Kripke frames, we write $\mathsf{C}_n$ for the subclass of all intuitionistic Kripke frames with bounded cardinality $n$ in $\mathsf{C}$.

As a well-known result, we have:
\begin{theorem}\label{thm:propcomp}
For any $\Gamma\cup\{\phi\}\subseteq\mathcal{L}_J$:
\begin{enumerate}
\item $\Gamma\vdash_{\mathbf{IPC}}\phi$ iff $\Gamma\models_{\mathsf{IF}}\phi$;
\item $\Gamma\vdash_{\mathbf{G}}\phi$ iff $\Gamma\models_{\mathsf{CIF}}\phi$;
\item $\Gamma\vdash_{\mathbf{C}}\phi$ iff $\Gamma\models_{\mathsf{IF}_1}\phi$.
\end{enumerate}
\end{theorem}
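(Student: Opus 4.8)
The plan is to prove each biconditional by the familiar soundness-and-completeness method for intermediate propositional logics, treating the three items uniformly and isolating the extra work needed for $\mathbf{G}$ and $\mathbf{C}$. For soundness ($\Gamma\vdash\phi$ implies $\Gamma\models\phi$) I would argue by induction on derivations: verify that each of $(A1)$--$(A9)$ is valid at every point of every Kripke frame, using Lemma \ref{lem:propmono} to guarantee persistence of truth along $\leq$, and check that modus ponens preserves local truth. This settles item (1) outright, since $\mathbf{IPC}$ is axiomatised by $(A1)$--$(A9)$ and $\mathsf{IF}$ is the class of \emph{all} intuitionistic frames. For items (2) and (3) only the two extra schemes remain: I would show that $(LIN)$ is valid at every point of a connected frame---if both $(\mathfrak{M},x)\not\models\phi\rightarrow\psi$ and $(\mathfrak{M},x)\not\models\psi\rightarrow\phi$, the two witnessing successors of $x$ are forced $\leq$-comparable by connectedness, contradicting the choice of witnesses---and that $(LEM)$ is valid on every frame in $\mathsf{IF}_1$, where the order is so restricted that the intuitionistic clause for $\rightarrow$ collapses to the classical one.

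For completeness ($\Gamma\models\phi$ implies $\Gamma\vdash\phi$) I would use the canonical Kripke model for the logic in question, the propositional specialisation of the tableau construction recalled in Section \ref{sec:canmodcon}: worlds are prime (disjunctively closed) $\mathbf{L}$-consistent theories ordered by inclusion, with $x\Vdash p$ iff $p\in x$, and the accompanying truth lemma ($\phi$ holds at $x$ iff $\phi\in x$) is proved by the usual induction and may be quoted from \cite{CZ1997}. Given $\Gamma\not\vdash_{\mathbf{L}}\phi$, a Lindenbaum extension produces a prime theory containing $\Gamma$ but omitting $\phi$, which refutes $\Gamma\models\phi$. For item (1) the underlying frame is an arbitrary partial order, hence already in $\mathsf{IF}$, so nothing further is needed.

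The substance of items (2) and (3)---and the step I expect to be the main obstacle---is to show that the refuting model can be taken over a frame in the \emph{smaller} class. The cleanest route is to restrict to the principal cone $\uparrow\{w\}$ above the refuting world $w$ (which changes no truth values, by the observation underlying Lemma \ref{lem:localglobalequiv}) and to verify the required frame condition there. For $\mathbf{G}$ this amounts to showing every such cone is a chain: given prime theories $y,z\supseteq w$, membership of the relevant instance of $(LIN)$ in $w\subseteq y,z$ together with primeness forces $y\subseteq z$ or $z\subseteq y$; this is exactly Dummett's linearity argument \cite{Dum1959}, whose delicate point is the choice of formulas witnessing a putative incomparability, and I would cite it rather than reprove it. For $\mathbf{C}$ the scheme $(LEM)$ makes each prime theory maximal consistent in the classical sense, so $x\leq y$ forces $x=y$ and every cone is a single point, placing the frame in $\mathsf{IF}_1$ and reducing satisfaction to a classical valuation. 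Everything outside these two frame-theoretic verifications is the folklore canonical-model completeness of $\mathbf{IPC}$.
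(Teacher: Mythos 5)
Your proposal is correct, but note that the paper does not actually prove this theorem: it records it as a known result, citing \cite{Kri1965} for item (1) and \cite{CZ1997} for item (2), with item (3) treated as folklore. What you have written is essentially the proof contained in those references --- soundness by induction on derivations plus frame-correspondence for $(LIN)$ and $(LEM)$, and completeness via the canonical model of prime theories followed by a verification that the principal cone above the refuting world lies in the smaller frame class. Your frame-theoretic verifications are the right ones, and your reading of $\mathsf{IF}_1$ (discrete order, so the clause for $\rightarrow$ collapses to the classical one and prime theories become classically maximal) is exactly what the paper's definition of bounded cardinality $1$ amounts to. One point worth making explicit, since you flag it as ``the delicate point'' and then defer to \cite{Dum1959}: in the linearity argument for $\mathbf{G}$, given incomparable prime theories $y,z$ above $w$ with $\phi\in y\setminus z$ and $\psi\in z\setminus y$, the primeness that does the work is that of the \emph{root} $w$ --- it must already decide which disjunct of $(\phi\rightarrow\psi)\lor(\psi\rightarrow\phi)$ it contains, and either choice propagates up to $y$ or to $z$ and yields a contradiction by deductive closure. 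Applying primeness to $y$ and $z$ separately leaves the non-contradictory configuration $\psi\rightarrow\phi\in y$, $\phi\rightarrow\psi\in z$, so the argument genuinely needs the root. A cosmetic remark: the theorem as stated says $\Gamma\cup\{\phi\}\subseteq\mathcal{L}_J$, but it is a statement about the propositional logics and should read $\mathcal{L}_0$; your proof correctly treats it as such.
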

Item (1) goes back to Kripke's work \cite{Kri1965}. Item (2) is not that easily traceable but can be found in \cite{CZ1997}. Combining this with the fact that $\mathsf{IF}$ as well as $\mathsf{CIF}$ and $\mathsf{IF}_1$ are closed under principal subframes, we have by Lemma \ref{lem:localglobalequiv}:
\begin{corollary}[of Theorem \ref{thm:propcomp}]\label{cor:globalpropcomp}
For any $\Gamma\cup\{\phi\}\subseteq\mathcal{L}_J$:
\begin{enumerate}[(1)']
\item $\Gamma\vdash_{\mathbf{IPC}}\phi$ iff $\Gamma\models^g_{\mathsf{IF}}\phi$;
\item $\Gamma\vdash_{\mathbf{G}}\phi$ iff $\Gamma\models^g_{\mathsf{CIF}}\phi$;
\item $\Gamma\vdash_{\mathbf{C}}\phi$ iff $\Gamma\models^g_{\mathsf{IF}_1}\phi$.
\end{enumerate}
\end{corollary}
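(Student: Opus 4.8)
The plan is to chain the local completeness statements of Theorem \ref{thm:propcomp} with part (2) of Lemma \ref{lem:localglobalequiv}, exploiting that each of the three frame classes coincides with its own closure under principal subframes. Concretely, for $\mathbf{L}\in\{\mathbf{IPC},\mathbf{G},\mathbf{C}\}$ with associated class $\mathsf{C}\in\{\mathsf{IF},\mathsf{CIF},\mathsf{IF}_1\}$, Theorem \ref{thm:propcomp} already gives $\Gamma\vdash_{\mathbf{L}}\phi$ iff $\Gamma\models_{\mathsf{C}}\phi$, so it remains only to pass from the local relation $\models_{\mathsf{C}}$ to the global relation $\models^g_{\mathsf{C}}$.

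The key step is to verify that each $\mathsf{C}$ is closed under principal subframes, i.e.\ $\overline{\mathsf{C}}=\mathsf{C}$ in the notation of Lemma \ref{lem:localglobalequiv}. For $\mathsf{IF}$ this is immediate, since a principal subframe $\mathfrak{F}\upharpoonright(\uparrow\{x\})$ of a partial order is again a partial order. For $\mathsf{CIF}$ I would check that connectedness restricts to principal subframes: if $\mathfrak{F}$ is connected and $y,z\in\;\uparrow\{x\}$ with $w\leq y$ and $w\leq z$ for some $w\in\;\uparrow\{x\}$, then $y\leq z$ or $z\leq y$ already holds in $\mathfrak{F}$, and both comparands lie in $\uparrow\{x\}$, so the comparison survives in the induced order. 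For $\mathsf{IF}_1$, the bounded-cardinality-$1$ condition forces $x\leq y\Rightarrow x=y$, i.e.\ the order is discrete, and any principal subframe of a discrete poset is a single point and hence again discrete.

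With $\overline{\mathsf{C}}=\mathsf{C}$ in hand, part (2) of Lemma \ref{lem:localglobalequiv} yields $\Gamma\models_{\mathsf{C}}\phi$ iff $\Gamma\models^g_{\mathsf{C}}\phi$ in each of the three cases. Chaining this with the corresponding equivalence from Theorem \ref{thm:propcomp} gives $\Gamma\vdash_{\mathbf{L}}\phi$ iff $\Gamma\models^g_{\mathsf{C}}\phi$, which are exactly items $(1)'$, $(2)'$ and $(3)'$. No genuine obstacle arises here; the only point requiring care is the routine verification of closure under principal subframes, and in particular correctly reading the bounded-cardinality condition at $n=1$ as discreteness of the order.
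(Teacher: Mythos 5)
Your proposal is correct and follows exactly the paper's route: the paper likewise derives the corollary by noting that $\mathsf{IF}$, $\mathsf{CIF}$ and $\mathsf{IF}_1$ are each closed under principal subframes and then applying Lemma \ref{lem:localglobalequiv} to Theorem \ref{thm:propcomp}. Your explicit verifications of the closure conditions (including reading bounded cardinality $1$ as discreteness) are accurate and merely spell out what the paper leaves implicit.
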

\subsubsection{$\mathbf{G_n}$ and $\mathbf{KC}$}
Prominent strengthenings of the infinite-valued G\"odel logic $\mathbf{G}$ (or G\"odel-Dummet logic) are the finite valued G\"odel logics $\mathbf{G_n}$. These actually pre-date $\mathbf{G}$ in the sense that this sequence of intermediate logics is the one used by G\"odel in \cite{Goe1932} for his investigations about intuitionistic logic, whereas $\mathbf{G}$ was later defined by Dummet in \cite{Dum1959}. Axiomatically, we can give the following description of $\mathbf{G_n}$. Consider the axiom scheme
\begin{enumerate}
\item [$(BC)_n$] $\bigvee_{i=0}^n\left(\bigwedge_{j<i}p_j\rightarrow p_i\right)$
\end{enumerate}
for any $n\geq 1$. Then, we define the $n$-valued G\"odel logic by
\[
\mathbf{G_n}:=\mathbf{G}+ (BC)_{n-1}
\]
for $n\geq 2$. The notation for the axiom scheme comes from its use in intermediate logics of \emph{bounded cardinality}. The usual semantics for $\mathbf{G_n}$, $n\geq 2$, is given by a characteristic matrix through the Heyting algebra
\[
\mathbf{V^{(n)}_G}:=\langle V^{(n)},\min,\max,\rightsquigarrow,0,1\rangle
\]
with
\[
V^{(n)}:=\left\{1-\frac{1}{k}\mid 1\leq k\leq n-1\right\}\cup\{1\}
\]
and the operation $\rightsquigarrow$ as before, restricted to $V^{(n)}$. Indeed, we then have the following completeness theorem:
\begin{theorem}\label{thm:propcompgnalg}
$\mathbf{G_n}$ is strongly complete with respect to $\mathbf{V^{(n)}_G}$.
\end{theorem}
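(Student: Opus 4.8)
The plan is to prove the two inclusions separately, treating soundness by a routine verification and completeness by a Lindenbaum--Tarski construction combined with the structure theory of G\"odel algebras. The crucial algebraic input is the observation that the scheme $(BC)_{n-1}$ is valid in a G\"odel chain (a linearly ordered Heyting algebra) \emph{precisely} when that chain has at most $n$ elements; this single correspondence drives both directions.

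First, for soundness, I would check that every axiom instance of $\mathbf{G_n}$ receives value $1$ under every $f\in\mathsf{Ev}(\mathbf{V^{(n)}_G};\mathcal{L}_0)$ and that modus ponens preserves this. The schemes $(A1)$--$(A9)$ hold in any Heyting algebra, and $(LIN)$ holds since $\mathbf{V^{(n)}_G}$ is linearly ordered. For $(BC)_{n-1}$ I would use that in a chain $x\to y=1$ if $x\le y$ and $x\to y=y$ otherwise, so a disjunct $\bigwedge_{j<i}p_j\to p_i$ evaluates to $<1$ exactly when $\min_{j<i}a_j>a_i$; were the whole disjunction $<1$ one would obtain a strictly descending sequence $1>a_0>a_1>\dots>a_{n-1}\ge 0$, i.e.\ $n+1$ distinct elements, contradicting $|V^{(n)}|=n$. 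Strong soundness then follows from the definition of $\vdash_{\mathbf{G_n}}$: if $\bigwedge_i\gamma_i\to\phi\in\mathbf{G_n}$ then $f(\bigwedge_i\gamma_i)\le f(\phi)$ for all $f$ (using that $a\to b=1$ iff $a\le b$), whence $f[\Gamma]\subseteq\{1^{\mathbf{V^{(n)}_G}}\}$ forces $f(\phi)=1^{\mathbf{V^{(n)}_G}}$.

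For completeness I would argue contrapositively. Assuming $\Gamma\not\vdash_{\mathbf{G_n}}\phi$, I form the Lindenbaum--Tarski algebra $\mathbf{A}$ of $\mathbf{G_n}$ relative to $\Gamma$, whose elements are the classes $[\alpha]$ under $\alpha\equiv_\Gamma\beta$ iff $\Gamma\vdash_{\mathbf{G_n}}\alpha\to\beta$ and $\Gamma\vdash_{\mathbf{G_n}}\beta\to\alpha$, with canonical evaluation $g(\alpha)=[\alpha]$. Then $\mathbf{A}$ is a Heyting algebra satisfying prelinearity and $(BC)_{n-1}$ as identities (as these are theorem schemes of $\mathbf{G_n}$, closed under substitution, every instance on classes equals $1$), we have $g[\Gamma]=\{1\}$, and $g(\phi)=[\phi]\ne 1$ since $\Gamma\not\vdash_{\mathbf{G_n}}\phi$. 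Because $[\phi]\ne 1$, a Zorn's lemma argument yields a prime filter $P$ of $\mathbf{A}$ with $[\phi]\notin P$; the quotient $\mathbf{A}/P$ is a G\"odel algebra that is moreover linearly ordered, since prelinearity gives $(x\to y)\lor(y\to x)=1\in P$ and primeness yields $[x]\le[y]$ or $[y]\le[x]$ in the quotient. As $(BC)_{n-1}$ persists to the homomorphic image $\mathbf{A}/P$, the cardinality correspondence forces $|\mathbf{A}/P|\le n$.

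To conclude, I would note that any G\"odel chain with at most $n$ elements embeds into $\mathbf{V^{(n)}_G}$: a $0,1$-preserving order embedding of chains automatically preserves $\min$, $\max$ and the G\"odel implication, hence is a Heyting embedding $\iota\colon\mathbf{A}/P\hookrightarrow\mathbf{V^{(n)}_G}$. Writing $h\colon\mathbf{A}\to\mathbf{A}/P$ for the quotient map, the composite $f:=\iota\circ h\circ g$ is then an evaluation into $\mathbf{V^{(n)}_G}$ with $f[\Gamma]=\{1\}$ and $f(\phi)\ne 1$, which witnesses $\Gamma\not\models_{\mathbf{V^{(n)}_G}}\phi$. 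The main obstacle is the structural heart of this argument, namely the two facts about G\"odel algebras---that quotients by prime filters are chains (via prelinearity) and that $(BC)_{n-1}$ characterizes chains of cardinality $\le n$; the remaining ingredients (the Lindenbaum construction, the existence of the prime filter, and the embedding of small chains) are routine.
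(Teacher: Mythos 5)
The paper does not actually prove Theorem \ref{thm:propcompgnalg}: it is stated as a known result and delegated to the references \cite{CF2001,Pre2010}. Your argument is therefore not competing with a proof in the text but supplying one, and it is the standard algebraic one: soundness by direct verification, completeness via the Lindenbaum--Tarski algebra relative to $\Gamma$, a prime filter separating $[\phi]$ from $1$ (a filter maximal with respect to omitting $[\phi]$ is prime by the Birkhoff--Stone argument in any distributive lattice), linearity of the quotient from prelinearity plus primeness, the cardinality bound $\le n$ from persistence of $(BC)_{n-1}$ to homomorphic images, and finally a $0,1$-preserving order embedding of the resulting small chain into $\mathbf{V^{(n)}_G}$, which is automatically a Heyting embedding because in a bounded chain the residuum is determined by the order. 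All of these steps are correct, and the finitary definition of $\vdash_{\mathbf{G_n}}$ together with the finiteness of $\mathbf{V^{(n)}_G}$ means the argument really does give \emph{strong} completeness for arbitrary $\Gamma$, as the theorem requires. Two small points deserve a word in a polished write-up: the $i=0$ disjunct of $(BC)_{n-1}$ involves the empty conjunction, which must be read as $\top$ for your count ``$1>a_0>\dots>a_{n-1}$ needs $n+1$ distinct values'' to come out right (this is the intended reading, matching $|V^{(n)}|=n$); and one should note explicitly that $[\phi]_P\neq 1$ in $\mathbf{A}/P$ because the elements collapsing to $1$ under the filter congruence are exactly those of $P$. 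Neither is a gap, only a presentational matter.
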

For the above theorem and more background on the finite valued G\"odel logics, we refer to \cite{CF2001,Pre2010}.

We can also give the following Kripke-style completeness theorem (a proof can be found in \cite{CZ1997}):
\begin{theorem}\label{thm:propcompgn}
$\mathbf{G_n}$ is strongly complete w.r.t $\mathsf{CIF}_{n-1}$.
\end{theorem}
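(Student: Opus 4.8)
The plan is to split the argument into soundness and completeness, with everything reducing to a single frame-correspondence fact: over connected frames, validity of $(BC)_{n-1}$ is equivalent to the bounded cardinality condition defining $\mathsf{CIF}_{n-1}$. Since $\mathbf{G}\subseteq\mathbf{G_n}$ and $\mathsf{CIF}_{n-1}\subseteq\mathsf{CIF}$, the soundness and completeness of $\mathbf{G}$ with respect to $\mathsf{CIF}$ recorded in Theorem \ref{thm:propcomp} already account for every axiom except the new scheme $(BC)_{n-1}$ and for the structure of the frames except the cardinality bound. Hence the only genuinely new work is (a) to check that $(BC)_{n-1}$ is valid on $\mathsf{CIF}_{n-1}$ and (b) to exhibit a countermodel inside $\mathsf{CIF}_{n-1}$; the rest is inherited and routine via the deduction theorem.

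For soundness I would show that bounded cardinality $n-1$ forces validity of $(BC)_{n-1}$. Suppose $(\mathfrak{M},x)\not\models(BC)_{n-1}$. Then for each $i\in\{0,\dots,n-1\}$ the $i$-th disjunct $\bigwedge_{j<i}p_j\rightarrow p_i$ fails at $x$, so there is $y_i\geq x$ with $(\mathfrak{M},y_i)\models\bigwedge_{j<i}p_j$ and $(\mathfrak{M},y_i)\not\models p_i$. For $i<k$ we have $(\mathfrak{M},y_k)\models p_i$ while $(\mathfrak{M},y_i)\not\models p_i$, so $y_i\neq y_k$; thus $y_0,\dots,y_{n-1}$ are $n$ pairwise distinct elements of $\uparrow x$, contradicting the bound. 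Together with the soundness half of Theorem \ref{thm:propcomp} and closure under modus ponens and substitution this yields strong soundness of $\mathbf{G_n}$. Note this direction does not even use connectedness.

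For completeness I would run the standard tableau canonical model construction for intermediate logics, adapted exactly as in Section \ref{sec:canmodcon} and \cite{CZ1997}: from $\Gamma\not\vdash_{\mathbf{G_n}}\phi$ the pair $(\Gamma,\{\phi\})$ is $\mathbf{G_n}$-consistent, so a Lindenbaum argument extends it to a maximal $\mathbf{G_n}$-consistent tableau $x=(\Gamma_x,\Delta_x)$ with $\Gamma\subseteq\Gamma_x$ and $\phi\in\Delta_x$, sitting in the canonical model $\mathfrak{M}^c$ of prime theories ordered by inclusion, whose truth lemma reads $(\mathfrak{M}^c,\Delta)\models\psi$ iff $\psi\in\Delta$. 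Then $(\mathfrak{M}^c,x)\models\Gamma$ but $(\mathfrak{M}^c,x)\not\models\phi$, so it remains only to place the canonical frame in $\mathsf{CIF}_{n-1}$. Connectedness follows from $(LIN)\in\mathbf{G_n}$ by the argument underlying Theorem \ref{thm:propcomp}(2). For the cardinality bound I would argue contrapositively on the frame: if some $\uparrow x$ had $n$ points, connectedness makes them a chain $x=\Delta_0\subsetneq\Delta_1\subsetneq\dots\subsetneq\Delta_{n-1}$; picking $\chi_i\in\Delta_{i+1}\setminus\Delta_i$ for $i<n-1$ and setting $\chi_{n-1}:=\bot$ gives formulas with $\chi_i\in\Delta_k$ iff $k>i$, so substituting $p_i\mapsto\chi_i$ into $(BC)_{n-1}$ yields a theorem whose every disjunct is refuted at $x$ (the $i$-th witnessed by $\Delta_i$, since $\bigwedge_{j<i}\chi_j\in\Delta_i$ but $\chi_i\notin\Delta_i$), contradicting the truth lemma. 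Hence every principal upset has at most $n-1$ points and $\mathfrak{M}^c\in\mathsf{CIF}_{n-1}$.

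The main obstacle is exactly this last transfer of the correspondence to the canonical frame: its nontrivial direction requires a specific refuting valuation, which need not be the canonical one, so one cannot simply conclude the bound from "$(BC)_{n-1}$ holds on $\mathfrak{M}^c$". The repair is the substitution-instance trick above, in which the strict inclusions of prime theories supply the separating formulas $\chi_i$ and the choice $\chi_{n-1}=\bot$ realizes the topmost always-false variable, all legitimated by closure of $\mathbf{G_n}$ under substitution. A secondary point to handle carefully is that strong (rather than weak) completeness demands starting the construction from the consistent tableau $(\Gamma,\{\phi\})$ with arbitrary $\Gamma$, which the tableau formulation of Lindenbaum accommodates directly.
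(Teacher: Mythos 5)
Your proof is correct. The paper does not actually prove Theorem \ref{thm:propcompgn} but quotes it from \cite{CZ1997}, and your argument is precisely the standard one found there: soundness by extracting $n$ pairwise distinct witnesses $y_0,\dots,y_{n-1}$ in $\uparrow\{x\}$ from a pointwise refutation of $(BC)_{n-1}$, and completeness via the tableau canonical model together with the substitution-instance argument (separating formulas $\chi_i$, with $\chi_{n-1}=\bot$) showing the canonical frame is connected and of bounded cardinality $n-1$ --- the latter being exactly the correspondence fact the paper itself later invokes, also without proof, as Lemma \ref{lem:charaxioms}(2) and Theorem \ref{thm:charaxioms}(2).
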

As also $\mathsf{CIF}_n$ is closed under principal subframes, we have the following corollary:
\begin{corollary}
$\mathbf{G_n}$ is strongly globally complete w.r.t. $\mathsf{CIF}_{n-1}$.
\end{corollary}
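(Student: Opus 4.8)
The plan is to derive the strong global completeness of $\mathbf{G_n}$ directly from the (local) strong completeness of Theorem \ref{thm:propcompgn} via Lemma \ref{lem:localglobalequiv}, in exactly the same way that Corollary \ref{cor:globalpropcomp} was obtained from Theorem \ref{thm:propcomp}. The key structural fact, already noted just above the statement, is that $\mathsf{CIF}_{n-1}$ is closed under principal subframes: if $\mathfrak{F}=\langle F,\leq\rangle$ is connected and of bounded cardinality $n-1$, then any principal subframe $\mathfrak{F}\upharpoonright(\uparrow\{x\})$ is again connected (connectedness is inherited by upsets) and still has bounded cardinality $n-1$ (restricting to an upset can only decrease the cardinality bound). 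Hence the closure $\overline{\mathsf{CIF}_{n-1}}$ of $\mathsf{CIF}_{n-1}$ under principal subframes coincides with $\mathsf{CIF}_{n-1}$ itself.

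With this in hand, the proof is a two-step equivalence. First I would invoke Theorem \ref{thm:propcompgn} to obtain, for all $\Gamma\cup\{\phi\}\subseteq\mathcal{L}_0(X)$,
\[
\Gamma\vdash_{\mathbf{G_n}}\phi\text{ iff }\Gamma\models_{\mathsf{CIF}_{n-1}}\phi.
\]
Then, since $\overline{\mathsf{CIF}_{n-1}}=\mathsf{CIF}_{n-1}$, part (2) of Lemma \ref{lem:localglobalequiv} (applied with $\mathsf{C}=\mathsf{CIF}_{n-1}$, so that $\overline{\mathsf{C}}=\mathsf{C}$) yields
\[
\Gamma\models_{\mathsf{CIF}_{n-1}}\phi\text{ iff }\Gamma\models^g_{\mathsf{CIF}_{n-1}}\phi.
\]
Chaining these two equivalences gives $\Gamma\vdash_{\mathbf{G_n}}\phi$ iff $\Gamma\models^g_{\mathsf{CIF}_{n-1}}\phi$, which is precisely strong global completeness w.r.t. $\mathsf{CIF}_{n-1}$.

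There is no genuine obstacle here: once the closure under principal subframes of $\mathsf{CIF}_{n-1}$ is observed, the corollary is immediate from Lemma \ref{lem:localglobalequiv}(2), and the only point that deserves a line of justification is the verification that the defining conditions of $\mathsf{CIF}_{n-1}$ (connectedness and bounded cardinality) are preserved under passing to principal subframes. I would therefore keep the proof to a single sentence pointing to Lemma \ref{lem:localglobalequiv} and this preservation remark, mirroring the brevity of Corollary \ref{cor:globalpropcomp}.
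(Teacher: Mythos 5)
Your proposal is correct and is exactly the paper's argument: the paper derives this corollary from Theorem \ref{thm:propcompgn} by noting that $\mathsf{CIF}_{n-1}$ is closed under principal subframes and then applying Lemma \ref{lem:localglobalequiv}. Your additional check that connectedness and bounded cardinality are inherited by principal subframes (both being universally quantified conditions on tuples from the frame, hence preserved under restriction to upsets) is the one detail the paper leaves implicit, and it is verified correctly.
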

A second intermediate logic which we want to consider is based on a weakening of the law of the excluded middle. Consider the axiom scheme
\begin{description}
\item [($WLEM$)] $\neg\neg\phi\lor\neg\phi$
\end{description}
and the corresponding logic of the weak law of the excluded middle, also known as Jankov's logic (introduced in \cite{Jan1968}), given by
\[
\mathbf{KC}:=\mathbf{IPC}+(WLEM).
\]
A classical result is the completeness result in terms of directed Kripke frames. For this consider the following definition:
\begin{definition}
A Kripke frame $\langle F,\leq\rangle$ is called \emph{directed} if 
\[
\forall x,y,z\in F\left(x\leq y\land x\leq z\Rightarrow\exists w\in F\left(y\leq w\land z\leq w\right)\right).
\]
\end{definition}
Let $\mathsf{DIF}$ be the class of all directed intuitionistic Kripke frames. Then, one obtains the following semantical characterization.
\begin{theorem}\label{thm:propcompkcframe}
$\mathbf{KC}$ is strongly complete w.r.t. $\mathsf{DIF}$.
\end{theorem}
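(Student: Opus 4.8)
The plan is to prove both directions of the stated equivalence, with soundness being routine and completeness resting on a canonical model argument whose crucial point is the directedness of the canonical frame. For soundness, since $\mathbf{IPC}$ is sound with respect to the class of all Kripke frames, it suffices to verify that $(WLEM)$ is valid on every directed frame and that validity is preserved under the rules; strong soundness then follows via the deduction theorem exactly as in the reductions used throughout the paper. To see that $(WLEM)$ holds at a point $x$ of a model $\mathfrak{M}$ over a directed frame, suppose $(\mathfrak{M},x)\not\models\neg\phi$, so that $(\mathfrak{M},y)\models\phi$ for some $y\geq x$. Given any $z\geq x$, directedness applied to $x\leq y$ and $x\leq z$ yields a $w$ with $y\leq w$ and $z\leq w$, and monotonicity (Lemma \ref{lem:propmono}) gives $(\mathfrak{M},w)\models\phi$; hence $(\mathfrak{M},z)\not\models\neg\phi$. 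As $z\geq x$ was arbitrary, $(\mathfrak{M},x)\models\neg\neg\phi$, so $(\mathfrak{M},x)\models\neg\neg\phi\lor\neg\phi$.

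For completeness I would use the standard canonical model over prime $\mathbf{KC}$-theories (equivalently, the maximal consistent tableaux of the form $\tau=(\Gamma,\Delta)$ from Section \ref{sec:canmodcon}, specialised to the propositional language), ordered by inclusion of the positive part, together with the usual canonical valuation and the accompanying truth lemma; the full propositional apparatus, including the Lindenbaum extension, is available in \cite{CZ1997}. Assuming $\Gamma\not\vdash_{\mathbf{KC}}\phi$, the tableau $(\Gamma,\{\phi\})$ is $\mathbf{KC}$-consistent and extends to a maximal consistent tableau $\tau_w=(\Gamma_w,\Delta_w)$ with $\Gamma\subseteq\Gamma_w$ and $\phi\in\Delta_w$, so by the truth lemma $\tau_w$ satisfies $\Gamma$ but not $\phi$.

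The main obstacle, and the only step specific to $\mathbf{KC}$, is to show that the canonical frame lies in $\mathsf{DIF}$. Given prime theories $\Gamma_y,\Gamma_z$ both extending a common $\Gamma_x$, I would argue that $\Gamma_y\cup\Gamma_z$ is $\mathbf{KC}$-consistent, so that it extends to a prime theory furnishing the required common upper bound. If it were inconsistent, then, since theories are closed under conjunction, there would be $\alpha\in\Gamma_y$ and $\beta\in\Gamma_z$ with $\vdash_{\mathbf{KC}}\alpha\land\beta\rightarrow\bot$, hence $\vdash_{\mathbf{KC}}\alpha\rightarrow\neg\beta$ and therefore $\neg\beta\in\Gamma_y$. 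Now $\neg\neg\beta\lor\neg\beta\in\mathbf{KC}\subseteq\Gamma_x$, and as $\Gamma_x$ is prime either $\neg\beta\in\Gamma_x\subseteq\Gamma_z$, which together with $\beta\in\Gamma_z$ makes $\Gamma_z$ inconsistent, or $\neg\neg\beta\in\Gamma_x\subseteq\Gamma_y$, which together with $\neg\beta\in\Gamma_y$ makes $\Gamma_y$ inconsistent; both cases contradict the consistency of the prime theories involved. This establishes that the canonical frame is directed, so the canonical model is a model over a frame in $\mathsf{DIF}$ on which $\Gamma$ holds at $\tau_w$ while $\phi$ fails, yielding $\Gamma\not\models_{\mathsf{DIF}}\phi$ and completing the proof.
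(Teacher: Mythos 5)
Your proof is correct: the soundness check that $(WLEM)$ is valid on directed frames via monotonicity, together with the canonical-model argument in which directedness of the canonical frame is extracted from $\neg\neg\beta\lor\neg\beta$ and primeness, is exactly the standard argument for this theorem. The paper itself gives no proof here, deferring entirely to \cite{CZ1997} (whose Theorem 5.16 it invokes again for the analogous directedness claims in Lemma \ref{lem:charaxioms} and Theorem \ref{thm:charaxioms}), and your argument is precisely the one it relies on.
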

A proof can again be found in \cite{CZ1997}. Considering that the model class in question is closed under principal subframes, we again have the following corollary based on Lemma \ref{lem:localglobalequiv}.
\begin{corollary}\label{cor:propcompkcframe}
$\mathbf{KC}$ is strongly globally complete w.r.t. $\mathsf{DIF}$.
\end{corollary}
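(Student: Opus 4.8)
The plan is to reduce the corollary directly to the local completeness statement of Theorem~\ref{thm:propcompkcframe} by means of Lemma~\ref{lem:localglobalequiv}, exactly along the lines indicated by the sentence preceding the corollary. The only ingredient that genuinely needs to be established is that $\mathsf{DIF}$ is closed under principal subframes, so that the closure $\overline{\mathsf{DIF}}$ appearing in Lemma~\ref{lem:localglobalequiv} coincides with $\mathsf{DIF}$ itself; once this is known, the two equivalences of that lemma can be chained with Theorem~\ref{thm:propcompkcframe} to yield the global statement.

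First I would verify closure under principal subframes. Let $\mathfrak{F}=\langle F,\leq\rangle\in\mathsf{DIF}$, fix $a\in F$, and consider the induced subframe on $\uparrow\{a\}=\{x\in F\mid x\geq a\}$. Suppose $x,y,z\in\;\uparrow\{a\}$ with $x\leq y$ and $x\leq z$. By directedness of $\mathfrak{F}$ there is a $w\in F$ with $y\leq w$ and $z\leq w$; since $y\geq a$ and $y\leq w$, transitivity of $\leq$ forces $w\geq a$, so $w\in\;\uparrow\{a\}$. Hence the principal subframe is itself directed, which gives $\overline{\mathsf{DIF}}=\mathsf{DIF}$. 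With this in hand the corollary is immediate: Theorem~\ref{thm:propcompkcframe} provides $\Gamma\vdash_{\mathbf{KC}}\phi$ iff $\Gamma\models_{\mathsf{DIF}}\phi$, and since $\overline{\mathsf{DIF}}=\mathsf{DIF}$, part~(2) of Lemma~\ref{lem:localglobalequiv} gives $\Gamma\models_{\mathsf{DIF}}\phi$ iff $\Gamma\models^g_{\mathsf{DIF}}\phi$. Combining the two equivalences yields $\Gamma\vdash_{\mathbf{KC}}\phi$ iff $\Gamma\models^g_{\mathsf{DIF}}\phi$, which is precisely strong global completeness of $\mathbf{KC}$ with respect to $\mathsf{DIF}$.

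I do not anticipate any real obstacle here, as the argument is a routine instance of the general pattern already used for Corollary~\ref{cor:globalpropcomp} and for the finite-valued G\"odel logics: local completeness plus closure of the frame class under principal subframes yields global completeness via Lemma~\ref{lem:localglobalequiv}. The only nontrivial point is the closure property, and that follows in a single line from transitivity of the partial order, as carried out above.
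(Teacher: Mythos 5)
Your proposal is correct and follows exactly the route the paper intends: local completeness (Theorem~\ref{thm:propcompkcframe}) plus closure of $\mathsf{DIF}$ under principal subframes, combined via Lemma~\ref{lem:localglobalequiv}. Your explicit verification that a principal subframe of a directed frame is directed (using $a\leq y\leq w$ and transitivity) is the one detail the paper leaves implicit, and it is carried out correctly.
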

\subsection{Completeness Theorems}
\subsection{$\mathbf{IPC}$, $\mathbf{G}$ and $\mathbf{C}$}
Based on Theorems \ref{thm:algmkrtmodcomp}, \ref{thm:algfittingmodcomp} and \ref{thm:algsubsetmodcomp}, we obtain the following particular corollaries.
\begin{corollary}\label{cor:intcompalg}
Let $\mathbf{IPCJL}_0\in\{\mathbf{IPCJ}_0,\mathbf{IPCJT}_0,\mathbf{IPCJ4}_0,\mathbf{IPCJT4}_0\}$ where $CS$ is a constant specification for $\mathbf{IPCJL}_0$. For any $\Gamma\cup\{\phi\}\subseteq\mathcal{L}_J$, the following are equivalent:
\begin{enumerate}
\item $\Gamma\vdash_{\mathbf{IPCJL}_{CS}}\phi$;
\item $\Gamma\models^1_{\mathsf{H}_{fin}\mathsf{AMJL}_{CS}}\phi$;
\item $\Gamma\models^1_{\mathsf{H}_{fin}\mathsf{AFJL^c}_{CS}}\phi$;
\item $\Gamma\models^1_{\mathsf{H}_{fin}\mathsf{ASJL^c}_{CS}}\phi$.
\end{enumerate}
\end{corollary}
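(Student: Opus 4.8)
The plan is to obtain the corollary by specializing the three general algebraic completeness theorems, Theorems \ref{thm:algmkrtmodcomp}, \ref{thm:algfittingmodcomp} and \ref{thm:algsubsetmodcomp}, to the particular choices $\mathbf{L}=\mathbf{IPC}$ and $\mathsf{C}=\mathsf{H}_{fin}$. All four conditions of the corollary share the common syntactic anchor (1), namely $\Gamma\vdash_{\mathbf{IPCJL}_{CS}}\phi$, so it suffices to show that each of the three semantic conditions (2), (3), (4) is equivalent to (1) on its own, via the corresponding theorem, and then chain the equivalences through (1).

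First I would verify admissibility of the chosen algebra class for each theorem. Since $\mathbf{IPC}$ is strongly complete with respect to $\mathsf{H}_{fin}$, we have $\mathsf{H}_{fin}\in\mathsf{Alg}(\mathbf{IPC})$, which is exactly the hypothesis required by Theorem \ref{thm:algmkrtmodcomp}. Theorems \ref{thm:algfittingmodcomp} and \ref{thm:algsubsetmodcomp}, however, demand the stronger condition $\mathsf{C}\in\mathsf{Alg}_{com}(\mathbf{IPC})$, i.e. a class of \emph{complete} Heyting algebras; this is the one genuine point of the argument. Here I would invoke the observation recorded immediately after the definition of completeness, that every finite Heyting algebra is complete, whence $\mathsf{H}_{fin}$ lies not merely in $\mathsf{Alg}(\mathbf{IPC})$ but already in $\mathsf{Alg}_{com}(\mathbf{IPC})$, making it a legitimate choice for all three theorems simultaneously.

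With admissibility settled, the three applications are immediate. Instantiating Theorem \ref{thm:algmkrtmodcomp} with $\mathsf{C}=\mathsf{H}_{fin}$ gives the equivalence of (1) with $\Gamma\models^1_{\mathsf{H}_{fin}\mathsf{AMJL}_{CS}}\phi$, which is condition (2). Instantiating Theorem \ref{thm:algfittingmodcomp} gives, via its last item specialized to the accessibility-crisp subclass, the equivalence of (1) with $\Gamma\models^1_{\mathsf{H}_{fin}\mathsf{AFJL^c}_{CS}}\phi$, which is condition (3); and Theorem \ref{thm:algsubsetmodcomp} analogously yields the equivalence of (1) with condition (4). Assembling these around the shared condition (1) produces the asserted four-way equivalence.

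The hard part, such as it is, is bookkeeping rather than a proof obstacle: one must check that the $\models^1$ relations over the crisp subclasses $\mathsf{AFJL^c}$ and $\mathsf{ASJL^c}$ named in the corollary match exactly the final items of Theorems \ref{thm:algfittingmodcomp} and \ref{thm:algsubsetmodcomp}, and that the only completeness property of $\mathsf{H}_{fin}$ actually used is the one supplied by finiteness. Since the substantive work—soundness, the $\star$-translation of Lemma \ref{lem:startrans}, and the canonical countermodel constructions—is already discharged inside the three general theorems, the corollary is a pure specialization introducing no new constructions.
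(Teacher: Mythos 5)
Your proposal is correct and is exactly the argument the paper intends: the corollary is stated as an immediate specialization of Theorems \ref{thm:algmkrtmodcomp}, \ref{thm:algfittingmodcomp} and \ref{thm:algsubsetmodcomp} to $\mathbf{L}=\mathbf{IPC}$ and $\mathsf{C}=\mathsf{H}_{fin}$, using the strong completeness of $\mathbf{IPC}$ w.r.t. $\mathsf{H}_{fin}$ and the remark that every finite Heyting algebra is complete to satisfy the $\mathsf{Alg}_{com}$ hypothesis. You correctly isolate the only nontrivial admissibility check, so nothing is missing.
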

\begin{corollary}\label{cor:goecompalg}
Let $\mathbf{GJL}_0\in\{\mathbf{GJ}_0,\mathbf{GJT}_0,\mathbf{GJ4}_0,\mathbf{GJT4}_0\}$ where $CS$ is a constant specification for $\mathbf{GJL}_0$. For any $\Gamma\cup\{\phi\}\subseteq\mathcal{L}_J$, the following are equivalent:
\begin{enumerate}
\item $\Gamma\vdash_{\mathbf{GJL}_{CS}}\phi$;
\item $\Gamma\models^1_{\mathbf{[0,1]_G}\mathsf{AMJL}_{CS}}\phi$;
\item $\Gamma\models^1_{\mathbf{[0,1]_G}\mathsf{AFJL^c}_{CS}}\phi$;
\item $\Gamma\models^1_{\mathbf{[0,1]_G}\mathsf{ASJL^c}_{CS}}\phi$.
\end{enumerate}
\end{corollary}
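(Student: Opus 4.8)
The plan is to obtain the corollary by instantiating the three general algebraic completeness theorems, Theorems \ref{thm:algmkrtmodcomp}, \ref{thm:algfittingmodcomp} and \ref{thm:algsubsetmodcomp}, at the base logic $\mathbf{L}=\mathbf{G}$ and the singleton class of Heyting algebras $\mathsf{C}=\{\mathbf{[0,1]_G}\}$. To be allowed to do so, two hypotheses on $\mathsf{C}$ must be checked. First, that $\{\mathbf{[0,1]_G}\}\in\mathsf{Alg}(\mathbf{G})$: this is precisely the statement that $\mathbf{G}$ is strongly complete with respect to $\mathbf{[0,1]_G}$, i.e. the algebraic completeness result for $\mathbf{G}$ recalled above (originally due to Dummett \cite{Dum1959}). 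This hypothesis already suffices to invoke the Mkrtychev theorem.

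Second, for the Fitting and subset theorems one needs the stronger membership $\{\mathbf{[0,1]_G}\}\in\mathsf{Alg}_{com}(\mathbf{G})$, so I would verify that $\mathbf{[0,1]_G}$ is a \emph{complete} Heyting algebra. This is immediate: $\langle[0,1],\min,\max\rangle$ is a complete lattice, since every $X\subseteq[0,1]$ has a supremum and an infimum in the usual order, and the Gödel implication $\rightsquigarrow$ is exactly the residuum of $\min$, so that conditions (1)--(4) of the definition of a Heyting algebra hold while all joins and meets exist. Hence $\mathbf{[0,1]_G}$ is complete, and combined with the strong completeness of $\mathbf{G}$ with respect to it we obtain $\{\mathbf{[0,1]_G}\}\in\mathsf{Alg}_{com}(\mathbf{G})$.

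With both hypotheses in place, I would simply read off the equivalences. Applying Theorem \ref{thm:algmkrtmodcomp} with $\mathsf{C}=\{\mathbf{[0,1]_G}\}$ yields the equivalence of (1) with $\Gamma\models^1_{\{\mathbf{[0,1]_G}\}\mathsf{AMJL}_{CS}}\phi$, which under the convention $\models^1_{\mathbf{A}\mathsf{JL}}=\models^1_{\{\mathbf{A}\}\mathsf{JL}}$ is item (2) of the corollary. Likewise, item (4) of Theorem \ref{thm:algfittingmodcomp} supplies the equivalence of (1) with item (3) via the accessibility-crisp class $\mathsf{AFJL^c}$, and item (4) of Theorem \ref{thm:algsubsetmodcomp} supplies the equivalence of (1) with item (4) via $\mathsf{ASJL^c}$. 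Chaining these three biconditionals through the common anchor (1) gives the full cycle of equivalences asserted by the corollary.

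There is no genuine obstacle here beyond the bookkeeping of which model class each theorem delivers; the only substantive point is the verification that $\mathbf{[0,1]_G}$ is complete, which is exactly what licenses invoking the Fitting and subset theorems. This contrasts with the $\mathbf{IPC}$-case, Corollary \ref{cor:intcompalg}, where completeness of the relevant algebras comes for free from finiteness of the class $\mathsf{H}_{fin}$, so that no separate completeness check is required there.
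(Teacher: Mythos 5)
Your proposal is correct and follows essentially the same route as the paper: the corollary is obtained by instantiating Theorems \ref{thm:algmkrtmodcomp}, \ref{thm:algfittingmodcomp} and \ref{thm:algsubsetmodcomp} at $\mathsf{C}=\{\mathbf{[0,1]_G}\}$, using Dummett's strong completeness of $\mathbf{G}$ with respect to $\mathbf{[0,1]_G}$ to certify the membership in $\mathsf{Alg}(\mathbf{G})$, respectively $\mathsf{Alg}_{com}(\mathbf{G})$. Your explicit check that $\mathbf{[0,1]_G}$ is a complete Heyting algebra is a point the paper leaves implicit but is indeed the hypothesis that licenses the Fitting and subset instances.
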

\begin{corollary}\label{cor:clacompalg}
Let $\mathbf{CJL}_0\in\{\mathbf{CJ}_0,\mathbf{CJT}_0,\mathbf{CJ4}_0,\mathbf{CJT4}_0\}$ where $CS$ is a constant specification for $\mathbf{CJL}_0$. For any $\Gamma\cup\{\phi\}\subseteq\mathcal{L}_J$, the following are equivalent:
\begin{enumerate}
\item $\Gamma\vdash_{\mathbf{CJL}_{CS}}\phi$;
\item $\Gamma\models^1_{\mathbf{\{0,1\}_B}\mathsf{AMJL}_{CS}}\phi$;
\item $\Gamma\models^1_{\mathbf{\{0,1\}_B}\mathsf{AFJL^c}_{CS}}\phi$;
\item $\Gamma\models^1_{\mathbf{\{0,1\}_B}\mathsf{ASJL^c}_{CS}}\phi$.
\end{enumerate}
\end{corollary}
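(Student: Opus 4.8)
The plan is to obtain this corollary by a direct instantiation of the three general algebraic completeness theorems, Theorems \ref{thm:algmkrtmodcomp}, \ref{thm:algfittingmodcomp} and \ref{thm:algsubsetmodcomp}, taking the underlying intermediate logic to be $\mathbf{L}:=\mathbf{C}$ and the class of Heyting algebras to be the singleton $\{\mathbf{\{0,1\}_B}\}$.

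First I would record the two membership facts needed to apply the theorems. By item (3) of the preceding algebraic completeness theorem, $\mathbf{C}$ is strongly complete with respect to $\mathbf{\{0,1\}_B}$, that is $\{\mathbf{\{0,1\}_B}\}\in\mathsf{Alg}(\mathbf{C})$; this is exactly the hypothesis required by Theorem \ref{thm:algmkrtmodcomp}. Moreover, $\mathbf{\{0,1\}_B}$ has a two-element domain and is therefore finite, hence complete, so in fact $\{\mathbf{\{0,1\}_B}\}\in\mathsf{Alg}_{com}(\mathbf{C})$, which is precisely the stronger hypothesis demanded by Theorems \ref{thm:algfittingmodcomp} and \ref{thm:algsubsetmodcomp}.

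With these in hand the equivalences read off immediately. Under the convention $\models^1_{\mathbf{\{0,1\}_B}\mathsf{AMJL}}\,=\,\models^1_{\{\mathbf{\{0,1\}_B}\}\mathsf{AMJL}}$ (and likewise for the other two model types), the equivalence of (1) and (2) is item (3) of Theorem \ref{thm:algmkrtmodcomp}; the equivalence of (1) and (3) is item (4) of Theorem \ref{thm:algfittingmodcomp}, i.e. the accessibility-crisp class $\mathsf{AFJL^c}$; and the equivalence of (1) and (4) is item (4) of Theorem \ref{thm:algsubsetmodcomp}, i.e. the accessibility-crisp class $\mathsf{ASJL^c}$. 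Since each of (2), (3) and (4) is thereby equivalent to $\Gamma\vdash_{\mathbf{CJL}_{CS}}\phi$, all four statements are mutually equivalent.

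I do not expect a genuine mathematical obstacle, as all the content is already carried by the three completeness theorems; the only points needing care are bookkeeping ones. One must confirm that $\{\mathbf{\{0,1\}_B}\}$ really witnesses strong completeness of $\mathbf{C}$ --- this is the folklore soundness and completeness of classical propositional logic for the two-element Boolean algebra --- and that the singleton-algebra notation $\mathbf{\{0,1\}_B}\mathsf{AMJL}$ appearing in the corollary is read as the model class $\{\mathbf{\{0,1\}_B}\}\mathsf{AMJL}$ of the theorems, together with selecting the accessibility-crisp variants in the Fitting and subset cases. The proof is then complete.
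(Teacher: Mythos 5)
Your proposal is correct and coincides with the paper's own (implicit) derivation: the corollary is obtained exactly by instantiating Theorems \ref{thm:algmkrtmodcomp}, \ref{thm:algfittingmodcomp} and \ref{thm:algsubsetmodcomp} with $\mathbf{L}=\mathbf{C}$ and the singleton class $\{\mathbf{\{0,1\}_B\}}$, using the folklore strong completeness of $\mathbf{C}$ w.r.t.\ $\mathbf{\{0,1\}_B}$ and the observation that this finite algebra is complete, so that $\{\mathbf{\{0,1\}_B}\}\in\mathsf{Alg}_{com}(\mathbf{C})$. The bookkeeping points you flag (the singleton-algebra notation and the choice of the accessibility-crisp classes in the Fitting and subset cases) are exactly the ones the paper relies on.
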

These theorems contain several well-known results from the literature on semantics for justification logics. At first, the equivalence between (1) and (2), (3), (4) in Corollary \ref{cor:clacompalg} are the known completeness theorems of Mkrtychev \cite{Mkr1997}, Fitting \cite{Fit2005} as well as Lehmann and Studer \cite{LS2019}, respectively. Further, the equivalence between (1) and (2), (3) in Corollary \ref{cor:goecompalg} are among the completeness results previously obtained for the G\"odel justification logics in \cite{Pis2020}.\\

The previous work on semantics of intuitionistic justification logics in the sense of the present paper is mainly \cite{MS2016} where the authors considered models for $\mathbf{IPCJT4}$ based on extensions of Kripke frames for intuitionistic propositional logic by the semantic machinery for justification logics from Mkrtychev's and Fitting's models, similar as with the intuitionistic Mkrtychev and Fitting models considered here. The above corollary \ref{cor:intcompalg} gives a different semantic approach to $\mathbf{IPCJT4}$.\\

However, we reobtain these results of \cite{MS2016} through the completeness theorems proved here regarding Mkrtychev, Fitting and subset models over Kripke frames. By Theorem \ref{thm:propcomp} and Corollary \ref{cor:globalpropcomp}, the completeness theorems based on Kripke frames apply and we obtain the following completeness theorems.
\begin{corollary}\label{cor:intcompframe}
Let $\mathbf{IPCJL}_0\in\{\mathbf{IPCJ}_0,\mathbf{IPCJT}_0,\mathbf{IPCJ4}_0,\mathbf{IPCJT4}_0\}$ where $CS$ is a constant specification for $\mathbf{IPCJL}_0$. For any $\Gamma\cup\{\phi\}\subseteq\mathcal{L}_J$, the following are equivalent:
\begin{enumerate}
\item $\Gamma\vdash_{\mathbf{IPCJL}_{CS}}\phi$;
\item $\Gamma\models_{\mathsf{IFKMJL}_{CS}}\phi$;
\item $\Gamma\models_{\mathsf{IFKFJL}_{CS}}\phi$;
\item $\Gamma\models_{\mathsf{IFKSJL}_{CS}}\phi$.
\end{enumerate}
\end{corollary}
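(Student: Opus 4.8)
The plan is to recognize the statement as a direct instantiation of the three general frame-completeness theorems, namely Theorems \ref{thm:genmkrtmodcomp}, \ref{thm:genfittingmodcomp} and \ref{thm:gensubsetmodcomp}, specialized to the base logic $\mathbf{L}=\mathbf{IPC}$ and the frame class $\mathsf{C}=\mathsf{IF}$. Each of those theorems carries, as its only hypothesis on the underlying intermediate logic, the requirement $\mathsf{C}\in\mathsf{KFr}(\mathbf{L})\cap\mathsf{KFr}^g(\mathbf{L})$. Hence the entire argument reduces to verifying that $\mathsf{IF}$ witnesses both the strong local and the strong global completeness of $\mathbf{IPC}$, after which each equivalence follows by quoting the corresponding general theorem.

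First I would invoke Theorem \ref{thm:propcomp}(1), which asserts precisely that $\Gamma\vdash_{\mathbf{IPC}}\phi$ iff $\Gamma\models_{\mathsf{IF}}\phi$, giving $\mathsf{IF}\in\mathsf{KFr}(\mathbf{IPC})$. Next I would invoke Corollary \ref{cor:globalpropcomp}(1)$'$, which supplies the global counterpart $\Gamma\vdash_{\mathbf{IPC}}\phi$ iff $\Gamma\models^g_{\mathsf{IF}}\phi$, that is $\mathsf{IF}\in\mathsf{KFr}^g(\mathbf{IPC})$. Combining the two yields $\mathsf{IF}\in\mathsf{KFr}(\mathbf{IPC})\cap\mathsf{KFr}^g(\mathbf{IPC})$, exactly the precondition the general theorems demand. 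I emphasize that Theorem \ref{thm:propcomp} alone does not suffice here: it is the \emph{global} completeness statement that the canonical constructions of Section \ref{sec:comptheoframes} rely on, and this in turn rests on $\mathsf{IF}$ being closed under principal subframes, routed through Lemma \ref{lem:localglobalequiv}. Fortunately this observation is already packaged into Corollary \ref{cor:globalpropcomp}, so only the citation is required.

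With the hypothesis discharged, I would then apply each general theorem in turn with $\mathbf{L}=\mathbf{IPC}$ and $\mathsf{C}=\mathsf{IF}$: Theorem \ref{thm:genmkrtmodcomp} delivers the equivalence of (1) and (2), Theorem \ref{thm:genfittingmodcomp} delivers (1) iff (3), and Theorem \ref{thm:gensubsetmodcomp} delivers (1) iff (4). Since all three are stated uniformly for an arbitrary choice $\mathbf{LJL}_0$ among the four justification extensions and for an arbitrary constant specification $CS$, chaining the equivalences through (1) gives the full fourfold equivalence for $\mathbf{IPCJL}_0\in\{\mathbf{IPCJ}_0,\mathbf{IPCJT}_0,\mathbf{IPCJ4}_0,\mathbf{IPCJT4}_0\}$ and any $CS$. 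There is essentially no obstacle: all the substantive content, the canonical model constructions and their truth lemmas, has already been carried out in the general theorems, and the sole point requiring attention is confirming availability of the global completeness hypothesis for $\mathbf{IPC}$, which is precisely what the cited corollary provides.
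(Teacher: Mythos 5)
Your proposal is correct and matches the paper's own argument exactly: the corollary is obtained by instantiating Theorems \ref{thm:genmkrtmodcomp}, \ref{thm:genfittingmodcomp} and \ref{thm:gensubsetmodcomp} with $\mathbf{L}=\mathbf{IPC}$ and $\mathsf{C}=\mathsf{IF}$, with the hypothesis $\mathsf{IF}\in\mathsf{KFr}(\mathbf{IPC})\cap\mathsf{KFr}^g(\mathbf{IPC})$ supplied by Theorem \ref{thm:propcomp} and Corollary \ref{cor:globalpropcomp}. Your emphasis on the global completeness statement being the essential ingredient (via closure of $\mathsf{IF}$ under principal subframes and Lemma \ref{lem:localglobalequiv}) is precisely the point the paper relies on.
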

\begin{corollary}\label{cor:goecompframe}
Let $\mathbf{GJL}_0\in\{\mathbf{GJ}_0,\mathbf{GJT}_0,\mathbf{GJ4}_0,\mathbf{GJT4}_0\}$ where $CS$ is a constant specification for $\mathbf{GJL}_0$. For any $\Gamma\cup\{\phi\}\subseteq\mathcal{L}_J$, the following are equivalent:
\begin{enumerate}
\item $\Gamma\vdash_{\mathbf{GJL}_{CS}}\phi$;
\item $\Gamma\models_{\mathsf{CIFKMJL}_{CS}}\phi$;
\item $\Gamma\models_{\mathsf{CIFKFJL}_{CS}}\phi$;
\item $\Gamma\models_{\mathsf{CIFKSJL}_{CS}}\phi$.
\end{enumerate}
\end{corollary}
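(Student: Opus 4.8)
The plan is to obtain all three equivalences as direct instances of the general frame-completeness theorems of Section \ref{sec:comptheoframes}, specialised to the underlying intermediate logic $\mathbf{G}$. Concretely, I would set $\mathbf{L}:=\mathbf{G}$ and take $\mathsf{C}:=\mathsf{CIF}$, the class of connected intuitionistic Kripke frames, and then read off the statements of Theorems \ref{thm:genmkrtmodcomp}, \ref{thm:genfittingmodcomp} and \ref{thm:gensubsetmodcomp}. With this choice the model classes written $\mathsf{CKMJL}$, $\mathsf{CKFJL}$ and $\mathsf{CKSJL}$ in those theorems are by definition exactly the classes $\mathsf{CIFKMJL}$, $\mathsf{CIFKFJL}$ and $\mathsf{CIFKSJL}$ of intuitionistic Mkrtychev, Fitting and subset models corresponding to $\mathbf{GJL}_0$ and $\mathsf{CIF}$. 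Hence the three theorems yield precisely the biconditionals between (1) and (2), between (1) and (3), and between (1) and (4).

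The single hypothesis that must be discharged before invoking these theorems is that $\mathsf{CIF}\in\mathsf{KFr}(\mathbf{G})\cap\mathsf{KFr}^g(\mathbf{G})$. The local half, $\mathsf{CIF}\in\mathsf{KFr}(\mathbf{G})$, is nothing but item (2) of Theorem \ref{thm:propcomp}. For the global half, $\mathsf{CIF}\in\mathsf{KFr}^g(\mathbf{G})$, I would appeal to item (2)$'$ of Corollary \ref{cor:globalpropcomp}; this was in turn derived from Theorem \ref{thm:propcomp}(2) by way of Lemma \ref{lem:localglobalequiv}, using the fact (already noted in the text preceding Corollary \ref{cor:globalpropcomp}) that $\mathsf{CIF}$ is closed under principal subframes. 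Thus both membership conditions hold.

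Once the hypothesis is verified, each of the three general completeness theorems applies verbatim, and chaining the three resulting biconditionals through the shared statement (1) gives the full cycle of equivalences asserted in the corollary. I expect no genuine obstacle here beyond bookkeeping: all the substantive content sits in the general frame-completeness theorems and in the propositional Kripke-completeness of $\mathbf{G}$. The only point deserving a moment's care is confirming $\mathsf{CIF}\in\mathsf{KFr}^g(\mathbf{G})$, since this is exactly where closure of $\mathsf{CIF}$ under principal subframes, hence Lemma \ref{lem:localglobalequiv}, is indispensable; without the global completeness of the propositional base, the canonical-model constructions underlying Theorems \ref{thm:genmkrtmodcomp}, \ref{thm:genfittingmodcomp} and \ref{thm:gensubsetmodcomp} would not go through.
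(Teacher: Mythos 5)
Your proposal is correct and follows exactly the route the paper takes: the corollary is obtained by instantiating Theorems \ref{thm:genmkrtmodcomp}, \ref{thm:genfittingmodcomp} and \ref{thm:gensubsetmodcomp} with $\mathbf{L}=\mathbf{G}$ and $\mathsf{C}=\mathsf{CIF}$, with the hypothesis $\mathsf{CIF}\in\mathsf{KFr}(\mathbf{G})\cap\mathsf{KFr}^g(\mathbf{G})$ discharged by Theorem \ref{thm:propcomp}(2) together with Corollary \ref{cor:globalpropcomp}, the latter resting on Lemma \ref{lem:localglobalequiv} and the closure of $\mathsf{CIF}$ under principal subframes. Nothing is missing.
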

\begin{corollary}\label{cor:clacompframe}
Let $\mathbf{CPCJL}_0\in\{\mathbf{CPCJ}_0,\mathbf{CPCJT}_0,\mathbf{CPCJ4}_0,\mathbf{CPCJT4}_0\}$ where $CS$ is a constant specification for $\mathbf{CPCJL}_0$. For any $\Gamma\cup\{\phi\}\subseteq\mathcal{L}_J$, the following are equivalent:
\begin{enumerate}
\item $\Gamma\vdash_{\mathbf{CPCJL}_{CS}}\phi$;
\item $\Gamma\models_{\mathsf{IF_1KMJL}_{CS}}\phi$;
\item $\Gamma\models_{\mathsf{IF_1KFJL}_{CS}}\phi$;
\item $\Gamma\models_{\mathsf{IF_1KSJL}_{CS}}\phi$.
\end{enumerate}
\end{corollary}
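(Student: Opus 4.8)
The plan is to obtain this corollary as a direct instantiation of the three general frame-completeness theorems proved earlier, namely Theorems \ref{thm:genmkrtmodcomp}, \ref{thm:genfittingmodcomp} and \ref{thm:gensubsetmodcomp}, taking the underlying intermediate logic to be classical propositional logic $\mathbf{C}=\mathbf{CPC}$ and the class of Kripke frames to be $\mathsf{IF}_1$. Each of those theorems asserts, for an intermediate logic $\mathbf{L}$, one of the calculi $\mathbf{LJL}_0$ with constant specification $CS$, and any frame class $\mathsf{C}\in\mathsf{KFr}(\mathbf{L})\cap\mathsf{KFr}^g(\mathbf{L})$, the equivalence of $\Gamma\vdash_{\mathbf{LJL}_{CS}}\phi$ with validity in the corresponding intuitionistic Mkrtychev, Fitting, or subset models over $\mathsf{C}$ respecting $CS$. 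Reading off the respective model-class notation, the three instances with $\mathbf{L}=\mathbf{CPC}$ and $\mathsf{C}=\mathsf{IF}_1$ will produce exactly the equivalences $(1)\Leftrightarrow(2)$, $(1)\Leftrightarrow(3)$ and $(1)\Leftrightarrow(4)$.

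The single hypothesis I would need to verify is the membership $\mathsf{IF}_1\in\mathsf{KFr}(\mathbf{CPC})\cap\mathsf{KFr}^g(\mathbf{CPC})$. Strong (local) completeness $\mathsf{IF}_1\in\mathsf{KFr}(\mathbf{CPC})$ is precisely item $(3)$ of Theorem \ref{thm:propcomp}, and strong global completeness $\mathsf{IF}_1\in\mathsf{KFr}^g(\mathbf{CPC})$ is precisely item $(3)'$ of Corollary \ref{cor:globalpropcomp}, which itself follows from Theorem \ref{thm:propcomp} via Lemma \ref{lem:localglobalequiv} using that $\mathsf{IF}_1$ is closed under principal subframes. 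Thus both conditions demanded by the general theorems are available for $\mathbf{CPC}$ over the frame class $\mathsf{IF}_1$.

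With this membership in hand, no further work remains: for each of the four choices of $\mathbf{CPCJL}_0$ and the given $CS$, I would simply invoke Theorem \ref{thm:genmkrtmodcomp} for $(1)\Leftrightarrow(2)$, Theorem \ref{thm:genfittingmodcomp} for $(1)\Leftrightarrow(3)$, and Theorem \ref{thm:gensubsetmodcomp} for $(1)\Leftrightarrow(4)$. Since there is no genuine model construction to perform here, I do not expect any real obstacle; the only point deserving attention is the bookkeeping that the classes written $\mathsf{IF_1KMJL}_{CS}$, $\mathsf{IF_1KFJL}_{CS}$ and $\mathsf{IF_1KSJL}_{CS}$ are indeed the model classes ``corresponding to $\mathbf{CPCJL}_0$ and $\mathsf{IF}_1$'' in the sense of those theorems, which is immediate from the naming conventions fixed in the definitions of the respective semantics.
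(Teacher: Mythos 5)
Your proposal is correct and matches the paper's own derivation: the corollary is obtained exactly by instantiating Theorems \ref{thm:genmkrtmodcomp}, \ref{thm:genfittingmodcomp} and \ref{thm:gensubsetmodcomp} with $\mathbf{L}=\mathbf{C}$ and $\mathsf{C}=\mathsf{IF}_1$, with the membership $\mathsf{IF}_1\in\mathsf{KFr}(\mathbf{C})\cap\mathsf{KFr}^g(\mathbf{C})$ supplied by Theorem \ref{thm:propcomp}(3) and Corollary \ref{cor:globalpropcomp}.
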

In particular, the equivalences between (1), (2) and (3) in Corollary \ref{cor:intcompframe}, for $\mathbf{IPC}$, are the completeness theorems obtained by Marti and Studer in \cite{MS2016}.
\subsubsection{$\mathbf{G_n}$ and $\mathbf{KC}$}
We give some exemplary completeness results for some intermediate justification logics not present in the previous literature to demonstrate some breadth of the here proved completeness theorems.

Using Theorems \ref{thm:algmkrtmodcomp}, \ref{thm:algfittingmodcomp} and \ref{thm:algsubsetmodcomp}, we obtain the following corollary based on Theorem \ref{thm:propcompgnalg}.
\begin{corollary}
Let $\mathbf{G_nJL}_0\in\{\mathbf{G_nJ}_0,\mathbf{G_nJT}_0,\mathbf{G_nJ4}_0,\mathbf{G_nJT4}_0\}$ and let $CS$ be a constant specification for $\mathbf{G_nJL}_0$. For any $\Gamma\cup\{\phi\}\subseteq\mathcal{L}J$, the following are equivalent:
\begin{enumerate}
\item $\Gamma\vdash_{\mathbf{G_nJL}_{CS}}\phi$;
\item $\Gamma\models^1_{\mathbf{V^{(n)}_G}\mathsf{AMJL}_{CS}}\phi$;
\item $\Gamma\models^1_{\mathbf{V^{(n)}_G}\mathsf{AFJL^c}_{CS}}\phi$;
\item $\Gamma\models^1_{\mathbf{V^{(n)}_G}\mathsf{ASJL^c}_{CS}}\phi$.
\end{enumerate}
\end{corollary}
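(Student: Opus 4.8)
The plan is to obtain the corollary as a direct specialization of the three general algebraic completeness theorems, namely Theorems~\ref{thm:algmkrtmodcomp}, \ref{thm:algfittingmodcomp} and \ref{thm:algsubsetmodcomp}, instantiated at the underlying intermediate logic $\mathbf{L}:=\mathbf{G_n}$ and the singleton algebra class $\mathsf{C}:=\{\mathbf{V^{(n)}_G}\}$. First I would record the only genuine input: by Theorem~\ref{thm:propcompgnalg}, $\mathbf{G_n}$ is strongly complete with respect to $\mathbf{V^{(n)}_G}$, so $\mathsf{C}=\{\mathbf{V^{(n)}_G}\}$ witnesses $\mathsf{C}\in\mathsf{Alg}(\mathbf{G_n})$. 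Moreover $\mathbf{V^{(n)}_G}$ has exactly $n$ elements and is in particular finite, hence complete, so in fact $\mathsf{C}\in\mathsf{Alg}_{com}(\mathbf{G_n})$ as well. This upgrade is what makes all three theorems simultaneously applicable to the same class, since the Fitting and subset completeness theorems require a class of \emph{complete} Heyting algebras whereas the Mkrtychev theorem only needs an arbitrary algebra class.

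With $\mathsf{C}$ fixed, I would apply each theorem in turn and select the relevant clause. Theorem~\ref{thm:algmkrtmodcomp} gives the equivalence of $\Gamma\vdash_{\mathbf{G_nJL}_{CS}}\phi$ with its clause~(3), $\Gamma\models^1_{\mathsf{CAMJL}_{CS}}\phi$; unwinding the convention $\mathbf{V^{(n)}_G}\mathsf{AMJL}:=\{\mathbf{V^{(n)}_G}\}\mathsf{AMJL}$ this is precisely the equivalence of (1) and (2). For the Fitting models, since $\mathsf{C}\in\mathsf{Alg}_{com}(\mathbf{G_n})$, Theorem~\ref{thm:algfittingmodcomp} applies and its clause~(4), the accessibility-crisp version $\Gamma\models^1_{\mathsf{CAFJL^c}_{CS}}\phi$, yields the equivalence of (1) and (3). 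Analogously, Theorem~\ref{thm:algsubsetmodcomp} applies (again using completeness of $\mathbf{V^{(n)}_G}$) and its clause~(4), the crisp subset semantics, delivers (1) iff (4). Chaining these three equivalences through the common endpoint (1) gives the four-way equivalence claimed.

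The hard part is essentially nonexistent: the corollary is pure bookkeeping over the general theorems, and the sole nontrivial ingredient is the propositional completeness statement of Theorem~\ref{thm:propcompgnalg}. The one place where a little care is needed is the completeness requirement on the algebra class for the Fitting and subset theorems; this is handled, as above, by observing that $\mathbf{V^{(n)}_G}$ is finite and therefore complete, so that the single class $\{\mathbf{V^{(n)}_G}\}$ lies in $\mathsf{Alg}_{com}(\mathbf{G_n})$ and can serve as $\mathsf{C}$ in all three theorems at once. Having secured this, selecting in each theorem the $\models^1$-clause --- and, for the Fitting and subset models, its accessibility-crisp variant --- reassembles exactly clauses (1)--(4).
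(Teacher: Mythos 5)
Your proposal is correct and matches the paper's own (essentially one-line) derivation: the corollary is obtained by instantiating Theorems \ref{thm:algmkrtmodcomp}, \ref{thm:algfittingmodcomp} and \ref{thm:algsubsetmodcomp} at $\mathsf{C}=\{\mathbf{V^{(n)}_G}\}$ using Theorem \ref{thm:propcompgnalg}. Your explicit observation that $\mathbf{V^{(n)}_G}$ is finite, hence complete, so that the same singleton class lies in $\mathsf{Alg}_{com}(\mathbf{G_n})$ and serves all three theorems at once, is exactly the (implicit) point the paper relies on.
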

Further, based on Theorems \ref{thm:genmkrtmodcomp}, \ref{thm:genfittingmodcomp} and \ref{thm:gensubsetmodcomp} together with Theorem \ref{thm:propcompgn}, we obtain:
\begin{corollary}
Let $\mathbf{G_nJL}_0\in\{\mathbf{G_nJ}_0,\mathbf{G_nJT}_0,\mathbf{G_nJ4}_0,\mathbf{G_nJT4}_0\}$ and let $CS$ be a constant specification for $\mathbf{G_nJL}_0$. For any $\Gamma\cup\{\phi\}\subseteq\mathcal{L}J$, the following are equivalent:
\begin{enumerate}
\item $\Gamma\vdash_{\mathbf{G_nJL}_{CS}}\phi$;
\item $\Gamma\models_{\mathsf{CIF_{n-1}KMJL}_{CS}}\phi$;
\item $\Gamma\models_{\mathsf{CIF_{n-1}KFJL^c}_{CS}}\phi$;
\item $\Gamma\models_{\mathsf{CIF_{n-1}KSJL^c}_{CS}}\phi$.
\end{enumerate}
\end{corollary}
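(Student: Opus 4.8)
The plan is to obtain the entire four-way equivalence as a direct instance of the general frame-completeness results of Section \ref{sec:comptheoframes}, specialised to the underlying intermediate logic $\mathbf{L}=\mathbf{G_n}$ and the frame class $\mathsf{C}=\mathsf{CIF}_{n-1}$. The only substantive work is to check that the hypotheses of those theorems are met, namely that $\mathsf{CIF}_{n-1}\in\mathsf{KFr}(\mathbf{G_n})\cap\mathsf{KFr}^g(\mathbf{G_n})$; once this membership is in place, each equivalence follows by a mechanical substitution.

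First I would record the local completeness $\mathsf{CIF}_{n-1}\in\mathsf{KFr}(\mathbf{G_n})$, which is precisely the content of Theorem \ref{thm:propcompgn}. Next, for the global completeness $\mathsf{CIF}_{n-1}\in\mathsf{KFr}^g(\mathbf{G_n})$, I would appeal to Lemma \ref{lem:localglobalequiv}: since $\mathsf{CIF}_{n-1}$ is an induced-subframe-closed class of connected frames of bounded cardinality, it is in particular closed under principal subframes (connectedness and the bounded-cardinality condition are both inherited by induced subframes), so the local statement upgrades to the global one exactly as in the corollary preceding this one. With both memberships established, the hypothesis $\mathsf{C}\in\mathsf{KFr}(\mathbf{G_n})\cap\mathsf{KFr}^g(\mathbf{G_n})$ required by Theorems \ref{thm:genmkrtmodcomp}, \ref{thm:genfittingmodcomp} and \ref{thm:gensubsetmodcomp} is satisfied.

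Then I would instantiate the three general theorems in turn. Theorem \ref{thm:genmkrtmodcomp} yields the equivalence of item (1) with the intuitionistic Mkrtychev clause, Theorem \ref{thm:genfittingmodcomp} the equivalence of (1) with the intuitionistic Fitting clause, and Theorem \ref{thm:gensubsetmodcomp} the equivalence of (1) with the intuitionistic subset clause; chaining these through (1) gives the full equivalence of all four items. Concerning the crispness superscript on the Fitting and subset clauses, I would observe that for frame-based models the accessibility relation $\mathcal{R}$ and the evidence relations $\mathcal{E}_t$ are already genuine two-valued binary relations, so crispness imposes no restriction and the superscripted classes coincide with the unsuperscripted ones delivered verbatim by the general theorems.

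The main obstacle, such as it is, is confined to the bookkeeping verification that $\mathsf{CIF}_{n-1}$ meets both the local and the global strong-completeness requirements for $\mathbf{G_n}$; granting the closure of $\mathsf{CIF}_{n-1}$ under principal subframes (as the surrounding text already records), the remainder is a routine application of the three previously proved completeness theorems, with no new model-theoretic construction needed.
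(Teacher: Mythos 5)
Your proposal is correct and follows essentially the same route as the paper: local completeness of $\mathbf{G_n}$ w.r.t.\ $\mathsf{CIF}_{n-1}$ (Theorem \ref{thm:propcompgn}), upgraded to global completeness via closure under principal subframes and Lemma \ref{lem:localglobalequiv}, and then direct instantiation of Theorems \ref{thm:genmkrtmodcomp}, \ref{thm:genfittingmodcomp} and \ref{thm:gensubsetmodcomp}. Your additional remark that the crispness superscript is vacuous for frame-based models is a sensible reading of the statement, since the paper only defines $\mathsf{C^c}$ for the algebraic model classes.
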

For Jankov's logic, Theorem \ref{thm:propcompkcframe} and Corollary \ref{cor:propcompkcframe} result in the following completeness theorem for justification logics based on Jankov's logic as a corollary of Theorems \ref{thm:genmkrtmodcomp}, \ref{thm:genfittingmodcomp} and \ref{thm:gensubsetmodcomp}.
\begin{corollary}
Let $\mathbf{KCJL}_0\in\{\mathbf{KCJ}_0,\mathbf{KCJT}_0,\mathbf{KCJ4}_0,\mathbf{KCJT4}_0\}$ where $CS$ is a constant specification for $\mathbf{KCJL}_0$. For any $\Gamma\cup\{\phi\}\subseteq\mathcal{L}_J$, the following are equivalent:
\begin{enumerate}
\item $\Gamma\vdash_{\mathbf{KCJL}_{CS}}\phi$;
\item $\Gamma\models_{\mathsf{DIF}\mathsf{KMJL}_{CS}}\phi$;
\item $\Gamma\models_{\mathsf{DIF}\mathsf{KFJL}_{CS}}\phi$;
\item $\Gamma\models_{\mathsf{DIF}\mathsf{KSJL}_{CS}}\phi$.
\end{enumerate}
\end{corollary}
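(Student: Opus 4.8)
The plan is to apply the three general frame-completeness theorems (Theorems \ref{thm:genmkrtmodcomp}, \ref{thm:genfittingmodcomp} and \ref{thm:gensubsetmodcomp}) directly, instantiated with the intermediate logic $\mathbf{L} := \mathbf{KC}$ and the frame class $\mathsf{C} := \mathsf{DIF}$. Each of those theorems asserts, for any intermediate logic $\mathbf{L}$ and any $\mathsf{C}\in\mathsf{KFr}(\mathbf{L})\cap\mathsf{KFr}^g(\mathbf{L})$, the equivalence of $\Gamma\vdash_{\mathbf{LJL}_{CS}}\phi$ with semantic consequence over the corresponding class of intuitionistic Mkrtychev, Fitting, or subset models built over $\mathsf{C}$. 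Hence the whole corollary reduces to verifying a single hypothesis: that $\mathsf{DIF}$ is simultaneously a strongly complete \emph{and} strongly globally complete frame class for $\mathbf{KC}$.

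First I would record that $\mathsf{DIF}\in\mathsf{KFr}(\mathbf{KC})$, which is exactly Theorem \ref{thm:propcompkcframe}. Next, $\mathsf{DIF}\in\mathsf{KFr}^g(\mathbf{KC})$ is provided by Corollary \ref{cor:propcompkcframe}; this in turn rests on Lemma \ref{lem:localglobalequiv} together with the observation that $\mathsf{DIF}$ is closed under principal subframes. The latter is immediate once one notes that directedness is an upward-hereditary property: if $\langle F,\leq\rangle$ is directed and $x\in F$, then for any $y,z\in\;\uparrow\{x\}$ with a common lower bound $w\in\;\uparrow\{x\}$, the bound $w'$ furnished by directedness of the ambient frame lies above $y$ and hence above $x$, so $w'\in\;\uparrow\{x\}$ and $\mathfrak{F}\upharpoonright(\uparrow\{x\})$ is again directed. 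With both memberships established, we obtain $\mathsf{DIF}\in\mathsf{KFr}(\mathbf{KC})\cap\mathsf{KFr}^g(\mathbf{KC})$.

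I would then apply each general theorem in turn. Theorem \ref{thm:genmkrtmodcomp} (with $\mathbf{L}=\mathbf{KC}$, $\mathsf{C}=\mathsf{DIF}$, and $\mathsf{CKMJL}=\mathsf{DIFKMJL}$ the associated class of intuitionistic Mkrtychev models corresponding to $\mathbf{KCJL}_0$) yields the equivalence of (1) and (2); Theorem \ref{thm:genfittingmodcomp} yields the equivalence of (1) and (3) via the class $\mathsf{DIFKFJL}$ of intuitionistic Fitting models; and Theorem \ref{thm:gensubsetmodcomp} yields the equivalence of (1) and (4) via the class $\mathsf{DIFKSJL}$ of intuitionistic subset models. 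Chaining these three equivalences through the common statement (1) delivers the full cycle $(1)\Leftrightarrow(2)\Leftrightarrow(3)\Leftrightarrow(4)$.

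There is no genuine obstacle here: the substantive work has already been carried out in the general completeness theorems, and the corollary is a pure instantiation. The only point deserving a line of care is the closure of $\mathsf{DIF}$ under principal subframes needed to invoke global completeness, but as sketched above this follows at once from the upward-heredity of directedness, so every step is mechanical.
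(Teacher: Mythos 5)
Your proposal is correct and follows exactly the route the paper intends: the corollary is a pure instantiation of Theorems \ref{thm:genmkrtmodcomp}, \ref{thm:genfittingmodcomp} and \ref{thm:gensubsetmodcomp} with $\mathbf{L}=\mathbf{KC}$ and $\mathsf{C}=\mathsf{DIF}$, with the hypothesis $\mathsf{DIF}\in\mathsf{KFr}(\mathbf{KC})\cap\mathsf{KFr}^g(\mathbf{KC})$ supplied by Theorem \ref{thm:propcompkcframe} and Corollary \ref{cor:propcompkcframe}. Your explicit check that $\mathsf{DIF}$ is closed under principal subframes is precisely the (unstated) verification the paper relies on when invoking Lemma \ref{lem:localglobalequiv}, and it is carried out correctly.
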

\subsection{Some intermediate modal logics and their semantics}
To consider realization results, based on the nature of our realization theorem, we need an appropriate semantical completeness result of the intermediate modal logic in question. It is not as straightforward, however, to establish such uniform completeness results for the intermediate modal logics as in the completeness theorems given in this paper for the intermediate justification logics in the sense of lifting \emph{arbitrary} classes of frames for the underlying intermediate logic to classes of frames for the intermediate modal logic. However, we can give some partial results. For this, we first introduce some notation:
\begin{definition}
Let $\mathsf{C}$ be a class of Kripke frames. Then, we write:
\begin{enumerate}
\item $\mathsf{CMK}$ for the class of all intuitionistic modal Kripke models over frames from $\mathsf{C}$;
\item $\mathsf{CMT}$ for the class of all $\mathsf{CMK}$ with reflexive $\mathcal{R}$;
\item $\mathsf{CMK4}$ for the class of all $\mathsf{CMK}$ with transitive $\mathcal{R}$;
\item $\mathsf{CMS4}$ for the class of all $\mathsf{CMK}$ with reflexive and transitive $\mathcal{R}$.
\end{enumerate}
\end{definition}
We state the following completeness results:
\begin{theorem}\label{thm:modalcomp}
Given an intermediate logic $\mathbf{L}$, let here $\mathbf{LML}\in\{\mathbf{LK},\mathbf{LT},\mathbf{LK4},\mathbf{LS4}\}$. Then:
\begin{enumerate}
\item $\mathbf{IPCML}$ is strongly complete w.r.t. $\mathsf{IFMML}$;
\item $\mathbf{GML}$ is strongly complete w.r.t. $\mathsf{CIFMML}$;
\item $\mathbf{CML}$ is strongly complete w.r.t. $\mathsf{IF_1MML}$;
\item $\mathbf{G_nML}$ is strongly complete w.r.t. $\mathsf{CIF_{n-1}MML}$;
\item $\mathbf{KCML}$ is strongly complete w.r.t. $\mathsf{DIFMML}$.
\end{enumerate}
\end{theorem}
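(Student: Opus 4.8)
The plan is to establish each of the five items by a soundness argument together with a canonical model construction, exploiting the fact that the frame conditions separate cleanly into an \emph{intuitionistic} part (a condition on the partial order $\leq$, inherited from the underlying intermediate logic $\mathbf{L}$) and a \emph{modal} part (a condition on the accessibility relation $\mathcal{R}$, governed by the presence of $(T)$ and $(4)$). Since these two dimensions are independent, it suffices to treat the $\leq$-conditions once per underlying logic ($\mathbf{IPC}$, $\mathbf{G}$, $\mathbf{C}$, $\mathbf{G_n}$, $\mathbf{KC}$, giving $\mathsf{IF}$, $\mathsf{CIF}$, $\mathsf{IF}_1$, $\mathsf{CIF}_{n-1}$, $\mathsf{DIF}$) and the $\mathcal{R}$-conditions once per modal strength, and then combine.

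For soundness I would verify that all axioms of $\mathbf{LML}$ are valid in the corresponding model class. The axiom $(K)$ and the necessitation rule are valid in every intuitionistic modal Kripke model, using the compatibility condition $x\leq y\Rightarrow\mathcal{R}[y]\subseteq\mathcal{R}[x]$ to secure monotonicity of $\Box$-formulae. Validity of $(T)$ under reflexivity of $\mathcal{R}$ and of $(4)$ under transitivity is routine. For the substitution instances of theorems of $\mathbf{L}$, I would reuse the localisation trick from the proof of Lemma \ref{lem:genmkrtmodsoundness}: at each world the truth sets of the substituted subformulae form upsets by the monotonicity lemma, so validity reduces to propositional truth of the $\mathbf{L}$-theorem under an induced intuitionistic valuation on the $\leq$-reduct, which lies in the relevant frame class and hence validates $\mathbf{L}$ by the propositional completeness results of Theorems \ref{thm:propcomp}, \ref{thm:propcompgn} and \ref{thm:propcompkcframe}.

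For completeness I would build, analogously to the standard canonical model of Section \ref{sec:canmodcon} but for the single-modal language $\mathcal{L}_\Box$, the canonical model whose worlds are maximal $\mathbf{LML}$-consistent tableaux $(\Gamma,\Delta)$, with $\tau\preceq\tau'$ iff $\Gamma\subseteq\Gamma'$, with $\tau\mathcal{R}\tau'$ iff $\Gamma^\#\subseteq\Gamma'$ where $\Gamma^\#:=\{\phi\mid\Box\phi\in\Gamma\}$, and with atoms forced by membership. The compatibility condition then holds automatically from $\Gamma\subseteq\Gamma'\Rightarrow\Gamma^\#\subseteq(\Gamma')^\#$. The truth lemma is proved by induction; the only nontrivial clause is $\Box$, whose existence direction uses that $\Box\phi\notin\Gamma$ forces $(\Gamma^\#,\{\phi\})$ to be consistent: if $\Gamma^\#\vdash_{\mathbf{LML}}\phi$, then $(K)$ and necessitation propagate this to $\Box\phi\in\Gamma$, a contradiction, and one then extends by Lindenbaum. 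Reflexivity of $\mathcal{R}$ from $(T)$ and transitivity from $(4)$ are the usual canonical computations.

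The main obstacle, and the heart of the argument, is showing that the canonical $\preceq$-reduct actually lies in the required intuitionistic frame class. For connectedness ($\mathbf{G}$) and directedness ($\mathbf{KC}$) I would argue directly on prime tableaux: incomparable $\preceq$-successors of a common world contradict primeness together with $(LIN)$ respectively $(WLEM)$, and for directedness one shows $\Gamma_y\cup\Gamma_z$ is consistent using $(WLEM)$ and extends it by Lindenbaum. The single-point condition for $\mathbf{C}$ follows since maximal consistent tableaux over a classical base are $\preceq$-maximal, and the bounded-cardinality condition for $\mathbf{G_n}$ follows from $(BC)_{n-1}$ by the standard intermediate-logic computation (see \cite{CZ1997}). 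The delicate point throughout is that these computations must be carried out on the modal canonical frame while the accessibility relation and compatibility are simultaneously in force; I expect the verification of bounded cardinality in the presence of $\Box$ to require the most care, since one must ensure the substitution instances of $(BC)_{n-1}$ witnessing a would-be long chain are genuinely available in the modal language. Once the $\preceq$-reduct is placed in $\mathsf{IF}$, $\mathsf{CIF}$, $\mathsf{IF}_1$, $\mathsf{CIF}_{n-1}$ or $\mathsf{DIF}$ as appropriate, the canonical model witnesses any underivability $\Gamma\not\vdash_{\mathbf{LML}}\phi$, yielding strong completeness.
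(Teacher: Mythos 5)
Your proposal follows essentially the same route as the paper: the paper also proves this theorem by a canonical model over maximal $\mathbf{LML}$-consistent tableaux with $\preceq^{sc}$ given by inclusion and $\mathcal{R}^{sc}$ by $\Gamma^\#\subseteq\Gamma'$, a truth lemma, and separate lemmas placing $\mathcal{R}^{sc}$ (reflexive/transitive from $(T)$/$(4)$) and $\langle\mathcal{W}^{sc},\preceq^{sc}\rangle$ (connected, directed, bounded cardinality from $(LIN)$, $(WLEM)$, $(BC)_n$, $(LEM)$) in the required classes, citing \cite{CZ1997} for the order-theoretic computations you carry out directly. Your worry about the availability of modal substitution instances of $(BC)_{n-1}$ is unfounded since $\mathbf{LML}$ is by definition closed under substitution in $\mathcal{L}_\Box$, so the argument goes through as you expect.
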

This theorem can be obtained using a usual canonical model construction similar to that of Section \ref{sec:canmodcon} (which we just sketch in the following). As before, we consider tableaux, this time over $\mathcal{L}_\Box$ with appropriately adapted notions of $\mathbf{LML}$-consistency.
\begin{definition}
The standard canonical intuitionistic modal Kripke model for $\mathbf{LML}$ is the structure $\mathfrak{M}^{sc}(\mathbf{LML})=\langle\mathcal{W}^{sc},\preceq^{sc},\mathcal{R}^{sc},\Vdash^{sc}\rangle$ which is defined by
\begin{enumerate}
\item $\mathcal{W}^{sc}:=\{\tau=(\Gamma,\Delta)\mid \tau\text{ is }\mathbf{LML}\text{-consistent and maximal}\}$,
\item $\tau\preceq^{sc}\tau'$ iff $\Gamma\subseteq\Gamma'$ iff $\Delta\supseteq\Delta'$,
\item $\tau\mathcal{R}^{sc}\tau'$ iff $\Gamma^\#\subseteq\Gamma'$ for $\Gamma^\#:=\{\phi\mid \Box\phi\in\Gamma\}$,
\item $\Vdash^{sc}(p)=\{\tau=(\Gamma,\Delta)\mid p\in\Gamma\}$,
\end{enumerate}
where $\tau=(\Gamma,\Delta)$ and $\tau'=(\Gamma',\Delta')$.
\end{definition}
\begin{theorem}
Let $\mathbf{L}$ be an intermediate logic, $\mathbf{LML}\in\{\mathbf{LK},\mathbf{LT},\mathbf{LK4},\mathbf{LS4}\}$ and $\mathfrak{M}^{sc}(\mathbf{LML})=\langle\mathcal{W}^{sc},\preceq^{sc},\mathcal{R}^{sc},\Vdash^{sc}\rangle$ its canonical model. For any $\phi\in\mathcal{L}_\Box$ and any $\tau=(\Gamma,\Delta)\in\mathcal{W}^{sc}$:
\begin{enumerate}
\item $\phi\in\Gamma\Rightarrow (\mathfrak{M}^{sc}(\mathbf{LJL}),\tau)\models\phi$;
\item $\phi\in\Delta\Rightarrow (\mathfrak{M}^{sc}(\mathbf{LJL}),\tau)\not\models\phi$.
\end{enumerate}
\end{theorem}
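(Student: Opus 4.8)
The plan is to prove both statements simultaneously by induction on the structure of $\phi\in\mathcal{L}_\Box$, exactly paralleling the truth lemma for the canonical intuitionistic Fitting model in Section~\ref{sec:canmodcon}. The base cases $\phi=\bot$ and $\phi=p\in Var$ are immediate from clause (4) of the definition of $\mathfrak{M}^{sc}(\mathbf{LML})$ together with $\mathbf{LML}$-consistency of $\tau=(\Gamma,\Delta)$, which guarantees $\Gamma\cap\Delta=\emptyset$. The inductive steps for $\land$, $\lor$ and $\rightarrow$ do not involve the modality and are handled verbatim as in the propositional intermediate case treated in \cite{CZ1997}; in particular the $\rightarrow$-case is where the partial order $\preceq^{sc}$ and the Lindenbaum lemma are used to witness a falsifying successor. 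Thus the only genuinely new work is the case $\phi=\Box\psi$, where I assume (1) and (2) hold for $\psi$ at every $\tau'\in\mathcal{W}^{sc}$.

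For the $\Box$-case part (1), suppose $\Box\psi\in\Gamma$. Then $\psi\in\Gamma^\#$, so for any $\tau'=(\Gamma',\Delta')$ with $\tau\mathcal{R}^{sc}\tau'$ we have $\Gamma^\#\subseteq\Gamma'$ and hence $\psi\in\Gamma'$; the induction hypothesis gives $(\mathfrak{M}^{sc},\tau')\models\psi$. As $\tau'\in\mathcal{R}^{sc}[\tau]$ was arbitrary, $(\mathfrak{M}^{sc},\tau)\models\Box\psi$ by the satisfaction clause for $\Box$.

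The main obstacle is part (2) of the $\Box$-case, where I must produce an accessible falsifying world. Assuming $\Box\psi\in\Delta$, I would consider the tableau $\mu=(\Gamma^\#,\{\psi\})$ and show it is $\mathbf{LML}$-consistent. If not, then $\Gamma^\#\vdash_{\mathbf{LML}}\psi$, so there are $\gamma_1,\dots,\gamma_n\in\Gamma^\#$ with $\vdash_{\mathbf{LML}}\bigwedge_{i=1}^n\gamma_i\rightarrow\psi$. Applying necessitation and then the axiom $(K)$ repeatedly, to distribute $\Box$ over the implication and the conjunction, yields $\vdash_{\mathbf{LML}}\bigwedge_{i=1}^n\Box\gamma_i\rightarrow\Box\psi$. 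Since $\gamma_i\in\Gamma^\#$ means $\Box\gamma_i\in\Gamma$ for each $i$, maximality and consistency of $\tau$ force $\Box\psi\in\Gamma$, contradicting $\Box\psi\in\Delta$ together with $\Gamma\cap\Delta=\emptyset$. Hence $\mu$ is consistent and, by the Lindenbaum lemma, extends to a maximal consistent tableau $\tau'=(\Gamma',\Delta')\in\mathcal{W}^{sc}$. By construction $\Gamma^\#\subseteq\Gamma'$, so $\tau\mathcal{R}^{sc}\tau'$, and $\psi\in\Delta'$, so the induction hypothesis (2) gives $(\mathfrak{M}^{sc},\tau')\not\models\psi$; therefore $(\mathfrak{M}^{sc},\tau)\not\models\Box\psi$. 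This step is the analogue of the $F\Box$-case in the quasi-realization lemma, but with plain necessitation replacing the appeal to the lifting lemma.

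I expect the reasoning to be uniform across the four logics $\mathbf{LK},\mathbf{LT},\mathbf{LK4},\mathbf{LS4}$: only $(K)$ and necessitation enter the truth lemma, while the additional axioms $(T)$ and $(4)$ are needed only afterwards to verify that $\mathcal{R}^{sc}$ is reflexive, respectively transitive, for the completeness Theorem~\ref{thm:modalcomp}. The one background fact I would check before the induction is that $\mathfrak{M}^{sc}(\mathbf{LML})$ is a well-defined intuitionistic modal Kripke model, that is, that $\tau\preceq^{sc}\tau'$ implies $\mathcal{R}^{sc}[\tau']\subseteq\mathcal{R}^{sc}[\tau]$; this follows immediately from monotonicity of $\Gamma\mapsto\Gamma^\#$ under $\subseteq$, since $\Gamma\subseteq\Gamma'$ gives $\Gamma^\#\subseteq(\Gamma')^\#$.
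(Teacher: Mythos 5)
Your proof is correct and follows essentially the same route the paper intends: the paper's own ``proof'' simply defers to the propositional canonical-model argument in the cited reference and asserts that the classical $\Box$-case transfers, and your write-up is exactly that transfer spelled out, with the falsifying-successor step for $\Box\psi\in\Delta$ handled by necessitation plus $(K)$ (the modal analogue of the lifting-lemma step in the paper's justification-logic truth lemma). The only detail worth keeping in mind is that the contradiction in that step is already immediate from $\Gamma\vdash_{\mathbf{LML}}\Box\psi$ together with $\Box\psi\in\Delta$ and the definition of tableau consistency, without needing to first place $\Box\psi$ in $\Gamma$.
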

\begin{proof}
A proof of the analogues propositional result can be found in \cite{CZ1997} and the arguments transfer to the case here. Similarly, the argument given in \cite{CZ1997} regarding the $\Box$ modality in the case for classical modal logic apply here for the $\Box$-connective.
\end{proof}
\begin{lemma}
Let $\mathbf{LML}\in\{\mathbf{LK},\mathbf{LT},\mathbf{LK4},\mathbf{LS4}\}$ and let $\mathfrak{M}^{sc}(\mathbf{LML})=\langle\mathcal{W}^{sc},\preceq^{sc},\mathcal{R}^{sc},\Vdash^{sc}\rangle$ be its canonical model. Then:
\begin{enumerate}
\item if $(T)\subseteq\mathbf{LML}$, then $\mathcal{R}^{sc}$ is reflexive;
\item if $(4)\subseteq\mathbf{LML}$, then $\mathcal{R}^{sc}$ is transitive.
\end{enumerate}
\end{lemma}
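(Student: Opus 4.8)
The plan is to run the standard frame-correspondence argument for canonical models, the two clauses being the modal analogues of the classical correspondences for the schemes $(T)$ and $(4)$. The one fact doing all the work is the deductive closure of the positive part of a maximal consistent tableau: if $\tau=(\Gamma,\Delta)\in\mathcal{W}^{sc}$ and $\Gamma\vdash_{\mathbf{LML}}\phi$, then $\phi\in\Gamma$. I would first record this, arguing as follows: if $\phi\notin\Gamma$, then $\phi\in\Delta$ by maximality of $\tau$, and then $\Gamma\vdash_{\mathbf{LML}}\phi$ (a single-disjunct instance of the consistency condition) contradicts the $\mathbf{LML}$-consistency of $\tau$; hence $\phi\in\Gamma$.

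For (1), assume $(T)\subseteq\mathbf{LML}$ and fix $\tau=(\Gamma,\Delta)\in\mathcal{W}^{sc}$; I must show $\Gamma^\#\subseteq\Gamma$, which by definition of $\mathcal{R}^{sc}$ is exactly $\tau\mathcal{R}^{sc}\tau$. So let $\phi\in\Gamma^\#$, i.e. $\Box\phi\in\Gamma$. Since $(T)$ gives $\Box\phi\rightarrow\phi\in\mathbf{LML}$, taking $\Box\phi$ as the single premise yields $\Gamma\vdash_{\mathbf{LML}}\phi$, and the closure fact gives $\phi\in\Gamma$. As $\phi$ was arbitrary, $\Gamma^\#\subseteq\Gamma$ and $\mathcal{R}^{sc}$ is reflexive.

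For (2), assume $(4)\subseteq\mathbf{LML}$ and suppose $\tau\mathcal{R}^{sc}\tau'$ and $\tau'\mathcal{R}^{sc}\tau''$, i.e. $\Gamma^\#\subseteq\Gamma'$ and $(\Gamma')^\#\subseteq\Gamma''$; I must show $\Gamma^\#\subseteq\Gamma''$. The crucial intermediate step is that $(4)$ makes $\Gamma$ absorb an extra box: if $\Box\phi\in\Gamma$, then since $\Box\phi\rightarrow\Box\Box\phi\in\mathbf{LML}$ we get $\Gamma\vdash_{\mathbf{LML}}\Box\Box\phi$, hence $\Box\Box\phi\in\Gamma$ by closure, i.e. $\Box\phi\in\Gamma^\#$. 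Now take $\phi\in\Gamma^\#$, so $\Box\phi\in\Gamma$; by the step just made $\Box\phi\in\Gamma^\#\subseteq\Gamma'$, whence $\phi\in(\Gamma')^\#\subseteq\Gamma''$. Thus $\Gamma^\#\subseteq\Gamma''$ and $\mathcal{R}^{sc}$ is transitive.

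Neither clause presents a genuine obstacle once the closure fact is in hand; the only thing to keep honest is that the consequence relation $\vdash_{\mathbf{LML}}$ really does internalise modus ponens against the theorems $(T)$ and $(4)$, which is immediate from its definition via $\bigwedge_{i}\gamma_i\rightarrow\phi\in\mathbf{LML}$ together with the closure of $\mathbf{LML}$ under modus ponens. The whole argument is the verbatim transcription of the classical canonical-frame lemmas (as in \cite{CZ1997}), the passage to tableaux $(\Gamma,\Delta)$ rather than single maximal consistent sets being inert here, since only the positive component $\Gamma$ and the derived set $\Gamma^\#$ enter the reasoning.
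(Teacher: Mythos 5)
Your proof is correct and is exactly the standard canonical-frame correspondence argument that the paper invokes (by reference to Theorem 5.16 of \cite{CZ1997}) while omitting the details; the closure fact for the positive component of a maximal consistent tableau is the right engine, and both clauses go through as you describe.
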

\begin{lemma}\label{lem:charaxioms}
Let $\mathbf{LML}\in\{\mathbf{LK},\mathbf{LT},\mathbf{LK4},\mathbf{LS4}\}$ and let $\mathfrak{M}^{sc}(\mathbf{LML})=\langle\mathcal{W}^{sc},\preceq^{sc},\mathcal{R}^{sc},\Vdash^{sc}\rangle$ be its canonical model.  Then:
\begin{enumerate}
\item $(LIN)\in\mathbf{LML}\Rightarrow \langle\mathcal{W}^{sc},\preceq^{sc}\rangle$ is connected;
\item $(BC)_n\in\mathbf{LML}\Rightarrow \langle\mathcal{W}^{sc},\preceq^{sc}\rangle$ is of bounded cardinality $n$;
\item $(WLEM)\in\mathbf{LML}\Rightarrow \langle\mathcal{W}^{sc},\preceq^{sc}\rangle$ is directed;
\item $(LEM)\in\mathbf{LML}\Rightarrow \langle\mathcal{W}^{sc},\preceq^{sc}\rangle$ is of bounded cardinality $1$.
\end{enumerate}
\end{lemma}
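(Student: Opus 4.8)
The plan is to transcribe the standard canonical-model arguments for intermediate propositional logics from \cite{CZ1997}, observing that each frame condition in the lemma constrains only the intuitionistic order $\preceq^{sc}$ and that each characteristic axiom $(LIN), (BC)_n, (WLEM), (LEM)$ is $\Box$-free; consequently the modal accessibility $\mathcal{R}^{sc}$ plays no role and the $\Box$-formulae occurring in the tableaux are inert for these arguments. Writing $x=(\Gamma_x,\Delta_x)$ for worlds, I would first record the two facts that drive everything: every maximal $\mathbf{LML}$-consistent tableau has a \emph{prime} and deductively closed first component (so $\alpha\lor\beta\in\Gamma_x$ implies $\alpha\in\Gamma_x$ or $\beta\in\Gamma_x$, and $\Gamma_x$ is closed under modus ponens), and every theorem of $\mathbf{LML}$ belongs to $\Gamma_x$ for all $x\in\mathcal{W}^{sc}$; both are immediate from the Lindenbaum lemma and the definition of consistency. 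Recall finally that $x\preceq^{sc}y$ unfolds to $\Gamma_x\subseteq\Gamma_y$.

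For (1) I would argue by contradiction: assuming $x\preceq^{sc}y$, $x\preceq^{sc}z$, $y\not\preceq^{sc}z$ and $z\not\preceq^{sc}y$, pick $\phi\in\Gamma_y\setminus\Gamma_z$ and $\psi\in\Gamma_z\setminus\Gamma_y$. The instance $(\phi\rightarrow\psi)\lor(\psi\rightarrow\phi)$ of $(LIN)$ is a theorem, hence lies in $\Gamma_x$; primeness puts one disjunct in $\Gamma_x$, monotonicity raises it to $\Gamma_y$ or $\Gamma_z$, and modus ponens with $\phi$ (resp. $\psi$) forces $\psi\in\Gamma_y$ (resp. $\phi\in\Gamma_z$), against the choice. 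For (3), given $x\preceq^{sc}y$ and $x\preceq^{sc}z$, I would show $\Gamma_y\cup\Gamma_z$ is $\mathbf{LML}$-consistent, so that Lindenbaum supplies a common successor $w$ with $y\preceq^{sc}w$ and $z\preceq^{sc}w$; were it inconsistent, some $\beta\in\Gamma_z$ would satisfy $\neg\beta\in\Gamma_y$, and the $(WLEM)$-instance $\neg\neg\beta\lor\neg\beta\in\Gamma_x$ would, through primeness and monotonicity, force a $\bot$-derivation in $\Gamma_y$ or in $\Gamma_z$. For (4), if $x\preceq^{sc}y$ with $x\neq y$ then some $\phi\in\Gamma_y\setminus\Gamma_x$ exists, and $\phi\lor\neg\phi\in\Gamma_x$ together with $\phi\notin\Gamma_x$ gives $\neg\phi\in\Gamma_x\subseteq\Gamma_y$, contradicting $\phi\in\Gamma_y$; hence $\preceq^{sc}$ collapses to equality, i.e.\ bounded cardinality $1$.

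The step I expect to be the most delicate is (2). Given points $x_0\preceq^{sc}x_1,\dots,x_0\preceq^{sc}x_n$ witnessing a failure of bounded cardinality $n$, I would choose for the relevant strict inclusions formulae $\chi_0,\dots,\chi_n$ separating the $\Gamma_{x_i}$, and apply primeness at $x_0$ to the substitution instance of $(BC)_n$ obtained by replacing each $p_i$ by $\chi_i$ (a theorem, hence in $\Gamma_{x_0}$): some disjunct $\bigwedge_{j<i}\chi_j\rightarrow\chi_i$ must lie in $\Gamma_{x_0}$, and one then locates an $x_i$ at which this implication is violated, contradicting monotonicity. The genuine work here is purely combinatorial — aligning the indexing of the separating formulae with the nested structure of $(BC)_n$ so that exactly one disjunct can be refuted — and it is precisely the propositional argument of \cite{CZ1997}; I would reproduce it, noting once more that the $\Box$-subformulae appearing in the tableaux never enter, since only the characteristic axiom and propositional reasoning about the witnesses $\chi_i$ are used.
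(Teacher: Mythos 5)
Your proposal is correct and is exactly the argument the paper intends: the paper omits the proof entirely, remarking only that it is ``completely analogous'' to the canonical-model arguments for intermediate propositional logics in Chagrov--Zakharyaschev (Theorem 5.16), and your write-up is precisely that transfer, correctly isolating the two facts that drive it (primeness and deductive closure of the $\Gamma$-component of a maximal consistent tableau, plus closure of $\mathbf{LML}$ under substitution so that instances of the $\Box$-free axioms at the separating $\mathcal{L}_\Box$-formulae are available in every world). The case analyses for $(LIN)$, $(WLEM)$ and $(LEM)$ are complete as written, and deferring the combinatorics of $(BC)_n$ to the cited propositional argument matches the level of detail the paper itself adopts.
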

The two above lemmas can be obtained completely analogously to corresponding results in the study of propositional intermediate logics and classical modal logics from \cite{CZ1997} (see especially Theorem 5.16 from \cite{CZ1997}) and a proof is thus omitted.\\

The previous lemmas constitute the necessary parts for a proof of the completeness result given in Theorem \ref{thm:modalcomp} by using the canonical model as a countermodel construction. We omit a detailed proof.
\subsection{Realization theorems}
Essential for both the known and unknown realization theorems is the following result:
\begin{theorem}\label{thm:charaxioms}
Let $\mathbf{L}$ be an intermediate logic and let $\mathbf{LJL}$ be an intermediate justification logic. Let $\mathfrak{M}^{sc}=\mathfrak{M}^{sc}(\mathbf{LJL})=\langle\mathcal{W}^{sc},\preceq^{sc},\mathcal{R}^{sc},\mathcal{E}^{sc},\Vdash^{sc}\rangle$ be the canonical model of $\mathbf{LJL}$.
\begin{enumerate}
\item $(LIN)\in\mathbf{LJL}\Rightarrow \langle\mathcal{W}^{sc},\preceq^{sc}\rangle$ is connected;
\item $(BC)_n\in\mathbf{LJL}\Rightarrow \langle\mathcal{W}^{sc},\preceq^{sc}\rangle$ is of bounded cardinality $n$;
\item $(WLEM)\in\mathbf{LJL}\Rightarrow \langle\mathcal{W}^{sc},\preceq^{sc}\rangle$ is directed;
\item $(LEM)\in\mathbf{LJL}\Rightarrow \langle\mathcal{W}^{sc},\preceq^{sc}\rangle$ is of bounded cardinality $1$.
\end{enumerate}
\end{theorem}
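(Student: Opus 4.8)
The plan is to observe that all four conclusions are statements about the partial order $\langle\mathcal{W}^{sc},\preceq^{sc}\rangle$ alone, so that the justification-specific data $\mathcal{R}^{sc}$ and $\mathcal{E}^{sc}$ of $\mathfrak{M}^{sc}(\mathbf{LJL})$ are irrelevant and the situation is formally identical to the canonical frame of a propositional intermediate logic---only now the underlying logic lives over $\mathcal{L}_J$ and its ``atoms'' include the modal formulas $t\colon\phi$. Consequently the argument runs exactly parallel to Lemma~\ref{lem:charaxioms} and to the standard correspondence results for intermediate logics (see \cite{CZ1997}, especially Theorem~5.16), with the modal formulas carried along as opaque subformulas.

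First I would record the order-theoretic facts about maximal $\mathbf{LJL}$-consistent tableaux that the correspondence arguments need, all of which transfer verbatim from the propositional case. For $\tau=(\Gamma,\Delta)\in\mathcal{W}^{sc}$ one has: (i) $\Gamma$ contains every $\mathbf{LJL}$-theorem (else such a theorem would lie in $\Delta$ and violate consistency with $n=1$); (ii) $\Gamma$ is closed under modus ponens; and (iii) $\Gamma$ is \emph{prime}, i.e. $\alpha\lor\beta\in\Gamma$ implies $\alpha\in\Gamma$ or $\beta\in\Gamma$ (otherwise $\alpha,\beta\in\Delta$ while $\Gamma\vdash_{\mathbf{LJL}}\alpha\lor\beta$). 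These follow from the definition of consistency together with the deduction theorem, and the Lindenbaum lemma supplies the extensions of consistent tableaux used below. Crucially, each scheme $(LIN)$, $(BC)_n$, $(WLEM)$, $(LEM)$ is a \emph{propositional} scheme, so its membership in $\mathbf{LJL}$ means that \emph{all} of its substitution instances over $\mathcal{L}_J$---in particular those obtained by plugging in arbitrary formulas $t\colon\phi$---are $\mathbf{LJL}$-theorems and hence lie in every $\Gamma$.

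The engine of each case is then the standard prime-theory argument, which I would illustrate for (1). Assume $\tau\preceq^{sc}\tau_1$ and $\tau\preceq^{sc}\tau_2$ with $\tau_1,\tau_2$ incomparable, say $\phi\in\Gamma_1\setminus\Gamma_2$ and $\psi\in\Gamma_2\setminus\Gamma_1$. The instance $(\phi\to\psi)\lor(\psi\to\phi)$ of $(LIN)$ lies in $\Gamma$ by (i), so by primeness (iii) of the \emph{common lower bound} $\tau$ one of the disjuncts lies in $\Gamma\subseteq\Gamma_1\cap\Gamma_2$; propagating it up and applying (ii) forces either $\psi\in\Gamma_1$ or $\phi\in\Gamma_2$, a contradiction in both cases. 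Hence $\langle\mathcal{W}^{sc},\preceq^{sc}\rangle$ is connected. Cases (2)--(4) are obtained by the analogous arguments: for $(LEM)$ one shows that $\Gamma\subseteq\Gamma'$ between maximal tableaux forces $\Gamma=\Gamma'$, so that $x\preceq^{sc}y$ collapses to equality, which is the bounded-cardinality-$1$ condition; and $(BC)_n$ and $(WLEM)$ yield bounded cardinality $n$ and directedness, respectively, by the corresponding computations from \cite{CZ1997}.

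I expect the only genuine point to watch---rather than any real obstacle---to be the bookkeeping observation that the modal formulas may be treated as opaque atoms: the correspondence arguments never decompose a $t\colon\phi$, using it only as a formula subject to the propositional schemes and to (i)--(iii), all of which hold for $\mathbf{LJL}$. The mathematically substantive content therefore already resides in the cited propositional (and, for Lemma~\ref{lem:charaxioms}, modal) correspondence results, and the remaining care is to confirm that it is primeness of the \emph{common} point---not merely of $\tau_1,\tau_2$ separately---that makes connectedness and directedness go through.
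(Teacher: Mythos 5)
Your proposal is correct and takes the route the paper itself intends: the paper omits the proof of Theorem \ref{thm:charaxioms}, deferring to the analogous propositional correspondence argument of \cite{CZ1997} (Theorem 5.16) and Lemma \ref{lem:charaxioms}, and what you write out is exactly that argument --- theoremhood in $\Gamma$ of every $\mathcal{L}_J$-substitution instance of the scheme, together with primeness and modus-ponens closure of maximal consistent tableaux, with $t\colon\phi$ treated as an opaque atom. The one point worth keeping explicit in a fully written version is the one you already flag: primeness is invoked at the common lower bound $\tau$ and the chosen disjunct is then propagated upward along $\preceq^{sc}$ before modus ponens is applied.
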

Again, this theorem is in analogy to the corresponding result from \cite{CZ1997} for intermediate propositional and classical modal logics (see again Theorem 5.16 there) and to the previous Lemma \ref{lem:charaxioms} and followingly we again omit a proof.\\

Using the above Theorems \ref{thm:charaxioms} and \ref{thm:modalcomp}, one validates the condition of $\langle\mathcal{W}^{sc},\preceq^{sc},\mathcal{R}^{sc}\rangle\in\mathsf{C}$ in Theorem \ref{thm:bimodreal} for the respective classes $\mathsf{C}$ from above and obtains the following corollaries:
\begin{corollary}\label{cor:realipc}
$\mathbf{IPCJL}\in\{\mathbf{IPCJ},\mathbf{IPCJT},\mathbf{IPCJ4},\mathbf{IPCJT4}\}$ realizes the corresponding modal logic $\mathbf{IPCML}$.
\end{corollary}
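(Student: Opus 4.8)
The plan is to obtain the statement as a direct instantiation of the general realization theorem (Theorem~\ref{thm:bimodreal}) with $\mathbf{L}=\mathbf{IPC}$, so that the only real work is discharging its hypothesis $\langle\mathcal{W}^{sc},\preceq^{sc},\mathcal{R}^{sc}\rangle\in\mathsf{C}$ for a suitable class $\mathsf{C}\in\mathsf{MFr}(\mathbf{IPCML})$. For $\mathsf{C}$ I would take the class furnished by Theorem~\ref{thm:modalcomp}(1), which asserts that $\mathbf{IPCML}$ is strongly complete with respect to $\mathsf{IFMML}$; the underlying class of intuitionistic modal Kripke frames (arbitrary partial order, with $\mathcal{R}$ required reflexive, transitive, or both exactly when $\mathbf{IPCML}$ is $\mathbf{IPCT}$, $\mathbf{IPCK4}$, or $\mathbf{IPCS4}$) then lies in $\mathsf{MFr}(\mathbf{IPCML})$ by definition.

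It remains to verify the membership of the canonical frame, which I would split into an order part and an accessibility part. For the order: since $\mathbf{IPC}$ is axiomatised by $(A1)$--$(A9)$ alone, none of $(LIN)$, $(BC)_n$, $(WLEM)$, $(LEM)$ lies in $\mathbf{IPCJL}$, so Theorem~\ref{thm:charaxioms} imposes no constraint and $\langle\mathcal{W}^{sc},\preceq^{sc}\rangle$ is simply an intuitionistic Kripke frame, i.e.\ a member of $\mathsf{IF}$. For the accessibility relation: the monotonicity condition $x\preceq^{sc}y\Rightarrow\mathcal{R}^{sc}[y]\subseteq\mathcal{R}^{sc}[x]$ holds because $\mathfrak{M}^{sc}(\mathbf{IPCJL})$ is a well-defined intuitionistic Fitting model, while reflexivity of $\mathcal{R}^{sc}$ follows from the presence of $(F)$ (for $\mathbf{IPCJT}$ and $\mathbf{IPCJT4}$) and transitivity from the presence of $(I)$ (for $\mathbf{IPCJ4}$ and $\mathbf{IPCJT4}$), by the usual maximal-tableau argument used for $\mathcal{R}^{sc}$ in the modal canonical model (the lemma preceding Lemma~\ref{lem:charaxioms}) and, mutatis mutandis, for the evidence relation in Lemma~\ref{lem:genfitmodwelldef}. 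Matching these properties to the definition of $\mathsf{C}$ in each of the four cases yields $\langle\mathcal{W}^{sc},\preceq^{sc},\mathcal{R}^{sc}\rangle\in\mathsf{C}$.

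With the hypothesis in place, Theorem~\ref{thm:bimodreal} gives, for each theorem $\phi$ of $\mathbf{IPCML}$, a $\psi$ with $(F,\psi)\in\real{F,\phi'}$ and $\mathbf{IPCJL}\vdash\psi$, which is precisely the assertion that $\mathbf{IPCJL}$ realizes $\mathbf{IPCML}$. The bookkeeping of pairing $(F)/(I)$ with reflexivity/transitivity across $(\mathbf{IPCJ},\mathbf{IPCK})$, $(\mathbf{IPCJT},\mathbf{IPCT})$, $(\mathbf{IPCJ4},\mathbf{IPCK4})$, $(\mathbf{IPCJT4},\mathbf{IPCS4})$ is routine. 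The one point that genuinely carries weight---and which I would flag as the main obstacle in the general scheme---is that the canonical frame must land in a class for which the modal companion is Kripke complete; this can fail for intermediate logics with Kripke-incomplete modal companions (cf.\ \cite{She1977}), but here it is automatic precisely because $\mathbf{IPC}$ carries no order conditions, leaving $\langle\mathcal{W}^{sc},\preceq^{sc}\rangle$ unconstrained and hence trivially in $\mathsf{IF}$.
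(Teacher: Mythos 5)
Your proposal follows exactly the paper's route: instantiate Theorem~\ref{thm:bimodreal} with $\mathbf{L}=\mathbf{IPC}$ and the class $\mathsf{IFMML}$ from Theorem~\ref{thm:modalcomp}(1), then check $\langle\mathcal{W}^{sc},\preceq^{sc},\mathcal{R}^{sc}\rangle\in\mathsf{C}$ --- the order part being vacuous for $\mathbf{IPC}$ by Theorem~\ref{thm:charaxioms} and the reflexivity/transitivity of $\mathcal{R}^{sc}$ coming from $(F)$/$(I)$ via the standard tableau argument. The paper compresses all of this into one sentence, so your write-up is simply a correctly expanded version of the same argument.
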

\begin{corollary}\label{cor:realg}
$\mathbf{GJL}\in\{\mathbf{GJ},\mathbf{GJT},\mathbf{GJ4},\mathbf{GJT4}\}$ realizes the corresponding modal logic $\mathbf{GML}$.
\end{corollary}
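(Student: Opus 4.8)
The plan is to instantiate the unified realization theorem, Theorem \ref{thm:bimodreal}, with $\mathbf{LJL}=\mathbf{GJL}$ and $\mathbf{LML}=\mathbf{GML}$, so that the entire statement reduces to producing a class $\mathsf{C}$ of intuitionistic modal frames for which $\mathbf{GML}$ is strongly Kripke complete and which contains the frame $\langle\mathcal{W}^{sc},\preceq^{sc},\mathcal{R}^{sc}\rangle$ underlying the standard canonical model $\mathfrak{M}^{sc}(\mathbf{GJL})$. For the completeness side, Theorem \ref{thm:modalcomp}(2) supplies exactly such a class: depending on whether $\mathbf{GJL}$ is $\mathbf{GJ}$, $\mathbf{GJT}$, $\mathbf{GJ4}$ or $\mathbf{GJT4}$, the matching modal logic $\mathbf{GML}$ is $\mathbf{GK}$, $\mathbf{GT}$, $\mathbf{GK4}$ or $\mathbf{GS4}$, and in each case $\mathbf{GML}$ is strongly complete w.r.t.\ the respective class $\mathsf{CIFMML}$ of intuitionistic modal frames over connected intuitionistic frames, carrying the appropriate reflexivity/transitivity conditions on $\mathcal{R}$. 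This mirrors the proof of Corollary \ref{cor:realipc}, the only difference being the restriction to connected frames.

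The core of the argument is then to verify the membership $\langle\mathcal{W}^{sc},\preceq^{sc},\mathcal{R}^{sc}\rangle\in\mathsf{CIFMML}$, which splits into a condition on the partial order $\preceq^{sc}$ and a condition on the accessibility relation $\mathcal{R}^{sc}$. For the partial order I would observe that $(LIN)\in\mathbf{GJL}$, since $\mathbf{G}=\mathbf{IPC}+(LIN)$ and hence every $\mathcal{L}_J$-instance of $(LIN)$ lies in $\overline{\mathbf{G}}\subseteq\mathbf{GJL}$, and then apply Theorem \ref{thm:charaxioms}(1) to conclude that $\langle\mathcal{W}^{sc},\preceq^{sc}\rangle$ is connected. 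This handles all four cases uniformly.

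For the accessibility relation I would verify, by the same maximal-consistent-tableau bookkeeping used in Lemma \ref{lem:genfitmodwelldef}, that $\mathcal{R}^{sc}$ inherits exactly the frame properties forced by the justification axioms present in $\mathbf{GJL}$: when $(F)$ is an axiom (the cases $\mathbf{GJT}$, $\mathbf{GJT4}$), the scheme $t:\phi\rightarrow\phi$ together with maximality gives $\Gamma^\#\subseteq\Gamma$ for every $\tau=(\Gamma,\Delta)\in\mathcal{W}^{sc}$, i.e.\ $\mathcal{R}^{sc}$ is reflexive; when $(I)$ is an axiom (the cases $\mathbf{GJ4}$, $\mathbf{GJT4}$), the scheme $t:\phi\rightarrow\,!t{:}t{:}\phi$ propagates membership of $\Gamma^\#$ along $\mathcal{R}^{sc}$-chains and yields transitivity. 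Matching these with the four target classes then places $\langle\mathcal{W}^{sc},\preceq^{sc},\mathcal{R}^{sc}\rangle$ in $\mathsf{CIFMML}$ in each case.

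With both conditions checked, Theorem \ref{thm:bimodreal} applies directly: for every $\phi$ with $\mathbf{GML}\vdash\phi$ there is a $\psi$ with $(F,\psi)\in\real{F,\phi'}$ and $\mathbf{GJL}\vdash\psi$, which is precisely the assertion that $\mathbf{GJL}$ realizes $\mathbf{GML}$. The only genuine subtlety, and the step I expect to require the most care rather than ingenuity, is the alignment of the frame conditions: one must ensure that the class witnessing completeness of $\mathbf{GML}$ (connected frames with the prescribed $\mathcal{R}$) is exactly the class into which the canonical justification frame provably falls, so that the hypothesis $\langle\mathcal{W}^{sc},\preceq^{sc},\mathcal{R}^{sc}\rangle\in\mathsf{C}$ of Theorem \ref{thm:bimodreal} is met on the nose for each of the four logics.
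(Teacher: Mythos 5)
Your proposal is correct and follows essentially the same route as the paper: instantiate Theorem \ref{thm:bimodreal} with the class $\mathsf{CIFMML}$ from Theorem \ref{thm:modalcomp}(2), use Theorem \ref{thm:charaxioms}(1) to get connectedness of $\langle\mathcal{W}^{sc},\preceq^{sc}\rangle$ from $(LIN)\in\mathbf{GJL}$, and check reflexivity/transitivity of $\mathcal{R}^{sc}$ from the axioms $(F)$ and $(I)$. The paper leaves the $\mathcal{R}^{sc}$-verification implicit, and your sketch of it (via $\Gamma^{\#}\subseteq\Gamma$ under $(F)$ and propagation of $t:\phi$ along $\mathcal{R}^{sc}$ under $(I)$) is exactly the intended argument.
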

\begin{corollary}\label{cor:realc}
$\mathbf{CJL}\in\{\mathbf{CJ},\mathbf{CJT},\mathbf{CJ4},\mathbf{CJT4}\}$ realizes the corresponding modal logic $\mathbf{CML}$.
\end{corollary}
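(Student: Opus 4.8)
The plan is to read off Corollary~\ref{cor:realc} as the instance $\mathbf{L}=\mathbf{C}$ of Theorem~\ref{thm:bimodreal}. The phrase ``$\mathbf{CJL}$ realizes $\mathbf{CML}$'' abbreviates exactly the conclusion of that theorem: for every $\phi$ with $\mathbf{CML}\vdash\phi$ there is a realization $\psi$ (that is, $(F,\psi)\in\real{F,\phi'}$ for a unique annotation $\phi'$) with $\mathbf{CJL}\vdash\psi$. Hence the only thing to supply is a class $\mathsf{C}\in\mathsf{MFr}(\mathbf{CML})$ whose members include the frame-reduct $\langle\mathcal{W}^{sc},\preceq^{sc},\mathcal{R}^{sc}\rangle$ of the canonical model $\mathfrak{M}^{sc}(\mathbf{CJL})$; Theorem~\ref{thm:bimodreal} then yields the statement, uniformly across the four choices $\mathbf{CJL}\in\{\mathbf{CJ},\mathbf{CJT},\mathbf{CJ4},\mathbf{CJT4}\}$ with their corresponding $\mathbf{CML}\in\{\mathbf{CK},\mathbf{CT},\mathbf{CK4},\mathbf{CS4}\}$.

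For the candidate class I would take the one furnished by Theorem~\ref{thm:modalcomp}(3), namely $\mathsf{IF_1MML}$, with respect to which $\mathbf{CML}$ is strongly complete and which therefore lies in $\mathsf{MFr}(\mathbf{CML})$. Membership of $\langle\mathcal{W}^{sc},\preceq^{sc},\mathcal{R}^{sc}\rangle$ in this class splits into a condition on the partial order $\preceq^{sc}$ and a condition on $\mathcal{R}^{sc}$. For the order, since $\mathbf{C}=\mathbf{IPC}+(LEM)$ we have $(LEM)\in\overline{\mathbf{C}}\subseteq\mathbf{CJL}$, so Theorem~\ref{thm:charaxioms}(4) immediately gives that $\langle\mathcal{W}^{sc},\preceq^{sc}\rangle$ is of bounded cardinality $1$, i.e.\ lies in $\mathsf{IF_1}$.

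The remaining work is the condition on $\mathcal{R}^{sc}$, the part not packaged by the cited characterization theorems, and I expect it to be the only genuine (though routine) obstacle. Here I would verify directly from the definition $\tau\mathcal{R}^{sc}\tau'$ iff $\Gamma^\#\subseteq\Gamma'$ (writing $\Gamma^\#=\{\phi\mid t:\phi\in\Gamma\text{ for some }t\in Jt\}$) the two correspondences mirroring the modal canonical-model lemma: if $(F)$ is an axiom scheme of $\mathbf{CJL}$ then $\mathcal{R}^{sc}$ is reflexive, and if $(I)$ is an axiom scheme then $\mathcal{R}^{sc}$ is transitive. Reflexivity holds because $t:\phi\rightarrow\phi\in\mathbf{CJL}$ forces $\phi\in\Gamma$ whenever $t:\phi\in\Gamma$, by maximality of $\tau$, so $\Gamma^\#\subseteq\Gamma$; transitivity holds because $t:\phi\rightarrow !t:t:\phi\in\mathbf{CJL}$ lets one chain $\Gamma^\#\subseteq\Gamma'$ and $(\Gamma')^\#\subseteq\Gamma''$ into $\Gamma^\#\subseteq\Gamma''$. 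Selecting, in each of the four cases, exactly the reflexivity/transitivity conditions that $\mathsf{IF_1MML}$ imposes on $\mathcal{R}$ then gives $\langle\mathcal{W}^{sc},\preceq^{sc},\mathcal{R}^{sc}\rangle\in\mathsf{IF_1MML}$, so the hypothesis of Theorem~\ref{thm:bimodreal} is met and the corollary follows. This $\mathcal{R}^{sc}$-verification is the justification-logic analogue of the lemma stated for $\mathfrak{M}^{sc}(\mathbf{LML})$ and is proved the same way; everything else in the argument is a direct citation.
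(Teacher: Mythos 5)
Your proposal is correct and follows essentially the same route as the paper: instantiate Theorem~\ref{thm:bimodreal} with the class $\mathsf{IF_1MML}$ from Theorem~\ref{thm:modalcomp}(3), and use Theorem~\ref{thm:charaxioms}(4) (via $(LEM)\in\overline{\mathbf{C}}$) to place $\langle\mathcal{W}^{sc},\preceq^{sc}\rangle$ in $\mathsf{IF}_1$. Your explicit check that $(F)$ forces $\mathcal{R}^{sc}$ reflexive and $(I)$ forces it transitive is exactly the justification-logic analogue of the paper's lemma for $\mathfrak{M}^{sc}(\mathbf{LML})$, a step the paper leaves implicit, so if anything your write-up is slightly more complete than the original.
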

Corollary \ref{cor:realipc} was first obtained by Marti and Studer in \cite{MS2016} (although only for $\mathbf{IPCJT4}$ explicitly) using a constructive proof. Corollary \ref{cor:realg} has been constructively obtained in \cite{Pis2019} and the part of $\mathbf{CJT4}$ of Corollary \ref{cor:realc} goes back to the original work of Artemov \cite{Art1995,Art2001}. The other parts of Corollary \ref{cor:realc} seem to be due to \cite{Bre2000} (see e.g. the remark in \cite{Kuz2000}) although we were not able to obtain that manuscript.\\

Theorem \ref{thm:charaxioms} can be used for more than just reobtaining old realization theorems. Recalling the axiomatizations of the previous subsection $\mathbf{G_n}$ and $\mathbf{KC}$, we find also the following corollaries:
\begin{corollary}\label{cor:realgn}
$\mathbf{G_nJL}\in\{\mathbf{G_nJ},\mathbf{G_nJT},\mathbf{G_nJ4},\mathbf{G_nJT4}\}$ realizes the corresponding modal logic $\mathbf{G_nML}$.
\end{corollary}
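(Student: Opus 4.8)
The plan is to instantiate the general realization theorem (Theorem \ref{thm:bimodreal}) with $\mathbf{LJL} = \mathbf{G_nJL}$ and its corresponding modal logic $\mathbf{LML} = \mathbf{G_nML}$. The realization property asserted in the corollary is exactly the conclusion of that theorem, so the entire task reduces to producing a class $\mathsf{C} \in \mathsf{MFr}(\mathbf{G_nML})$ that contains the underlying frame $\langle \mathcal{W}^{sc}, \preceq^{sc}, \mathcal{R}^{sc}\rangle$ of the canonical model $\mathfrak{M}^{sc}(\mathbf{G_nJL})$, precisely as was done for the cases treated in Corollaries \ref{cor:realipc}, \ref{cor:realg} and \ref{cor:realc}. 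First I would choose $\mathsf{C} := \mathsf{CIF_{n-1}MML}$, which by Theorem \ref{thm:modalcomp}(4) is a class of intuitionistic modal frames with respect to which $\mathbf{G_nML}$ is strongly complete, so that indeed $\mathsf{C} \in \mathsf{MFr}(\mathbf{G_nML})$. It then remains only to check that the canonical frame of $\mathbf{G_nJL}$ lies in this class.

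For the order component, recall that $\mathbf{G_n} = \mathbf{G} + (BC)_{n-1} = \mathbf{IPC} + (LIN) + (BC)_{n-1}$, so both $(LIN)$ and $(BC)_{n-1}$ belong to $\overline{\mathbf{G_n}}$ and hence to $\mathbf{G_nJL}$. Applying Theorem \ref{thm:charaxioms}(1) and (2) then shows that $\langle \mathcal{W}^{sc}, \preceq^{sc}\rangle$ is connected and of bounded cardinality $n-1$, i.e. that it belongs to $\mathsf{CIF_{n-1}}$. For the modal accessibility component, the conditions on $\mathcal{R}^{sc}$ demanded by $\mathbf{MML} \in \{\mathbf{K}, \mathbf{T}, \mathbf{K4}, \mathbf{S4}\}$ --- reflexivity in the presence of $(F)$ and transitivity in the presence of $(I)$ --- hold for the canonical justification frame just as in the classical setting, matching the choice of $\mathbf{LJL}$ among $\{\mathbf{G_nJ}, \mathbf{G_nJT}, \mathbf{G_nJ4}, \mathbf{G_nJT4}\}$. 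Together these place $\langle \mathcal{W}^{sc}, \preceq^{sc}, \mathcal{R}^{sc}\rangle$ in $\mathsf{CIF_{n-1}MML}$, discharging the hypothesis of Theorem \ref{thm:bimodreal}.

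I expect the only genuinely new ingredient over the $\mathbf{G}$ case of Corollary \ref{cor:realg} to be the bounded-cardinality clause contributed by $(BC)_{n-1}$, and the main point requiring care is simply ensuring that the notion of bounded cardinality $n-1$ supplied by Theorem \ref{thm:charaxioms}(2) for the justification-canonical order coincides with the frame restriction defining $\mathsf{CIF_{n-1}}$ in Theorem \ref{thm:modalcomp}(4), together with the routine bookkeeping that the $\mathcal{R}^{sc}$-conditions track the chosen modal variant. Once this alignment is in place the corollary follows immediately by invoking Theorem \ref{thm:bimodreal}; no separate construction is needed, since the merging of quasi-realizations into realizations has already been carried out in full generality there.
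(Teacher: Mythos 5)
Your proposal follows exactly the paper's argument: the paper likewise obtains Corollary \ref{cor:realgn} by combining Theorem \ref{thm:charaxioms} (items (1) and (2), giving connectedness and bounded cardinality $n-1$ of $\langle\mathcal{W}^{sc},\preceq^{sc}\rangle$) with the completeness statement of Theorem \ref{thm:modalcomp}(4) for $\mathsf{CIF_{n-1}MML}$, so as to discharge the hypothesis $\langle\mathcal{W}^{sc},\preceq^{sc},\mathcal{R}^{sc}\rangle\in\mathsf{C}$ of Theorem \ref{thm:bimodreal}. Your additional remark that the reflexivity/transitivity of $\mathcal{R}^{sc}$ must track the chosen variant of $\mathbf{LJL}$ is a point the paper leaves implicit, so your write-up is, if anything, slightly more careful than the original.
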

\begin{corollary}\label{cor:realkc}
$\mathbf{KCJL}\in\{\mathbf{KCJ},\mathbf{KCJT},\mathbf{KCJ4},\mathbf{KCJT4}\}$ realizes the corresponding modal logic $\mathbf{KCML}$.
\end{corollary}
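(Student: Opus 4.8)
The plan is to obtain Corollary~\ref{cor:realkc} as a direct instance of the general realization result Theorem~\ref{thm:bimodreal}, applied with $\mathbf{L}=\mathbf{KC}$. For each of the four cases $\mathbf{KCJL}\in\{\mathbf{KCJ},\mathbf{KCJT},\mathbf{KCJ4},\mathbf{KCJT4}\}$ I would fix the corresponding modal logic $\mathbf{KCML}\in\{\mathbf{KCK},\mathbf{KCT},\mathbf{KCK4},\mathbf{KCS4}\}$ and invoke Theorem~\ref{thm:modalcomp}(5) to secure a class $\mathsf{C}\in\mathsf{MFr}(\mathbf{KCML})$, namely $\mathsf{DIFMML}$, of directed intuitionistic modal Kripke frames (with the reflexivity/transitivity condition on $\mathcal{R}$ matching the present $(T)$/$(4)$ axioms). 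Everything then reduces to discharging the single hypothesis of Theorem~\ref{thm:bimodreal}, that the frame $\langle\mathcal{W}^{sc},\preceq^{sc},\mathcal{R}^{sc}\rangle$ underlying the canonical model $\mathfrak{M}^{sc}(\mathbf{KCJL})$ lies in $\mathsf{C}$.

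This verification I would split into two independent checks. For the intuitionistic order $\preceq^{sc}$, since $\mathbf{KC}=\mathbf{IPC}+(WLEM)$ we have $(WLEM)\in\mathbf{KCJL}$, so Theorem~\ref{thm:charaxioms}(3) immediately gives that $\langle\mathcal{W}^{sc},\preceq^{sc}\rangle$ is directed, i.e. lies in $\mathsf{DIF}$. For the accessibility relation $\mathcal{R}^{sc}$, I would read off its behaviour from the tableau definition $\tau\mathcal{R}^{sc}\tau'$ iff $\Gamma^\#\subseteq\Gamma'$: the inclusion condition $x\preceq^{sc}y\Rightarrow\mathcal{R}^{sc}[y]\subseteq\mathcal{R}^{sc}[x]$ required of an intuitionistic modal frame is automatic since $\Gamma\subseteq\Gamma'$ entails $\Gamma^\#\subseteq(\Gamma')^\#$, while reflexivity of $\mathcal{R}^{sc}$ follows from maximal consistency together with $(F)$ (i.e. $t:\phi\rightarrow\phi$) and transitivity from $(I)$ (i.e. $t:\phi\rightarrow\,!t:t:\phi$). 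These are the exact justification-side counterparts of the modal Lemma~\ref{lem:charaxioms}, and matching the four combinations of present axioms to the defining $\mathcal{R}$-conditions of $\mathsf{DIFMK},\mathsf{DIFMT},\mathsf{DIFMK4},\mathsf{DIFMS4}$ places the canonical frame in $\mathsf{C}$ in each case.

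With both parts of the inclusion established, Theorem~\ref{thm:bimodreal} applies verbatim: whenever $\mathbf{KCML}\vdash\phi$ there is some $\psi$ with $(F,\psi)\in\real{F,\phi'}$ for an annotation $\phi'$ of $\phi$ and with $\mathbf{KCJL}\vdash\psi$, which is precisely the assertion that $\mathbf{KCJL}$ realizes $\mathbf{KCML}$.

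I expect the application of Theorem~\ref{thm:bimodreal} and the directedness step to be immediate. The point demanding genuine care is the simultaneous bookkeeping that the one frame $\langle\mathcal{W}^{sc},\preceq^{sc},\mathcal{R}^{sc}\rangle$ carries the order-theoretic ($\mathbf{KC}$-)property \emph{and} the modal ($K/T/K4/S4$-)property at once, so that the single inclusion into $\mathsf{C}$ holds in each of the four cases; since the $\preceq^{sc}$-condition depends only on $(WLEM)$ and the $\mathcal{R}^{sc}$-conditions only on $(F)$ and $(I)$, these requirements do not interfere, and the argument goes through uniformly.
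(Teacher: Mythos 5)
Your proposal is correct and follows essentially the same route as the paper: instantiate Theorem~\ref{thm:bimodreal} with $\mathbf{L}=\mathbf{KC}$, use Theorem~\ref{thm:modalcomp}(5) for the completeness class $\mathsf{C}$, and use Theorem~\ref{thm:charaxioms}(3) (via $(WLEM)\in\mathbf{KCJL}$) together with the $(F)$/$(I)$-induced reflexivity/transitivity of $\mathcal{R}^{sc}$ to place the canonical frame in $\mathsf{C}$. Your explicit verification of the $\mathcal{R}^{sc}$-conditions and of the heredity condition $x\preceq^{sc}y\Rightarrow\mathcal{R}^{sc}[y]\subseteq\mathcal{R}^{sc}[x]$ is a welcome elaboration of steps the paper leaves implicit in its one-line derivation.
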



\begin{thebibliography}{10}

\bibitem{Art1995}
S.~Artemov.
\newblock {Operational Modal Logic}.
\newblock Technical Report MSI 95-29, Cornell University, 1995.
\newblock Ithaca, NY.

\bibitem{Art2001}
S.~Artemov.
\newblock {Explicit Provability and Constructive Semantics}.
\newblock {\em The Bulleting of Symbolic Logic}, 7(1):1--36, 2001.

\bibitem{Art2008}
S.~Artemov.
\newblock {The logic of justification}.
\newblock {\em The Review of Symbolic Logic}, 1(4):477--513, 2008.

\bibitem{AF2019}
S.~Artemov and M.~Fitting.
\newblock {\em {Justification Logic: Reasoning with Reasons}}, volume 216 of
  {\em Cambridge Tracts in Mathematics}.
\newblock Cambridge University Press, 2019.

\bibitem{Bre2000}
V.~Brezhnev.
\newblock {On explicit counterparts of modal logics}.
\newblock Technical Report CFIS 2000-06, Cornell University, 2000.

\bibitem{CR2010}
X.~Caicedo and R.~O. Rodriguez.
\newblock {Standard G\"odel Modal Logics}.
\newblock {\em Studia Logica}, 94:189--214, 2010.

\bibitem{CZ1997}
A.~Chagrov and M.~Zakharyaschev.
\newblock {\em {Modal Logic}}.
\newblock Clarendon Press, 1997.

\bibitem{CF2001}
A.~Ciabattoni and M.~Ferrari.
\newblock {Hypersequent Calculi for some Intermediate Logics with Bounded
  Kripke Models}.
\newblock {\em Journal of Logic and Computation}, 11(2):283--294, 2001.

\bibitem{CGT2008}
A.~Ciabattoni, N.~Galatos, and K.~Terui.
\newblock {From Axioms to Analytic Rules in Nonclassical Logics}.
\newblock In {\em 23rd Annual IEEE Symposium on Logic in Computer Science.
  Proceedings}, pages 229--240, 2008.

\bibitem{Dum1959}
M.~Dummett.
\newblock {A propositional calculus with denumerable matrix}.
\newblock {\em Journal of Symbolic Logic}, 24(2):97--106, 1959.

\bibitem{Fit2005}
M.~Fitting.
\newblock {The logic of proofs, semantically}.
\newblock {\em Annals of Pure and Applied Logic}, 132(1):1--25, 2005.

\bibitem{Fit2009}
M.~Fitting.
\newblock {Realizations and LP}.
\newblock {\em Annals of Pure and Applied Logic}, 161(3):368--387, 2009.

\bibitem{Fit2016}
M.~Fitting.
\newblock {Modal logics, justification logics, and realization}.
\newblock {\em Annals of Pure and Applied Logic}, 167(8):615--648, 2016.

\bibitem{Gab1981}
D.~Gabbay.
\newblock {\em {Semantical Investigations in Heyting's Intuitionistic Logic}},
  volume 148 of {\em Synthese Library}.
\newblock Springer Netherlands, 1981.

\bibitem{Gha2014}
M.~Ghari.
\newblock {Justification Logics in a Fuzzy Setting}.
\newblock {\em ArXiv e-prints}, 2014.
\newblock arXiv, math.LO, 1407.4647.

\bibitem{Gha2016}
M.~Ghari.
\newblock {Pavelka-style fuzzy justification logics}.
\newblock {\em Logic Journal of the IGPL}, 24(5):743--773, 2016.

\bibitem{Goe1932}
K.~G\"odel.
\newblock {Zum intuitionistischen Aussagenkalk\"ul}.
\newblock {\em Anzeiger der Akademie der Wissenschaften in Wien}, 69:65--66,
  1932.

\bibitem{Goe1933}
K.~G\"odel.
\newblock {Eine Interpretation des intuitionistischen Aussagenkalk\"uls}.
\newblock {\em Ergebnisse eines mathematischen Kolloquiums}, 4:39--40, 1933.

\bibitem{Jan1968}
A.~Jankov.
\newblock {Calculus of the weak law of the excluded middle}.
\newblock {\em Izv. Akad. Nauk SSSR, Ser. Mat.}, 32:1044--1051, 1968.

\bibitem{Kri1965}
S.~Kripke.
\newblock {Semantical Analysis of Intuitionistic Logic I}.
\newblock In {\em Formal Systems and Recursive Functions}, volume~40 of {\em
  Studies in Logic and the Foundations of Mathematics}, pages 92--130. 1965.

\bibitem{Kuz2000}
R.~Kuznets.
\newblock {On the complexity of explicit modal logics}.
\newblock In {\em International Workshop on Computer Science Logic.
  Proceedings}, volume 1862 of {\em Lecture Notes in Computer Science}, pages
  371--383. Springer, 2000.

\bibitem{Kuz2008}
R.~Kuznets.
\newblock {\em {Complexity Issues in Justification Logic}}.
\newblock PhD thesis, City University of New York, 2008.

\bibitem{KMS2017}
R.~Kuznets, S.~Marin, and L.~Straßburger.
\newblock {Justification logic for constructive modal logic}.
\newblock In {\em 7th Workshop on Intuitionistic Modal Logic and Applications},
  pages 371--383. 2017.

\bibitem{KS2019}
R.~Kuznets and T.~Studer.
\newblock {\em {Logics of Proofs and Justifications}}, volume~80 of {\em
  Mathematical Logic and Foundations}.
\newblock College Publications, 2019.

\bibitem{LS2019}
E.~Lehmann and T.~Studer.
\newblock {Subset models for justification logic}.
\newblock {\em Logic, Language, Information and Computation (WoLLIC)}, page
  433–449, 2019.

\bibitem{MS2016}
M.~Marti and T.~Studer.
\newblock {Intuitionistic Modal Logic made Explicit}.
\newblock {\em IfCoLog Journal of Logics and their Applications},
  3(5):877--901, 2016.

\bibitem{MS2018}
M.~Marti and T.~Studer.
\newblock {The Internalized Disjunction Property for Intuitionistic
  Justification Logic}.
\newblock In {\em Advances in Modal Logic}, 2018.

\bibitem{Mkr1997}
A.~Mkrtychev.
\newblock {Models for the logic of proofs}.
\newblock In {\em Proceedings of Logical Foundations of Computer Science
  LFCS'97}, volume 1234 of {\em Lecture Notes in Computer Science}, pages
  266--275. Springer, 1997.

\bibitem{Ono1971}
H.~Ono.
\newblock {Kripke models and intermediate logics}.
\newblock {\em Publications of the Research Institute for Mathematical
  Sciences, Kyoto University}, 6:461--476, 1971.

\bibitem{Ono1977}
H.~Ono.
\newblock {On Some Intuitionistic Modal Logics}.
\newblock {\em Publications of the Research Institute for Mathematical
  Sciences, Kyoto University}, 13:687--722, 1977.

\bibitem{Ono2019}
H.~Ono.
\newblock {\em {Proof Theory and Algebra in Logic}}, volume~2 of {\em Short
  Textbooks in Logic}.
\newblock Springer Singapore, 2019.

\bibitem{Pis2019}
N.~Pischke.
\newblock {Standard G\"odel modal logics are not realized by G\"odel
  justification logics}.
\newblock {\em ArXiv e-prints}, 2019.
\newblock arXiv, math.LO, 1907.04583.

\bibitem{Pis2020}
N.~Pischke.
\newblock {A note on strong axiomatization of G\"odel Justification Logic}.
\newblock {\em Studia Logica}, 108(4):687--724, 2020.

\bibitem{Pre2010}
N.~Preining.
\newblock {G\"odel Logics - A Survey}.
\newblock In {\em Logic for Programming, Artificial Intelligence, and
  Reasoning. LPAR 2010. Proceedings}, volume 6397 of {\em Lecture Notes in
  Computer Science}, pages 30--51. Springer, 2010.

\bibitem{She1977}
V.~Shetman.
\newblock {On Incomplete Propositional Logics}.
\newblock {\em Soviet Mathematics Doklady}, 18:985--989, 1977.

\bibitem{Vid2015}
A.~Vidal.
\newblock {\em {On modal expansions of t-norm based logics with rational
  constants}}.
\newblock PhD thesis, Artificial Intelligence Research Institute (IIIA - CSIC)
  and Universitat de Barcelona, 2015.

\end{thebibliography}

\end{document}